\documentclass[10pt,a4paper,final]{article}  
\usepackage{babel}                       
\usepackage{geometry}
\usepackage[utf8]{inputenc}
\usepackage[T1]{fontenc}
\usepackage[reqno]{amsmath} 
\usepackage[reqno]{amsmath}  
\usepackage{amsthm}                    
\usepackage{mathtools}
\usepackage[colorlinks=true, linkcolor=red, urlcolor=red,citecolor=red]{hyperref} 
\usepackage{comment}

\usepackage{tikz}

\tikzstyle{dashed}= [dash pattern= on 2pt off 2pt]

\usetikzlibrary{3d,calc}
\usetikzlibrary{arrows, arrows.meta}
\usetikzlibrary{math}
\usepackage{pgfplots}
\usepackage{esint}

\usepackage{amssymb,latexsym,verbatim} 
\usepackage{amstext}
\usepackage{amsfonts}
\usepackage{amsbsy}
\usepackage{amsopn}
\usepackage{enumerate}
\usepackage{graphics}
\usepackage[figurename=Fig.]{caption}
\usepackage{tcolorbox}
\usepackage{comment}

\usepackage{todonotes}
\usepackage{showlabels}

\graphicspath{{pictures/} }

%%%%%%%%%%%%%%%%%%%%%%%%%%%%%%%%%%%%%%%%%%%%%%%%%%%%%%

% \vint barred integral, needs one index (use {} if none)
% Definition submitted by Tero Kilpel\"ainen and Pekka Koskela
% with articles for Arkiv f\"or matematik 37:2 (1999).
% Modified to give the same spacing to the left of the sign as the
% usual integral by Anders Bj\"orn, 4 January 2000.
%
% Modified to have distance in em and ex rather than pt to scale
% properly in 11pt and 12pt by Anders Bj\"orn, 16 February 2005
%
% \vintslides is provided for use with the slides style,
% uncomment the last line in this case. Anders Bj\"orn, 23 February 2005
%

\def\vint_#1{\mathchoice%
      {\mathop{\kern 0.2em\vrule width 0.6em height 0.69678ex depth -0.58065ex
              \kern -0.8em \intop}\nolimits_{\kern -0.4em#1}}%
      {\mathop{\kern 0.1em\vrule width 0.5em height 0.69678ex depth -0.60387ex
              \kern -0.6em \intop}\nolimits_{#1}}%
      {\mathop{\kern 0.1em\vrule width 0.5em height 0.69678ex depth -0.60387ex
              \kern -0.6em \intop}\nolimits_{#1}}%
      {\mathop{\kern 0.1em\vrule width 0.5em height 0.69678ex depth -0.60387ex
              \kern -0.6em \intop}\nolimits_{#1}}}
\def\vintslides_#1{\mathchoice%
      {\mathop{\kern 0.1em\vrule width 0.5em height 0.697ex depth -0.581ex
              \kern -0.6em \intop}\nolimits_{\kern -0.4em#1}}%
      {\mathop{\kern 0.1em\vrule width 0.3em height 0.697ex depth -0.604ex
              \kern -0.4em \intop}\nolimits_{#1}}%
      {\mathop{\kern 0.1em\vrule width 0.3em height 0.697ex depth -0.604ex
              \kern -0.4em \intop}\nolimits_{#1}}%
      {\mathop{\kern 0.1em\vrule width 0.3em height 0.697ex depth -0.604ex
              \kern -0.4em \intop}\nolimits_{#1}}}

\newcommand{\aveint}[2]{\mathchoice%
      {\mathop{\kern 0.2em\vrule width 0.6em height 0.69678ex depth -0.58065ex
              \kern -0.8em \intop}\nolimits_{\kern -0.45em#1}^{#2}}%
      {\mathop{\kern 0.1em\vrule width 0.5em height 0.69678ex depth -0.60387ex
              \kern -0.6em \intop}\nolimits_{#1}^{#2}}%
      {\mathop{\kern 0.1em\vrule width 0.5em height 0.69678ex depth -0.60387ex
              \kern -0.6em \intop}\nolimits_{#1}^{#2}}%
      {\mathop{\kern 0.1em\vrule width 0.5em height 0.69678ex depth -0.60387ex
              \kern -0.6em \intop}\nolimits_{#1}^{#2}}}

%%%%%%%%%%%%%%%%%%%%%%%%%%%%%%%%%%%%%%%%%%%%%%%%%%%%%%

\newcommand{\dist}{\mathrm{dist }}
\newcommand{\tr}{\mathrm{tr}}
\newcommand{\id}{\mathrm{id}}

\newcommand{\diam}{\mathrm{diam}}

\newcommand{\loc}{\mathrm{loc}}

\newcommand{\R}{\mathbb{R}}

\newcommand{\N}{\mathbb{N}}

\newcommand{\pa}{\partial}
\newcommand{\eps}{\varepsilon}
\newcommand{\beq}{\begin{equation}}
\newcommand{\eeq}{\end{equation}}
\newcommand{\T}{\mathrm{T}}

\newcommand{\solT}{$(\Omega_t,u,p)_{t \in [0,T)} \,$}

\renewcommand{\d}{\mathop{}\!\mathrm{d}}
\DeclareMathOperator{\diver}{div}
\renewcommand{\div}{\diver}
\renewcommand{\H}{\mathcal{H}}
\renewcommand{\R}{\mathbb R}

\newcommand{\nablasym}{\nabla_{\mathrm{sym}}}

%notation that might be changed
\newcommand{\curv}{\kappa} 
\newcommand{\n}{\nu} 
\renewcommand{\P}{P}
\newcommand{\inner}[2]{\left\langle #1, #2 \right\rangle}
\newcommand{\freearg}{\, \cdot \,} 
\newcommand{\spt}{\mathrm{spt} \,}

\newcommand{\bP}{\begin{proof}}
\newcommand{\eP}{\end{proof}}

\theoremstyle{definition}

\newtheorem{thm}{Theorem}[section]
\newtheorem{cor}[thm]{Corollary}
\newtheorem{defi}[thm]{Definition}
\newtheorem{lemma}[thm]{Lemma}
\newtheorem{prop}[thm]{Proposition}
\newtheorem{rema}[thm]{Remark}

\newtheorem{exam}[thm]{Example}

\newcommand{\abs}[1]{{\left | #1 \right|}}
\newcommand{\norm}[1]{{\left \| #1 \right\|}}
\numberwithin{equation}{section}

\flushbottom

\tolerance=500
\emergencystretch=1em
\parskip\medskipamount

\setlength{\textwidth}{15truecm}
\setlength{\textheight}{23truecm}
%\normallineskiplimit=1pt
%%\advance\hoffset 20pt      %48
%\advance\voffset 20pt      %78
%\voffset=-0.5truecm
%\hoffset=-0.5truecm
%
%\footskip=40pt

%%%%%%%%%%%%%%%%%%%%%%%%%%%%%%%%%%%%%%%%%%%%%%%%%%%%%%%%%%%%%%%%%%%%%%

\title{Regularity estimates of a fluid-free surface evolution}
%\author{Malte Kampschulte, Joonas Niinikoski, Sebastian Schwarzacher}
\author{Malte Kampschulte\footnote{Department of Mathematical Analysis,
Faculty of Mathematics and Physics,
Charles University,
Sokolovsk\'a 83,
186 75 Praha 8, Czechia; {\em kampschulte@karlin.mff.cuni.cz, niinikoski@karlin.mff.cuni.cz, schwarz@karlin.mff.cuni.cz}}
, Joonas Niinikoski$^*$, Sebastian Schwarzacher$^*$\footnote{Department of Mathematics, Box 480
751 06 Uppsala, Sweden}}
%\date{\today}

\begin{document}

\allowdisplaybreaks

\maketitle

\begin{abstract}
In this work the evolution of a fluid droplet in vacuum is considered. This means that the surface tension and the fluid forces are in equilibrium at the free boundary. The fluid is governed by the incompressible quasi-steady Stokes equation. 

We present higher order energy estimates for this setting in the planar case. In particular bounds of the curvature and its tangential derivative combined with the second and third spacial derivatives of the fluid velocity as respective dissipation. These estimates are shown to hold until the point of a topological degeneracy. They provide quantitative bounds, that depend on specific properties of the initial geometry only. 

The work contrasts previous approaches, which are based on the use of local coordinates and instead performs all estimates in an Eulerian setting. Indeed, the estimates provided here are geometrically intrinsic and collapse only once these intrinsic qualities break.
\end{abstract}

%\tableofcontents

%%%%%%%%%%%%%%%%%%%%%%%%%%%%%%%%%%%%%%%%%%%%%%%%%%%%%%%%%%%%%%%%%%%%%%%%%%%%%%%%%%%%%%%%%%%%%
%%%%%%%%%%%%%%%%%%%%%%%%%%%%%%%%%%%%%%%%%%%%%%%%%%%%%%%%%%%%%%%%%%%%%%%%%%%%%%%%%%%%%%%%%%%%%

\section{Introduction}

Free-surface problems form a major branch in the study of partial differential equations and the literature is very rich, containing some of the most celebrated results in PDE theory. A main application is the question how the motion of a liquid drop is evolving under surface tension.

For this setting many results have been shown. Concerning the existence and regularity theory this includes a general existence theory for short times of regular solutions and the global existence theory for states that are close to (smooth) equilibria, see~\cite{solonnikov1977solvability,solonnikov1986unsteady,
solonnikov1991initial,solonnikov2002estimates,padula2010local}. 
Much effort has also been put into the strongly related setting of two fluids, that are separated by surface tension, see~\cite{denisova1994solvability,pruss2011analytic}, the monograph~\cite{pruss2016moving} and the references therein. For this setting it was shown that under suitable but general assumptions solutions become analytic instantaneously and stay so until a blow up is approached~\cite{pruss2011analytic,pruss2016moving}. These results provide a qualitative theory that seems very satisfactory, in particular due to the generality of its range of applicability.
Moreover, it may be assumed that the methods developed there are also applicable for the ``one phase flow'', which we consider, in two or three space dimensions, even so, up to the authors' knowledge, a clear reference seems to be missing.

The previously mentioned results generally rely on the use of (local in time) coordinate systems (e.g.\ height functions or Lagrangian coordinates) that allow to treat the fluid with respect to a fixed domain. The construction is then obtained via linearisation and a contraction principle. Accordingly the related regularity estimates are rather implicit.
What seems generally to be missing in the literature are {\em quantitative regularity estimates} with {\em intrinsic dependence on the geometric evolution.}
 In this work we aim to progress on that question. Higher order energy estimates are provided for what is probably the simplest possible scenario of a free fluid motion under surface tension; that is the planar case of a quasi-steady incompressible fluid. The estimates are produced in a purely {\em  intrinsic Eulerian description} and are derived without the use of a flow map. This implies their validity under very general geometric situations. The estimates can be closed and are valid up to the point of topological degeneracy. Moreover, a respective blow-up can be excluded for a (short) but fixed time interval depending only on the given initial geometric constants of the droplet.

As Example~\ref{example} shows, degenerations in finite time can happen. Hence their general avoidance can not be expected. However a proper quantification of their appearance is still far from being understood. We believe that our quantitative estimates provide a valid first step in that direction.

{\bf The setting:} We introduce the set-up that will be studied in this paper. More details and notation can be found in Section~\ref{sec:prelim}. We define $\nu_t$ as the (time-changing) outer normal of the time-changing domain $\Omega_t$ and $\tau_t$ as the right orientated tangential as well as $\curv_t$ as the curvature. For a vector field $w$ we define $\div_{\nu_t} w:=\langle\partial_{\nu_t} w,\nu_t\rangle$ and $\nablasym w$ as the symmetric gradient.

Our aim is to consider the following system of partial differential equations. We seek a family of domains $\Omega_t\subset \R^2$ with $t\in [0,T]$, evolving with (scalar) normal velocity $v$, as well as for each $t$ a fluid velocity $u:\Omega_t\to \R^2$ and fluid pressure $p:\Omega_t\to \R$, satisfying
\begin{equation} \label{eq:main}
 \left\{
 \begin{aligned}
  \vartheta\Delta u -\nabla p &= 0 &\text{ in }& \Omega_t \\
  \div u &= 0 &\text{ in }& \Omega_t \\
\curv_t \n_t &=[pI-2\vartheta \nablasym u]\n_t &\text{ on }& \pa\Omega_t\\
v&=\langle u, \nu_t\rangle& \text{ on } & \pa\Omega_t
 \end{aligned}
 \right.
\end{equation}

Since we are in two dimensions, the third equation can also be rewritten as a pair of scalar equations
\begin{align}
\label{eq:surfacetension1}
\curv_t &= p - 2\vartheta \div_{\n_t} u (t, \, \cdot \,)  &\text{ on }& \pa\Omega_t \\
\label{eq:surfacetension2}
0 &= \langle\pa_{\n_t} u (t, \, \cdot \,),\tau_t  \rangle +  \langle\pa_{\tau_t} u (t, \, \cdot \,),\n_t \rangle  
&\text{ on } & \pa\Omega_t.
\end{align}

This PDE is coupled to the evolution of $\partial\Omega_t$, that is transported by the normal fluid velocity. The system is closed by providing the initial condition for the domain\footnote{Note that the initial geometry implies the initial velocity by solving the respective Stokes equation (uniquely up to rigid motions), see Subsection~\ref{ssec:stokes} for more details.}
\begin{align}
\label{eq:initial}
\Omega_0=\Omega.
\end{align}
The equation has a gradient-flow structure which provides a weak but natural estimate on the perimeter and first order derivatives of the velocity (see Subsection~\ref{ssec:grad}). For this setting we now aim to show higher order energy estimates.

The first of these, will be an $L^2$-estimate of the curvature, which implies bounds of the Willmore energy of the mean curvature combined with the second order derivatives of the fluid velocity. In this setting the error terms appearing seem to be too strong to allow closing the estimates. Indeed, in order to derive it, we require higher regularity that {\em does not follow} from what the estimate yields back even for short times.

In contrast once the second, $H^1$-estimate of the curvature is considered {\em the cycle can be closed}. This means that strong solutions do indeed satisfy uniform bounds up to geometric degeneration. The bound include the tangential derivative of the mean curvature $\curv$ and third spatial derivatives of the fluid velocity as respective dissipation.

Already in this setting of a two dimensional quasi steady evolution, obtaining higher order energy estimates is a non-trivial task, as they can not simply be read of the equation. It was necessary to rewrite and carefully combine the regularity theory for the (incompressible) mean curvature flow~\cite{JN2023,julin2024flat} with the regularity theory for the steady Stokes operator in a geometrically intrinsic manner. This made it necessary to revisit and specify important analytic tools as {\em Korn} and {\em Poincar\'e{} inequalities} as well as {\em interpolation estimates} with clear control of the geometry (See Subsection~\ref{subsec:domainIndep}).

The $H^1$-control seems not to be strong enough to imply {\em uniform bounds} in the three dimensional setting. It can however be speculated that higher order energy estimates can be closed also for higher spacial dimensions. This relates to previous works in the subject. Indeed, at least since the pioneering works of Solonnikov and his co-authors on the existence of solutions for short times, it was necessary to consider a functional framework beyond {\em the regularity threshold} of bounded curvature.

As mentioned before, our notion of regular solution as well as the main proof itself {\em is independent of the use of any local coordinates}. In particular, no height function, flow map or other type of fixed coordinate system is used. Nevertheless, the potential degeneracy of the geometry can not be excluded for arbitrary times and is actually not expected to hold for a motion of a drop in vacuum.\footnote{ It remains a difficult open question, whether in case of a two phase flow geometric degeneracy can be excluded a-priori; at least in the quasi-steady planar case.} As a measure of degeneracy of the moving surface we take the so-called {\em uniform ball condition (UBC)}. It seems to be a somewhat natural characterisation of non-degeneracy that is well suited for free-boundary problems and mean-curvature flows. In the two dimensional case it essentially means that the curvature $\curv$ is uniformly bounded and that the surface has a positive distance in between intrinsically distant surface points. This size is precisely quantifiable by our estimates which hence allows us to close the circle of estimates, which is the main result of the paper:

\begin{thm}\label{thm:main}
Let \solT be a $C^3$-regular solution to \eqref{eq:main} according to Definition \ref{def:classicsol}.
There exists a time $T^*>0$ depending only on the maximal UBC radius
$r_0$ of the initial domain, the initial perimeter $P_0$, the
viscosity constant $\vartheta$ and the initial $L^2$-energy $\|\partial_{\tau_0} \curv_0\|_{L^2(\pa \Omega_0)}$
of the curvature, such that for all $0\leq t,s < \min\{T,T^*\}$
it holds for the maximal UBC radii $r_t$ and $r_s$ of the domains $\Omega_t$ and $\Omega_s$ respectively that
\begin{align}
|r_t-r_s| \leq \frac{C(P_0,\|\partial_{\tau_0} \curv_0\|_{L^2(\pa \Omega_0)})}{r_0^{8}} \left(\frac{|t-s|}{\vartheta}\right)^\frac13
\end{align}
as well as $r_t,r_s \geq r_0/2$. The dependence of $T^*$ on the initial parameters can be given explictily, for this see Theorem \ref{thm:UBC} and Corollary \ref{cor:time_lowerbound}.
Further we have an explicit time control on $\|\pa_{\tau_t} \curv_t\|_{L^2(\pa \Omega_t)}$ in $[0,T^*)$, see \eqref{growth2} and $\norm{u}_{H^3(\Omega_t)}$ is quantitatively bounded in $L^2(0,T^*)$, see Lemma \ref{lem:normcontrol}. These estimates are based on the estimates \eqref{est:H2control} for $\|u\|_{H^2(\Omega_t)}$ as well as
\eqref{1stvar_higherwillmore:estB} for $\frac{\d}{\d t}\|\partial_{\tau_t} \kappa_t\|^2_{L^2(\partial\Omega_t)}$ and $\|\nabla^3 u\|_{L^2(\Omega_t)}$ respectively.
\end{thm}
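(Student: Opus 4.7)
The proof will be a continuation (bootstrap) argument that couples the curvature energy inequality \eqref{1stvar_higherwillmore:estB} to the geometric control of the UBC radius.

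\textbf{Step 1 (Time-regularity of the UBC radius).} The maximal UBC radius $r_t$ is a geometric functional of $\partial\Omega_t$, sensitive both to pointwise curvature and to the non-local distance between intrinsically far but extrinsically close boundary points. Since the boundary moves with normal velocity $\langle u,\nu_t\rangle$, I first show that $t\mapsto r_t$ inherits quantitative H\"older regularity from the regularity of $u$. Using the $L^\infty(0,T;H^2(\Omega_t))$ control provided by \eqref{est:H2control} together with the $L^2(0,T;H^3(\Omega_t))$ dissipation appearing in \eqref{1stvar_higherwillmore:estB}, and the Sobolev embedding $H^3 \hookrightarrow W^{1,\infty}$ valid in two dimensions, I interpolate to obtain $\int_s^t \|u(\tau,\cdot)\|_{W^{1,\infty}(\Omega_\tau)}\,d\tau \leq C |t-s|^{1/3}$, with $C$ fully explicit in terms of the two bounds. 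Transferring this to a displacement and tilt estimate for $\partial\Omega_t$ and tracking the effect on the minimal radius of inner/outer tangent balls yields the asserted $1/3$-H\"older control of $r_t$.

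\textbf{Step 2 (Bootstrap).} Define
\begin{equation*}
T^* := \sup\Bigl\{\,T>0 \,:\, r_t \geq \tfrac{r_0}{2}\text{ and }\|\partial_{\tau_t}\curv_t\|_{L^2(\partial\Omega_t)}^2 \leq 2\|\partial_{\tau_0}\curv_0\|_{L^2(\partial\Omega_0)}^2 +1\ \ \forall\,t\in[0,T]\Bigr\}.
\end{equation*}
By the assumed $C^3$-regularity and the continuity from Step 1, $T^*>0$. On $[0,T^*)$ every quantity appearing on the right hand side of \eqref{1stvar_higherwillmore:estB} is controlled: the perimeter by the gradient-flow structure (hence by $P_0$), $r_t^{-1}\leq 2/r_0$, and $\|u\|_{H^2(\Omega_t)}$ by \eqref{est:H2control}. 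Consequently \eqref{1stvar_higherwillmore:estB} reduces to a Gr\"onwall-type inequality of the form
\begin{equation*}
\frac{\d}{\d t}\|\partial_{\tau_t}\curv_t\|_{L^2(\partial\Omega_t)}^2 + \frac{c\vartheta}{r_0^{\alpha}}\,\|\nabla^3 u\|_{L^2(\Omega_t)}^2 \leq \frac{C\bigl(P_0,\|\partial_{\tau_0}\curv_0\|_{L^2}\bigr)}{r_0^{\beta}}\Bigl(1+\|\partial_{\tau_t}\curv_t\|_{L^2(\partial\Omega_t)}^2\Bigr),
\end{equation*}
which, upon integration, yields the explicit growth \eqref{growth2} for the curvature energy and an $L^2_t H^3_x$-bound on $u$ (this is Lemma \ref{lem:normcontrol}).

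\textbf{Step 3 (Closing the loop).} Feeding the $L^\infty_tH^2_x\cap L^2_tH^3_x$ control of $u$ just obtained back into Step 1 produces the claimed estimate
\begin{equation*}
|r_t-r_s| \leq \frac{C(P_0,\|\partial_{\tau_0}\curv_0\|_{L^2(\partial\Omega_0)})}{r_0^{8}}\left(\frac{|t-s|}{\vartheta}\right)^{1/3}.
\end{equation*}
Now I choose $T^*$ so small (depending only on $r_0,P_0,\vartheta,\|\partial_{\tau_0}\curv_0\|_{L^2}$) that the right hand side evaluated at $|t-s|=T^*$ is at most $r_0/4$, and so that the growth formula \eqref{growth2} keeps $\|\partial_{\tau_t}\curv_t\|_{L^2}^2$ strictly below $2\|\partial_{\tau_0}\curv_0\|_{L^2}^2+1$ on $[0,T^*]$. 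Both bootstrap inequalities are thereby strictly improved, so $T^*$ can be extended until the threshold is reached, giving the quantitative lower bound on the lifespan (matching Theorem \ref{thm:UBC} and Corollary \ref{cor:time_lowerbound}).

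\textbf{Main obstacle.} The delicate point is Step 1: turning the Eulerian regularity of the fluid velocity into a \emph{quantitative} modulus of continuity for a \emph{geometric, non-local} quantity like the UBC radius, whose failure can be triggered by two boundary arcs that are intrinsically far but extrinsically close. This is precisely where the coordinate-free nature of the framework is essential, as any chart-based argument would already presuppose a lower bound on $r_t$. Once Step 1 is in place, Steps 2--3 are Gr\"onwall manipulations of the already-derived estimates.
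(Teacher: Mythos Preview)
Your bootstrap skeleton (Steps 2--3) is sound and matches the paper's logic, with one correction: \eqref{1stvar_higherwillmore:estB} carries a term $\vartheta C r_t^2\|\nabla u\|_{L^2}^2\bigl(\|\partial_{\tau_t}\kappa_t\|_{L^2}^2\bigr)^2$, so the differential inequality for the curvature energy is Riccati-type rather than linear in the energy. The paper integrates it via $\arctan$ (see \eqref{growth1}--\eqref{growth2} and the constant $K_0$ in \eqref{K0}); your device of linearising inside the bootstrap interval would also work, but you should acknowledge the nonlinearity.

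Step~1 has a genuine gap, and you correctly flag it as the main obstacle. A bound on $\int_s^t\|u\|_{W^{1,\infty}}$ only gives ``displacement and tilt'', i.e.\ a $C^1$ modulus for the moving boundary. But the UBC radius is a $C^{1,1}$ quantity: by Proposition~\ref{prop:UBC}, in the curvature-limited alternative one has $r_t=1/\|\kappa_t\|_{L^\infty}$, and the pointwise evolution of $\kappa_t$ involves $\nabla^2 u$ restricted to $\partial\Omega_t$, which $W^{1,\infty}$ does not control. So ``tracking the effect on the minimal radius of inner/outer tangent balls'' from displacement and tilt alone cannot succeed: a small $C^1$ perturbation of the boundary can create large curvature. (Your interpolation exponent $1/3$ is also not the one that falls out of the stated ingredients; Cauchy--Schwarz with $L^2_tH^3$ gives $|t-s|^{1/2}$.)

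The paper's route here is different, and this is where the real content lies. Instead of controlling $\kappa$ pointwise in time, it uses a Campanato-type averaging. For a bump $\phi$ supported on a boundary arc of length $\rho$, Proposition~\ref{prop:kappaCampanato} bounds the spatial oscillation of $\kappa$ by $C\rho^{1/2}\|\partial_\tau\kappa\|_{L^2}$, while $\tfrac{\d}{\d h}\int_{\partial\Omega_h}\phi\,\kappa_h$ is bounded using only the $L^\infty_tH^2_x$ control of $u$, after an integration by parts that trades a derivative on $\kappa_h$ for one on $\phi$ (Lemma~\ref{lem:curvtime}). This yields
\[
|\kappa_t(x)-\kappa_s(y)|\ \lesssim\ \rho^{1/2}+\frac{|t-s|}{\vartheta\,\rho},
\]
and optimising at $\rho\sim(|t-s|/\vartheta)^{2/3}$ produces the $1/3$-H\"older exponent. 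A parallel argument (Lemma~\ref{lem:normaltime}) handles $\nu_t$ for the ``neck'' alternative of Proposition~\ref{prop:UBC}, and the two are combined in Theorem~\ref{thm:UBC}. The exponent $1/3$ is thus not an interpolation artefact of $u$ but comes from balancing spatial regularity of $\kappa$ (via $\|\partial_\tau\kappa\|_{L^2}$) against time regularity of its local averages.
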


We expect that similar estimates can also be obtained for further terms of even higher order. These might even form a step towards a similar estimate in higher dimensions. However the number of geometric correction terms occuring in the calculation seems to increase exponentially with the order, which is why we have restricted ourselves to the case at hand for now.

Due to the high sensitivity of the setting it is natural to consider weaker concepts of solutions which allow for geometric degeneracies.
Most progress in that direction has been achieved for two-fluid flows that are separated by surface tension. Such flows are naturally approximated by so-called smooth interfaces, where a thin layer of mixture is representing the phase transition. The separation is approximated using either Allen-Cahn or Cahn-Hillard equations. Here weak solutions a la Leray are known to exist in three dimensions and strong solutions in two dimensions~\cite{abels2009existence,abels2012thermodynamically}.
By asymptotic expansion they can be related to sharp interface limits. Recently it was shown that assuming a smooth solution for the sharp interface exists the smooth interface converges to this solution~\cite{abels2018sharp,abels2022non,hensel2023sharp,abels2023approximation}. Moreover, in some cases passing to the limit with the smooth interface allows to construct so-called varifold solutions of the sharp interface~\cite{abels2014sharp}. These are known to coincide with the unique classical solutions, if the latter exists~\cite{fischer2020weak}.  Finally it is very classical to consider the Euler flow with free surface. This is in some sense the opposite point of view to the present article. Here we refer to the recent works~\cite{julin2024priori,baldi2024liquid}, which are about respective energy estimates and regularity questions.

The paper is structured as follows. A preliminary section where the notation and some analytic results are introduced follows a section where the concept of solution is introduced as well as an instructive example. Section 4 and 5 are dedicated for the derivation of the $L^2$ and $H^1$ curvature estimates, respectively. In Section 6 these estimates are used to conclude the proof of Theorem~\ref{thm:main}.

\subsection*{Acknowledgment}
M.~K., J.~N.\ and S.~S.~are supported by the ERC-CZ Grant CONTACT LL2105 funded
by the Ministry of Education, Youth and Sport of the Czech Republic and the University Centre
UNCE/SCI/023 of Charles University. M.~K.\ is additionally funded by the Czech Science Foundation (GA\v{C}R) under grant No.\ 23-04766S. S.~S.~also acknowledges the support of the VR Grant
2022-03862 of the Swedish Research Council and the supported by Charles University Research Centre program No.\ UNCE/24/SCI/005. S.~S.~is a member of the Necas center for mathematical modeling. The authors wish to thank Vesa Julin for the many valuable discussions on the subject.

%%%%%%%%%%%%%%%%%%%%%%%%%%%%%%%%%%%%%%%%%%%%%%%%%%%%%%%%%%%%%%%%%%%%%%%%%%%%%%%%%%%%%%%%%%%%%
%%%%%%%%%%%%%%%%%%%%%%%%%%%%%%%%%%%%%%%%%%%%%%%%%%%%%%%%%%%%%%%%%%%%%%%%%%%%%%%%%%%%%%%%%%%%%

\section{Preliminaries}
\label{sec:prelim}

%%%%%%%%%%%%%%%%%%%%%%%%%%%%%%%%%%%%%%%%%%%%%%%%%%%%%%%%%%%%%%%%%%%%%%%%%%%%%%%%%%%%%%%%%%%%%

\subsection{Notation}
We denote a ball (or a disk) of radius $r \in \R_+$ centered at $x \in \R^2$ by $B_r(x)$. If $x=0$, then we simply write $B_r$. Again $\langle \freearg,\freearg \rangle$ is reserved for the scalar product.  
The notation $\spt f$ denotes the support of given a function $f$.
For a scalar function $\phi$ we use interchangeably  the notation $\nabla \phi$
for the differential and gradient. The symmetric gradient operator is given by $\nablasym = (\nabla +\nabla^\T)/2$. The notation $\otimes$ stands for the generic outer product between given $\omega \in V$, $V$ a finite dimensional real vector space , and $x \in \R^2$, that is, $\omega \otimes x$ defines 
a linear map $\R^2 \rightarrow V$ given by the rule $[\omega \otimes x]y= \langle x,y \rangle \omega$.
Using this convention we have for every vector field $X \in C^k(U; \R^2)$ and every orthonormal frame $\{v,w\}$ of $\R^2$
\[
\nabla^k X = \pa_v \nabla^{k-1} X  \otimes v + \pa_w \nabla^{k-1} X  \otimes w.
\]
If $M$ is sufficiently regular $2 \times 2$ matrix field, we denote $\div M = [\pa_v M] v + [\pa_w M] w$.

For given subsets $A,A' \subset \R^2$ we denote the usual Hausdorff distance between them by $d_\H(A,A')$.
For any measurable subset $A \subset \R^2$ we denote its area (volume) by $|A|$.
If $|A|<\infty$ and $T$ is an  integrable tensor field in $A$, then
$T_A$ denotes the integral average of $T$ over $A$. Unless otherwise stated, $\Omega$ stands for a non-empty \textbf{bounded} domain in $\R^2$, that is, $\Omega \neq \varnothing$ is a bounded, open and connected subset of $\R^2$. If $\Omega$ has a finite
perimeter we denote it by $P(\Omega)$. For domains of the regularity we consider we have $P(\Omega)=\H^1(\pa \Omega)$.

Finally, $c$ and $C$ are reserved for generic positive constants for lower and upper estimates respectively.
These constants may change their values even in the same estimate chain.
If there is a dependency on a domain $\Omega$ or a relevant quantity $q$, we shall denote
$C=C(\Omega)$, $C=C(q)$, $C=C(\Omega,q)$ and so forth. Otherwise, we say that $c$ or $C$ is independent which can be read as ``dimensional'' albeit we are strictly restricted to $\R^2$.

%%%%%%%%%%%%%%%%%%%%%%%%%%%%%%%%%%%%%%%%%%%%%%%%%%%%%%%%%%%%%%%%%%%%%%%%%%%%%%%%%%%%%%%%%%%%%

\subsection{Regular domains and tangential differentiation}

We say that a bounded domain $\Omega \subset \R^2$ is $C^k$-regular for $k \in \N$ 
provided that we can write it locally as a subgraph of a $C^k$-regular function by possibly rotating the coordinates.
In particular $\pa \Omega$ is an embedded $C^k$-regular (not necessarily connected) manifold. As all $1$-dimensional manifolds are curves, we can always find a local parametrization around $x_0 \in \pa \Omega$ in form of a map $\gamma \in C^k (I ;\pa \Omega)$ from an open interval $I\subset \R$ such that $\gamma' \neq 0$, $\gamma(I)$ is open in $\pa \Omega$ and $\gamma : I \rightarrow \gamma(I)$ is a homeomorphism. Other classes such as $C^{k,\alpha}$-regular domains are defined in similar fashion.

We denote the outer unit normal field of $\pa \Omega$ by $\n:\partial \Omega \to \R^2$ and the unit tangent vector $\tau : \pa \Omega \to \R^2$ such that $\{\n,\tau\}$ forms a right-handed coordinate frame, i.e.\ $\tau = (-\n_2,\n_1)$
in the standard coordinates. Thus $\pa \Omega$ is oriented counter-clockwise and $\n, \tau \in C^{k-1}(\pa \Omega;\R^2)$ form an orthonormal basis of $\R^2$ at every point $x \in \pa \Omega$.

We define the tangential derivative for $f \in C^{l}(\pa \Omega; \R^m)$, $1 \leq l \leq k$, 
by setting $\pa_\tau f = [\nabla \tilde f ] \tau$ where $\tilde f$ is any (local) $ C^{l}$-extension of $f$. 
Then $\pa_{\tau}$ defines a linear map $C^{l}(\pa \Omega; \R^m) \rightarrow C^{l-1}(\pa \Omega; \R^m)$.
Again, we define recursively $\pa_\tau^l = \pa_\tau \pa_\tau^{l-1}$  for every $l=1,\ldots,k$. The tangential divergence of $X \in C^1(\pa \Omega;\R^2)$ is given by  $\div_\tau X := \langle \pa_\tau X, \tau \rangle$.
It should be noted while tangential derivative is tied to the choice of orientation, the tangential divergence is an orientation free expression.
We also set $\pa_\n f = [\nabla f] \nu$ for every $f \in C^1(\overline \Omega;\R^m)$. Although this expression
heavily depends on how $f$ is defined in the vicinity of $\pa \Omega$ it is useful once we extend $\nu$ and $\tau$
into a tubular neighborhood in a consistent manner provided that $\Omega$ has sufficient regularity.

If $k \geq 2$, the (scalar) curvature $\curv$ of $\pa \Omega$ can be defined as
\[
\curv = \div_\tau \n.
\]
With the chosen orientation
$\curv$ is always non-negative when $\Omega$ is convex. We recall the fundamental Weingarten identities
\begin{equation}
\label{weingarten}
\pa_\tau \n = \curv \, \tau \qquad \text{and} \qquad \pa_\tau \tau = - \curv \, \n.
\end{equation}

Combining Stokes' theorem with the second indentity results in the classical surface divergence theorem
which states
\begin{equation}
\label{eq:surfacedivthm} 
\int_{\pa \Omega} \div_\tau  X \d \H^1 = \int_{\pa \Omega} \curv \langle X , \n \rangle \d \H^1 
\end{equation}
for every $X \in C^1(\pa \Omega;\R^2)$. This identity can be seen as a distributional definition of curvature.

We will also make use of the two-dimensional Reilly's formula \cite{Reilly}  which in 
this context reduces into
\begin{align}
\label{vReilly2D}
\begin{split}
&\int_\Omega |\nabla^2 X|^2 - |\Delta X|^2 \d x \\
&=- \int_{\pa \Omega}  2\langle \pa_\n X, \pa_\tau^2 X\rangle + \curv |\nabla X|^2 \d \H^1.
\end{split}
\end{align}
for every $X \in C^2(\overline \Omega;\R^2)$. For the reader's convenience we derive this identity in the Appendix,
see \eqref{ReillyComp}.

In the case of time-dependent domain $\Omega_t$ we use notations $\n_t$, $\tau_t$, $\curv_t$ and so forth
just to avoid confusion.

%%%%%%%%%%%%%%%%%%%%%%%%%%%%%%%%%%%%%%%%%%%%%%%%%%%%%%%%%%%%%%%%%%%%%%%%%%%%%%%%%%%%%%%%%%%%%

\subsection{Uniform ball condition, signed distance and tubular neighborhoods} \label{sec:UBC}

\begin{defi}[Uniform ball condition]
\label{def:UBC}
We say that a domain $\Omega \subset \R^2$ satisfies the uniform ball condition (UBC) with radius $r \in \R_+$,
if for every $x \in \pa \Omega$ we find open balls $B_r(x_-) \subset \Omega$ and $B_r(x_+) \subset \R^2 \setminus \Omega$
such that $\pa B_r(x_-) \cap \pa B_r(x_+)=\{x\}$. 
\end{defi}
Note that the definition generalizes into the $n$-dimensional case and $\Omega$ can be replaced with any non-empty set. Study of such sets can be traced back to Federer \cite{FedererCurvature}.
We also mention \cite{Fu} as a general framwork.

In the context of bounded domains the uniform ball condition equals to $C^{1,1}$-regularity.
Indeed, the UBC with $r$ for $\Omega$ implies $C^{1,1}$-regularity with explicit bounds depending only on $r$. For a quick proof see e.g. \cite[Prop 2.7]{JN2023}.
Conversely, if $\Omega$ is $C^{1,1}$-regular, it satisfies the UBC with some radius $r \in \R_+$.  
Then $x_- = x - r \nu (x)$, $x_+ = x + r \nu (x)$ in Definition \ref{def:UBC} and it holds  
\begin{equation}
\label{est:Lipnu}
\left[\nu \right]_{C^{0,1}(\pa \Omega)} \leq 1/r \quad  \text{w.r.t.\ the Euclidean metric}.
\end{equation}

The \emph{maximal UBC radius} for a $C^{1,1}$-regular domain $\Omega$ is defined as
\begin{equation}
r_\Omega = \sup \{r \in \R_+ : \Omega \text{ satisfies UBC with } r\}.
\end{equation}
Since $\Omega$ is bounded, the number $r_\Omega$ is finite 
and can be estimated 
as follows
\beq
\label{est:rOmega}
r_\Omega \leq \diam(\Omega) \leq \P(\Omega) \qquad \text{and} \qquad 
r_\Omega \leq \sqrt{|\Omega|/\pi}.
\eeq  

We are mainly interested in the classical case that $\Omega$ is $C^k$-regular with $k \geq 2$. In this case we infer $|\curv| \leq 1/r_\Omega$  from \eqref{est:Lipnu}. In fact the only other bound on $r_\Omega$ is given by lack of separation of (intrinsically) distant parts of the boundary, which allows us to conclude the following:

\begin{prop}\label{prop:UBC} If $\Omega \subset \R^2$ is a $C^2$-regular domain, at least one of the following conditions holds true.
\begin{itemize}
\item[(i)] $\|\curv\|_{L^\infty(\pa \Omega)}= 1/r_\Omega$
\item[(ii)] There are $x,y \in \pa \Omega$ such that $y=x\pm2r_\Omega \nu (x)$ and $\n(x)=-\n(y)$.
\end{itemize}
\end{prop}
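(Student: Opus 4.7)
The plan follows the classical reach-type analysis adapted to the two-sided UBC. As a preliminary step, I would verify that UBC is in fact attained at $r = r_\Omega$ itself: for each $x \in \partial\Omega$ the open balls $B_r(x \pm r\nu(x))$ form an increasing family in $r$, and each one with $r < r_\Omega$ is contained in $\Omega$ (resp.\ $\R^2 \setminus \Omega$) by definition of $r_\Omega$, so the union over $r < r_\Omega$ is as well, yielding the corresponding inclusion at $r = r_\Omega$. Arguing by contradiction, suppose neither (i) nor (ii) holds; in particular $\|\kappa\|_{L^\infty(\partial\Omega)} < 1/r_\Omega$ strictly.

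Introduce the one-sided maximal radii
\[
\phi^-(x) := \sup\{\, r : B_r(x - r\nu(x)) \subset \Omega\}, \qquad \phi^+(x) := \sup\{\, r : B_r(x + r\nu(x)) \subset \R^2 \setminus \Omega\}.
\]
Both are lower semicontinuous and $r_\Omega = \min(\inf\phi^-, \inf\phi^+)$ is attained by compactness; say at $x_0 \in \partial\Omega$ with $\phi^+(x_0) = r_\Omega$ (the other case is symmetric). A second-order Taylor expansion of $\partial\Omega$ at $x_0$ shows that the outer ball of radius $r$ fits \emph{locally} inside $\R^2 \setminus \Omega$ precisely when $\kappa(x_0) \geq -1/r$. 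Since $|\kappa(x_0)| \leq \|\kappa\|_\infty < 1/r_\Omega$, a slightly larger ball still fits locally, so the obstruction to $\phi^+(x_0) > r_\Omega$ must come from a \emph{distant} contact point $z_0 \in \partial\Omega \setminus \{x_0\}$ lying on $\partial B_{r_\Omega}(c_0)$, where $c_0 := x_0 + r_\Omega\nu(x_0)$. Tangency from outside at $z_0$ forces $\nu(z_0) = (c_0 - z_0)/r_\Omega$, which rearranges into the contact identity $z_0 - x_0 = r_\Omega(\nu(x_0) - \nu(z_0))$.

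The heart of the argument, and the step I expect to be the main obstacle, is to upgrade this to the antipodal relation $\nu(z_0) = -\nu(x_0)$, which combined with the contact identity yields $z_0 = x_0 + 2r_\Omega\nu(x_0)$ and thus (ii). I would parameterise the tangent point as a function $z(x)$ of $x \in \partial\Omega$ near $x_0$ via the implicit function theorem (its hypotheses being secured by $C^2$-regularity together with the strict bound $|\kappa| < 1/r_\Omega$), so that $z(x) - x = \phi^+(x)\bigl(\nu(x) - \nu(z(x))\bigr)$ holds in a neighbourhood. Differentiating this identity with respect to arclength at $x_0$, combining the Weingarten relation $\partial_\tau \nu = \kappa\tau$ with the first-order minimality $(d\phi^+/ds)(x_0) = 0$, one arrives at a tangential identity of the form
\[
\lambda\,(1 + r_\Omega \kappa(z_0))\, \tau(z_0) = (1 + r_\Omega \kappa(x_0))\, \tau(x_0),
\]
where $\lambda$ is the tangential speed of $z(\cdot)$ at $x_0$. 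The hypothesis $|\kappa| < 1/r_\Omega$ makes both factors $(1 + r_\Omega \kappa)$ strictly positive, so necessarily $\tau(x_0) = \pm\tau(z_0)$ and hence $\nu(x_0) = \pm\nu(z_0)$. Substituted back into the contact identity, the $+$ sign gives $z_0 = x_0$ and is excluded; so $\nu(z_0) = -\nu(x_0)$, producing (ii) and the desired contradiction.
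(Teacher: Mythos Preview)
The paper does not actually prove this proposition; it is stated in Section~\ref{sec:UBC} as a background fact about the uniform ball condition (with a reference to Figure~\ref{fig:uniformBall} for geometric intuition) and then invoked later in the proof of Theorem~\ref{thm:UBC}. So there is no paper proof to compare against directly.

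Your argument is essentially correct and follows the natural reach--type analysis. The main computation---differentiating the contact identity and using minimality to obtain
\[
\lambda\,(1 + r_\Omega \kappa(z_0))\,\tau(z_0) = (1 + r_\Omega \kappa(x_0))\,\tau(x_0),
\]
then invoking the strict bound $|\kappa| < 1/r_\Omega$ to force $\tau(z_0) = \pm\tau(x_0)$---is clean and correct; note also that $\lambda = 0$ is excluded since the right-hand side is nonzero. One technical point deserves care: you write the identity with $\phi^+(x)$ and then set $(\phi^+)'(x_0) = 0$, but $\phi^+$ itself need not be differentiable if the contact point is non-unique. The fix is routine: the implicit function theorem (applicable by the non-degenerate contact $\kappa(z_0) > -1/r_\Omega$) produces a \emph{smooth branch} $(z(x), r(x))$ with $r(x_0) = r_\Omega$, and since growing the ball beyond $r(x)$ enters $\Omega$ near $z(x)$ one has $r(x) \geq \phi^+(x) \geq r_\Omega = r(x_0)$, hence $r'(x_0) = 0$. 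Running your differentiation with $r$ in place of $\phi^+$ closes the argument.
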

See Figure \ref{fig:uniformBall} for an illustration. Note that for the validity of Proposition \ref{prop:UBC} it is 
necessary to require $\Omega$ to be bounded.
 The geometric interpretation of the case  $r_\Omega \ll 1/\|\curv\|_{L^\infty(\pa \Omega)}$ is that $\Omega$ has a relatively thin neck or slit compared to turning radius of the boundary. 

The uniform ball condition for $\Omega$ can be equivalently seen as regularity of distance to $\Omega$.
We recall that the \emph{signed distance function} $d_\Omega:\R^2 \rightarrow \R$ of a given domain $\Omega$ is given by the rule
\[
d_\Omega (x) =
\begin{cases}
\dist(x,\Omega), &\text{if $x$ in $\R^2 \setminus \Omega$} \\
-\dist(x,\Omega), &\text{if $x$ in $\Omega$}.
\end{cases}
\] 
Then one may show that $\Omega$ satisfies UBC with a radius $r$ exactly when $d_\Omega$ is differentiable
in the \emph{tubular neighborhood}
\[
\mathcal N_r(\pa \Omega) = \{ x \in \R^2 : \dist(x,\pa \Omega)<r \}.
\]
Again, this is equivalent to say that the least distance projection $\pi_{\pa \Omega}$ is well-defined from $\mathcal N_r(\pa \Omega)$ onto $\pa \Omega$.  In this case, the map $\pa \Omega \times (-r,r) \rightarrow \mathcal N_r(\pa \Omega)$, $(x,\rho) \mapsto x +\rho \, \nu (x)$,
is a bijection with the inverse $z \mapsto \left(\pi_{\pa \Omega} (z), d_\Omega(z)\right)$ which, in turn, results in the identities 
\begin{equation}
\label{eq:sgndistance1}
\id = \pi_{\pa \Omega} + d_\Omega  \nabla d_\Omega \qquad \text{and} \qquad \nabla d_\Omega = \nu \circ \pi_{\pa \Omega}  \qquad \text{in} \ \ \mathcal N_r(\pa \Omega).
\end{equation}
We also remark that each sublevel set $\{d_\Omega < \rho\}$, $\rho \in (-r,r)$, satisfies UBC with a radius of at least $r - |\rho|$. 

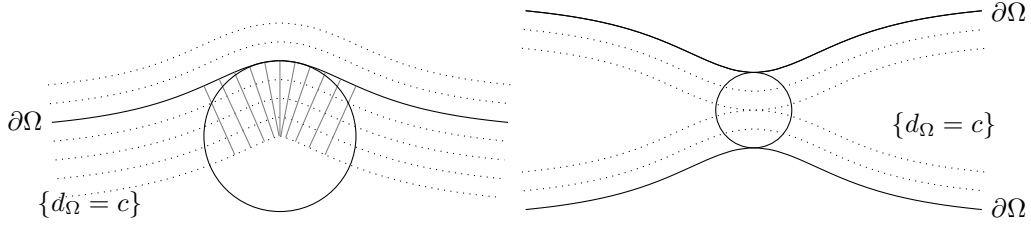
\begin{figure}[ht]
\begin{center}
 \begin{tikzpicture}
  %f(x) = 1/(1+.5 x^2)
  %f'(x) = x/(1+.5 x^2)^2
  %n(x) = (f',-1) / | 1+f'^2|
  
  %function and its parallels
  \draw plot[smooth,domain=-3:3] (\x,{1/(1+.5*\x*\x)});
  \draw[dotted] plot[smooth,domain=-3:3] ({\x-.25*\x/sqrt((1+.5*\x*\x)^4+\x*\x)  },{1/(1+.5*\x*\x)-.25/sqrt(1+\x*\x/(1+.5*\x*\x)^4) });
  \draw[dotted] plot[smooth,domain=-3:3] ({\x-.5*\x/sqrt((1+.5*\x*\x)^4+\x*\x)  },{1/(1+.5*\x*\x)-.5/sqrt(1+\x*\x/(1+.5*\x*\x)^4) });
  \draw[dotted] plot[smooth,domain=-3:3] ({\x-.75*\x/sqrt((1+.5*\x*\x)^4+\x*\x)  },{1/(1+.5*\x*\x)-.75/sqrt(1+\x*\x/(1+.5*\x*\x)^4) });
  \draw[dotted] plot[smooth,domain=-3:3] ({\x-\x/sqrt((1+.5*\x*\x)^4+\x*\x)  },{1/(1+.5*\x*\x)-1/sqrt(1+\x*\x/(1+.5*\x*\x)^4) });
  
  \draw[dotted] plot[smooth,domain=-3:3] ({\x+.25*\x/sqrt((1+.5*\x*\x)^4+\x*\x)  },{1/(1+.5*\x*\x)+.25/sqrt(1+\x*\x/(1+.5*\x*\x)^4) });
  \draw[dotted] plot[smooth,domain=-3:3] ({\x+.5*\x/sqrt((1+.5*\x*\x)^4+\x*\x)  },{1/(1+.5*\x*\x)+.5/sqrt(1+\x*\x/(1+.5*\x*\x)^4) });
  
  %normal vectors
  \foreach \x in {-1,-.8,...,1} {
  \draw[gray] (\x,{1/(1+.5*\x*\x)}) -- ({\x-\x/sqrt((1+.5*\x*\x)^4+\x*\x)  },{1/(1+.5*\x*\x)-1/sqrt(1+\x*\x/(1+.5*\x*\x)^4) });
}
  
  %f''(0) = 1/R = 1
  \draw (0,0) circle (1);
  
  \node[left] at (-3,.2) {$\partial \Omega$};
  \node[below] at (-2.5,-.6) {$\{d_\Omega = c\}$};
 \end{tikzpicture}
 \begin{tikzpicture}
  %functions and their parallels
  \draw plot[smooth,domain=-3:3] (\x,{1/(1+.5*\x*\x)});
  \draw plot[smooth,domain=-3:3] (\x,{3-1/(1+.5*\x*\x)});
  \draw[dotted] plot[smooth,domain=-3:3] ({\x+.25*\x/sqrt((1+.5*\x*\x)^4+\x*\x)  },{1/(1+.5*\x*\x)+.25/sqrt(1+\x*\x/(1+.5*\x*\x)^4) });
  \draw[dotted] plot[smooth,domain=-3:3] ({\x+.5*\x/sqrt((1+.5*\x*\x)^4+\x*\x)  },{1/(1+.5*\x*\x)+.5/sqrt(1+\x*\x/(1+.5*\x*\x)^4) });
  \draw plot[smooth,domain=-3:3] (\x,{3-1/(1+.5*\x*\x)});
  \draw[dotted] plot[smooth,domain=-3:3] ({\x+.25*\x/sqrt((1+.5*\x*\x)^4+\x*\x)  },{3-1/(1+.5*\x*\x)-.25/sqrt(1+\x*\x/(1+.5*\x*\x)^4) });
  \draw[dotted] plot[smooth,domain=-3:3] ({\x+.5*\x/sqrt((1+.5*\x*\x)^4+\x*\x)  },{3-1/(1+.5*\x*\x)-.5/sqrt(1+\x*\x/(1+.5*\x*\x)^4) });

  \draw (0,1.5) circle (.5);

  \node[right] at (3,2.8) {$\partial \Omega$};
  \node[right] at (3,.2) {$\partial \Omega$};
  \node[above] at (2.5,1) {$\{d_\Omega = c\}$};
 \end{tikzpicture}
 \end{center}
 
 \caption{
 \label{fig:uniformBall} Relationship between uniform ball condition, regularity of level sets for the signed distance function and existence of normal tubular neighborhoods. Note how in the left picture, the degeneracy is entirely due to the curvature, while in the right picture it is instead a result of different parts of $\partial \Omega$ getting too close.}
\end{figure}

To make notation lighter we simply abbreviate $\mathcal N(\pa \Omega) := \mathcal N_{r_\Omega} (\pa \Omega )$. Thus $\mathcal N(\pa \Omega)$ is the largest tubular neighborhood in which $d_\Omega$ is differentiable. It can be shown that
that
$d_\Omega \in C^{1,1}_\loc(\mathcal N(\pa \Omega))$ and $\pi_{\pa\Omega} \in C^{0,1}_\loc(\mathcal N(\pa \Omega);\R^2)$, see \cite[Thm 4.8]{FedererCurvature}.

Within this neighborhood we can consistently define the \emph{extended normal} and \emph{extended tangent} vectors in $\mathcal N_r(\partial \Omega) \to \R^2$ as
the extension $\n \circ \pi_{\pa \Omega}$ and  $\tau \circ \pi_{\pa \Omega}$ respectively. By slightly abusing the notation we denote these extension by $\nu$ and $\tau$.
Thus  $\{\n,\tau\}$ forms an orthonormal, righthanded coordinate frame in $\mathcal N(\pa \Omega)$.  Note that while the directions of these vectors correspond to the partial derivatives of $(x,\rho) \mapsto x +\rho \, \nu (x)$, the latter will not have unit length in tangential direction. For a detailed discussion of this and some of the resulting identities, we refer to Appendix \ref{app:ExtNormal}.
Again, for more extensive study of the (signed) distance and related topics we refer to 
\cite{Bellettini}.

For many of the calculations, it is useful to have a cutoff-function that allows us to split $\Omega$ into a tubular neighborhood of the boundary and the rest of the bulk in a controlled manner.

\begin{defi}[Cutoff function] Let $\Omega$ be a UBC domain. We define a cutoff $\eta :\R^2 \to [0,1]$ supported in $\mathcal N(\pa \Omega)$ by setting
\begin{equation}
\label{eta}
\eta =  \phi \circ (d_\Omega/r_\Omega),
\end{equation}
where we choose a fixed symmetric bump function $\phi \in C^\infty_0(\R)$ s.t.  
\begin{itemize}
\item[(i)] $\phi$ is non-increasing in $(0,\infty)$ and
\item[(ii)] $\phi = 1$ in $(0,1/4)$  and $\phi=0$ in $(1/2,\infty)$.
\end{itemize} 
\end{defi}

\begin{lemma}[Properties of the cutoff] Let $\Omega$ and $\eta$ be as in the previous definition.
 The cutoff function $\eta$ satisfies the following properties in $\mathcal N(\pa \Omega)$:
\begin{equation}
\label{nablaeta}
\nabla \eta = \pa_{\n} \eta \,  \n,
\end{equation}
\begin{equation}
\label{diffeta1}
\pa_{\n}^k \eta = \frac{\phi^{(k)} \circ (d_\Omega/r_\Omega)}{r_\Omega^k} \qquad \text{and} \qquad  \pa_{\tau} \pa_{\n}^k \eta = 0 \qquad \text{for every $k=0,1,2, \ldots$}
\end{equation}
As well as
\begin{equation}
\label{diffeta2}
|\pa_{\n} \eta| \leq \frac{C}{r_\Omega} \qquad \text{and} \qquad |\pa_{\n}^2 \eta| \leq \frac{C}{r_\Omega^2}
\end{equation}
where $C$ is an independent constant.
\end{lemma}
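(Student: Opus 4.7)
The plan is to derive everything by chain-rule manipulations, resting on the single key identity $\nabla d_\Omega = \nu$ (the extended normal) in $\mathcal N(\partial\Omega)$, which is recalled from \eqref{eq:sgndistance1}. Since the extended frame $\{\nu,\tau\}$ is orthonormal at every point of the tubular neighborhood, the gradient of any radial function $f\circ d_\Omega$ is automatically parallel to $\nu$, and the three properties reduce to differentiating $\phi$.

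First I would prove \eqref{nablaeta}. Direct chain-rule application to $\eta = \phi\circ(d_\Omega/r_\Omega)$ gives
\[
\nabla \eta = \frac{1}{r_\Omega}\, \phi'\!\left(\frac{d_\Omega}{r_\Omega}\right)\nabla d_\Omega = \frac{1}{r_\Omega}\,\phi'\!\left(\frac{d_\Omega}{r_\Omega}\right)\nu.
\]
Taking the scalar product with $\nu$ and $\tau$ then yields $\partial_\nu \eta = \phi'(d_\Omega/r_\Omega)/r_\Omega$ and $\partial_\tau \eta = 0$, so $\nabla \eta = (\partial_\nu \eta)\,\nu$. This also establishes the $k=1$ case of both identities in \eqref{diffeta1}.

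For the higher derivatives in \eqref{diffeta1} I would argue by induction on $k$. Assuming $\partial_\nu^k \eta = \phi^{(k)}(d_\Omega/r_\Omega)/r_\Omega^k$, the right-hand side is again a function of $d_\Omega$ only. Exactly the same chain-rule computation as in the $k=1$ case then gives
\[
\nabla\bigl(\partial_\nu^k \eta\bigr) = \frac{1}{r_\Omega^{k+1}}\,\phi^{(k+1)}\!\left(\frac{d_\Omega}{r_\Omega}\right)\nu,
\]
so that $\partial_\nu^{k+1}\eta = \phi^{(k+1)}(d_\Omega/r_\Omega)/r_\Omega^{k+1}$ and $\partial_\tau \partial_\nu^k \eta = \langle \nabla(\partial_\nu^k \eta),\tau\rangle = 0$, closing the induction.

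Finally \eqref{diffeta2} is immediate from \eqref{diffeta1}: since $\phi$ is a fixed, compactly supported smooth bump function, $\|\phi'\|_{L^\infty}$ and $\|\phi''\|_{L^\infty}$ are absolute constants, and they give the stated bounds after division by the appropriate power of $r_\Omega$. The only subtlety worth mentioning is ensuring that the chain rule is applied within the region where $d_\Omega$ enjoys the required regularity; this is guaranteed because $\eta$ is supported in $\{|d_\Omega|\le r_\Omega/2\}\subset \mathcal N(\partial\Omega)$, where $d_\Omega \in C^{1,1}_{\loc}$ and the identity $\nabla d_\Omega = \nu$ holds by \eqref{eq:sgndistance1}, so all of the above derivatives are well-defined pointwise a.e.\ and continuous in the interior of the support.
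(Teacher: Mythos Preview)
Your proof is correct and follows essentially the same approach as the paper: the paper's proof simply says that differentiating $\eta = \phi\circ(d_\Omega/r_\Omega)$ via the chain rule (using $\nabla d_\Omega = \nu$) gives the first three identities, and then takes $C = \max\{\max|\phi'|,\max|\phi''|\}$ for \eqref{diffeta2}. Your write-up is just a more detailed version of this, spelling out the induction on $k$ and adding a remark on regularity that the paper omits.
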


\begin{proof}
Differentiating \eqref{eta} in $\mathcal N(\pa \Omega)$ gives us
the first three identities. The second identity immediately yields
\eqref{diffeta2}. Then $C=\max\{\max |\phi'|, \max |\phi''|\}$ depends only on the choice of $\phi$.
\end{proof}

Moreover, it follows from \eqref{eta} and (ii) that 
\[
\spt \eta \,  \subset \mathcal N_{r_\Omega/2}(\pa \Omega).
\]
If $\Omega$ is $C^2$-regular, then recalling the curvature bound $|\curv| \leq 1 /r_\Omega$ and \eqref{divn2D} we have
\begin{align}
\label{div n est}
|\div \, \n| \leq \frac2{r_\Omega} \qquad \text{in} \ \  \spt \eta.
\end{align}

\begin{rema}
\label{eta:rem}
By replacing $\eta$ with $\eta^2$ and $C$ with $2C + 2C^2$ we may assume that that $\eta^\frac12$  satisfies the same properties as $\eta$. 
\end{rema}

%%%%%%%%%%%%%%%%%%%%%%%%%%%%%%%%%%%%%%%%%%%%%%%%%%%%%%%%%%%%%%%%%%%%%%%%%%%%%%%%%%%%%%%%%%%%%

\subsection{Domain independent integral inequalities}
\label{subsec:domainIndep}

In this section we state a few integral inequalities which are well known in the case of a fixed domain. However, in those inequalities the constants are domain dependent. As we are dealing with families of changing domains and many of those constants will actually diverge for bad domains, e.g.\ when the 
uniform ball condition collapses, we need to state them explicitly in terms of 
maximal UBC radius and other geometric parameters. Although their proofs are not that different from the standard proofs, we need to quantify the constants explicitly and, hence, we will provide them in the appendix.

We start with the standard Poincaré inequality. 

\begin{lemma}[Poincaré inequality] \label{lem:Poincare}
 Let $\Omega \subset \R^2$ be a UBC domain. There exists an independent constant $C$ such that
 \begin{align}
\label{Poincare:est}
  \int_\Omega \abs{f - f_\Omega}^2 \d x \leq C \frac{|\Omega| P(\Omega)}{r_\Omega}
\int_\Omega \abs{\nabla f}^2 \d x
 \end{align}
 for every $f\in H^1(\Omega)$.
\end{lemma}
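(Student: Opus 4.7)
The plan is to prove the Poincar\'e inequality by a Vitali-type covering combined with a chaining argument whose length is quantified via the uniform ball condition. Set $g = f - f_\Omega$. I would first use the UBC to produce a maximal family of disjoint balls $B_i = B(x_i, r_\Omega/8) \subset \Omega$ such that the doubled balls $\tilde B_i = B(x_i, r_\Omega/4)$ have bounded overlap and cover $\Omega$; for centers $x_i$ within $r_\Omega$ of $\partial\Omega$ one replaces $x_i$ by the center of the inner UBC tangent ball so that $B_i \subset \Omega$ throughout. A volume count gives at most $C|\Omega|/r_\Omega^2$ balls.

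Standard Poincar\'e on each ball, combined with bounded overlap, yields
\[
\int_\Omega g^2 \,\d x \;\leq\; C r_\Omega^2 \int_\Omega |\nabla g|^2 \,\d x + C \sum_i |\tilde B_i|\,(g_{B_i})^2.
\]
To control the second sum, fix a base ball $B_0$ and join each $B_i$ to $B_0$ by a chain of consecutive overlapping balls from the covering of minimal length $L_i$. The standard telescoping estimate
\[
(g_B - g_{B'})^2 \;\leq\; C \int_{B \cup B'} |\nabla g|^2 \,\d x
\]
for overlapping equal-radius balls, combined with Cauchy--Schwarz along the chain, yields $(g_{B_i} - g_{B_0})^2 \leq C L_i \int_\Omega |\nabla g|^2 \,\d x$. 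Since $\sum_i |\tilde B_i| \leq C|\Omega|$ and $g_\Omega = 0$, an additional averaging step gives
\[
\int_\Omega g^2 \,\d x \;\leq\; C\bigl(r_\Omega^2 + L_{\max} |\Omega|\bigr) \int_\Omega |\nabla g|^2 \,\d x.
\]

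The main obstacle is the purely geometric bound $L_{\max} \leq C P(\Omega)/r_\Omega$. To obtain it I would use the inner parallel curve $\{d_\Omega = -r_\Omega/2\}$: under the UBC the curvature of inner parallels stays $\leq 2/r_\Omega$ (as in Section~\ref{sec:UBC}), so the tube formula bounds its total $\mathcal H^1$-length by $P(\Omega)$, and every point of $\Omega$ lies at most $r_\Omega$ from this curve along a normal segment provided by the tubular neighborhood. Consequently any two points of $\Omega$ are joined by a path in $\Omega$ of length $\lesssim P(\Omega)$, and discretising it into steps of size $r_\Omega/8$ yields a chain of at most $C P(\Omega)/r_\Omega$ balls. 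Plugging $L_{\max} \leq C P(\Omega)/r_\Omega$ into the previous display and absorbing $r_\Omega^2 \leq |\Omega| P(\Omega)/r_\Omega$ (using $r_\Omega \leq P(\Omega)$ and $r_\Omega^2 \leq |\Omega|/\pi$ from \eqref{est:rOmega}) finishes the proof. The multiply connected case requires handling several components of the inner parallel separately, but the UBC guarantees they remain intrinsically close at controlled cost.
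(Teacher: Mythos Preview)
Your strategy---local Poincar\'e at scale $r_\Omega$ plus a chaining argument with chain length $\lesssim P(\Omega)/r_\Omega$---is exactly the paper's approach (see Lemma~\ref{lem:localestimates}, Proposition~\ref{prop:connectivity}, and Lemma~\ref{lem:averageEstimates} in the appendix). Your sketch of the chain-length bound via the inner parallel curve is in fact a reasonable outline of what the paper states as Proposition~\ref{prop:connectivity} without proof.

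There is, however, one step that does not work as written. You want the small balls $B_i=B(x_i,r_\Omega/8)$ to lie in $\Omega$ \emph{and} the doubled balls $\tilde B_i=B(x_i,r_\Omega/4)$ to cover $\Omega$. If you push centers near the boundary to the center of the inner UBC ball (at distance $r_\Omega$ from $\partial\Omega$), then $\tilde B_i$ misses the strip $\{d_\Omega>-3r_\Omega/4\}$; if instead you allow centers within $r_\Omega/4$ of $\partial\Omega$, then $\tilde B_i\not\subset\Omega$ and ``standard Poincar\'e on each ball'' is unavailable. Either way you need a local Poincar\'e inequality on the boundary pieces $\Omega\cap B_{r_\Omega/4}(y)$ with a constant depending only on $r_\Omega$. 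This is precisely what the paper isolates as \eqref{est:local1}: it shows $\Omega\cap B_{r_\Omega/4}(y)$ is star-shaped with respect to a ball of independent radius (and satisfies a uniform cone condition), and then invokes a uniform Poincar\'e result. Once you insert that lemma, your argument goes through; the ``additional averaging step'' to eliminate $g_{B_0}$ can be replaced by the cleaner trick $\int_\Omega|f-f_\Omega|^2\le\int_\Omega|f-f_{B_0}|^2$, which removes the base term outright.
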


It should be noted that the scaling in terms of $r_\Omega$ is optimal in the previous estimate. This can be seen true by considering a suitable 
dumbbell-shaped domain. We also need the following basic interpolation inequalities.

\begin{lemma}[Gagliardo-Nirenberg]
\label{lem:interpolation}
 Let $\Omega \subset \R^2$ be a UBC domain. There exists an independent constant $C$
such that
\beq
\label{interpolation:L2}
\norm{\nabla f}_{L^2(\Omega)}\leq C\left(\norm{\nabla^2 f}_{L^2(\Omega)}^\frac12\norm{f}_{L^2(\Omega)}^\frac12+
\frac{\norm{f}_{L^2(\Omega)}}{r_\Omega}\right)
\eeq
and
\beq
\label{interpolation:Linfty}
\| f\|_{L^\infty(\Omega)} \leq C\left( \|\nabla^2 f\|_{L^2(\Omega)}^\frac12 \|f\|_{L^2(\Omega)}^\frac12 + \frac{\|f\|_{L^2(\Omega)}}{r_\Omega} \right)
\eeq
for every $f \in H^2(\Omega)$.
\end{lemma}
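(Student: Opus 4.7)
The plan is to reduce both estimates to the standard Gagliardo-Nirenberg inequalities on $\R^2$ via a quantitatively controlled $H^2$-extension of $f$. The essential geometric input is the bound $\|\nabla^2 d_\Omega\|_{L^\infty(\mathcal{N}_{r_\Omega/2}(\pa\Omega))} \leq C/r_\Omega$, which follows because each sublevel set $\{d_\Omega < \rho\}$ with $|\rho| < r_\Omega/2$ itself satisfies UBC with radius at least $r_\Omega/2$, so that the curvature of each such level set (which coincides with the tangential eigenvalue of $\nabla^2 d_\Omega$) is uniformly bounded by $2/r_\Omega$.

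The extension is built by a higher-order reflection across $\pa\Omega$ using the signed distance. Define
\begin{equation*}
R_1(x) = x - 2 d_\Omega(x) \nabla d_\Omega(x), \qquad R_2(x) = x - 3 d_\Omega(x) \nabla d_\Omega(x)
\end{equation*}
on the exterior part of the tubular neighborhood. If $x$ sits at exterior distance $d \in (0, r_\Omega/6)$, then $R_1(x)$ and $R_2(x)$ lie at interior depths $d$ and $2d$ respectively; both maps are bi-Lipschitz with constants independent of $r_\Omega$ and satisfy $\|\nabla^2 R_j\|_{L^\infty} \leq C/r_\Omega$. Set
\begin{equation*}
f^*(x) = 3 f(R_1(x)) - 2 f(R_2(x)).
\end{equation*}
The coefficients $3$ and $-2$ are determined by the matching requirements $f^*|_{\pa\Omega} = f$ and $\pa_\nu f^*|_{\pa\Omega} = \pa_\nu f$; the latter uses the direct computation $\nabla R_1 \cdot \nu = -\nu$ and $\nabla R_2 \cdot \nu = -2\nu$ on $\pa\Omega$. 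Letting $\tilde\eta$ be a cutoff constructed analogously to \eqref{eta} but with support in $\mathcal{N}_{r_\Omega/6}(\pa\Omega)$ (and the same $r_\Omega^{-k}$ bounds on its derivatives), the function $Ef$ that equals $f$ on $\overline\Omega$ and $\tilde\eta f^*$ on $\R^2 \setminus \overline\Omega$ belongs to $H^2_c(\R^2)$ thanks to the $C^1$-matching at $\pa\Omega$.

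The chain rule, a change of variables with $|\det \nabla R_j|$ bounded above and below independently of $r_\Omega$, and Leibniz's rule applied with $|\nabla^k \tilde\eta| \leq C/r_\Omega^k$ then yield
\begin{equation*}
\|\nabla^k Ef\|_{L^2(\R^2)} \leq C \sum_{j=0}^{k} \frac{\|\nabla^j f\|_{L^2(\Omega)}}{r_\Omega^{k-j}}, \qquad k=0,1,2.
\end{equation*}
Since $Ef$ is compactly supported, the scale-free Euclidean estimates $\|\nabla g\|_{L^2(\R^2)} \leq C \|g\|_{L^2(\R^2)}^{1/2} \|\nabla^2 g\|_{L^2(\R^2)}^{1/2}$ and $\|g\|_{L^\infty(\R^2)} \leq C \|g\|_{L^2(\R^2)}^{1/2} \|\nabla^2 g\|_{L^2(\R^2)}^{1/2}$ apply to $g = Ef$. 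Substituting the extension bounds and absorbing the mixed term $r_\Omega^{-1/2} \|\nabla f\|_{L^2(\Omega)}^{1/2}\|f\|_{L^2(\Omega)}^{1/2}$ via Young's inequality produces \eqref{interpolation:L2}. Then \eqref{interpolation:Linfty} follows by inserting \eqref{interpolation:L2} into the $L^\infty$-version to dispose of the analogous mixed term.

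The main obstacle is the careful tracking of $r_\Omega$-dependence throughout: the Hessian of the reflection maps, the uniform bounds on the Jacobian determinants, and the resulting weighted seminorms of $Ef$. Every step is a direct chain-rule or change-of-variables computation, but only the UBC prevents $\nabla^2 d_\Omega$ from diverging in $L^\infty$, and one has to verify that no hidden $r_\Omega^{-k}$ loss sneaks in beyond those recorded above. Once the extension bounds are secured, the remainder is a routine Euclidean interpolation-absorption argument.
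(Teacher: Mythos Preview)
Your argument is correct, but the paper takes a quite different and more elementary route that avoids extension operators altogether.

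For \eqref{interpolation:L2} the paper integrates by parts directly:
\[
\int_\Omega |\nabla f|^2 \d x = \int_{\pa\Omega} f\langle \nabla f,\n\rangle \d\H^1 - \int_\Omega f\,\Delta f \d x,
\]
and controls the boundary term via a quantified trace estimate $\|g\|_{L^2(\pa\Omega)}^2 \leq C(r_\Omega^{-1}\|g\|_{L^2(\Omega)}^2 + \|g\|_{L^2(\Omega)}\|\nabla g\|_{L^2(\Omega)})$ applied to both $f$ and $\nabla f$; the latter is proved by testing the divergence theorem against $\phi\,\nu$ for a cutoff $\phi$ built from $d_\Omega$. After expanding the product of square roots and absorbing via Young, \eqref{interpolation:L2} drops out. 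For \eqref{interpolation:Linfty} the paper observes that the UBC places every $x\in\Omega$ inside a ball $B_{r_\Omega}(y)\subset\Omega$, rescales that ball to $B_1$, and quotes the standard inequality $\|f\|_{L^\infty(B_1)}\leq C\|f\|_{H^2(B_1)}^{1/2}\|f\|_{L^2(B_1)}^{1/2}$ together with \eqref{interpolation:L2}.

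What each approach buys: the paper's proof of \eqref{interpolation:L2} needs only the trace lemma and works verbatim in any dimension; its proof of \eqref{interpolation:Linfty} is essentially one line and, as the paper notes, uses only the \emph{interior} ball condition. Your reflection--extension argument is more systematic and yields both inequalities from a single construction, and it generalises cleanly to other Gagliardo--Nirenberg exponents; however, it genuinely needs the two-sided UBC to control $\nabla^2 d_\Omega$ in $L^\infty$, and it relies on the (true but not entirely trivial) fact that composition with a bi-Lipschitz $W^{2,\infty}$ map preserves $H^2$ with the expected chain-rule bounds. The paper's route is therefore slightly sharper in hypotheses and shorter in execution, while yours is more portable.
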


Here we remark that the first interpolation inequality holds generally in arbitrary dimension with the same scaling and
$C$ as a dimensional constant. For second estimate the uniform ball condition can be replaced with weaker \emph{uniform interior ball condition}, i.e., for fixed $r \in \R_+$ every $x \in \Omega$ can be contained in a ball of radius $r$ which, in turn, is a subset of
$\Omega$.

Since we are dealing with Stokes systems, we need the following basic interior regularity estimate.

\begin{lemma}[Interior regularity estimate]
\label{lem:interiorreg}
 Let $\Omega \subset \R^2$ be a domain. There exists an independent constant $C$
such that 
\beq
\label{est:interiorreg}
\int_{\{d_\Omega < - \rho\}} |\nabla^3 X|^2 \d x \leq C \int_{\Omega} \frac1{\rho^2} |\nabla^2 X|^2
+\frac1{\rho^4} |\nabla X|^2+\frac1{\rho^6} |X|^2 \d x
\eeq
for every $\rho \in \R_+$ with a non-empy sublevel set $\{d_\Omega < - \rho\}$ and weakly biharmonic vector field $X \in H^2(\Omega;\R^2)$.
\end{lemma}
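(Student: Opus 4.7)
The plan is to reduce to a standard interior Caccioppoli-type estimate for biharmonic vector fields on Euclidean balls and then combine such local estimates by a Vitali-type covering. No regularity of $\partial\Omega$ enters; only the distance to it matters.

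First I would establish the following \emph{local} version: for every ball $B_\rho(y)\Subset \Omega$ and every weakly biharmonic $X\in H^2(B_\rho(y);\R^2)$,
\[
\int_{B_{\rho/2}(y)} |\nabla^3 X|^2 \d x \leq C\left(\frac{1}{\rho^2}\int_{B_\rho(y)}|\nabla^2 X|^2 \d x + \frac{1}{\rho^4}\int_{B_\rho(y)}|\nabla X|^2\d x + \frac{1}{\rho^6}\int_{B_\rho(y)}|X|^2\d x\right),
\]
with $C$ a dimensional constant. This follows from the classical Caccioppoli technique for the bi-Laplacian: testing $\Delta^2 u=0$ against $\eta^4 u$ with a smooth cutoff $\eta\in C_c^\infty(B_\rho)$ satisfying $\eta\equiv 1$ on $B_{\rho/2}$ and $|\nabla^j\eta|\leq C\rho^{-j}$, expanding $\Delta(\eta^4 u)$, and absorbing the top-order term via Young's inequality yields
\[
\int_{B_{\rho/2}}|\nabla^2 u|^2\d x\leq C\left(\rho^{-2}\int_{B_\rho}|\nabla u|^2\d x+\rho^{-4}\int_{B_\rho}|u|^2\d x\right).
\]
Since each partial derivative $\partial_k X$ is again weakly biharmonic on $B_\rho(y)$ (as $\Delta^2$ commutes with differentiation in the interior), applying this inequality component-wise to $u=\partial_k X$ and summing over $k$ produces the local estimate above; the $\rho^{-6}|X|^2$ contribution is then just a nonnegative addition to the right-hand side, kept for later bookkeeping convenience.

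Second, I would cover $\{d_\Omega<-\rho\}$ by a Vitali family $\{B_{\rho/2}(x_i)\}_{i\in I}$ with centres $x_i\in\{d_\Omega<-\rho\}$, chosen so that the enlarged balls $\{B_\rho(x_i)\}$ have uniformly bounded overlap $\sum_i \chi_{B_\rho(x_i)}\leq N$ with $N$ a dimensional constant. The condition $d_\Omega(x_i)<-\rho$ is precisely $\dist(x_i,\partial\Omega)>\rho$, so each $B_\rho(x_i)\subset\Omega$ and the local estimate of Step~1 applies. Summing the local inequalities over $i$ and invoking the overlap bound converts the right-hand sides into a single integral over $\Omega$ of exactly the claimed form.

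The main obstacle lies in Step~1: the Caccioppoli chain must be set up so that the two integrations by parts in $\int\Delta u\,\Delta(\eta^4 u)\d x$ leave only cross-terms that Young's inequality can absorb with exactly the advertised $\rho$-scaling, and so that the iteration from the $|\nabla^2 u|^2$ bound to the $|\nabla^3 X|^2$ bound does not accumulate further inverse powers of $\rho$. The covering step, by contrast, is entirely standard and purely dimensional.
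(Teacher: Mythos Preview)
Your proposal is correct and follows essentially the same two-stage strategy as the paper: a local Caccioppoli-type estimate on balls $B_\rho\subset\Omega$ followed by a bounded-overlap covering of $\{d_\Omega<-\rho\}$. The only difference is in how the local third-order estimate is obtained: the paper derives it in one shot, working directly with a cutoff $\phi$ to get
\[
\int_{B_\rho(z)}\phi\,|\nabla^3 X|^2\d x\leq C\int_{B_\rho(z)}(|\nabla\phi|^2+|\nabla^2\phi|)|\nabla^2 X|^2+|\nabla^2\phi|^2|\nabla X|^2+|\nabla^3\phi|^2|X|^2\d x,
\]
whereas you first prove the standard second-order Caccioppoli for biharmonic $u$ and then iterate by applying it to $u=\partial_k X$ (which is again biharmonic, and in $H^2_{\mathrm{loc}}$ by interior smoothness). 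Your route is slightly more modular and in fact yields a marginally sharper local bound (the $\rho^{-6}|X|^2$ term is superfluous), while the paper's direct computation is shorter. For the covering the paper invokes Besicovitch rather than Vitali, but this is immaterial.
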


Finally, as the natural energy estimates for the Stokes equation involves the symmetrized gradient, we need properly quantified versions of the corresponding Korn-inequalities.

\begin{lemma}[Korn's inequality]
\label{lem:Korn1}
Let $\Omega \subset \R^2$ be a UBC domain. There exists an independent constant $C$
such that
\begin{equation}
\label{Korn:est1}
\int_{\Omega}|\nabla X|^2 \d x \leq C \left(\int_{\Omega}|\nablasym X|^2 \d x  +\frac1{r_\Omega^2}\int_\Omega |X|^2 \d x\right)
\end{equation}
 for every $X \in H^1(\Omega;\R^2)$. Moreover, if $\eta$ is the cutoff for $\Omega$ defined in \eqref{eta}, then 
\begin{align}
\label{Korn_cutoff:est} 
\int_\Omega \eta \, |\nabla X|^2 \d x
\leq C\left( \int_\Omega \eta \, |\nablasym X|^2 \d x +\frac1{r_\Omega^2}\int_{\Omega \cap \, \spt \eta}  |X|^2  \d x\right)
\end{align}
for every $X \in H^1\left( \Omega \cap \mathcal N_{r_\Omega/2}(\pa \Omega);\R^2\right)$.
\end{lemma}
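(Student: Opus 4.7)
I base the proof on the pointwise algebraic identity
\[
|\nabla X|^2 = 2|\nablasym X|^2 - \sum_{i,j}(\partial_i X_j)(\partial_j X_i),
\]
combined with the two-dimensional simplification $\sum_{i,j}(\partial_i X_j)(\partial_j X_i) = (\div X)^2 - 2\det(\nabla X)$, giving
\[
|\nabla X|^2 = 2|\nablasym X|^2 - (\div X)^2 + 2\det(\nabla X)
\]
pointwise. Since $|\div X|\le \sqrt{2}\,|\nablasym X|$, the only obstruction to Korn is the control of $\int_\Omega \eta\det(\nabla X)\d x$, which is favourable because $\det(\nabla X)$ is a null Lagrangian: $\det(\nabla X) = \div F$ with $F=(X_1\partial_2 X_2,\,-X_1\partial_1 X_2)$.

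To prove \eqref{Korn_cutoff:est} I multiply the identity by $\eta$ and integrate. Integration by parts on the $\det$-term, using $\nabla \eta = (\partial_\nu \eta)\nu$ from \eqref{nablaeta}, produces (i) a boundary integral $\int_{\partial\Omega}X_1\partial_\tau X_2\d\H^1$ (since $\eta \equiv 1$ on $\partial\Omega$) and (ii) a bulk integral $\int_\Omega (\partial_\nu \eta) X_1\partial_\tau X_2 \d x$ whose integrand is localised to $\spt\nabla\eta$. The bulk contribution is absorbed via Remark~\ref{eta:rem} (writing $\eta=\tilde\eta^2$ with $|\partial_\nu\tilde\eta|\le C/r_\Omega$), Cauchy--Schwarz and Young's inequality, yielding a bound of the form $\delta\int_\Omega \eta|\nabla X|^2 + C_\delta r_\Omega^{-2}\|X\|^2_{L^2(\spt\eta)}$.

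For the boundary integral, the tangential decomposition $X=\alpha\nu+\beta\tau$ and the computation of $X\wedge\partial_\tau X$ in the $(\nu,\tau)$-frame via the Weingarten identities \eqref{weingarten} give
\[
\int_{\partial\Omega}X_1\partial_\tau X_2\d\H^1 = \tfrac12\int_{\partial\Omega}\bigl[\alpha\partial_\tau\beta-\beta\partial_\tau\alpha + \kappa|X|^2\bigr]\d\H^1.
\]
The curvature piece is controlled by the UBC bound $|\kappa|\le 1/r_\Omega$ together with a trace estimate $\|X\|^2_{L^2(\partial\Omega)}\le C(\|X\|_{L^2(\Omega)}\|\nabla X\|_{L^2(\Omega)} + r_\Omega^{-1}\|X\|^2_{L^2(\Omega)})$, derived independently by applying the divergence theorem to $|X|^2 \chi(d_\Omega/r_\Omega)\nu$ and exploiting \eqref{div n est}. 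The tangential pieces $\int(\alpha\partial_\tau\beta - \beta\partial_\tau\alpha)$ are converted back into bulk integrals by extending $\alpha$, $\beta$ and $\div_\tau X$ through the tubular neighbourhood using the extended normal frame (cf.~Appendix~\ref{app:ExtNormal}), applying the divergence theorem, and using \eqref{div n est}; this returns further curvature-weighted bulk integrals of $|X|^2$ and $\nablasym X$-type expressions, all carrying the correct $r_\Omega^{-1}$ or $r_\Omega^{-2}$ scaling, and a final Young's inequality absorbs the remaining $\nabla X$ factors into the LHS.

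For \eqref{Korn:est1} I partition $X = \tilde\eta X + (1-\tilde\eta)X$ with $\tilde\eta = \eta^{1/2}$. The first summand satisfies $\int|\nabla(\tilde\eta X)|^2 \le 2\int \eta|\nabla X|^2 + Cr_\Omega^{-2}\|X\|^2_{L^2(\Omega)}$, which is controlled by the cutoff version \eqref{Korn_cutoff:est}. The second summand is supported in $\{d_\Omega \le -r_\Omega/4\}$ and extends by zero to a field $Y\in H^1_c(\R^2;\R^2)$; for such compactly supported vector fields $\int_{\R^2}\det(\nabla Y)=0$, so the elementary inequality $\int|\nabla Y|^2\le 2\int|\nablasym Y|^2$ follows directly from the main algebraic identity. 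Expanding $\nablasym((1-\tilde\eta)X)$ and using $|\nabla\tilde\eta|\le C/r_\Omega$ yields the result after a final absorption. The main obstacle throughout is the treatment of the boundary integral $\int_{\partial\Omega}X_1\partial_\tau X_2\d\H^1$: for a generic $X\in H^1(\Omega)$ it is only a distributional $H^{1/2}$--$H^{-1/2}$ pairing and cannot be bounded by $L^2$-traces alone; its resolution requires the combined use of the tangential decomposition, the Weingarten identities, and the careful conversion of surface integrals to bulk integrals through the extended normal frame, which is precisely what produces the correct $r_\Omega$-dependence on the right-hand side.
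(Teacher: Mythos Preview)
Your approach is quite different from the paper's: there, \eqref{Korn:est1} is obtained by covering $\Omega$ with sets $\Omega_y=\Omega\cap B_{r_\Omega/4}(y)$ via Besicovitch, observing (from the UBC graph representation) that each $\Omega_y$ is star-shaped with respect to a ball of radius comparable to $r_\Omega$, and invoking the classical Korn inequality on star-shaped domains for the local estimate \eqref{est:local2}; summing over the cover gives \eqref{Korn:est1} immediately, and \eqref{Korn_cutoff:est} is declared a straightforward consequence. Your null-Lagrangian strategy is a genuinely different, two-dimensional route, and your treatment of the interior piece $(1-\tilde\eta)X$ via $\int\det(\nabla Y)=0$ for compactly supported $Y$ is correct.

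There is, however, a real gap in your handling of the boundary integral. After the (correct) decomposition
\[
\int_{\partial\Omega}X_1\partial_\tau X_2\,\d\H^1=\tfrac12\int_{\partial\Omega}\bigl[\alpha\partial_\tau\beta-\beta\partial_\tau\alpha+\kappa|X|^2\bigr]\,\d\H^1
\]
and the disposal of the curvature term, you propose to convert $\int_{\partial\Omega}(\alpha\partial_\tau\beta-\beta\partial_\tau\alpha)$ into bulk integrals via the extended normal frame and \eqref{div(f*n)}. But the integrand already contains one derivative of $X$ (indeed $\partial_\tau\beta=\langle[\nablasym X]\tau,\tau\rangle-\kappa\alpha$), so applying $\partial_\nu$ to push it into the bulk produces \emph{second} derivatives of $X$; for instance $\partial_\nu\bigl(\alpha\langle[\nablasym X]\tau,\tau\rangle\bigr)$ contains $\langle[\partial_\nu\nablasym X]\tau,\tau\rangle$. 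No rewriting in the $(\nu,\tau)$-frame removes this: the normal derivative of any component of $\nablasym X$ is a genuine second-order object, not a ``$\nablasym X$-type expression''. Thus the a~priori bound one obtains for smooth $X$ carries a $\|\nabla^2 X\|_{L^2}$ term on the right-hand side and does not close in $H^1$. This is not a technicality: the null-Lagrangian identity is exactly what makes Korn trivial for $X\in H^1_0$, but for general $X\in H^1$ it merely transfers the difficulty to a boundary term whose control is the entire content of Korn's second inequality. The paper's localisation argument avoids this by reducing to star-shaped pieces where the classical proofs (Ne\v{c}as/Lions lemma, or direct constructions as in \cite{KoOl}) supply the local estimate with constants depending only on the star-shape parameters, hence only on $r_\Omega$ after rescaling.
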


\begin{lemma}[Korn-Poincaré inequality]
\label{lem:Korn2}
 Let $\Omega \subset \R^2$ be a UBC domain. There exists an independent constant $C$ 
such that
 \begin{align}
\label{Korn:est2}
\int_\Omega \abs{\nabla X - ( \nabla X )_\Omega}^2 \d x \leq  C \frac{|\Omega|P(\Omega)}{r_\Omega^3}  \int_\Omega \abs{\nablasym X - ( \nablasym X)_\Omega}^2 \d x
 \end{align}
 for all vector fields $X \in H^1(\Omega;\R^2)$ and
 \begin{align}
\label{Korn:est3}
  \int_\Omega |\nabla X|^2 \d x\leq C \frac{|\Omega|P(\Omega)}{r_\Omega^3}  \int_\Omega |\nablasym X|^2 \d x
 \end{align}
 for every $X \in H^1(\Omega;\R^2)$ with symmetric $\int_\Omega \nabla X \d x$.
\end{lemma}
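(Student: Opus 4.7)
My plan is to handle both inequalities via a single estimate on the antisymmetric part of $\nabla X$. Decomposing $\nabla X = \nablasym X + W$ with $W := (\nabla X - \nabla X^\T)/2$, the pointwise Frobenius orthogonality of symmetric and antisymmetric matrices gives
\begin{equation*}
|\nabla X - (\nabla X)_\Omega|^2 = |\nablasym X - (\nablasym X)_\Omega|^2 + |W - W_\Omega|^2,
\end{equation*}
so \eqref{Korn:est2} reduces to bounding $\|W - W_\Omega\|_{L^2(\Omega)}$ by $\|\nablasym X - (\nablasym X)_\Omega\|_{L^2(\Omega)}$. In two dimensions $W$ is determined by the single scalar $\omega := (\pa_1 X_2 - \pa_2 X_1)/2$ with $|W|^2 = 2\omega^2$, and a direct computation produces the key identities
\begin{equation*}
\pa_1 \omega = \pa_1 (\nablasym X)_{12} - \pa_2 (\nablasym X)_{11}, \qquad \pa_2 \omega = \pa_1 (\nablasym X)_{22} - \pa_2 (\nablasym X)_{12},
\end{equation*}
expressing $\nabla \omega$ distributionally through first derivatives of the entries of $\nablasym X$.

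To turn this into an $L^2$-bound on $\omega$ without losing any derivatives, I would dualise against a Bogovskii operator on $\Omega$: for $\phi \in L^2(\Omega)$ with $\int_\Omega \phi \d x = 0$ it produces $\Phi \in H^1_0(\Omega;\R^2)$ with $\div \Phi = \phi$ and $\|\nabla \Phi\|_{L^2(\Omega)} \leq C_B(\Omega)\|\phi\|_{L^2(\Omega)}$. Combining $\int_\Omega \omega \phi \d x = -\langle \nabla\omega, \Phi\rangle$ with the identities above (the boundary terms vanish because $\Phi \in H^1_0$) yields
\begin{equation*}
\int_\Omega \omega \phi \d x = \int_\Omega \bigl[(\nablasym X)_{12}(\pa_1 \Phi_1 - \pa_2 \Phi_2) + (\nablasym X)_{22} \pa_1 \Phi_2 - (\nablasym X)_{11} \pa_2 \Phi_1\bigr] \d x.
\end{equation*}
Since $\int_\Omega \pa_i \Phi_j \d x = 0$, every entry of $\nablasym X$ above may be replaced by its oscillation at no cost, and Cauchy--Schwarz then gives $|\int_\Omega \omega \phi \d x| \leq C C_B(\Omega)\|\nablasym X - (\nablasym X)_\Omega\|_{L^2(\Omega)} \|\phi\|_{L^2(\Omega)}$. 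Dualising over mean-zero $\phi$ yields $\|\omega - \omega_\Omega\|_{L^2(\Omega)} \leq C C_B(\Omega) \|\nablasym X - (\nablasym X)_\Omega\|_{L^2(\Omega)}$, which combined with the orthogonal splitting proves \eqref{Korn:est2}.

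For \eqref{Korn:est3} the symmetry hypothesis on $\int_\Omega \nabla X \d x$ forces $\omega_\Omega = 0$, so the same duality argument applied directly to $\nablasym X$ gives $\|\omega\|_{L^2}^2 \leq C C_B(\Omega)^2 \|\nablasym X\|_{L^2}^2$; combined with $|\nabla X|^2 = |\nablasym X|^2 + 2\omega^2$ and the UBC-induced lower bound $|\Omega|P(\Omega)/r_\Omega^3 \geq c > 0$ (which follows from $|\Omega| \geq \pi r_\Omega^2$ via the interior ball and from $P(\Omega) \geq 2\pi r_\Omega$ via isoperimetry), this closes the estimate. The remaining hard part is the quantitative Bogovskii bound $C_B(\Omega)^2 \leq C|\Omega|P(\Omega)/r_\Omega^3$ on UBC domains, which I would establish by covering $\Omega$ with $O(|\Omega|/r_\Omega^2)$ overlapping star-shaped pieces of scale $r_\Omega$ (possible by the interior ball condition), solving Bogovskii's equation locally with scale-invariant constants, and glueing through a partition of unity, with the chain-length losses controlled by the Poincar\'e estimate of Lemma~\ref{lem:Poincare}.
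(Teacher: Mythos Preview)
Your reduction is correct and takes a genuinely different route from the paper. The orthogonal splitting $\nabla X = \nablasym X + W$, the two-dimensional scalar identity for $\nabla\omega$ in terms of first derivatives of the entries of $\nablasym X$, and the duality via a Bogovskii solution $\Phi\in H^1_0(\Omega;\R^2)$ are all valid; the subtraction of means via $\int_\Omega \pa_i\Phi_j\,\d x=0$ is a nice touch that cleanly produces the oscillation of $\nablasym X$ on the right-hand side. The derivation of \eqref{Korn:est3} from the same bound using $\omega_\Omega=0$ and the observation $|\Omega|P(\Omega)/r_\Omega^3\geq 2\pi^2$ is also fine.

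The paper, by contrast, never passes through the Bogovskii operator. It rescales to $|\Omega|=1$, proves a \emph{local} Korn--Poincar\'e inequality with an independent constant on each piece $\Omega_y=\Omega\cap B_{r_\Omega/4}(y)$ (these are star-shaped with respect to a ball of radius $\sim r_\Omega$ by the UBC), compares the averages $(\nabla X)_{\Omega_y}$ and $(\nabla X)_{\Omega_z}$ by a ball-chain argument along a path of length $\leq CP(\Omega)/r_\Omega$ inside $\Omega$, and assembles via a Besicovitch covering. This gives the constant $P(\Omega)/r_\Omega^3$ directly after rescaling back.

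Your approach trades the entire covering machinery for a single quantitative estimate $C_B(\Omega)^2\leq C\,|\Omega|P(\Omega)/r_\Omega^3$ on the Bogovskii constant, which you only sketch. This is where the real work hides: the standard decomposition methods for Bogovskii on non-star-shaped domains (Dur\'an--Muschietti, Diening--R\r{u}\v{z}i\v{c}ka--Schumacher tree coverings) produce a constant that grows with the chain or tree depth, here $\sim P(\Omega)/r_\Omega$, so $C_B^2\lesssim P(\Omega)^2/r_\Omega^2$; to land on the paper's stated constant you would still need the UBC fact $P(\Omega)\,r_\Omega\leq C\,|\Omega|$ (obtainable from coarea on the inner tubular neighbourhood). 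In effect the chaining argument you defer to the Bogovskii step is of the same nature and difficulty as the one the paper carries out directly on the Korn inequality, so neither approach is strictly shorter --- yours is more conceptual (it isolates exactly where the domain geometry enters), while the paper's is more self-contained and avoids invoking and quantifying an auxiliary operator.
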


%%%%%%%%%%%%%%%%%%%%%%%%%%%%%%%%%%%%%%%%%%%%%%%%%%%%%%%%%%%%%%%%%%%%%%%%%%%%%%%%%%%%%%%%%%%%%
%%%%%%%%%%%%%%%%%%%%%%%%%%%%%%%%%%%%%%%%%%%%%%%%%%%%%%%%%%%%%%%%%%%%%%%%%%%%%%%%%%%%%%%%%%%%%

\section{Concept of solution}
\label{sec:sol}

%%%%%%%%%%%%%%%%%%%%%%%%%%%%%%%%%%%%%%%%%%%%%%%%%%%%%%%%%%%%%%%%%%%%%%%%%%%%%%%%%%%%%%%%%%%%%

\subsection{Evolution of free surfaces under Stokes flow}
\label{ssec:stokes}

We will now try to give a precise definition of a solution to \eqref{eq:main}. For this we first need to define what is meant by normal velocity. See also Figure \ref{fig:normalVelocity}.

\begin{figure}[ht]

\tikzmath{\a=0.25*pi;}
\tikzmath{\b=-0.5*pi;}
\begin{center}
\begin{tikzpicture}
%current  boundary
\draw[thick] plot[smooth,domain=-pi:pi]  ({\x}, {((\x+1)/(-0.3*cos((\x*180)/pi)+1))+0.5)});
\node[scale=1, right] at (pi,3.7) {$\partial \Omega_t$};

%old boundary
\draw[dotted, thick] plot[smooth,domain=-pi:pi]  ({\x}, {((\x+1)/(-0.65*cos((\x*180)/pi)+1))+0.5)-0.5});
\node[scale=1, right] at (pi,2.6) {$\partial \Omega_{t-h}$};

%boundary points x and y
\filldraw[black] ({\a},{((\a+1)/(-0.3*cos((\a*180)/pi)+1))+0.5)}) circle (1pt);
\node[scale=1, left] at ({\a},{((\a+1)/(-0.3*cos((\a*180)/pi)+1))+0.5)+0.1}) {$x$};

\filldraw[black] ({\b},{((\b+1)/(-0.3*cos((\b*180)/pi)+1))+0.5)}) circle (1pt);
\node[scale=1, right] at ({\b},{((\b+1)/(-0.3*cos((\b*180)/pi)+1))+0.5)-0.1}) {$y$};

%normal velocity vectors

\draw[thick,->]({\a},{((\a+1)/(-0.3*cos((\a*180)/pi)+1))+0.5)})
--  ({\a+1.5*((1-0.3*cos((\a*180)/pi)-0.3*(\a+1)*sin((\a*180)/pi))/((1-0.3*cos((\a*180)/pi))^2))},{-1.5+((\a+1)/(-0.3*cos((\a*180)/pi)+1))+0.5});
\node[scale=1, below] at ({\a+1.5*((1-0.3*cos((\a*180)/pi)-0.3*(\a+1)*sin((\a*180)/pi))/((1-0.3*cos((\a*180)/pi))^2))},{-1.5+((\a+1)/(-0.3*cos((\a*180)/pi)+1))+0.5}) {$v(t,x)\nu_t(x)$};

\draw[thick,->]({\b},{((\b+1)/(-0.3*cos((\b*180)/pi)+1))+0.5)})
--  ({\b- ((1-0.3*cos((\b*180)/pi)-0.3*(\b+1)*sin((\b*180)/pi))/((1-0.3*cos((\b*180)/pi))^2))},{1+((\b+1)/(-0.3*cos((\b*180)/pi)+1))+0.5});
\node[scale=1, above] at  ({\b- ((1-0.3*cos((\b*180)/pi)-0.3*(\b+1)*sin((\b*180)/pi))/((1-0.3*cos((\b*180)/pi))^2))},{1+((\b+1)/(-0.3*cos((\b*180)/pi)+1))+0.5}) {$v(t,y)\nu_t(y)$};
\end{tikzpicture}
\end{center}
 \caption{
 \label{fig:normalVelocity} An illustration of normal velocity and the evolution of a boundary.}
\end{figure}
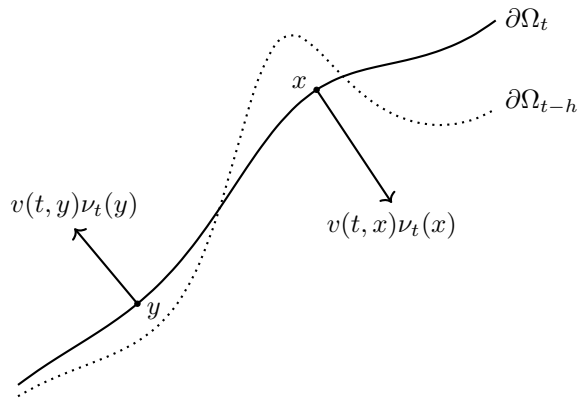

\begin{defi}[Normal velocity]
\label{def:normalvelocity}
Let $I\subset \R$ be an interval and $(\Omega_t)_{t \in I} \subset \R^2$ a time parametrized family of $C^1$-regular domains. Assume that for fixed $t \in I$, $x \in \partial \Omega_{t}$ and all $h \in \R$ with $|h|$ small enough there exists a point $z \in \partial \Omega_{t+h}$ such that $z = x + s_{t,x}(h) \n_t(x)$, where $s_{t,x}(h) \in \R$ is a unique number closest to zero for which this is the case. Then, assuming it exists, the derivative $v(t,x):= s_{t,x}'(0)$ is the \emph{normal velocity} of $(\Omega_t)_{t \in I}$ at $(t,x)$.
\end{defi}

Since we have now defined all the relevant quantities, we can give a precise definition of a solution.

\begin{defi}[Regular solution]
\label{def:classicsol}
For given an integer $k \geq 2$ we say that a triplet $(\Omega_t,u(t,\freearg),p(t,\freearg))_{t \in [0,T)}$ is a $C^k$-regular solution to \eqref{eq:main}
if the following conditions are satisfied.
\begin{itemize}
\item[(i)] Each $\Omega_t \subset \R^2$ is a $C^k$-regular domain for which $u(t,\freearg) \in  C^k(\overline \Omega_t;\R^2)$ and $p(t,\freearg) \in  C^{k-1}(\overline \Omega_t;\R^2)$ solve
the problem \eqref{eq:main} in the classical sense with a time-independent viscosity constant $\vartheta \in \R_+$. 
\item[(ii)]  Each $u(t,\freearg)$ is \emph{moment-preserving} meaning that 
$\int_{\Omega_t} u(t,\freearg) \d x = 0$ and $\int_{\Omega_t} \nabla u(t,\freearg) \d x$ is symmetric.
\item[(iii)] The parametrized family $(\Omega_t)_{t \in [0,T)}$ moves according to the normal velocity $v (t,\freearg) = \langle u(t,\freearg), \n_t \rangle$ on $\pa \Omega_t$ for every $t \in [0,T)$.  
\item[(iv)] The map $u$ and its spatial derivatives $\nabla^l u$ up to $l=k$ are continuous in the time-space cylinder  $\{(t,x) : t \in [0,T) , x \in \overline \Omega_t  \}$.
\end{itemize}
\end{defi}

Unless otherwise stated, we use implicitly the abbreviations
\[
u:= u(t,\freearg), \ \ p:=p(t,\freearg), \ \ r_t := r_{\Omega_t},\ \ \pi_t := \pi_{\pa \Omega_t},  \ \ \text{and} \ \ \P_t := \P(\Omega_t) 
\]
for a solution according to the previous definition. 

As we neglect the influence of inertia, it is also not surprising that our problem is invariant under Galilean transformations.

\begin{rema}[Galilean transformations]
\label{rem:rigidmotion}
Assume that $(\Omega_t,u,p)_{t \in [0,T)}$ is a $C^k$-regular solution, except possibly not satisfying the moment preservation. Now let $R:[0,T) \to SO(2)$
and $a:[0,T) \to \R^2$ be sufficiently regular maps. If we define
 \begin{align*}
  \widetilde{\Omega}_t &:= R(t) \Omega_t + a(t) \\
  \tilde{u}(t,R(t)x+a(t)) &:= R'(t)x+ u(t,x) +a'(t) \\
  \tilde{p}(t,R(t)x+a(t)) &:= p(t,x)
 \end{align*}
then it can be shown that $(\widetilde{\Omega}_t,\tilde{u},\tilde{p})_{t \in [0,T)}$ is also a $C^k$-regular solution, except for the moment preservation (the condition (ii)).
Conversely though, as we can substract the average movement and rotation (note that the antisymmetric matrices are exactly the tangent space of $SO(2)$), for any classical solution, we can find a moment-preserving solution with the same initial data by a Galilean transformation.
\end{rema}

In fact, this is not only the physically expected invariance but also (at least on a formal level) the precise characterisation of the mathematical non-uniqueness inherent in the problem. First of all, note that the only initial data for the problem is the intial domain $\Omega_0$. In fact for an arbitrary fixed time $t$, the sections $u(t,\freearg)$ and $p(t,\freearg)$ can be uniquely determined by $\Omega_t$.  This follows directly from the standard theory of the Stokes equation.  In particular, for the regularity, see e.g. \cite[Thm IV. 4.3]{Galdi}. 

\begin{lemma}[Uniqueness of $u(t,\freearg)$ and $p(t,\freearg)$]
 Let \solT be a $C^k$-regular solution and $t \in [0,T)$. Then the pair $(u(t,\freearg),p(t,\freearg))$ is the unique solution to the Stokes problem with Neumann data:
\begin{equation*}
 \left\{
 \begin{aligned}
  \vartheta\Delta \tilde{u} -\nabla \tilde{p} &= 0 &\text{ in }& \Omega_t \\
  \div \tilde{u} &= 0 &\text{ in }& \Omega_t \\
\curv_t &=\tilde p - 2\vartheta \,  \div_{\n_t} \tilde{u} &\text{ in }& \partial \Omega_t \\
0&=  (I - \n_t \otimes \n_t) [\nablasym \tilde{u}] \n_t &\text{ in }& \partial\Omega_t
 \end{aligned}
 \right.
\end{equation*}
in the class $\int_{\Omega_t} \tilde{u} \d x = 0$ and $\int_{\Omega_t} \nabla \tilde{u} \d x$ symmetric. In addition we have that $u(t,\freearg) \in C^\infty(\Omega_t;\R^2)$ and $p(t,\freearg) \in C^\infty(\Omega_t)$.
\end{lemma}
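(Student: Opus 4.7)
The plan is to establish three points in succession: that $(u(t,\freearg), p(t,\freearg))$ indeed solves the stated Neumann Stokes system, that the class of solutions with vanishing first moment and symmetric averaged gradient is a uniqueness class, and that interior $C^\infty$-regularity follows from standard elliptic theory.

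First I would verify that the vector boundary condition in \eqref{eq:main} is equivalent to the two scalar conditions in the lemma statement. Decomposing the identity $\curv_t \n_t = [pI - 2\vartheta \nablasym u]\n_t$ against the orthonormal frame $\{\n_t,\tau_t\}$, its $\n_t$-component reads $\curv_t = p - 2\vartheta \langle [\nablasym u]\n_t,\n_t\rangle = p - 2\vartheta \div_{\n_t} u$, which is \eqref{eq:surfacetension1}, while its $\tau_t$-component reads $0 = \langle [\nablasym u]\n_t,\tau_t\rangle$, i.e.\ precisely the vanishing of the tangential trace $(I - \n_t \otimes \n_t)[\nablasym u]\n_t = 0$. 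Together with the interior equations of \eqref{eq:main} and the moment-preservation property~(ii) of Definition~\ref{def:classicsol}, this places $(u(t,\freearg), p(t,\freearg))$ in the claimed solution class.

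For uniqueness I would pass to the difference $w = \tilde u_1 - \tilde u_2$, $q = \tilde p_1 - \tilde p_2$ of two solutions in this class and introduce the Cauchy stress $\sigma(w,q) := 2\vartheta \nablasym w - qI$. Linearity then yields $\div \sigma(w,q) = \vartheta\Delta w - \nabla q = 0$ in $\Omega_t$, $\div w = 0$, and $\sigma(w,q)\n_t = 0$ on $\pa \Omega_t$ (the inhomogeneous datum $-\curv_t \n_t$ cancels in the difference). Testing $\div \sigma(w,q) = 0$ against $w$, integrating by parts, and using the vanishing boundary traction together with $\div w = 0$ collapses the identity to
\begin{equation*}
2\vartheta \int_{\Omega_t} |\nablasym w|^2 \d x = 0.
\end{equation*}
Thus $\nablasym w \equiv 0$, so $w(x) = Ax + b$ for some antisymmetric $A \in \R^{2\times 2}$ and $b \in \R^2$. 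The requirement that $\int_{\Omega_t} \nabla w \, \d x = A|\Omega_t|$ be symmetric forces $A = 0$, whereupon $\int_{\Omega_t} w \, \d x = b|\Omega_t| = 0$ gives $b = 0$, and hence $w \equiv 0$ and $q \equiv \text{const}$; the latter constant vanishes by comparing the scalar boundary equation on both sides.

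Interior smoothness is the easiest point: applying $\div$ to $\vartheta\Delta u = \nabla p$ and using $\div u = 0$ yields $\Delta p = 0$ in $\Omega_t$, so $p(t,\freearg)$ is harmonic, hence real-analytic in the interior. Then $u(t,\freearg)$ solves the elliptic system $\vartheta \Delta u = \nabla p$ with analytic right-hand side and $\div u = 0$, and standard interior Stokes regularity (cf.\ the reference cited just above the lemma) propagates smoothness to all derivatives of $u(t,\freearg)$. The only step requiring even a small amount of care is the first, namely translating the geometric vector boundary equation of \eqref{eq:main} into the normal/tangential scalar pair appearing in the Neumann system; once that identification is made, the rest is a clean Korn-type energy argument and a textbook regularity bootstrap.
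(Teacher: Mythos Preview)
Your argument is correct. The paper itself does not give a detailed proof of this lemma; it simply says that the result ``follows directly from the standard theory of the Stokes equation'' and cites Galdi for the regularity. Your energy argument for uniqueness (testing the difference equation against $w$, killing the boundary term via $\sigma(w,q)\n_t=0$, and using the moment constraints to eliminate the rigid-motion kernel) and your harmonic-pressure bootstrap for interior smoothness are exactly the content of that standard theory, written out. So you are not taking a different route; you are supplying the details the paper leaves to the reference.
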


%%%%%%%%%%%%%%%%%%%%%%%%%%%%%%%%%%%%%%%%%%%%%%%%%%%%%%%%%%%%%%%%%%%%%%%%%%%%%%%%%%%%%%%%%%%%%

\subsection{Gradient flow estimates}
\label{ssec:grad}
Let us note that formally our solution has the structure of a gradient flow in the space of domains, where we take the energy to be the perimeter $\P_t$ and the distance/dissipation functional to be $2 \vartheta  \int_{\Omega_t} |\nablasym u|^2 \d x$.

\begin{lemma}[Gradient flow structure] \label{lem:gradFlow}
 Let $(\Omega_t, u,p)_{t \in [0,T)}$ be a $C^2$-regular solution. Then it satisfies
 \begin{align}
\label{eq:dissipation}
  \frac{\d}{\d t} \P_t = \int_{\pa \Omega_t} \curv_t \langle u, \n_t \rangle \d t = - 2 \vartheta \int_{\Omega_t} |\nablasym u|^2 \d x
 \end{align}
 for all $t\in [0,T)$.
\end{lemma}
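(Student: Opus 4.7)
The identity splits into two equalities, which I would prove independently.

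\textbf{First equality (first variation of perimeter).} This is the classical fact that under normal velocity $v$, the perimeter evolves as $\frac{\d}{\d t} \P_t = \int_{\partial \Omega_t} \kappa_t v\,\d\mathcal{H}^1$, combined with $v=\langle u,\nu_t\rangle$ from the fourth equation of \eqref{eq:main}. To establish it rigorously in the $C^2$-setting we can argue locally: around a fixed boundary point, parametrize $\partial \Omega_t$ by arclength $s$ at time $0$ and extend it to a family $\gamma(t,s)$ whose only instantaneous motion is normal, i.e.\ $\partial_t\gamma = v\nu_t$ at $t=0$ (any tangential component is only a reparametrization and does not change $\P_t$). Differentiating the length element $|\partial_s\gamma|$ and using the Weingarten identity \eqref{weingarten} to compute $\partial_s\partial_t\gamma = \partial_s(v\nu_t) = (\partial_\tau v)\tau_t + \kappa_t v\,\tau_t$... actually one gets $\partial_t|\partial_s\gamma| = \langle \partial_s\partial_t\gamma,\tau_t\rangle = \partial_\tau v$ at arclength $s$, and a minus sign from the proper orientation produces $\frac{\d}{\d t}\P_t = \int_{\partial\Omega_t}\kappa_t v\,\d\mathcal H^1$ after applying \eqref{eq:surfacedivthm} to the vector field $v\nu_t$. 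Equivalently, one can simply cite this as a classical first variation formula.

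\textbf{Second equality (dissipation identity).} The plan is to test the boundary condition with $u$ and integrate by parts using the Stokes system. Taking the inner product of $\kappa_t\nu_t = [pI - 2\vartheta\nablasym u]\nu_t$ with $u$ on $\partial\Omega_t$ yields
\begin{equation*}
\int_{\partial\Omega_t} \kappa_t \langle u,\nu_t\rangle\,\d\mathcal H^1
= \int_{\partial\Omega_t} p\,\langle u,\nu_t\rangle\,\d\mathcal H^1
- 2\vartheta \int_{\partial\Omega_t}\langle (\nablasym u)\nu_t,u\rangle\,\d\mathcal H^1.
\end{equation*}
Using $\div u=0$ and the Stokes equation $\nabla p = \vartheta\Delta u$, the divergence theorem gives
\begin{equation*}
\int_{\partial\Omega_t} p\,\langle u,\nu_t\rangle\,\d\mathcal H^1
= \int_{\Omega_t} \div(pu)\,\d x = \int_{\Omega_t} u\cdot\nabla p\,\d x
= \vartheta \int_{\Omega_t} u\cdot\Delta u\,\d x.
\end{equation*}
For the second boundary integral, the divergence theorem together with the identity $2\div(\nablasym u) = \Delta u + \nabla\div u = \Delta u$ and the orthogonality of symmetric and antisymmetric parts (which gives $\nablasym u : \nabla u = |\nablasym u|^2$) yields
\begin{equation*}
2\vartheta\int_{\partial\Omega_t}\langle(\nablasym u)\nu_t,u\rangle\,\d\mathcal H^1
= 2\vartheta\int_{\Omega_t}\div(\nablasym u)\cdot u + \nablasym u:\nabla u\,\d x
= \vartheta\int_{\Omega_t} u\cdot\Delta u\,\d x + 2\vartheta\int_{\Omega_t}|\nablasym u|^2\,\d x.
\end{equation*}
Subtracting the two cancels the $\vartheta \int u\cdot\Delta u$ contributions and leaves $-2\vartheta \int_{\Omega_t}|\nablasym u|^2\,\d x$, establishing the second equality.

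\textbf{Main obstacle.} The computation is essentially routine once the boundary conditions and algebraic identities are in place; the only genuine subtlety lies in the first equality, where one must justify that tangential components of the velocity field $u$ on $\partial\Omega_t$ do not contribute to $\frac{\d}{\d t}\P_t$ and that the sign convention for $\kappa_t$ (outward normal, convex positive) matches the standard first variation formula. The remaining ingredients -- the Stokes equations, $\div u=0$, and the symmetric/antisymmetric orthogonality -- make the second equality a direct integration by parts.
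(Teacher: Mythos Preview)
Your proof is correct and follows essentially the same route as the paper's: for the second equality the paper tests the Stokes equation $\vartheta\Delta u-\nabla p=0$ with $u$ and integrates by parts outward to the boundary, while you start from the boundary identity and push each piece inward, but the ingredients ($\div u=0$, $\Delta u=2\div\nablasym u$, $\nablasym u:\nabla u=|\nablasym u|^2$, divergence theorem, boundary condition) are identical. For the first equality the paper simply invokes the first variation of perimeter from its appendix, as you ultimately do; note though that your parametrization sketch has a small slip --- the direct computation gives $\partial_t|\partial_s\gamma|=\langle\tau_t,\partial_s(v\nu_t)\rangle=\kappa_t v$ immediately (via $\partial_\tau\nu=\kappa\tau$), with no $\partial_\tau v$ term, no extra sign, and no need for the surface divergence theorem.
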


\begin{proof}
First we note that since $u$ is divergence free in $\Omega_t$  we may write
\[
\Delta u = \div \nabla u =   \div \nabla u + \nabla \div u = \div \nabla u  + \div (\nabla u)^\T= 2 \div \nablasym u
\ \ \text{in} \ \ \Omega_t.
\]
Then we test the Stokes equation with $u$ itself, use the previous identity and the divergence theorem as well as recall
the boundary condition in \eqref{eq:main}
\begin{align*}
  0 &= \int_{\Omega_t} \inner{\vartheta \Delta u -\nabla p}{u} \d x \\
 &= \int_{\Omega_t} \langle 2\vartheta \div \nablasym u -\nabla p,u\rangle \d x\\
 &= \int_{\Omega_t} \div\left(2\vartheta [\nablasym u] u\right) - 2\vartheta \, \tr(\nablasym u \nabla u)
-\div(pu) + p \div u \d x\\
  &= \int_{\partial \Omega_t} \langle [2\vartheta\nablasym u-pI]u, \n_t\rangle \d \H^1 -2 \vartheta  \int_{\Omega_t} 
\langle \nablasym u,\nablasym u\rangle \d x\\
 &= \int_{\partial \Omega_t} \langle u, [2\vartheta\nablasym u-pI]\n_t\rangle \d \H^1 -2 \vartheta  \int_{\Omega_t} 
|\nablasym u |^2 \d x\\
   &=- \int_{\partial \Omega_t} \curv_t \inner{u}{\n_t} \d \H^1 -2 \vartheta  \int_{\Omega_t} 
|\nablasym u |^2 \d x
 \end{align*} 
 and we conclude proof by recalling the first variation of perimeter \eqref{dP/dt}. 
\end{proof}

We further observe that testing the Stokes equation in the previous proof with any a.e.\ divergence free
$X \in W^{1,1}(\pa \Omega_t)$ yields more generally
\[
 \int_{\partial \Omega_t} \curv_t \inner{X}{\n_t} \d \H^1 = - 2 \vartheta \int_{\Omega_t} \langle \nablasym u, \nablasym X \rangle
\d x
\]
where $X$ at $\pa \Omega_t$ as a $L^1(\pa \Omega_t;\R^2)$-function is given by the trace operator associated to $\Omega_t$.
Additionally, if $\langle X,\n_t \rangle =\langle u,\n_t \rangle$ a.e.\ in the sense of the $\H^1$-measure on $\pa \Omega_t$, we conclude
 \begin{align*}
  \frac{\d}{\d t} \P_t \geq -2 \vartheta\int_{\Omega_t} |\nablasym X|^2 \d x
 \end{align*}
with equality precisely when $u(t,\freearg)-X$ modulo translation equals (almost everywhere) to an antisymmetric linear map.

As a direct consequence of the gradient flow structure
we have monotonicity of perimeter and a $L^2L^2$-control over the symmetric energy of a classical solution $(\Omega_t, u,p)_{t \in [0,T)}$, that is, 
\begin{align}
\label{eq:dissipation2}
P_s =P_t+2\vartheta \int_s^t\norm{\nablasym u}_{L^2(\Omega_h)}^2 \d h
\end{align}
for all $0 \leq s \leq t < T$. By combining this with the estimate \eqref{est:rOmega} 
we conclude
\beq
\label{est:r_t}
r_t \leq \P_0 .
\eeq
Note that the moment-preserving condition 
yields a quantitative control of the $H^1$-energy by the symmetric energy.
 Indeed, since $u_{\Omega_t} = 0$ and $\int_{\Omega_t} \nabla u \d x$ is symmetric, 
the quantified Poincaré inequality \eqref{Poincare:est}, the Korn-Poincaré inequality 
\eqref{Korn:est3} and $|\Omega_t|=|\Omega_0|$ combined yield
 yield
\beq
\label{est:H1control}
\|u\|_{L^2(\Omega_t)}^2 \leq C \frac{|\Omega_0|P_t}{r_t}\norm{\nabla u}_{L^2(\Omega_t)}^2
\ \ \ \text{and} \ \ \ \|\nabla u\|_{L^2(\Omega_t)}^2 \leq  C \frac{|\Omega_0|P_t}{r_t^3} \norm{\nablasym u}_{L^2(\Omega_t)}^2
\eeq
We also observe that the dissipation equation \eqref{eq:dissipation} 
allows us to control the symmetric energy itself at every time in terms of the geometry of evolving  domain, thus resulting in the total control of the $H^1$-energy.
\begin{prop}
\label{prop:H1control}
Let $(\Omega_t,u,p)_{t \in [0,T)}$ be a $C^2$-regular solution. There exists an independent constant $C$
such that for every time $t \in [0,T)$ it holds
\beq
\label{est:symmcontrol}
\vartheta \norm{\nablasym u}_{L^2(\Omega_t)} \leq C \frac{|\Omega_0|P_t}{r^{5/2}_t}
\|\curv_t\|_{L^2(\pa \Omega_t)}.
\eeq
\end{prop}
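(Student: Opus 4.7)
The plan is to start from the gradient-flow identity of Lemma \ref{lem:gradFlow} and turn the right-hand side into something controlled by $\|\kappa_t\|_{L^2(\partial\Omega_t)}$ times $\|\nabla_{\mathrm{sym}} u\|_{L^2(\Omega_t)}$, after which one absorbs one factor of the symmetric gradient. Explicitly, \eqref{eq:dissipation} gives
\begin{equation*}
2\vartheta\|\nabla_{\mathrm{sym}} u\|_{L^2(\Omega_t)}^2 = -\int_{\partial\Omega_t}\kappa_t\langle u,\nu_t\rangle \, d\mathcal{H}^1 \leq \|\kappa_t\|_{L^2(\partial\Omega_t)}\,\|u\|_{L^2(\partial\Omega_t)},
\end{equation*}
so the whole task reduces to the trace estimate
\begin{equation*}
\|u\|_{L^2(\partial\Omega_t)} \leq C\,\frac{|\Omega_0|P_t}{r_t^{5/2}}\|\nabla_{\mathrm{sym}} u\|_{L^2(\Omega_t)}.
\end{equation*}

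For this trace bound I would use the cutoff $\eta$ and the extended normal $\nu_t$ provided in Subsection \ref{sec:UBC}. Since $\eta\nu_t = \nu_t$ on $\partial\Omega_t$, the divergence theorem gives
\begin{equation*}
\|u\|_{L^2(\partial\Omega_t)}^2 = \int_{\partial\Omega_t}|u|^2\langle\eta\nu_t,\nu_t\rangle \, d\mathcal{H}^1 = \int_{\Omega_t}\div\bigl(\eta\,|u|^2\nu_t\bigr) \, dx.
\end{equation*}
Expanding the divergence and using $|\partial_\nu\eta|\leq C/r_t$ from \eqref{diffeta2} together with $|\div\nu_t|\leq 2/r_t$ from \eqref{div n est}, I obtain
\begin{equation*}
\|u\|_{L^2(\partial\Omega_t)}^2 \leq \frac{C}{r_t}\|u\|_{L^2(\Omega_t)}^2 + C\,\|u\|_{L^2(\Omega_t)}\|\nabla u\|_{L^2(\Omega_t)}.
\end{equation*}

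The moment-preservation condition (ii) in Definition \ref{def:classicsol} is precisely what allows application of the Poincaré and Korn–Poincaré inequalities in the form \eqref{est:H1control}. Chaining them (and using $|\Omega_t|=|\Omega_0|$) yields
\begin{equation*}
\|u\|_{L^2(\Omega_t)} \leq C\,\frac{|\Omega_0|P_t}{r_t^{2}}\|\nabla_{\mathrm{sym}} u\|_{L^2(\Omega_t)},
\qquad
\|\nabla u\|_{L^2(\Omega_t)} \leq C\left(\frac{|\Omega_0|P_t}{r_t^{3}}\right)^{1/2}\|\nabla_{\mathrm{sym}} u\|_{L^2(\Omega_t)}.
\end{equation*}
Substituting gives two terms, of respective sizes $|\Omega_0|^2 P_t^2/r_t^5$ and $(|\Omega_0|P_t)^{3/2}/r_t^{7/2}$. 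I would then show the first dominates: this follows from the estimates \eqref{est:rOmega} (giving $|\Omega_0|\geq \pi r_t^2$) together with the isoperimetric inequality $P_t \geq 2\sqrt{\pi|\Omega_0|}$, which combine to yield $|\Omega_0|^{1/2}P_t^{1/2}/r_t^{3/2}\gtrsim 1$. Hence
\begin{equation*}
\|u\|_{L^2(\partial\Omega_t)} \leq C\,\frac{|\Omega_0|P_t}{r_t^{5/2}}\|\nabla_{\mathrm{sym}} u\|_{L^2(\Omega_t)},
\end{equation*}
and inserting this back into the Cauchy–Schwarz bound above and dividing through by $\|\nabla_{\mathrm{sym}} u\|_{L^2(\Omega_t)}$ concludes the proof.

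The only mildly subtle step is the trace bound via the extended normal; everything else is scaling bookkeeping and an appeal to the domain-independent Poincaré/Korn inequalities of Subsection \ref{subsec:domainIndep}, which were set up precisely for this type of argument. The main obstacle to watch is to make sure the geometric parameters $|\Omega_0|$, $P_t$, $r_t$ are tracked correctly so that one ends up with the stated power $r_t^{-5/2}$ and not something worse.
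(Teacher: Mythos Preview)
Your proposal is correct and follows essentially the same route as the paper: start from the dissipation identity \eqref{eq:dissipation}, apply Cauchy--Schwarz on the boundary, convert the boundary $L^2$-norm into a bulk integral via the cutoff $\eta$ and the divergence theorem, and then close using the moment-preserving Poincar\'e/Korn--Poincar\'e bounds \eqref{est:H1control} together with \eqref{est:rOmega}. The only cosmetic difference is that the paper works with $\langle u,\nu_t\rangle$ rather than the full $|u|$ on $\partial\Omega_t$, which lets it use $|\langle\partial_{\nu_t}u,\nu_t\rangle|\le|\nablasym u|$ directly and skip one application of Korn; your detour through $\|\nabla u\|_{L^2}$ produces an extra intermediate term of size $(|\Omega_0|P_t)^{3/2}/r_t^{7/2}$ that you then correctly absorb.
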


\begin{proof}
The dissipation equation \eqref{eq:dissipation} yields
\[
\vartheta \|\nablasym u\|_{L^2(\Omega_t)}^2 \leq \|\curv_t\|_{L^2(\pa \Omega_t)}
 \|\langle u, \n_t \rangle \|_{L^2(\pa \Omega_t)}.
\]
Let $\eta$ be the cutoff for $\Omega_t$ defined in \eqref{eta}. 
Then using the identities \eqref{basicid}, \eqref{div(f*n)} and the estimates \eqref{est:rOmega},
\eqref{diffeta2}, \eqref{div n est} and \eqref{est:H1control} 
we have
\begin{align*}
\int_{\pa \Omega_t} \langle u,\nu_t \rangle^2 \d \H^1 
&=\int_{\Omega_t} 2 \eta \, \langle u,\nu_t \rangle \langle \pa_{\n_t} u,\nu_t \rangle + \pa_{\n_t} \eta \langle u,\nu_t \rangle^2 
+ \div \n_t \, \eta \, \langle u,\n_t \rangle^2 \d x \\
&\leq C \norm{u}_{L^2(\Omega_t)} \norm{\nablasym u}_{L^2(\Omega_t)} +  \frac{C}{r_t}
\norm{u}_{L^2(\Omega_t)}^2\\
&\leq C \frac{|\Omega_0| P_t }{r_t^2}\norm{\nablasym u}_{L^2(\Omega_t)}^2 +  C \frac{|\Omega_0|^2P_t^2}{r_t^5}\norm{\nablasym u}_{L^2(\Omega_t)}^2 \\
&\leq C \frac{|\Omega_0|^2P_t^2}{r_t^5}\norm{\nablasym u}_{L^2(\Omega_t)}^2.
\end{align*}
Thus combining the estimates concludes the proof.
\end{proof}

%%%%%%%%%%%%%%%%%%%%%%%%%%%%%%%%%%%%%%%%%%%%%%%%%%%%%%%%%%%%%%%%%%%%%%%%%%%%%%%%%%%%%%%%%%%%%

\subsection{A simple example of evolution}

We close this section with an example that shows that there is indeed a change in topology. 
For simplicity we assume $\vartheta = 1$.

\begin{exam}[Evolution of an annulus] \label{example}
 Consider an annulus $\Omega$ of inner radius $l$ and outer radius $L$, i.e.\  $\Omega = B_L \setminus \overline{B_l}$. We first solve the Stokes problem
such that the solution is moment-preserving in $\Omega$. 
 
 By radial symmetry of $\Omega$, the only solution that does not result in additional rotation or translation has to have the same radial symmetry. As the velocity $u$ has to be divergence free, this means that $u := \lambda\id / |\id|^2$ for some $ \lambda \in \R$. Then by a quick calculation
 \begin{align*}
  \nabla u  = \frac{ \lambda}{|\id|^2} - 2 \lambda \frac{\id \otimes \id}{|\id|^4} \quad \text{ and } \quad \Delta u = 0 \ \ \text{in} \ \ \Omega.
 \end{align*}
 As a result the pressure is given by a constant $p$. We can now study the boundary conditions at the inner and outer radius where $\curv=-1/l$ and $\curv=1/L$ respectively. There we have 
$[\nablasym u] \n = - \lambda \n /|\id|^2$. Then the boundary condition 
$p \n - 2[\nablasym u]\n = \curv \n$
yields the equations
 \begin{align*}
  p  +  \lambda\frac{2}{l^2} = -\frac{1}{l} \quad \text{ and } \quad
  p  +  \lambda\frac{2}{L^2} = \frac{1}{L}
 \end{align*}
and thus $ \lambda =-\frac{Ll}{2(L-l)}$ and $p = \frac{1}{L-l}$.

Then we consider a time evolving annulus $\Omega_t$ where $l=l(t)$ and $L=L(t)$.
Now $v(t,\freearg)=\langle u, \n_t \rangle$ so $v(t,\freearg)= - \lambda/l$ 
and $v(t,\freearg)= \lambda/L$ on the inner and outer rings respectively.
On the other hand, they must equal to the rates of $-l$ and $L$.
Combining these gives us the changes of radii:
 \begin{align*}
  \frac{\d}{\d t}l = \frac{\lambda}{l} = -\frac{L}{2(L-l)} \quad \text{ and } \quad
  \frac{\d}{\d t} L = \frac{\lambda}{L} = -\frac{l}{2(L-l)}.
 \end{align*}
 Note that also the enclosed area $A:= \pi(L^2-l^2)$ is conserved, so this can be rewritten as one-dimensional ODEs
 \begin{align*}
  \frac{\d}{\d t}l = -\frac{\sqrt{A/\pi+l^2}}{2\sqrt{A/\pi+l^2} - 2 l} \quad \text{ and }\quad
  \frac{\d}{\d t} L =  -\frac{\sqrt{-A/\pi+L^2}}{2L-2\sqrt{-A/\pi+L^2}}
 \end{align*}
 which can be solved explicitly as
 \begin{align*}
  l(t) := \frac{A/\pi-(b_0+t/2)^2}{2b_0+t} \quad \text{ and }\quad
  L(t) := \frac{A/\pi + (b_0+t/2)^2}{2b_0+t}
 \end{align*}
 where $b_0 = \sqrt{A/\pi + l^2(0)}-l(0)$. We note that this corresponds to an actual solution as long as $l(t) > 0$, i.e. there is a collapse into a (stable) ball at $t_0 =2\sqrt{A/\pi} - 2b_0$. Note that prior to this point the normal velocity at the collapsing 
inner ring converges to $1/2$ but despite this we have
\[
\int_{\partial \Omega_t} \curv_t^2\, \d \H^1=\int_{\partial B_{l(t)}}\frac{1}{l^2(t)}\, \d \H^1+\int_{\partial B_{L(t)}}\frac{1}{L^2(t)}\, \d\H^1  \sim \frac{1}{l(t)},
\]
which becomes unbounded as $t \rightarrow t_0$ and thus demonstrates that in general no curvature estimate can be expected to hold unconditionally.
\end{exam}

%%%%%%%%%%%%%%%%%%%%%%%%%%%%%%%%%%%%%%%%%%%%%%%%%%%%%%%%%%%%%%%%%%%%%%%%%%%%%%%%%%%%%%%%%%%%%
%%%%%%%%%%%%%%%%%%%%%%%%%%%%%%%%%%%%%%%%%%%%%%%%%%%%%%%%%%%%%%%%%%%%%%%%%%%%%%%%%%%%%%%%%%%%%

\section{\texorpdfstring{$L^2$}{L2}-Curvature estimates}

In this section we estimate the rate
\[
\frac{\d}{\d t}  \int_{\pa \Omega_t} \curv_t^2 \d \H^1
\]
in terms of maximal UBC radius $r_t$ for the domain $\Omega_t$.
In the subsection, we generally assume that \solT is a $C^2$-regular solution. 

\begin{lemma} 
 Let \solT be a $C^2$-regular solution. Then it satisfies
\begin{align}
\label{1stvar_willmoreD}
\begin{split}
\frac{\d}{\d t}  \int_{\pa \Omega_t} \curv_t^2 \d \H^1
=&-\vartheta \int_{\Omega_t}  |\nabla^2 u|^2 \d x + 2 \vartheta \int_{\pa \Omega_t}  \div_{\n_t} u \, \langle \Delta  u , \n_t \rangle \d \H^1 \\
&- 4 \vartheta \int_{\pa \Omega_t} \curv_t (\div_{\tau_t} u)^2 \d \H^1-\int_{\pa \Omega_t} \curv_t^2 \, \div_{\tau_t} u   \d \H^1.
\end{split}
\end{align}
\end{lemma}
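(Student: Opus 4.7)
The plan is to combine three ingredients: the geometric transport formulas for $\curv_t^2$ under normal motion, Reilly's identity \eqref{vReilly2D} applied to $u$, and the Stokes equation together with its boundary conditions \eqref{eq:surfacetension1}--\eqref{eq:surfacetension2}. Throughout I write $v := \langle u, \n_t\rangle$ and work in the extended frame $\{\n,\tau\}$ of Subsection~\ref{sec:UBC}, so that $\pa_\n \n = \pa_\n \tau = 0$ in the tubular neighborhood and the Weingarten relations \eqref{weingarten} hold on $\pa\Omega_t$.

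For the left-hand side I would use the standard transport formulas for a boundary evolving with normal velocity $v$, namely $\frac{\d}{\d t}(\d\H^1) = \curv_t v\, \d\H^1$ and the pointwise curvature evolution $\pa_t\curv_t = -\pa_{\tau_t}^2 v - \curv_t^2 v$, to get $\frac{\d}{\d t}\int_{\pa\Omega_t}\curv_t^2 \d\H^1 = \int_{\pa\Omega_t}(-2\curv_t \pa_{\tau_t}^2 v - \curv_t^3 v)\d\H^1$. Expanding $\pa_\tau v$ via \eqref{weingarten}, integrating by parts on the closed curve, and using both \eqref{eq:surfacetension2} in the form $\langle \pa_\tau u,\n\rangle = -\langle \pa_\n u,\tau\rangle$ and the divergence-free identity $\div_\n u = -\div_\tau u$ on $\pa\Omega_t$, this collapses to
\[
\frac{\d}{\d t}\int_{\pa\Omega_t}\curv_t^2\d\H^1 = -2\int_{\pa\Omega_t}\curv_t\langle\pa_{\tau_t}^2 u,\n_t\rangle\d\H^1 - 3\int_{\pa\Omega_t}\curv_t^2\div_{\tau_t} u\, \d\H^1.
\]

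For the right-hand side I would apply Reilly \eqref{vReilly2D} to $X=u$ and combine with $\vartheta\Delta u = \nabla p$, $\Delta p = 0$ (from the divergence of Stokes) and one bulk integration by parts, to obtain $-\vartheta\int|\nabla^2 u|^2 = -\int_{\pa\Omega_t} p\langle\Delta u,\n\rangle + 2\vartheta\int_{\pa\Omega_t}\langle\pa_\n u,\pa_\tau^2 u\rangle + \vartheta\int_{\pa\Omega_t}\curv|\nabla u|^2$. The boundary condition $\curv = p - 2\vartheta\div_\n u$ from \eqref{eq:surfacetension1} then combines the pressure term with the second term of the target into $-\int_{\pa\Omega_t}\curv\langle\Delta u,\n\rangle$. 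Setting $\alpha := \langle\pa_\tau u,\n\rangle = -\langle\pa_\n u,\tau\rangle$ and $\beta := \div_\tau u = -\div_\n u$, Weingarten gives $|\nabla u|^2 = 2(\alpha^2 + \beta^2)$ and $\langle\pa_\n u,\pa_\tau^2 u\rangle = -\pa_\tau(\alpha\beta) + \curv(\beta^2 - \alpha^2)$ on $\pa\Omega_t$, so a single IBP around the closed curve yields the cancellation
\[
2\vartheta\int_{\pa\Omega_t}\langle\pa_\n u,\pa_\tau^2 u\rangle\d\H^1 + \vartheta\int_{\pa\Omega_t}\curv|\nabla u|^2\d\H^1 = 4\vartheta\int_{\pa\Omega_t}\curv(\div_\tau u)^2\d\H^1.
\]
Hence the full target right-hand side reduces to $-\int_{\pa\Omega_t}\curv\langle\Delta u,\n\rangle\d\H^1 - \int_{\pa\Omega_t}\curv^2\div_\tau u\, \d\H^1$.

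To identify the two reduced forms I would use the frame decomposition $\Delta u = \pa_\n^2 u + \pa_\tau^2 u + \curv\pa_\n u$ on $\pa\Omega_t$ (from $\delta^{ij} = \n^i\n^j + \tau^i\tau^j$ together with $\pa_\tau^2 u = \tau^i\tau^j\pa_{ij}u - \curv\pa_\n u$ by Weingarten), which gives $\langle\Delta u,\n\rangle = \langle\pa_\n^2 u,\n\rangle + \langle\pa_\tau^2 u,\n\rangle - \curv\div_\tau u$. The remaining term $\langle\pa_\n^2 u,\n\rangle = \pa_\n\div_\n u = -\pa_\n\div_\tau u$ (from $\pa_\n\n=0$ and differentiating $\div u = 0$ normally from inside) is handled via the commutator identity $[\pa_\n,\pa_\tau] = -\curv\pa_\tau$, which follows from $\pa_\n\n = \pa_\n\tau = 0$ and $[\n,\tau] = -\curv\tau$ on $\pa\Omega_t$; together with Weingarten this produces the key bookkeeping identity $\langle\pa_\n^2 u,\n\rangle = \langle\pa_\tau^2 u,\n\rangle + 3\curv\div_\tau u$, whence $\langle\Delta u,\n\rangle = 2\langle\pa_\tau^2 u,\n\rangle + 2\curv\div_\tau u$. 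Substituting makes the reduced target right-hand side coincide with the left-hand side derived above. The main obstacle is the bookkeeping in this non-commutative extended frame: several carefully chosen integrations by parts -- both around the closed curve and between bulk and boundary via Reilly -- must be orchestrated so that the pressure cancels and the Willmore-type dissipation $-\vartheta\int|\nabla^2 u|^2$ emerges cleanly from otherwise mismatched boundary terms.
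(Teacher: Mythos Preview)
Your proof is correct and uses essentially the same ingredients as the paper's argument: the transport formula for $\int_{\pa\Omega_t}\curv_t^2$, Reilly's identity \eqref{vReilly2D}, the Stokes equation with its boundary conditions, and the key pointwise identity $\langle\Delta u,\n\rangle = 2\langle\pa_\tau^2 u,\n\rangle + 2\curv\div_\tau u$ (which is exactly the paper's \eqref{<Delta u,n>}). The only organizational difference is that the paper proceeds linearly from the time derivative to the target expression (transport $\to$ \eqref{<Delta u,n>} $\to$ boundary condition $\to$ divergence theorem $\to$ improved Reilly), whereas you reduce both sides to the common intermediate form $-\int_{\pa\Omega_t}\curv_t\langle\Delta u,\n_t\rangle - \int_{\pa\Omega_t}\curv_t^2\div_{\tau_t} u$ and match them; in particular your direct computation of $2\langle\pa_\n u,\pa_\tau^2 u\rangle + \curv|\nabla u|^2 = -2\pa_\tau(\alpha\beta) + 4\curv\beta^2$ is exactly the content of the paper's improved Reilly formula \eqref{improved2DReilly}.
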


\begin{proof}
 We start from a general formula for the first variation of the energy $\|\curv_t\|_{L^2(\pa \Omega_t)}^2$ in formula \eqref{eq:k2derivative}, using that the boundary evolves with the normal part of $u$.
\begin{equation}
\label{1stvar_willmoreA}
\frac{\d}{\d t}  \int_{\pa \Omega_t} \curv_t^2 \d \H^1
=-\int_{\pa \Omega_t}  2\curv_t\left(\langle \pa_{\tau_t}^2 u , \n_t \rangle
+ \curv_t \div_{\tau_t} u \right) + \curv_t^2 \, \div_{\tau_t} u   \d \H^1.
\end{equation} 
Differentiating \eqref{eq:surfacetension2} in the tangential direction and using incompressibility as well as the decomposition \eqref{Delta X}
of the operator $\Delta$ restricted to $\pa \Omega_t$ yields the following technical identity
\begin{equation}
\label{<Delta u,n>}
2 \langle \pa_{\tau_t}^2 u, \n_t\rangle = \langle \Delta u, \n_t \rangle - 2 \curv_t \,\div _{\tau_t} u
\qquad 
\end{equation}
on $\pa \Omega_t$. Thus by substituting \eqref{<Delta u,n>} into \eqref{1stvar_willmoreA} and recalling the expression \eqref{eq:surfacetension1} for $p$ on $\pa \Omega_t$ we
conclude
\begin{equation}
\label{1stvar_willmoreB}
\frac{\d}{\d t}  \int_{\pa \Omega_t} \curv_t^2 \d \H^1
=-\int_{\pa \Omega_t}  p \langle \Delta  u , \n_t \rangle - 2 \vartheta \div_{\n_t} u \, \langle \Delta  u , \n_t \rangle
+ \curv_t^2 \, \div_{\tau_t} u 
\d \H^1.
\end{equation}
Since the incompressibility implies $\div \, \Delta u = \Delta\, \div u = 0$ in $\Omega_t$, then
by applying divergence  theorem\footnote{Since $\Delta u \in C^\infty(\Omega;\R^2)$, we have 
$\int_{\pa \{d_t <-\rho\}} g \langle \Delta u, \n_t \rangle \ \H^1 = \int_{\{d_t<-\rho\}} \langle 
\Delta u, \nabla g\rangle \d x$ for every $\rho \in (0,r_t)$ when $g \in C^{0,1}(\Omega)$.
Thus passing $\rho$ to zero yields $\int_{\pa \Omega_t} g \langle \Delta u, \n_t \rangle \ \H^1 = \int_{\Omega_t} \langle 
\Delta u, \nabla g\rangle \d x$. 
}
and Stokes equation in \eqref{eq:main} on the first term  we further obtain
\begin{align}
\label{1stvar_willmoreC}
\begin{split}
\frac{\d}{\d t}  \int_{\pa \Omega_t} \curv_t^2 \d \H^1
=&-\vartheta \int_{\Omega_t}  |\Delta u|^2 \d x + 2 \vartheta \int_{\pa \Omega_t}  \div_{\n_t} u \, \langle \Delta  u , \n_t \rangle \d \H^1 \\
&-\int_{\pa \Omega_t} \curv_t^2 \, \div_{\tau_t} u   \d \H^1.
\end{split}
\end{align}
Taking incompressibility and \eqref{eq:surfacetension2} into account, the two dimensional Reilly's formula \eqref{vReilly2D} for $u$ takes the form
\begin{equation}
\label{improved2DReilly}
\int_{\Omega_t} |\nabla^2 u|^2 - |\Delta u|^2 \d x = - 4  \int_{\pa \Omega_t} \curv_t (\div_{\tau_t} u)^2 \d \H^1.
\end{equation}
Hence substitution into \eqref{1stvar_willmoreC} yields the desired equality.
\end{proof}

 While at first glance the identity in the previous lemma seems very useful, it has the issue that its lower order terms are all on the boundary. Thus we cannot immediately estimate them. Instead we first need to transform them into the bulk. This is done in the following.

\begin{lemma}
Let \solT be a $C^2$-regular solution. Then this solution satisfies
\begin{align}
\label{1stvar_willmoreE}
\begin{split}
&\phantom{{}={}}\frac\d{\d t} \int_{\pa \Omega_t}  \curv_t^2 \d \H^1 \\
&=-\vartheta\int_{\Omega_t} \eta  \left(\langle \Delta u, \tau_t \rangle^2+\langle \pa_{\n_t}^2 u-\pa_{\tau_t}^2 u - \div \, \n_t \, \pa_{\n_t} u, \n_t \rangle^2\right) \d x \\
&\phantom{{}={}}-2\vartheta\int_{\Omega_t} \eta  \left( \langle \pa_{\tau_t}^2 u,\tau_t\rangle^2 + \langle \pa_{\n_t}\pa_{\tau_t} u, \n_t\rangle^2\right) \d x
-\vartheta\int_{\Omega_t} (1-\eta) \, |\nabla^2 u|^2 \d x \\
&\phantom{{}={}}+4\vartheta\int_{\Omega_t} \div \, \n_t \, \eta \,  \left \langle  \pa_{\n_t}^2 u, -2 \langle \pa_{\n_t} u, \n_t \rangle \n_t+
\langle \pa_{\tau_t} u, \tau_t\rangle 
 \tau_t \right\rangle \d x \\
&\phantom{{}={}}+2\vartheta\int_{\Omega_t} \pa_{\n_t} \eta \, \langle \pa_{\n_t} u, \n_t \rangle  \langle \Delta u , \n_t \rangle \d x
-\int_{\pa \Omega_t} \curv_t^2 \, \div_{\tau_t} u   \d \H^1
\\
&\phantom{{}={}}+2\vartheta\int_{\Omega_t} \eta \, ( \div \, \n_t \, \langle \pa_{\n_t} u, \tau_t \rangle )^2 \d x - 4\vartheta\int_{\Omega_t}
\div \, \n_t \, \pa_{\n_t} \eta \, \langle \pa_{\n_t} u, \n_t \rangle^2 \d x
 \\
&\phantom{{}={}}+\vartheta\int_{\Omega_t} \div \, \n_t \, \pa_{\n_t} \eta \left(\langle\pa_{\tau_t}  u, \n_t \rangle -
\langle\pa_{\n_t}  u, \tau_t \rangle\right)^2 \d x.
\end{split}
\end{align}
\end{lemma}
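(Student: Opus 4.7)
The plan is to start from the identity \eqref{1stvar_willmoreD} and perform two coordinated transformations. First, I convert the two remaining boundary integrals into bulk integrals supported in $\spt\eta$ by means of the cutoff function and the divergence theorem. Second, I decompose the full bulk term $\int_{\Omega_t}|\nabla^2 u|^2\,\d x$ using the extended normal-tangent frame on $\spt\eta$, keeping the interior $(1-\eta)$-piece untouched. The term $-\int_{\pa\Omega_t}\curv_t^2\,\div_{\tau_t}u\,\d\H^1$ from \eqref{1stvar_willmoreD} is kept as is, producing the corresponding boundary contribution in \eqref{1stvar_willmoreE}.

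For the boundary-to-bulk transformation, the key observation is that since $\eta\equiv 1$ on $\pa\Omega_t$ and $\nabla\eta=\pa_{\n_t}\eta\,\n_t$ in $\spt\eta$ (see \eqref{nablaeta}), the divergence theorem yields
\[
\int_{\pa\Omega_t} f\,\d\H^1 = \int_{\Omega_t}\div(\eta f\,\n_t)\,\d x = \int_{\Omega_t}\bigl(\pa_{\n_t}\eta\,f + \eta\,\pa_{\n_t}f + \eta f\,\div\n_t\bigr)\,\d x
\]
for any scalar $f$ that extends suitably to $\spt\eta$. I would apply this with $f=2\vartheta\,\div_{\n_t}u\,\langle\Delta u,\n_t\rangle$ and with $f=-4\vartheta\,\curv_t(\div_{\tau_t}u)^2$, where $\div_{\n_t}u$, $\div_{\tau_t}u$ and $\curv_t$ are extended via the natural formulas $\langle\pa_{\n_t}u,\n_t\rangle$, $\langle\pa_{\tau_t}u,\tau_t\rangle$ and $\div\n_t$, respectively. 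The product rule then produces all correction terms with explicit factors of $\pa_{\n_t}\eta$ and $\div\n_t$ that appear in the last four lines of \eqref{1stvar_willmoreE}, including the ones of the form $\pa_{\n_t}\eta\,\langle\pa_{\n_t}u,\n_t\rangle\langle\Delta u,\n_t\rangle$ and $\div\n_t\,\pa_{\n_t}\eta\,\langle\pa_{\n_t}u,\n_t\rangle^2$.

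For the bulk term, the splitting $|\nabla^2u|^2=\eta|\nabla^2 u|^2+(1-\eta)|\nabla^2 u|^2$ isolates the interior contribution and reduces the analysis to $\spt\eta$. Within $\spt\eta$ the extended frame satisfies $\pa_{\n_t}\n_t=\pa_{\n_t}\tau_t=0$ together with $\pa_{\tau_t}\n_t=\div\n_t\,\tau_t$ and $\pa_{\tau_t}\tau_t=-\div\n_t\,\n_t$, yielding the commutator identity $[\pa_{\tau_t},\pa_{\n_t}]u=\div\n_t\,\pa_{\tau_t}u$ and the frame decomposition
\[
|\nabla^2 u|^2 = |\pa_{\n_t}^2u|^2+2|\pa_{\n_t}\pa_{\tau_t}u|^2+|\pa_{\tau_t}^2 u+\div\n_t\,\pa_{\n_t}u|^2.
\]
Expanding each norm in the $\{\n_t,\tau_t\}$-basis and then eliminating the normal-normal components via the two differentiated incompressibility identities
\[
\langle\pa_{\n_t}^2 u,\n_t\rangle=-\langle\pa_{\n_t}\pa_{\tau_t}u,\tau_t\rangle,\qquad \langle\pa_{\n_t}\pa_{\tau_t}u,\n_t\rangle=-\langle\pa_{\tau_t}^2 u,\tau_t\rangle-\div\n_t\,\langle\pa_{\n_t}u,\tau_t\rangle,
\]
obtained by differentiating $\langle\pa_{\n_t}u,\n_t\rangle+\langle\pa_{\tau_t}u,\tau_t\rangle=0$ in the normal and tangential directions, produces the four squared combinations on the second and third lines of \eqref{1stvar_willmoreE}.

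The main obstacle is the combinatorial bookkeeping rather than any conceptual difficulty: the two procedures above generate a profusion of geometric correction terms with factors $\div\n_t$, $\pa_{\n_t}\eta$, or their product, originating from the non-zero Christoffel symbols of the extended frame, from the boundary-to-bulk conversion, and from the product-rule expansions of the squared combinations. These must regroup exactly into the positive quadratic expressions $(\div\n_t\,\langle\pa_{\n_t}u,\tau_t\rangle)^2$ and $(\langle\pa_{\tau_t}u,\n_t\rangle-\langle\pa_{\n_t}u,\tau_t\rangle)^2$, as well as the mixed cross-term $\langle\pa_{\n_t}^2 u,-2\langle\pa_{\n_t}u,\n_t\rangle\n_t+\langle\pa_{\tau_t}u,\tau_t\rangle\tau_t\rangle$ displayed in \eqref{1stvar_willmoreE}. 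Once the identities above are applied and each integration by parts is tracked, the remaining cancellations are mechanical and yield the claimed form.
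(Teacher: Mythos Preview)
Your overall plan — convert the boundary integrals in \eqref{1stvar_willmoreD} to bulk integrals via the cutoff $\eta$, split off the interior piece $(1-\eta)|\nabla^2 u|^2$, and keep the last boundary term unchanged — matches the paper's strategy, and your treatment of the third boundary term $-4\vartheta\int_{\pa\Omega_t}\curv_t(\div_{\tau_t}u)^2\,\d\H^1$ via \eqref{div(divn*f*n)} agrees with what the paper does.

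There is, however, a concrete gap in how you handle the second boundary term $2\vartheta\int_{\pa\Omega_t}\div_{\n_t}u\,\langle\Delta u,\n_t\rangle\,\d\H^1$. Applying the generic formula \eqref{div(f*n)} with $f=\div_{\n_t}u\,\langle\Delta u,\n_t\rangle$ forces you to compute $\pa_{\n_t}f$, which contains the term $\div_{\n_t}u\,\langle\pa_{\n_t}\Delta u,\n_t\rangle$. This is a \emph{third}-derivative contribution; it is not a ``correction term with explicit factors of $\pa_{\n_t}\eta$ and $\div\n_t$'', it has nothing to cancel against in your frame decomposition of $\eta|\nabla^2 u|^2$ (which only produces products of second-order components), and it is not present anywhere in the target identity \eqref{1stvar_willmoreE}. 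For a merely $C^2$-regular solution it is not even well defined up to the boundary.

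The paper avoids this entirely by exploiting $\div\Delta u=0$: it applies the divergence theorem to the vector field $\eta\,\div_{\n_t}u\,\Delta u$ rather than to $\eta f\,\n_t$, so that the derivative lands only on $\eta\,\div_{\n_t}u$ and $\Delta u$ stays undifferentiated (see \eqref{badterm1}). The resulting integrand $2\eta\,\langle\nabla\div_{\n_t}u,\Delta u\rangle$ is then decomposed pointwise in the frame and shown to equal $-(\text{sum of squares}) + |\nabla^2 u|^2 + (\text{lower-order corrections})$; the $+|\nabla^2 u|^2$ produced here is what cancels against the $-\vartheta\int\eta|\nabla^2 u|^2$ from the splitting. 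Your route could be salvaged by a further integration by parts in $\tau_t$ after invoking $\div\Delta u=0$, but you do not mention this, and without it the bookkeeping you describe does not close.
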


\begin{proof}
We transform the second term on the RHS of \eqref{1stvar_willmoreD}
into a bulk term by utilizing the cut-off function $\eta$ for $\Omega_t$ introduced in Section \ref{sec:UBC}.
By applying the divergence theorem on the second term  
and recalling $\div  \Delta u = 0$ and  \eqref{nablaeta}
we obtain
\begin{align}
\label{badterm1}
\begin{split}
&\phantom{{}={}}2 \vartheta \int_{\pa \Omega_t}  \div_{\n_t} u \, \langle \Delta  u , \n_t \rangle \d \H^1 \\
&= 2 \vartheta \int_{\Omega_t}  \eta \, \langle \nabla \div_{\n_t} u, \Delta  u\rangle \d x
+2\vartheta \int_{\Omega_t}  \div_{\n_t} u \, \pa_{\n_t} \eta \, \langle \n_t , \Delta  u\rangle \d x.
\end{split}
\end{align}
To decompose the first integral on RHS of \eqref{badterm1}, we use the following  
technical identities
\begin{equation}
\nabla \div_{\n_t} u =\langle \pa_{\n_t}^2 u, \n_t \rangle \n_t - \langle \pa_{\tau_t}^2 u, \tau_t \rangle \tau_t + \div \, \n_t \langle \pa_{\tau_t} u, \n_t \rangle \tau_t 
\end{equation}
and 
\begin{equation}
\langle\pa_{\tau_t}^2 u, \tau_t \rangle = - \pa_{\n_t} \langle \pa_{\tau_t} u, \n_t \rangle - \div \, \n_t \, \langle \pa_{\n_t} u, \tau_t \rangle
\end{equation}
in  $\Omega_t \cap \mathcal N_{r_t}(\pa \Omega_t$ which can be derived by using the incompressibility condition $\div u=0$ and the basic identities
\eqref{basicid} and \eqref{Delta X} for the extended
normal and tangent fields.

Then by using the previous identities
and decompositions \eqref{Delta X} and \eqref{|nabla^2X|^2} for $\Delta  u$ and $|\nabla^2 u|^2$ respectively  we may decompose $2\langle \nabla \div_{\n_t} u , \Delta u\rangle$ in $\Omega_t \cap \mathcal N(\pa \Omega_t)$ as follows
\begin{align}
\begin{split}
 &2\langle \nabla \div_{\n_t} u , \Delta u\rangle \\
= 
&-\left(\langle \Delta u, \tau_t \rangle^2+\langle \pa_{\n_t}^2 u-\pa_{\tau_t}^2 u - \div \, \n_t \, \pa_{\n_t} u, \n_t \rangle^2+ 2 \langle \pa_{\tau_t}^2 u,\tau_t\rangle^2 + 2 \langle \pa_{\n_t}\pa_{\tau_t} u, \n_t\rangle^2\right)
 \\
&+ \div \, \n_t \, \pa_{\n_t} \left(\langle\pa_{\n_t}  u, \tau_t \rangle^2 + 2 \langle\pa_{\n_t}  u, \tau_t \rangle \langle\pa_{\tau_t}  u, \n_t \rangle - \langle\pa_{\tau_t}  u, \n_t \rangle^2\right)
+2\, ( \div \, \n_t \, \langle \pa_{\n_t} u, \tau_t \rangle )^2 \\
&+|\nabla^2 u|^2
\end{split}
\end{align}
and further
\begin{align*}
 2\vartheta& \, \eta \, \langle \nabla \div_{\n_t} u , \Delta u\rangle \\
= 
&-\vartheta \, \eta \, \left(\langle \Delta u, \tau_t \rangle^2+\langle \pa_{\n_t}^2 u-\pa_{\tau_t}^2 u - \div \, \n_t \, \pa_{\n_t} u, \n_t \rangle^2+ 2 \langle \pa_{\tau_t}^2 u,\tau_t\rangle^2 + 2 \langle \pa_{\n_t}\pa_{\tau_t} u, \n_t\rangle^2\right)
 \\
&+ \vartheta \, \div \, \n_t \, \pa_{\n_t} \left(\eta \left(\langle\pa_{\n_t}  u, \tau_t \rangle^2 + 2 \langle\pa_{\n_t}  u, \tau_t \rangle \langle\pa_{\tau_t}  u, \n_t \rangle - \langle\pa_{\tau_t}  u, \n_t \rangle^2\right)\right)\\
&- \vartheta \, \div \, \n_t \, \pa_{\n_t} \eta \left(\langle\pa_{\n_t}  u, \tau_t \rangle^2 + 2 \langle\pa_{\n_t}  u, \tau_t \rangle \langle\pa_{\tau_t}  u, \n_t \rangle - \langle\pa_{\tau_t}  u, \n_t \rangle^2\right)
+2 \vartheta \, \eta \, ( \div \, \n_t \, \langle \pa_{\n_t} u, \tau_t \rangle )^2 \\
&+ \vartheta \, \eta \, |\nabla^2 u|^2
\end{align*}
in $\spt \eta \, \cap  \Omega_t$. Integrating the previous identity over $\Omega_t$ and
invoking \eqref{div(divn*f*n)} and \eqref{eq:surfacetension2} yields
\begin{align}
\label{badterm2}
\begin{split}
2 \vartheta &\int_{\Omega_t}  \eta \, \langle \nabla \div_{\n_t} u, \Delta  u\rangle \d x \\
=&-\vartheta\int_{\Omega_t} \eta \, \left(\langle \Delta u, \tau_t \rangle^2+\langle \pa_{\n_t}^2 u-\pa_{\tau_t}^2 u - \div \, \n_t \, \pa_{\n_t} u, \n_t \rangle^2\right) \d x \\
&-2\vartheta\int_{\Omega_t} \eta \, \left( \langle \pa_{\tau_t}^2 u,\tau_t\rangle^2 + \langle \pa_{\n_t}\pa_{\tau_t} u, \n_t\rangle^2\right) \d x
+\vartheta\int_{\Omega_t} \eta \, |\nabla^2 u|^2 \d x \\
&+2\vartheta\int_{\pa \Omega_t} \curv_t \langle \pa_{\n_t} u, \tau_t \rangle^2 \d \H^1
+2\vartheta\int_{\Omega_t} \eta \, ( \div \, \n_t \, \langle \pa_{\n_t} u, \tau_t \rangle )^2 \d x\\
&+\vartheta\int_{\Omega_t} \div \, \n_t \, \pa_{\n_t} \eta \left(\langle\pa_{\tau_t}  u, \n_t \rangle^2 - 2 \langle\pa_{\n_t}  u, \tau_t \rangle \langle\pa_{\tau_t}  u, \n_t \rangle -
\langle\pa_{\n_t}  u, \tau_t \rangle^2\right) \d x.
\end{split}
\end{align}
Applying  \eqref{div(divn*f*n)} on the boundary integral in \eqref{badterm2} gives
\[
2 \vartheta \int_{\pa \Omega_t} \curv_t \langle \pa_{\n_t} u, \tau_t \rangle^2 \d \H^1 =  2 \vartheta  \int_{\Omega_t}2 \, \div \, \n_t \, \eta \,  \langle \pa_{\n_t} u, \tau_t \rangle \langle  \pa_{\n_t}^2 u, \tau_t \rangle
+ \div \, \n_t \pa_{\n_t} \eta \,  \langle \pa_{\n_t} u, \tau_t \rangle^2  \d x.
\]
By substituting the previous into \eqref{badterm2} and, in turn, injecting \eqref{badterm2} into \eqref{badterm1} 
we get 
\begin{align}
\label{badterm3}
\begin{split}
2 \vartheta &\int_{\pa \Omega_t}  \div_{\n_t} u \, \langle \Delta  u , \n_t \rangle \d \H^1  \\
=&-\vartheta\int_{\Omega_t} \eta \, \left(\langle \Delta u, \tau_t \rangle^2+\langle \pa_{\n_t}^2 u-\pa_{\tau_t}^2 u - \div \, \n_t \, \pa_{\n_t} u, \n_t \rangle^2\right) \d x \\
&-2\vartheta\int_{\Omega_t} \eta \, \left( \langle \pa_{\tau_t}^2 u,\tau_t\rangle^2 + \langle \pa_{\n_t}\pa_{\tau_t} u, \n_t\rangle^2\right) \d x
+\vartheta\int_{\Omega_t} \eta \, |\nabla^2 u|^2 \d x \\
&+4\vartheta\int_{\Omega_t} \div \, \n_t \, \eta \,  \langle \pa_{\n_t} u, \tau_t \rangle \langle  \pa_{\n_t}^2 u, \tau_t \rangle \d x
+2\vartheta \int_{\Omega_t}  \div_{\n_t} u \, \pa_{\n_t} \eta \, \langle \n_t , \Delta  u\rangle \d x
\\
&+2\vartheta\int_{\Omega_t} \eta \, ( \div \, \n_t \, \langle \pa_{\n_t} u, \tau_t \rangle )^2 \d x+\vartheta\int_{\Omega_t} \div \, \n_t \, \pa_{\n_t} \eta \left(\langle\pa_{\tau_t}  u, \n_t \rangle -
\langle\pa_{\n_t}  u, \tau_t \rangle\right)^2 \d x.
\end{split}
\end{align}
Again, applying  \eqref{div(divn*f*n)} on the third integral on RHS of \eqref{1stvar_willmoreD} yields
\[
- 4 \vartheta \int_{\pa \Omega_t} \curv_t (\div_{\tau_t} u)^2 \d \H^1= - 4 \vartheta \int_{\Omega_t} 2 \, \div \, \n_t \, \eta \, \langle \pa_{\n_t} u, \n_t \rangle
\langle \pa_{\n_t}^2 u, \n_t \rangle + \div \, \n_t \, \pa_{\n_t} \eta \, \langle \pa_{\n_t} u, \n_t \rangle^2 \d x.
\]
Thus we infer from the above, \eqref{1stvar_willmoreD} and \eqref{badterm3} that
\eqref{1stvar_willmoreE} holds.
\end{proof}

\begin{thm}
Let \solT be a $C^2$-regular solution. Then there exists independent constants $c$ and $C$ such that
\begin{equation}
\label{1stvar_willmore:estD}
\frac\d{\d t} \int_{\pa \Omega_t}  \curv_t^2 \d \H^1
\leq-\vartheta c \int_{\Omega_t}|\nabla^2 u|^2 \d x + \vartheta \frac{C}{r_t^2}\int_{\Omega_t} |\nabla u|^2 \d x +  \frac1{\vartheta} \frac{C}{r_t^3} \P_t.
\end{equation}
\end{thm}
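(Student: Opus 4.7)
The plan is to estimate each term on the right-hand side of \eqref{1stvar_willmoreE} separately, organizing them into three groups: (i) the genuinely dissipative block formed by the first three bulk integrals, (ii) the mixed error terms involving $\div \, \n_t$ or derivatives of $\eta$, and (iii) the lone boundary integral. All the tools are the pointwise bounds $|\div \, \n_t| \leq 2/r_t$ on $\spt \eta$ \eqref{div n est}, the cutoff estimates \eqref{diffeta2}, together with Cauchy-Schwarz and Young's inequality. The target is to show that the sum of the "good" terms dominates $\vartheta c \int_{\Omega_t} |\nabla^2 u|^2 \d x$ and that every remaining contribution is absorbable into $\vartheta \epsilon \int |\nabla^2 u|^2 + C \vartheta/r_t^2 \int |\nabla u|^2 + C P_t/(\vartheta r_t^3)$ for a suitably small $\epsilon$.

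The crucial step, and the main obstacle, is to show that the first two bulk integrals in \eqref{1stvar_willmoreE}, multiplied by $\eta$, provide $-\vartheta c \int_{\Omega_t} \eta |\nabla^2 u|^2 \d x$ up to a correction of order $\vartheta/r_t^2 \int_{\Omega_t \cap \spt \eta} |\nabla u|^2 \d x$. Expanding the Laplacian via \eqref{Delta X} as $\Delta u = \partial_{\n_t}^2 u + \partial_{\tau_t}^2 u + \div \n_t \, \partial_{\n_t} u$ and writing all six components $\langle \partial_{\alpha}\partial_{\beta} u, \gamma\rangle$ with $\alpha,\beta,\gamma \in \{\n_t,\tau_t\}$, the quantities $\langle \Delta u, \tau_t\rangle^2$, $\langle \partial_{\n_t}^2 u - \partial_{\tau_t}^2 u - \div \n_t \partial_{\n_t} u, \n_t\rangle^2$, $\langle \partial_{\tau_t}^2 u, \tau_t\rangle^2$ and $\langle \partial_{\n_t}\partial_{\tau_t} u, \n_t\rangle^2$ form, together with the identity $|\nabla^2 u|^2 = \sum_{\alpha,\beta,\gamma} \langle \partial_{\alpha}\partial_{\beta} u, \gamma\rangle^2$ decomposed in the extended frame (with frame-rotation corrections of size $|\div \n_t| |\nabla u| \lesssim |\nabla u|/r_t$, cf.\ \eqref{|nabla^2X|^2}), a linear system equivalent to all second-derivative components. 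Adding the bulk term $-\vartheta \int (1-\eta) |\nabla^2 u|^2$ from the third line then upgrades this to control on the whole of $\Omega_t$.

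For the third group and the remaining error terms, one applies Young's inequality term-by-term: every mixed integral in lines four through seven has the schematic form $\vartheta \, r_t^{-k} \cdot |\nabla^2 u|^{a} |\nabla u|^{2-a}$ with $a \in \{0,1\}$ and $k \in \{1,2\}$, which by $ab \le \epsilon a^2 + b^2/(4\epsilon)$ contributes at most $\vartheta \epsilon \int_{\Omega_t \cap \spt\eta}|\nabla^2 u|^2 + C \vartheta/r_t^2 \int_{\Omega_t \cap \spt\eta} |\nabla u|^2$. The boundary integral $-\int_{\pa\Omega_t} \curv_t^2 \div_{\tau_t} u \d\H^1$ is handled by $|\curv_t| \leq 1/r_t$ to reduce to $r_t^{-2} \int_{\pa\Omega_t} |\nabla u|\d\H^1$, converting this boundary integral to the bulk via the divergence theorem applied to $\eta \, |\nabla u|^{2}\n_t$ (or each component after a short mollification), using $\eta = 1$ on $\pa \Omega_t$ and $|\spt \eta \cap \Omega_t| \lesssim P_t \, r_t$. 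After one Cauchy-Schwarz this yields the bound $\tfrac{C}{r_t^2}\bigl( (P_t r_t)^{1/2}\|\nabla^2 u\|_{L^2} + (P_t/r_t)^{1/2}\|\nabla u\|_{L^2}\bigr)$, and another Young step converts it into a piece absorbable into $\vartheta c \|\nabla^2 u\|_{L^2}^2$ plus the error $C P_t/(\vartheta r_t^3) + C r_t^{-2}\|\nabla u\|_{L^2}^2$.

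Choosing the Young constants $\epsilon$ in the previous two paragraphs small enough that the total positive $\|\nabla^2 u\|^2$ contribution is strictly less than the coercive one from the first step, the remaining negative multiple of $\int|\nabla^2 u|^2$ is the advertised dissipation. Collecting all surviving pieces gives exactly \eqref{1stvar_willmore:estD}. The delicate point is keeping track of the frame corrections throughout step one — every time a derivative is commuted with $\n_t$ or $\tau_t$ in $\mathcal N(\pa\Omega_t)$ a factor $\div \n_t$ appears, and it must be verified that all such lower-order errors only contribute at the scale $r_t^{-2}|\nabla u|^2$, never at the scale of $|\nabla^2 u|^2$.
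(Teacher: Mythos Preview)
There is a genuine gap in your coercivity argument (step (i)). You claim that the four quantities
\[
\langle \Delta u,\tau_t\rangle,\quad
\langle \pa_{\n_t}^2 u-\pa_{\tau_t}^2 u-\div\n_t\,\pa_{\n_t}u,\n_t\rangle,\quad
\langle \pa_{\tau_t}^2 u,\tau_t\rangle,\quad
\langle \pa_{\n_t}\pa_{\tau_t}u,\n_t\rangle
\]
form ``a linear system equivalent to all second-derivative components''. This is false. After imposing incompressibility (which gives $\langle\pa_{\n_t}\pa_{\tau_t}u,\tau_t\rangle=-\langle\pa_{\n_t}^2u,\n_t\rangle$ and $\langle\pa_{\n_t}\pa_{\tau_t}u,\n_t\rangle=-\langle\pa_{\tau_t}^2u,\tau_t\rangle$ up to lower order), the last two of your four scalars become linearly \emph{dependent}, so you are left with only three independent linear combinations of the four free components of $\nabla^2u$. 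Concretely, if you set $\langle\pa_{\n_t}^2u,\n_t\rangle=\langle\pa_{\tau_t}^2u+\div\n_t\pa_{\n_t}u,\n_t\rangle$ and $\langle\pa_{\n_t}^2u,\tau_t\rangle=-\langle\pa_{\tau_t}^2u+\div\n_t\pa_{\n_t}u,\tau_t\rangle$ at a point (which is compatible with $\div u=0$), all four ``good'' squares vanish except $4\langle\pa_{\tau_t}^2u,\tau_t\rangle^2$, while $|\nabla^2u|^2$ need not be small. So the sum of squares in \eqref{1stvar_willmoreE} does \emph{not} pointwise dominate $c\,\eta\,|\nabla^2u|^2$ minus lower order, and your absorption scheme cannot close.

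The paper fixes exactly this: it observes that the good block equals, up to lower order, $\langle\Delta u,\tau_t\rangle^2+2|\nablasym\pa_{\tau_t}u|^2$, and then applies the cutoff Korn inequality \eqref{Korn_cutoff:est} to $\eta\,|\nablasym\pa_{\tau_t}u|^2$, upgrading it to $c\,\eta\,|\nabla\pa_{\tau_t}u|^2-\tfrac{C}{r_t^2}\int|\pa_{\tau_t}u|^2$. Combined with the pointwise bound $|\nabla^2u|^2\le C\bigl(\langle\Delta u,\tau_t\rangle^2+|\nabla\pa_{\tau_t}u|^2+(\div\n_t)^2|\nabla u|^2\bigr)$ (equation \eqref{|nabla^2|^2_control}), this yields the missing coercivity \eqref{badterm_est}. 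The key point you are missing is that the deficiency of the pointwise quadratic form is precisely the antisymmetric part of $\nabla\pa_{\tau_t}u$, which only an \emph{integral} inequality of Korn type can recover. Your treatment of the error terms (ii) and of the boundary integral (iii) is essentially fine, though for (iii) the paper uses \eqref{div(divn*f*n)} rather than the cruder bound $|\curv_t|\le 1/r_t$.
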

\begin{proof}
First by using \eqref{|nabla^2X|^2}, \eqref{diffeta1} and \eqref{div n est} we infer from the previous decomposition
\begin{align}
\label{1stvar_willmore:estA}
\begin{split}
\frac\d{\d t} &\int_{\pa \Omega_t}  \curv_t^2 \d \H^1 \\
\leq&-\vartheta\int_{\Omega_t} \eta \, \left(\langle \Delta u, \tau_t \rangle^2+\langle \pa_{\n_t}^2 u-\pa_{\tau_t}^2 u - \div \, \n_t \, \pa_{\n_t} u, \n_t \rangle^2\right) \d x \\
&-2\vartheta\int_{\Omega_t} \eta \, \left( \langle \pa_{\tau_t}^2 u,\tau_t\rangle^2 + \langle \pa_{\n_t}\pa_{\tau_t} u, \n_t\rangle^2\right) \d x
-\vartheta\int_{\Omega_t} (1-\eta) \, |\nabla^2 u|^2 \d x \\
&+ \vartheta C \int_{\Omega_t} \frac{|\nabla u|}{r_t} |\nabla^2 u| \d x + \vartheta \frac{C}{r_t^2}\int_{\Omega_t} |\nabla u|^2 \d x-\int_{\pa \Omega_t} \curv_t^2 \, \div_{\tau_t} u   \d \H^1.
\end{split}
\end{align}
Recall that $C$ and $c$ are positive independent constants which may change from line to line.
The second order homogenous part controls the full second derivative of $u$. To this end, we use the incompressibility of the system 
as well as the decomposition $\eqref{|nabla^2X|^2}$ of $|\nabla^2u|^2$ wrt.\ the frame $\{\n_t, \tau_t\}$
to obtain 
\begin{align}
\notag
|\nablasym \pa_{\tau_t} u|^2 
&= \langle \pa_{\n_t} \pa_{\tau_t} u, \n_t \rangle^2 +\langle \pa_{\tau_t}^2 u, \tau_t \rangle^2 + \frac12\langle \pa_{\n_t}^2 u - \pa_{\tau_t}^2 u, \n_t \rangle^2 \ \ \ \text{and} \\
\label{|nabla^2|^2_control}
|\nabla^2 u|^2 
&\leq C\left( \langle\Delta u, \tau_t \rangle^2+ |\nabla \pa_{\tau_t} u|^2 +(\div \, \n_t)^2|\nabla u|^2 \right)
\end{align}
in $\spt \eta \, \cap \Omega_t$. Then we multiply the identity by $\eta$, integrate it over $\Omega_t$,
use the incompressibility and apply Korn's inequality \eqref{Korn_cutoff:est} on the integrand $2 \eta \,  |\nablasym \pa_{\tau_t} u|^2$ to obtain
\begin{align*}
&\phantom{{}={}}\int_{\Omega_t} \eta \, \left(\langle \Delta u, \tau_t \rangle^2+\langle \pa_{\n_t}^2 u-\pa_{\tau_t}^2 u - \div \, \n_t \, \pa_{\n_t} u, \n_t \rangle^2\right) \d x \\
&+2\int_{\Omega_t} \eta \, \left( \langle \pa_{\tau_t}^2 u,\tau_t\rangle^2 + \langle \pa_{\n_t}\pa_{\tau_t} u, \n_t\rangle^2\right) \d x \\
&\geq  c \int_{\Omega_t} \eta \, \left(\langle \Delta u, \tau_t \rangle^2 +  |\nabla \pa_{\tau_t} u|^2 \right) \d x
+\int_{\Omega_t}  \div \, \n_t \, \eta \, \langle \pa_{\n_t} u,\n_t\rangle (\langle \pa_{\n_t}\pa_{\tau_t} u, \tau_t\rangle +  \langle \pa_{\tau_t}^2 u, \n_t \rangle) \d x \\
&-\frac{C}{r_t^2}\int_{\Omega_t} |\nabla u|^2 \d x.
\end{align*}
Using \eqref{|nabla^2|^2_control}, Young's inequality as well as \eqref{div n est} further yields
\begin{align}
\label{badterm_est}
\begin{split}
&\phantom{{}={}}\int_{\Omega_t} \eta \, \left(\langle \Delta u, \tau_t \rangle^2+\langle \pa_{\n_t}^2 u-\pa_{\tau_t}^2 u - \div \, \n_t \, \pa_{\n_t} u, \n_t \rangle^2\right) \d x \\
&+2\int_{\Omega_t} \eta \, \left( \langle \pa_{\tau_t}^2 u,\tau_t\rangle^2 + \langle \pa_{\n_t}\pa_{\tau_t} u, \n_t\rangle^2\right) \d x \\
&\geq  c \int_{\Omega_t} \eta \, |\nabla^2 u|^2 \d x-\frac{C}{r_t^2}\int_{\Omega_t} |\nabla u|^2 \d x.
\end{split}
\end{align}
Injecting this into \eqref{1stvar_willmore:estA} and applying Young's inequality on the mixed term results in 
\begin{align}
\label{1stvar_willmore:estC}
\begin{split}
\frac\d{\d t} &\int_{\pa \Omega_t}  \curv_t^2 \d \H^1 \\
\leq&
-\vartheta c \int_{\Omega_t}|\nabla^2 u|^2 \d x + \vartheta \frac{C}{r_t^2}\int_{\Omega_t} |\nabla u|^2 \d x - \int_{\pa \Omega_t} \curv_t^2 \, \div_{\tau_t} u   \d \H^1.
\end{split}
\end{align}
The last term on RHS is an original term from \eqref{1stvar_willmoreA}. Note that it doesn't have $\vartheta$ as a scaling factor. 
To transform it into a bulk integral we first apply \eqref{div(divn*f*n)} on $\eta \, \div \, \n_t \, \div_{\tau_t} u$
and then recall the basic identities \eqref{basicid} and \eqref{pa_n H} for the frame $\{\n_t, \tau_t \}$ to obtain 
\begin{align*}
\int_{\pa \Omega_t} &\curv_t^2 \, \div_{\tau_t} u   \d \H^1 \\
&= \int_{\Omega_t} (\div \, \n_t)^2\left( \pa_{\n_t} \eta \, \langle \pa_{\tau_t} u, \tau_t\rangle - \eta \,  \div \, \n_t \, \langle \pa_{\tau_t} u, \tau_t\rangle + \eta \, \langle \pa_{\n_t} \pa_{\tau_t} u, \tau_t\rangle \right)   \d x.
\end{align*}
Hence, it follows from \eqref{|nabla^2X|^2}, \eqref{diffeta1} and \eqref{div n est} 
\[
\left|\int_{\pa \Omega_t} \curv_t^2 \, \div_{\tau_t} u   \d \H^1 \right|
\leq \int_{\spt \eta \, \cap \, \Omega_t} \frac{1}{r_t^2} \frac{|\nabla u|}{r_t} + \eta\frac{1}{r_t^2} |\nabla^2 u| \d x.
\]
Finally, by using a Young's inequality to absorb mixed terms at the cost of a potentially higher constant and noting that $|\spt \eta \, \cap \, \Omega_t| \leq r_t \P_t$, we infer from \eqref{1stvar_willmore:estC} and the previous estimate that \eqref{1stvar_willmore:estD} holds. \end{proof}

As a byproduct of this discussion we observe that with increased regularity we may control the $L^2$-norm of $\nabla^2 u$
as follows.
\begin{lemma}
\label{lem:H2control}
 Let \solT  be a $C^3$-regular solution. Then it holds
\begin{equation}
\label{est:H2control}
\vartheta^2\int_{\Omega_t} |\nabla^2 u|^2 \d x \leq r_t \int_{\pa \Omega_t} |\pa_{\tau_t} \curv_t|^2 \d \H^1 + \vartheta^2\frac{C}{r_t^2} \int_{\Omega_t} |\nabla u|^2 \d x.
\end{equation}
\end{lemma}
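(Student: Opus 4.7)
The plan is to combine Reilly's identity \eqref{improved2DReilly} applied to $u$ with the harmonicity of the pressure $p$ and the boundary conditions \eqref{eq:surfacetension1}--\eqref{eq:surfacetension2}, performing the estimates entirely in terms of the boundary datum $\pa_{\tau_t}\curv_t$ plus absorbable lower--order terms. First I would rewrite the target LHS by Reilly as
\[
\int_{\Omega_t}|\nabla^2 u|^2\,\d x = \int_{\Omega_t}|\Delta u|^2\,\d x - 4\int_{\pa\Omega_t}\curv_t(\div_{\tau_t}u)^2\,\d\H^1,
\]
and dispose of the boundary correction using $|\curv_t|\le 1/r_t$, converting the boundary integral of $|\nabla u|^2$ to the bulk via the cutoff $\eta$ and the divergence of $\eta|\nabla u|^2\n_t$; by Young's inequality this yields a term $C r_t^{-2}\|\nabla u\|_{L^2}^2 + \epsilon\|\nabla^2 u\|_{L^2}^2$, the second of which is absorbed.

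Next, since $\vartheta\Delta u = \nabla p$ and $\Delta p = \vartheta\,\div\Delta u = 0$, I get
\[
\vartheta^2\int_{\Omega_t}|\Delta u|^2 = \int|\nabla p|^2 = \int_{\pa\Omega_t} p\,\pa_{\n_t}p\,\d\H^1.
\]
On $\pa\Omega_t$ I substitute two key identities: the boundary relation $p = \curv_t - 2\vartheta\,\div_{\tau_t}u$ (obtained from \eqref{eq:surfacetension1} combined with incompressibility $\div_{\n_t}u = -\div_{\tau_t}u$), and
$\pa_{\n_t}p = \vartheta\langle\Delta u,\n_t\rangle = 2\vartheta\,\pa_{\tau_t}\langle\pa_{\tau_t}u,\n_t\rangle$
(derived by applying Stokes and then \eqref{<Delta u,n>} together with Weingarten to turn $\langle\pa_{\tau_t}^2 u,\n_t\rangle$ into a tangential derivative of $\langle\pa_{\tau_t}u,\n_t\rangle$, the $\curv_t\div_{\tau_t}u$ contributions cancelling). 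Since $\pa\Omega_t$ is a closed curve, a tangential integration by parts on the $\curv_t$--piece produces the desired term $-2\vartheta\int_{\pa\Omega_t}(\pa_{\tau_t}\curv_t)\langle\pa_{\tau_t}u,\n_t\rangle$.

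The remaining cross--term involves $\int_{\pa\Omega_t}\div_{\tau_t}u\,\pa_{\tau_t}\langle\pa_{\tau_t}u,\n_t\rangle$, which at face value carries $\pa_{\tau_t}^2 u$ on $\pa\Omega_t$ and would require $\nabla^3 u$ upon trace -- this is the \emph{main obstacle}. I resolve it by exploiting that both boundary identities above determine $\pa_{\tau_t}p$: differentiating $p=\curv_t-2\vartheta\div_{\tau_t}u$ in $\tau_t$ gives $\pa_{\tau_t}p = \pa_{\tau_t}\curv_t - 2\vartheta\,\pa_{\tau_t}\div_{\tau_t}u$, while Stokes gives $\pa_{\tau_t}p = \vartheta\langle\Delta u,\tau_t\rangle$. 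Equating and solving for $\pa_{\tau_t}\div_{\tau_t}u$ allows one to reroute the problematic term through first--order quantities of $u$ and one extra $\pa_{\tau_t}\curv_t$, producing a cancellation that removes the $\pa_{\tau_t}^2 u$ boundary dependence while leaving only $\pa_{\tau_t}\curv_t$ and boundary factors of $|\nabla u|$.

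Finally, Young's inequality with weight tuned to give exactly the $r_t$ in front of the curvature term yields
\[
\Big|2\vartheta\int_{\pa\Omega_t}(\pa_{\tau_t}\curv_t)\langle\pa_{\tau_t}u,\n_t\rangle\Big| \le r_t\|\pa_{\tau_t}\curv_t\|_{L^2(\pa\Omega_t)}^2 + \frac{\vartheta^2}{r_t}\|\nabla u\|_{L^2(\pa\Omega_t)}^2,
\]
and the boundary $L^2$--norm of $\nabla u$ is once again transferred to the bulk using the cutoff $\eta$ and $\div(\eta|\nabla u|^2\n_t)$, yielding $C\vartheta^2 r_t^{-2}\|\nabla u\|_{L^2(\Omega_t)}^2 + \epsilon\vartheta^2\|\nabla^2 u\|_{L^2(\Omega_t)}^2$. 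Collecting and absorbing all the $\epsilon\vartheta^2\|\nabla^2 u\|_{L^2}^2$ contributions into the LHS gives exactly \eqref{est:H2control}.
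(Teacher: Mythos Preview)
Your setup matches the paper's: Reilly's identity \eqref{improved2DReilly} together with $\vartheta^2\int_{\Omega_t}|\Delta u|^2=\int_{\pa\Omega_t}p\,\pa_{\n_t}p\,\d\H^1$, the boundary relation $p=\curv_t-2\vartheta\,\div_{\tau_t}u$, and the identity $\pa_{\n_t}p=\vartheta\langle\Delta u,\n_t\rangle=2\vartheta\,\pa_{\tau_t}\langle\pa_{\tau_t}u,\n_t\rangle$ are exactly the ingredients the paper uses to reach the decomposition \eqref{int:nabla^2u_decomp}. The Young step for the $\pa_{\tau_t}\curv_t$ term and the conversion of $\|\nabla u\|_{L^2(\pa\Omega_t)}^2$ to bulk via \eqref{div(f*n)} are also fine.

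The genuine gap is your treatment of the cross--term
\[
T:=-4\vartheta^2\int_{\pa\Omega_t}\div_{\tau_t}u\;\pa_{\tau_t}\langle\pa_{\tau_t}u,\n_t\rangle\,\d\H^1
   =2\vartheta^2\int_{\pa\Omega_t}\div_{\n_t}u\,\langle\Delta u,\n_t\rangle\,\d\H^1.
\]
Your proposed ``rerouting'' identity $\pa_{\tau_t}\div_{\tau_t}u=\tfrac{1}{2\vartheta}\pa_{\tau_t}\curv_t-\tfrac12\langle\Delta u,\tau_t\rangle$ does \emph{not} lower the order: $\langle\Delta u,\tau_t\rangle$ is still a second derivative of $u$ on $\pa\Omega_t$, so the claim of reducing to ``first--order quantities of $u$'' is false. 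Worse, if you integrate $T$ by parts and substitute, you obtain
\[
T=2\vartheta\int_{\pa\Omega_t}\pa_{\tau_t}\curv_t\,\langle\pa_{\tau_t}u,\n_t\rangle
   -2\vartheta^2\int_{\pa\Omega_t}\langle\Delta u,\tau_t\rangle\langle\pa_{\tau_t}u,\n_t\rangle,
\]
and the first contribution \emph{cancels} the $-2\vartheta\int\pa_{\tau_t}\curv_t\,\langle\pa_{\tau_t}u,\n_t\rangle$ you had already produced from the $\curv_t$--piece. What remains is the tautology $\int_{\Omega_t}|\Delta u|^2=-2\int_{\pa\Omega_t}\langle\Delta u,\tau_t\rangle\langle\pa_{\tau_t}u,\n_t\rangle$ (an identity that follows from $\nabla p=\vartheta\Delta u$ alone), with no curvature term left and a boundary integrand that is still second--order and cannot be traced to bulk without $\nabla^3 u$.

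The paper's actual argument for $T$ is the substantive part of the proof: it invokes the full bulk decomposition \eqref{badterm3} together with the Korn--type estimate \eqref{badterm_est} (applied to $\eta\,|\nablasym\pa_{\tau_t}u|^2$) to obtain
\[
2\int_{\pa\Omega_t}\div_{\n_t}u\,\langle\Delta u,\n_t\rangle\,\d\H^1
\le (1-c)\int_{\Omega_t}|\nabla^2 u|^2\,\d x
   +\frac{C}{r_t}\int_{\Omega_t}|\nabla^2 u||\nabla u|\,\d x
   +\frac{C}{r_t^2}\int_{\Omega_t}|\nabla u|^2\,\d x,
\]
with the crucial factor $1-c<1$ allowing absorption into $\int_{\Omega_t}|\nabla^2 u|^2$. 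This structural step --- which reuses the machinery built earlier in Section~4 --- is what your sketch is missing.
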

\begin{proof}
We first compute
\begin{align*}
\int_{\Omega_t} |\Delta u|^2 \d x 
&= \frac1\vartheta\int_{\pa \Omega_t} p \langle \Delta u, \n_t\rangle \d \H^1 \\
&=\frac1\vartheta\int_{\pa \Omega_t} \curv_t \langle \Delta u, \n_t\rangle \d \H^1 + 2 \int_{\pa \Omega_t} \div_{\n_t} u \langle \Delta u, \n_t\rangle \d \H^1 \\
&=\frac2\vartheta\int_{\pa \Omega_t} \pa_{\tau_t} \curv_t \langle \pa_{\n_t} u, \tau_t\rangle \d \H^1 + 2 \int_{\pa \Omega_t} \div_{\n_t} u \langle \Delta u, \n_t\rangle \d \H^1.
\end{align*}
Here the first equality follows after using Stokes equation in \eqref{eq:main}, 
$\div \Delta u = 0$ and divergence theorem. The second equality is
due to the boundary condition
\eqref{eq:surfacetension1} and for the 
last one we used \eqref{<Delta u,n>}, \eqref{basicid}, integration by parts as well as the boundary condition \eqref{eq:surfacetension2}. 
Recalling Reilly formula \eqref{improved2DReilly} for $u$ we then obtain
a decomposition
\begin{align}
\label{int:nabla^2u_decomp}
\begin{split}
\int_{\Omega_t} |\nabla^2 u|^2 \d x 
= &\frac2\vartheta\int_{\pa \Omega_t} \pa_{\tau_t} \curv_t \langle \pa_{\n_t} u, \tau_t\rangle \d \H^1 + 2 \int_{\pa \Omega_t} \div_{\n_t} u \langle \Delta u, \n_t\rangle \d \H^1 \\
 &-4\int_{\pa \Omega_t} \curv_t (\div_{\tau_t} u)^2 \d \H^1.
\end{split}
\end{align}
We gather from \eqref{badterm3}, \eqref{badterm_est}, \eqref{Delta X}, \eqref{|nabla^2X|^2}, \eqref{diffeta2}
and \eqref{div n est} 
\begin{align*}
2& \int_{\pa \Omega_t} \div_{\n_t} u \langle \Delta u, \n_t\rangle \d \H^1 \\
&\leq (1-c)\int_{\Omega_t} |\nabla^2 u|^2 \d x+ \frac{C}{r_t} 
\int_{\Omega_t} |\nabla^2 u||\nabla u| \d x + \frac{C}{r_t^2} 
\int_{\Omega_t} |\nabla u|^2 \d x.
\end{align*}
Injecting this estimate into \eqref{int:nabla^2u_decomp}
and applying Young's inequality on the first integral results in
\begin{align*}
\int_{\Omega_t} |\nabla^2 u|^2 \d x
 &\leq \frac{r_t}{2\vartheta^2} \int_{\pa \Omega_t} |\pa_{\tau_t} \curv_t|^2 \d \H^1
+\frac{C}{r_t} \int_{\pa \Omega_t} \langle \pa_{\n_t} u, \tau_t \rangle^2 \d \H^1
-4\int_{\pa \Omega_t} \curv_t (\div_{\tau_t} u)^2 \d \H^1 \\
&+\int_{\Omega_t} |\nabla^2 u||\nabla u| \d x + \frac{C}{r_t^2} 
\int_{\Omega_t} |\nabla u|^2 \d x.
\end{align*}
Thus we infer \eqref{est:H2control} from this estimate by applying \eqref{div(f*n)} on the second integral, 
\eqref{div(divn*f*n)} on the third one, using the estimates  \eqref{diffeta2}
and \eqref{div n est} and finally by applying Young's inequality on the products $|\nabla^2 u|(
|\nabla u|/r_t)$.
\end{proof}

%%%%%%%%%%%%%%%%%%%%%%%%%%%%%%%%%%%%%%%%%%%%%%%%%%%%%%%%%%%%%%%%%%%%%%%%%%%%%%%%%%%%%%%%%%%%%
%%%%%%%%%%%%%%%%%%%%%%%%%%%%%%%%%%%%%%%%%%%%%%%%%%%%%%%%%%%%%%%%%%%%%%%%%%%%%%%%%%%%%%%%%%%%%

\section{\texorpdfstring{$H^1$}{H1}-Curvature estimates}

We now proceed in a similar fashion with the derivative of the curvature
for $C^3$-regular solutions. Note the analogy between the expressions \eqref{1stvar_willmoreB} and \eqref{1stvar_higherwillmoreA}.
\begin{lemma} Let \solT be a $C^3$-regular solution. Then it satisfies
\begin{align}
\label{1stvar_higherwillmoreA}
\begin{split}
\frac{\d}{\d t} &\int_{\pa \Omega_t} |\partial_{\tau_t} \curv_t|^2 \d \H^1 \\
=&-\int_{\pa \Omega_t}  \pa_{\tau_t} p \langle \pa_{\tau_t} \Delta  u, \n_t \rangle \d \H^1 + 2 \vartheta \int_{\pa \Omega_t}
  \pa_{\tau_t}\div_{\n_t} u \langle \pa_{\tau_t} \Delta  u, \n_t \rangle \d \H^1\\
&-\int_{\pa \Omega_t} \left( \frac{\curv_t}{\vartheta}+3 \, \div_{\tau_t} u \right)(\pa_{\tau_t} \curv_t)^2  \d \H^1.
\end{split}
\end{align}
\end{lemma}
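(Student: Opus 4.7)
My plan is to mimic, step by step, the derivation of \eqref{1stvar_willmoreB}, but now applied to the tangential derivative $\pa_{\tau_t}\curv_t$ in place of $\curv_t$ itself. The starting point would be a first-variation formula for $\int_{\pa\Omega_t} g_t^2 \d\H^1$ under the normal velocity $v=\langle u,\n_t\rangle$, specialized to $g_t=\pa_{\tau_t}\curv_t$ and structurally analogous to the identity \eqref{1stvar_willmoreA} that was the starting point for $g_t=\curv_t$. Schematically this should produce
\[
\frac{\d}{\d t}\int_{\pa\Omega_t}(\pa_{\tau_t}\curv_t)^2 \d\H^1 = \int_{\pa\Omega_t} 2\,\pa_{\tau_t}\curv_t \cdot D_t^\circ(\pa_{\tau_t}\curv_t) + (\pa_{\tau_t}\curv_t)^2\,\div_{\tau_t} u \d\H^1,
\]
where $D_t^\circ$ stands for the natural material derivative on the moving boundary. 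Expanding $D_t^\circ(\pa_{\tau_t}\curv_t)$ by combining the classical normal-flow evolution $D_t^\circ\curv_t=-\pa_{\tau_t}^2 v-\curv_t^2 v$ with the Weingarten identities \eqref{weingarten} for the time-variation of the tangent frame, I expect to obtain a principal contribution involving third tangential derivatives of $u$, plus a collection of lower-order $\curv_t$-corrections.

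Next, I would rewrite the third tangential derivatives of $u$ in terms of $\langle\pa_{\tau_t}\Delta u,\n_t\rangle$ by differentiating the identity \eqref{<Delta u,n>} tangentially, using incompressibility $\div u=0$, the Weingarten identities \eqref{weingarten}, and the boundary condition \eqref{eq:surfacetension2}. This should yield an analog of \eqref{<Delta u,n>} relating $\langle\pa_{\tau_t}^3 u,\n_t\rangle$ to $\langle\pa_{\tau_t}\Delta u,\n_t\rangle$ modulo lower-order $\curv_t$-terms. After substitution, the principal contribution collapses to $-\pa_{\tau_t}\curv_t\,\langle\pa_{\tau_t}\Delta u,\n_t\rangle$.

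The final step is to split this contribution using the tangential derivative of the boundary condition \eqref{eq:surfacetension1}, namely
\[
\pa_{\tau_t}\curv_t = \pa_{\tau_t} p - 2\vartheta\,\pa_{\tau_t}\div_{\n_t} u,
\]
which directly furnishes the first two integrals on the right-hand side of \eqref{1stvar_higherwillmoreA}. The term $\curv_t/\vartheta$ in the coefficient of $(\pa_{\tau_t}\curv_t)^2$ is expected to emerge when the Stokes identity $\vartheta\Delta u = \nabla p$ is combined with commutators of $\pa_{\tau_t}$ and $\pa_{\n_t}$ acting on $p$ on the boundary, while the factor $3$ in front of $\div_{\tau_t} u$ will assemble from the perimeter-stretching term, the time-variation of $\tau_t$ itself, and the Weingarten corrections collected in the previous steps.

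The main obstacle will be bookkeeping: many $\curv_t$-correction terms appear individually, and one has to witness their exact cancellation in order to arrive at the clean coefficient $\frac{\curv_t}{\vartheta}+3\,\div_{\tau_t} u$ of $(\pa_{\tau_t}\curv_t)^2$. To keep the algebra manageable, I would perform all computations in a tubular neighborhood of $\pa\Omega_t$ using the extended normal–tangent frame introduced in Section~\ref{sec:prelim}, record every correction symbolically together with its origin, and only integrate by parts along the closed curve $\pa\Omega_t$ at the very end, where the absence of endpoints eliminates boundary contributions.
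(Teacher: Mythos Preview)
Your plan is essentially the paper's own proof: start from the first-variation formula \eqref{eq:dk2derivative}, substitute the tangential derivative of \eqref{<Delta u,n>} to replace $\pa_{\tau_t}(\langle\pa_{\tau_t}^2 u,\n_t\rangle+\curv_t\div_{\tau_t} u)$ by $\tfrac12\pa_{\tau_t}\langle\Delta u,\n_t\rangle$, and then split $\pa_{\tau_t}\curv_t=\pa_{\tau_t} p-2\vartheta\,\pa_{\tau_t}\div_{\n_t} u$ via \eqref{eq:surfacetension1}. Your identification of the factor $3$ is correct.

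One small correction in your bookkeeping expectations: the term $\curv_t/\vartheta$ does \emph{not} arise from commutators of $\pa_{\tau_t}$ and $\pa_{\n_t}$ acting on $p$. Rather, when you differentiate $\langle\Delta u,\n_t\rangle$ tangentially, the Weingarten identity \eqref{weingarten} produces the extra piece $\curv_t\langle\Delta u,\tau_t\rangle$; this combines with the $-2\curv_t\,\pa_{\tau_t}\curv_t\,\pa_{\tau_t}\div_{\tau_t} u$ term in \eqref{eq:dk2derivative} (rewritten via incompressibility as $+2\curv_t\,\pa_{\tau_t}\curv_t\,\pa_{\tau_t}\div_{\n_t} u$) to give $\curv_t\,\pa_{\tau_t}\curv_t\,(\langle\Delta u,\tau_t\rangle-2\pa_{\tau_t}\div_{\n_t} u)$. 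The key observation, which is the identity \eqref{pa_tau H} (i.e.\ $\pa_{\tau_t} p=\vartheta\langle\Delta u,\tau_t\rangle$ from the Stokes equation inserted into the tangential derivative of \eqref{eq:surfacetension1}), then turns this directly into $\tfrac{\curv_t}{\vartheta}(\pa_{\tau_t}\curv_t)^2$. No commutator bookkeeping is needed, and all $\curv_t$-corrections cancel without further integration by parts.
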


\begin{proof}
Again, for the change of  the norm $\|\partial_{\tau_t} \curv_t\|_{L^2(\pa \Omega_t)}^2$ we have \eqref{eq:dk2derivative} and again using that $\partial \Omega_t$ evolves with the normal part of $u$ to obtain
\begin{align*}
\frac{\d}{\d t} &\int_{\pa \Omega_t} |\partial_{\tau_t} \curv_t|^2 \d \H^1 \\
=&-2\int_{\pa \Omega_t}  \pa_{\tau_t} \curv_t \pa_{\tau_t}\left( \langle \pa_{\tau_t}^2 u, \n_t \rangle +  \curv_t \div_{\tau_t} u\right) +
 \curv_t\pa_{\tau_t} \curv_t \pa_{\tau_t} \div_{\tau_t} u \d \H^1 \\
&-3 \int_{\pa \Omega_t}\div_{\tau_t} u \, (\pa_{\tau_t} \curv_t)^2 \d \H^1.
\end{align*}
Injecting \eqref{<Delta u,n>} into the previous expression yields
\begin{align*}
\frac{\d}{\d t} &\int_{\pa \Omega_t} |\partial_{\tau_t} \curv_t|^2 \d \H^1 \\
=&-\int_{\pa \Omega_t}  \pa_{\tau_t} \curv_t \langle \pa_{\tau_t} \Delta  u, \n_t \rangle +
  \curv_t\pa_{\tau_t} \curv_t  ( \langle \Delta u , \tau_t \rangle - 2 \pa_{\tau_t} \div_{\n_t} u) \d \H^1 \\
&-3 \int_{\pa \Omega_t} \div_{\tau_t} u (\pa_{\tau_t} \curv_t)^2 \d \H^1.
\end{align*}
Differentiating \eqref{eq:surfacetension1} in the tangential direction and using Stokes equation in \eqref{eq:main} gives us 
\begin{equation}
\label{pa_tau H}
\pa_{\tau_t} \curv_t = \partial_\tau p - 2\vartheta \pa_{\tau_t} \div_{\n_t} u= \vartheta(\langle \Delta u, \tau_t\rangle - 2 \pa_{\tau_t} \div_{\n_t} u)
\end{equation}
and thus the previous decomposition for $\frac\d{\d t}\|\partial_{\tau_t} \curv_t\|_{L^2(\pa \Omega_t)}^2$ becomes \eqref{1stvar_higherwillmoreA}.
\end{proof}

Next we proceed in a similar fashion as before in order to turn boundary terms into bulk terms.
We will show that $\frac\d{\d t}\|\partial_{\tau_t} \curv_t\|_{L^2(\pa \Omega_t)}^2$ can be expressed in the in the form of a sum of squares of third order derivatives of $u$ multiplied by the cutoff $\eta$ plus a remainder term $R$.

Compared to the lower order case the highest order term will not be a real bulk term due to the presence of the cut-off.
The remainder $R$ will be further split into different terms $R_i$ which we will collect in the process.
These consist of both bulk and boundary integrals and the highest order bulk integrands are mixed terms of the form a third derivative times a lower order term.

\begin{lemma}  Let \solT be a $C^3$-regular solution. We have
\begin{align}
\label{1stvar_higherwillmoreB}
\begin{split}
\frac{\d}{\d t}&\int_{\pa \Omega_t}|\partial_{\tau_t} \curv_t|^2  \d \H^1 \\
= &- \vartheta \int_{\Omega_t}  \eta \,
\langle \pa_{\tau_t}( - \pa_{\n_t}^2 u+ \pa_{\tau_t}^2 u + \div \, \n_t \, \pa_{\n_t} u), \n_t\rangle^2
\d x \\
&- \vartheta \int_{\Omega_t} \eta \, \left(\langle \pa_{\tau_t} \Delta u, \tau_t\rangle^2
+4\langle \pa_{\tau_t}^3 u, \tau_t \rangle^2
  \right) \d x +\vartheta R
\end{split}
\end{align}
with reminder term $R= R_0+R_1+R_2+R_3+R_4$, defined below.
\end{lemma}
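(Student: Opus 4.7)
The strategy mirrors the derivation of \eqref{1stvar_willmoreE} at one higher order, starting from \eqref{1stvar_higherwillmoreA}. The last boundary integral there, $-\int_{\pa\Omega_t}(\curv_t/\vartheta + 3\div_{\tau_t} u)(\pa_{\tau_t}\curv_t)^2 \d\H^1$, is already of lower-order form and will be assigned directly to the remainder piece $R_0$. The work is concentrated on converting the two remaining boundary integrals, which contain the third-order quantity $\pa_{\tau_t}\Delta u$, into bulk integrals exhibiting the three negative squares on the right-hand side of \eqref{1stvar_higherwillmoreB}.

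The first step is to use $\nabla p = \vartheta\Delta u$, valid throughout the tubular neighborhood $\mathcal{N}(\pa\Omega_t)$, to replace $\pa_{\tau_t} p$ by $\vartheta\langle\Delta u, \tau_t\rangle$ in the first integral. Together with the cutoff $\eta$ of \eqref{eta}, which equals $1$ on $\pa\Omega_t$, and the identity $\nabla\eta = \pa_{\n_t}\eta\,\n_t$ from \eqref{nablaeta}, the divergence theorem converts each surface integral $\int_{\pa\Omega_t} g\,\langle h, \n_t\rangle\,\d\H^1$ into $\int_{\Omega_t}\eta\,\div(gh)\,\d x + \int_{\Omega_t}\pa_{\n_t}\eta\,g\,\langle h,\n_t\rangle\,\d x$. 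The $\pa_{\n_t}\eta$-layer contributions (and analogous bulk terms supported where $\eta<1$) are parked in $R_1$ and $R_2$. Inside the principal bulk integrand I would then expand $\pa_{\tau_t}\Delta u$ and its companion vector field in the extended $\{\n_t,\tau_t\}$-frame via \eqref{basicid}, \eqref{Delta X} and \eqref{|nabla^2X|^2}, and use the divergence-free condition $\div\Delta u=0$ (whose tangential derivative is a commutator involving $\nabla\tau_t$) together with the tangential derivative of $\div u = 0$. Precisely as in the passage from \eqref{badterm1} to \eqref{badterm2}, the integrand then decomposes as minus the three stated squares, with coefficients $-1,-1,-4$ after factoring the global $\vartheta$, plus a sum of products of the form (third derivative)$\times$(lower-order frame factor); those mixed products make up $R_3$ and $R_4$.

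The main obstacle is combinatorial bookkeeping. Each tangential derivative striking the extended frame $\{\n_t,\tau_t\}$ creates additional $\div\n_t$ and $\pa_{\tau_t}\curv_t$ factors through \eqref{basicid} and \eqref{pa_n H}, and each integration by parts against $\eta$ contributes a $\pa_{\n_t}\eta$ layer term. The delicate point is the grouping: one must assemble the expansion so that precisely the three distinguished squares survive as the principal negative part, while every mixed product, commutator or cutoff-layer contribution is routed into exactly one of the $R_i$. These remainders are not estimated at the present stage; they will be absorbed subsequently via Young's inequality and the quantified Korn and interpolation estimates of Subsection~\ref{subsec:domainIndep}, exactly as was done in the $L^2$-case for the terms produced in \eqref{1stvar_willmoreE}.
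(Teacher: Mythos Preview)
Your overall strategy is correct and matches the paper: start from \eqref{1stvar_higherwillmoreA}, send the last boundary integral to $R_0$ (after substituting \eqref{pa_tau H}), convert the $\pa_{\tau_t}p$-integral to bulk via the divergence theorem with cutoff $\eta$ to produce $-\vartheta\int_{\Omega_t}\eta\,|\pa_{\tau_t}\Delta u|^2\,\d x$ and the remainder $R_1$, then handle the second boundary integral.

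However, the paper's treatment of the second boundary integral differs from what you describe, and your labelling of $R_2$ does not match. The paper does \emph{not} convert $2\vartheta\int_{\pa\Omega_t}\pa_{\tau_t}\div_{\n_t}u\,\langle\pa_{\tau_t}\Delta u,\n_t\rangle\,\d\H^1$ directly to bulk. Instead it first substitutes the boundary identity \eqref{<Delta u,n>} and integrates by parts \emph{tangentially along $\pa\Omega_t$}; this produces a boundary remainder $R_2$ (so $R_2$ is a surface integral, not a cutoff-layer term) together with the clean boundary expression $-4\vartheta\int_{\pa\Omega_t}\langle\pa_{\tau_t}^2u,\tau_t\rangle\,\pa_{\tau_t}\langle\pa_{\tau_t}^2u,\n_t\rangle\,\d\H^1$. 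Only this last term is then converted to bulk via \eqref{div(f*n)} and the third-order commutator identities \eqref{3rdchange1}--\eqref{3rdchange5}, yielding $-4\vartheta\int_{\Omega_t}\eta\,\langle\pa_{\tau_t}^3u,\tau_t\rangle^2\,\d x$, the cross term $4\vartheta\int_{\Omega_t}\eta\,\langle\pa_{\tau_t}\pa_{\n_t}^2u,\n_t\rangle\langle\pa_{\tau_t}^3u,\n_t\rangle\,\d x$, and the bulk remainder $R_3$. The final step is the algebraic completion of squares \eqref{niceobservation}: using $\Delta u=\pa_{\n_t}^2u+\pa_{\tau_t}^2u+\div\n_t\,\pa_{\n_t}u$ from \eqref{Delta X}, the combination $-\langle\pa_{\tau_t}\Delta u,\n_t\rangle^2+4\langle\pa_{\tau_t}\pa_{\n_t}^2u,\n_t\rangle\langle\pa_{\tau_t}^3u,\n_t\rangle$ is rewritten as $-\langle\pa_{\tau_t}(-\pa_{\n_t}^2u+\pa_{\tau_t}^2u+\div\n_t\,\pa_{\n_t}u),\n_t\rangle^2$ plus the last remainder $R_4$.

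Your proposed direct bulk conversion of the second boundary integral (mirroring the passage \eqref{badterm1}--\eqref{badterm2} from the $L^2$ section) is a plausible alternative, but would lead to a different intermediate structure and you would still need an analogue of \eqref{niceobservation} to isolate the three specific squares. The paper's tangential integration by parts is what makes the $-4\langle\pa_{\tau_t}^3u,\tau_t\rangle^2$ term appear cleanly.
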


\begin{proof}
Let us first restate \eqref{1stvar_higherwillmoreA}
\begin{align*}
\frac{\d}{\d t} &\int_{\pa \Omega_t} |\partial_{\tau_t} \curv_t|^2 \d \H^1 \\
=&-\int_{\pa \Omega_t}  \pa_{\tau_t} p \langle \pa_{\tau_t} \Delta  u, \n_t \rangle \d \H^1 + 2 \vartheta \int_{\pa \Omega_t}
  \pa_{\tau_t}\div_{\n_t} u \langle \pa_{\tau_t} \Delta  u, \n_t \rangle \d \H^1\\
&-\int_{\pa \Omega_t} \left( \frac{\curv_t}{\vartheta}+3 \, \div_{\tau_t} u \right)(\pa_{\tau_t} \curv_t)^2  \d \H^1.
\end{align*}
We get the first remainder term from the last term on RHS of  \eqref{1stvar_higherwillmoreA} after injecting \eqref{pa_tau H} into it, that is,
\begin{equation}
\label{lastpart}
-\int_{\pa \Omega_t} \left( \frac{\curv_t}{\vartheta}+3 \, \div_{\tau_t} u \right)(\pa_{\tau_t} \curv_t)^2  \d \H^1 = \vartheta R_0,
\end{equation}
where
\begin{equation}
\label{R0}
R_0 = - \int_{\pa \Omega_t} \curv_t (\langle \Delta u, \tau_t\rangle - 2 \pa_{\tau_t} \div_{\n_t} u)^2 + 3 \div_{\tau_t} u \, \pa_{\tau_t} \curv_t (\langle \Delta u, \tau_t\rangle - 2 \pa_{\tau_t} \div_{\n_t} u)
 \d \H^1.
\end{equation}

The first integral on RHS of \eqref{1stvar_higherwillmoreA} is rather straightforward to compute 
but we need some technical identities. To this end, we recall $\div  \Delta u = 0$ in $\Omega_t$ due to incompressibility of the system. Again, it follows from Stokes equation in \eqref{eq:main} that $\nabla \Delta u$ is symmetric in $\Omega_t$. These, in turn, yield
\begin{equation}
\label{|pa_tau Delta u|^2}
|\pa_{\tau_t} \Delta u|^2 = |\pa_{\n_t} \Delta u|^2 = \frac12|\nabla \Delta u|^2 \quad \text{in} \ \ \Omega_t \cap \mathcal N_{r_t}(\pa \Omega_t).
\end{equation}
By invoking $\div([A] X) = \langle \div(A^\T), X\rangle + \tr\left(A \nabla X\right)$, \eqref{basicid} and Stokes equation in \eqref{eq:main} we get the identities
\begin{align}
\label{divpataudelta}
\div ( \pa_{\tau_t}  \Delta u) &= - \div \n_t \, \pa_{\n_t}\langle  \Delta u, \tau_t\rangle \\
\label{nablapataup}
 \text{and} \quad \quad \nabla \pa_{\tau_t} p &= \vartheta \, \pa_{\tau_t} \Delta u  - \vartheta \,\div \, \n_t \,  \langle \Delta u, \n_t\rangle\tau_t
\end{align}
in $\Omega_t \cap \mathcal N_r(\Omega_t)$.

By applying the divergence theorem on $\eta \, \pa_{\tau_t} p \, \langle \pa_{\tau_t}  \Delta u, \n_t \rangle $
and then injecting Stokes equation \eqref{eq:main} and technical identities \eqref{divpataudelta} and  \eqref{nablapataup} into it we conclude
\begin{equation}
\label{nicepart}
-\int_{\pa \Omega_t}  \pa_{\tau_t} p \, \langle \pa_{\tau_t}  \Delta u, \n_t \rangle \d \H^1=
- \vartheta \int_{\Omega_t}  \eta \, | \pa_{\tau_t} \Delta u|^2 \d x + \vartheta R_1,
\end{equation}
where the remainder part can be written as
\begin{align}
\label{R1}
\begin{split}
R_1 = \int_{\Omega_t} (\pa_{\n_t} \eta  - \eta \, \div \, \n_t)\langle \Delta u, \tau_t\rangle\langle\pa_{\n_t} \Delta u, \tau_t \rangle - \div \, \n_t \, \eta \, \langle \Delta u, \n_t \rangle
\langle \pa_{\tau_t} \Delta u, \tau_t \rangle \d x.
\end{split}
\end{align}
The second integral on RHS of \eqref{1stvar_higherwillmoreA}
is more technical to treat and consists of several consequent steps. 
First, by substituting  \eqref{<Delta u,n>} into it, 
integrating by parts and using the incompressibility as well as the basic identities  \eqref{basicid}
we obtain
\begin{equation}
\label{badpart1}
2\vartheta \int_{\pa \Omega_t} \pa_{\tau_t} \div_{\n_t} u \, \langle \pa_{\tau_t} \Delta u,\n_t\rangle \d \H^1 = -4\vartheta \int_{\pa \Omega_t}
 \langle\pa_{\tau_t}^2 u, \tau_t \rangle \pa_{\tau_t}\langle \pa_{\tau_t}^2 u, \n_t \rangle \d \H^1+ \vartheta R_2,
\end{equation}
where
\begin{align}
\label{R2}
\begin{split}
R_2 = &-2\int_{\pa \Omega_t} 2 \pa_{\tau_t} \curv_t  \langle \pa_{\tau_t} u, \tau_t \rangle\langle \pa_{\tau_t}^2 u, \tau_t \rangle
+\pa_{\tau_t}\curv_t  \langle\pa_{\tau_t} u, \n_t \rangle  \langle \Delta u,\n_t\rangle
\d \H^1   \\
&-2\int_{\pa \Omega_t} 2\curv_t \langle \pa_{\tau_t}^2 u, \tau_t \rangle^2
+\curv_t  \langle\pa_{\tau_t}^2 u, \n_t \rangle  \langle \Delta u,\n_t\rangle
-\curv_t\langle\pa_{\tau_t}^2 u, \tau_t \rangle  \langle \Delta u,\tau_t\rangle 
\d \H^1\\
&-2\int_{\pa \Omega_t}
\curv_t^2 \left( \langle\pa_{\tau_t} u, \tau_t \rangle  \langle \Delta u,\n_t\rangle
+ \langle\pa_{\tau_t} u, \n_t \rangle \langle \Delta u,\tau_t\rangle 
-2 \langle \pa_{\tau_t} u, \n_t \rangle 
\langle \pa_{\tau_t}^2 u, \tau_t \rangle\right)
\d \H^1.
\end{split}
\end{align}
Before we turn the first term on RHS of \eqref{badpart1} into a bulk integral 
we need (yet) another batch of technical identities this time regarding the change of differentiation order componentwise in the coordinate frame $\{\n_t, \tau_t\}$
for 3rd order derivative of $u$.
We have the following rules 
\begin{align}
\label{3rdchange1}
\langle\pa_{\n_t} \pa_{\tau_t}^2 u , \n_t \rangle&=-\langle \pa_{\tau_t}^3 u, \tau_t\rangle -\pa_{\tau_t} \div \, \n_t \, \langle\pa_{\n_t} u, \tau_t\rangle +  2 \, \div \, \n_t \, \langle  
\pa_{\n_t}^2 u + \div \, \n_t \, \pa_{\n_t} u , \n_t \rangle\\
\label{3rdchange2}
 \langle \pa_{\tau_t}  \pa_{\n_t}^2 u, \n_t\rangle&=-\langle \pa_{\n_t} \pa_{\tau_t}^2 u, \tau_t\rangle
- \div \, \n_t \,  \langle \Delta u , \tau_t\rangle
-\div \, \n_t \,  \langle\pa_{\tau_t}^2 u, \tau_t\rangle  \\
\label{3rdchange3}
\langle\pa_{\tau_t} \pa_{\n_t}^2 u , \tau_t \rangle&=
-\langle \pa_{\n_t}^3 u, \n_t\rangle -2 \, \div \, \n_t \, \langle \pa_{\n_t}\pa_{\tau_t} u, \n_t \rangle
\\
\label{3rdchange4}
\langle \pa_{\n_t}^2 \pa_{\tau_t} u, \n_t \rangle 
&=-\langle\pa_{\n_t}  \pa_{\tau_t}^2 u, \tau_t \rangle
-\div \, \n_t \, \langle \pa_{\n_t}^2 u, \tau_t\rangle
+(\div \, \n_t)^2 \langle \pa_{\n_t} u, \tau_t\rangle\\
\label{3rdchange5}
\langle \pa_{\n_t}^2 \pa_{\tau_t} u, \tau_t \rangle&=-\langle \pa_{\n_t}^3 u, \n_t \rangle 
\end{align}
which can be obtained from the incompressibility condition $\div u = 0$ with help of the decomposition \eqref{Delta X} and the basic identities \eqref{basicid},
\eqref{pa_n H} and \eqref{changeoforder} for the frame. 

We first apply \eqref{div(f*n)} on $\eta \, \langle\pa_{\tau_t}^2 u, \tau_t \rangle \pa_{\tau_t}\langle \pa_{\tau_t}^2 u, \n_t \rangle$ and then 
use integration by parts, the previous identities \eqref{3rdchange1}  and \eqref{3rdchange2} ,  \eqref{diffeta1} for $\eta$  and the basic identities \eqref{basicid} and \eqref{changeoforder} 
to obtain
\begin{align}
\label{badpart2}
\begin{split}
&-4\vartheta \int_{\pa \Omega_t}
 \langle\pa_{\tau_t}^2 u, \tau_t \rangle \pa_{\tau_t}\langle \pa_{\tau_t}^2 u, \n_t \rangle \d \H^1 \\
&=4\int_{\Omega_t}
\eta \,  \langle \pa_{\tau_t} \pa_{\n_t}^2 u, \n_t\rangle \langle \pa_{\tau_t}^3 u, \n_t \rangle
-\eta \,  \langle \pa_{\tau_t}^3 u, \tau_t\rangle^2 
\d x +\vartheta R_3,
\end{split}
\end{align}
where
\begin{align}
\label{R3}
\begin{split}
&R_3=\\
4&\int_{\Omega_t}
\div \, \n_t \,\eta \,  
\left(
\langle \pa_{\tau_t} \pa_{\n_t}^2 u, \n_t\rangle \langle \pa_{\tau_t}^2 u, \tau_t \rangle
+ \left( \langle \Delta u , \tau_t\rangle + \langle\pa_{\tau_t}^2 u, \tau_t\rangle\right)\pa_{\tau_t}\langle \pa_{\tau_t}^2 u, \n_t \rangle
\right)
 \d x \\
-4&\int_{\Omega_t}\eta \, \pa_{\tau_t} \div \, \n_t \, \langle\pa_{\n_t} u, \tau_t\rangle \left(\langle\pa_{\tau_t}^3 u, \tau_t \rangle
-\div \, \n_t \, \langle\pa_{\tau_t}^2 u, \n_t \rangle \right)
\d x \\
+4&\int_{\Omega_t} \div \, \n_t \, \eta \, \left( \langle\pa_{\tau_t}^2 u, \n_t \rangle
+2\langle\pa_{\n_t}^2 u+ \div \, \n_t \, \pa_{\n_t} u, \n_t \rangle
\right)\langle \pa_{\n_t}^3 u, \tau_t\rangle
\d x \\
+4&\int_{\Omega_t}\pa_{\n_t} \eta \,  \langle \pa_{\tau_t}^2 u, \n_t \rangle
\langle \pa_{\n_t}^3 u, \tau_t\rangle \d x \\
-4&\int_{\Omega_t}2(\div \, \n_t )^2 \eta \, \langle  \pa_{\n_t}^2 u + \div \, \n_t \, \pa_{\n_t} u , \n_t \rangle
 \langle\pa_{\tau_t}^2 u, \n_t \rangle
+\div \, \n_t \, \pa_{\n_t} \eta \, \langle\pa_{\tau_t}^2 u, \n_t \rangle^2
 \d x.
\end{split}
\end{align}
But now recalling the decomposition \eqref{Delta X} for the Laplacian we observe
\begin{align}
\label{niceobservation}
\begin{split}
-&\int_{ \Omega_t} \eta \, \langle \pa_{\tau_t} \Delta u, \n_t \rangle^2 \d x + 4 \int_{ \Omega_t}
 \eta \, \langle \pa_{\tau_t} \pa_{\n_t}^2 u, \n_t\rangle \langle \pa_{\tau_t}^3 u, \n_t \rangle \d x \\
=-&\int_{\Omega_t} \eta \, \left\langle \pa_{\tau_t} \left(- \pa_{\n_t}^2 u
+\pa_{\tau_t}^2u + \div \, \n_t \, \pa_{\n_t} u\right), \n_t \right\rangle^2 \d x\\
+ 4&\int_{\Omega_t} \eta \, \langle \pa_{\tau_t} \pa_{\n_t}^2 u,\n_t \rangle \langle\pa_{\tau_t} (\div \, \n_t \, \pa_{\n_t} u), \n_t \rangle \d x.
\end{split}
\end{align}
Thus by setting
\begin{equation}
\label{R4}
R_4 = 4\int_{\Omega_t} \eta \, \langle \pa_{\tau_t} \pa_{\n_t}^2 u,\n_t \rangle \langle\pa_{\tau_t} (\div \, \n_t \, \pa_{\n_t} u), \n_t \rangle \d x
\end{equation}
we infer the lemma from \eqref{1stvar_higherwillmoreA}, \eqref{lastpart}, \eqref{nicepart}, \eqref{badpart1}, \eqref{badpart2} and \eqref{niceobservation}.
\end{proof}

We now need to estimate all those error terms.

\begin{thm}  Let \solT be a $C^3$-regular solution. There exist independent constants $C$ and $c$ such that
\begin{align}
\label{1stvar_higherwillmore:estB}
\begin{split}
\frac\d{\d t} &\int_{\pa \Omega_t}  |\partial_{\tau_t} \curv_t|^2 \d \H^1  \\
\leq&
-\vartheta c \int_{\Omega_t} |\nabla^3 u|^2 \d x +  \vartheta C r^2_t\int_{\Omega_t} |\nabla u|^2 \d x  \left(\int_{\pa \Omega_t} |\partial_{\tau_t} \curv_t|^2   \d \H^1 \right)^2\\
&+ \vartheta \frac{C}{r_t} \int_{\Omega_t} |\nabla u|^2 \d x \int_{\pa \Omega_t} |\partial_{\tau_t} \curv_t|^2   \d \H^1 \\
&+ \vartheta \frac{C}{r_t^2}\int_{\Omega_t} |\nabla^2 u|^2 \d x +  \vartheta \frac{C}{r_t^4}\int_{\Omega_t} |\nabla u|^2 \d x
 +\vartheta \frac{C}{r_t^6}\int_{\Omega_t} |u|^2 \d x.
\end{split}
\end{align}
\end{thm}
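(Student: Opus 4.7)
The starting point is the identity \eqref{1stvar_higherwillmoreB}, which already decomposes $\frac{\d}{\d t}\|\pa_{\tau_t}\curv_t\|_{L^2(\pa \Omega_t)}^2$ into three $\eta$-weighted bulk integrals of squared third-order frame components of $u$ (the coercive part) plus the remainder $\vartheta R=\vartheta(R_0+R_1+R_2+R_3+R_4)$ collected in \eqref{R0}--\eqref{R4}. The proof proceeds in three steps: (a) extract coercivity of $\int_{\Omega_t}\eta|\nabla^3 u|^2\d x$ from the three squared integrands; (b) upgrade this to full coercivity of $\int_{\Omega_t}|\nabla^3 u|^2\d x$ via the interior regularity Lemma \ref{lem:interiorreg}; (c) bound each remainder term $R_i$ by Young and Gagliardo--Nirenberg inequalities, absorbing the top-order factors into the coercive term.

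For step (a), I mirror the derivation of \eqref{badterm_est} from the $L^2$-curvature section. The frame decomposition \eqref{|nabla^2X|^2} applied to $\pa_{\tau_t}\nabla u$, combined with incompressibility $\div u=0$ and the change-of-order identities \eqref{3rdchange1}--\eqref{3rdchange5}, expresses the six independent components of $\nabla \pa_{\tau_t}\nabla u$ in terms of the three squared quantities appearing in \eqref{1stvar_higherwillmoreB} modulo pointwise products of $\div\,\n_t$ with second derivatives of $u$. The cutoff-Korn inequality \eqref{Korn_cutoff:est} applied to the vector field $\pa_{\tau_t}\nabla u$ then produces $c\int_{\Omega_t}\eta|\nabla^3 u|^2\d x$ as a lower bound on the sum of the three coercive integrals from \eqref{1stvar_higherwillmoreB}, up to an $O(1/r_t^2)\int_{\Omega_t}\eta|\nabla^2 u|^2\d x$ error. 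For step (b), by Remark \ref{eta:rem} we may assume $\eta\equiv 1$ on $\{d_{\Omega_t}<-r_t/8\}$; on this set the interior Stokes estimate \eqref{est:interiorreg} with $\rho=r_t/8$ dominates $\int|\nabla^3 u|^2\d x$ by a combination of $r_t^{-2}\|\nabla^2 u\|^2+r_t^{-4}\|\nabla u\|^2+r_t^{-6}\|u\|^2$, reproducing exactly the scaling of the three lower-order terms on the right-hand side of \eqref{1stvar_higherwillmore:estB}.

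For step (c), the remainders $R_1$--$R_4$ of \eqref{R1}--\eqref{R4} are pointwise bilinear products of one factor of order at most three with one of order at most two, weighted by powers of $\div\,\n_t$ (bounded by $1/r_t$ via \eqref{div n est}) and of derivatives of $\eta$ (bounded by $1/r_t$ or $1/r_t^2$ via \eqref{diffeta2}). Young's inequality applied to each product absorbs the $|\nabla^3 u|$ factor against the coercive term $c\int_{\Omega_t}\eta|\nabla^3 u|^2\d x$ at the cost of contributions of exactly the $r_t^{-2}\|\nabla^2 u\|^2 + r_t^{-4}\|\nabla u\|^2 + r_t^{-6}\|u\|^2$ type. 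The only genuine boundary remainder is $R_0$ in \eqref{R0}: using \eqref{pa_tau H} to rewrite $\langle \Delta u,\tau_t\rangle-2\pa_{\tau_t}\div_{\n_t}u = \pa_{\tau_t}\curv_t/\vartheta$, the first piece of $R_0$ becomes $\vartheta^{-1}\int_{\pa\Omega_t}\curv_t(\pa_{\tau_t}\curv_t)^2\d\H^1$ and is estimated by $\|\curv_t\|_{L^\infty(\pa\Omega_t)}\leq 1/r_t$ times $\|\pa_{\tau_t}\curv_t\|_{L^2}^2$.

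The critical piece is the genuinely cubic contribution $\vartheta^{-1}\int_{\pa\Omega_t}\div_{\tau_t}u\,(\pa_{\tau_t}\curv_t)^2\d\H^1$. Bounding $\|\div_{\tau_t}u\|_{L^\infty(\pa\Omega_t)}\leq\|\nabla u\|_{L^\infty(\Omega_t)}$ by trace and applying the quantified interpolation \eqref{interpolation:Linfty} to $\nabla u$ yields
\[
\left|\int_{\pa\Omega_t}\div_{\tau_t}u\,(\pa_{\tau_t}\curv_t)^2\d\H^1\right|\leq C\bigl(\|\nabla^3 u\|_{L^2}^{1/2}\|\nabla u\|_{L^2}^{1/2}+r_t^{-1}\|\nabla u\|_{L^2}\bigr)\|\pa_{\tau_t}\curv_t\|_{L^2}^2.
\]
A weighted Young split of the first factor distributes $\|\nabla^3 u\|^{1/2}$ into an $\epsilon\vartheta\|\nabla^3 u\|^2$ piece, absorbed by the coercive term obtained in (a)--(b), and a residue of the form $\vartheta C r_t^2\|\nabla u\|_{L^2}^2\|\pa_{\tau_t}\curv_t\|_{L^2}^4$; a further Young split of the second factor yields the linear contribution $\vartheta(C/r_t)\|\nabla u\|^2\|\pa_{\tau_t}\curv_t\|^2$. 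The main obstacle is precisely this cubic boundary nonlinearity in $R_0$: its correct absorption requires the quantified 2D interpolation of Subsection \ref{subsec:domainIndep} and is the unique source of the super-quadratic growth in $\|\pa_{\tau_t}\curv_t\|_{L^2}$ on the right-hand side of \eqref{1stvar_higherwillmore:estB}. A secondary, purely bookkeeping difficulty is tracking the $r_t$-powers through the many change-of-order identities, since each use of $\div\,\n_t$ carries a factor $1/r_t$ which must not be lost in the Korn, Poincaré and interpolation steps.
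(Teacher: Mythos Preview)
Your three-step plan is the same as the paper's, but two of the steps contain gaps that prevent you from reaching the stated inequality.

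\textbf{Step (a): the $\pa_{\tau_t}\div\,\n_t$ terms.} You claim that the change-of-order identities \eqref{3rdchange1}--\eqref{3rdchange5} express $|\nabla^3 u|^2$ in terms of the three coercive squares modulo products of $\div\,\n_t$ with second derivatives of $u$. That is not quite true: identities such as \eqref{3rdchange1} and \eqref{<pa_n^3 u,n>}--\eqref{<pa_n^3 u,tau>} also produce terms of the form $\pa_{\tau_t}\div\,\n_t\,\langle\pa_{\n_t}u,\cdot\rangle$. The factor $\pa_{\tau_t}\div\,\n_t$ is \emph{not} bounded pointwise by $1/r_t$; it is only controlled in $L^2$ via \eqref{int (pa_tau div n)^2 control} by $r_t\|\pa_{\tau_t}\curv_t\|_{L^2(\pa\Omega_t)}^2$. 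Hence the leading-part estimate necessarily carries an error of the form $C r_t\,\|\eta^{1/2}\pa_{\n_t}u\|_{L^\infty(\Omega_t)}^2\int_{\pa\Omega_t}|\pa_{\tau_t}\curv_t|^2\d\H^1$, not merely an $O(r_t^{-2})\|\nabla^2 u\|_{L^2}^2$ error.

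\textbf{Step (c): the cubic term in $R_0$.} Your substitution $g=\pa_{\tau_t}\curv_t/\vartheta$ turns the first piece of $\vartheta R_0$ into $-\vartheta^{-1}\int_{\pa\Omega_t}\curv_t(\pa_{\tau_t}\curv_t)^2\d\H^1$, whose naive bound $\vartheta^{-1}r_t^{-1}\|\pa_{\tau_t}\curv_t\|_{L^2}^2$ does \emph{not} fit the right-hand side (every term there carries a positive power of $\vartheta$). More seriously, your treatment of the $\div_{\tau_t}u\,(\pa_{\tau_t}\curv_t)^2$ piece does not give the residue you claim. Bounding $\|\div_{\tau_t}u\|_{L^\infty}$ linearly by interpolation yields $\|\nabla^3 u\|_{L^2}^{1/2}\|\nabla u\|_{L^2}^{1/2}\|\pa_{\tau_t}\curv_t\|_{L^2}^2$, and the only Young split absorbing $\|\nabla^3 u\|^{1/2}$ into $\epsilon\vartheta\|\nabla^3 u\|^2$ leaves a residue $\sim\vartheta^{-1/3}\|\nabla u\|^{2/3}\|\pa_{\tau_t}\curv_t\|^{8/3}$, not $\vartheta r_t^2\|\nabla u\|^2\|\pa_{\tau_t}\curv_t\|^4$.

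The paper resolves both issues the same way. For $R_0$ one first applies \eqref{div(divn*f*n)} to convert $\int_{\pa\Omega_t}\curv_t g^2$ directly to a bulk integral (never reintroducing $\pa_{\tau_t}\curv_t$), and for the cubic piece one first uses a weighted Young inequality to produce $\tfrac{r_t}{2}\int_{\pa\Omega_t}(\div_{\tau_t}u)^2(\pa_{\tau_t}\curv_t)^2\d\H^1$, i.e.\ the \emph{square} $\|\nabla u\|_{L^\infty}^2$ times $\|\pa_{\tau_t}\curv_t\|_{L^2}^2$, with an extra factor $r_t$. Combined with the same $\|\eta^{1/2}\nabla u\|_{L^\infty}^2$ term coming from step (a), interpolation \eqref{interpolation:Linfty} applied to $\eta^{1/2}\nabla u$ now yields a factor linear in $\|\eta^{1/2}\nabla^3 u\|_{L^2}$, and a single Young split gives precisely $\epsilon\vartheta\|\nabla^3 u\|_{L^2}^2 + C\vartheta r_t^2\|\nabla u\|_{L^2}^2\|\pa_{\tau_t}\curv_t\|_{L^2}^4$. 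The crucial point you missed is that one must square the $L^\infty$ factor \emph{before} interpolating, not after.
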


\begin{proof}
\textbf{Step 1: Estimating the leading part:}
We show that for the  leading part
\begin{align}
 \label{Leading_part}
\begin{split}
L:=- &\vartheta \int_{\Omega_t}  \eta \,
\langle \pa_{\tau_t}( - \pa_{\n_t}^2 u+ \pa_{\tau_t}^2 u + \div \, \n_t \, \pa_{\n_t} u), \n_t\rangle^2
\d x \\
- &\vartheta \int_{\Omega_t} \eta \, \left(\langle \pa_{\tau_t} \Delta u, \tau_t\rangle^2
+4\langle \pa_{\tau_t}^3 u, \tau_t \rangle^2
  \right) \d x
\end{split}
\end{align}
it holds
\begin{align}
 \label{Leading_est}
\begin{split}
L \leq -&\vartheta c \int_{\Omega_t} |\nabla^3 u|^2 \d  x
+\vartheta C  r_t \|\eta^\frac12 \pa_{\n_t} u\|_{L^\infty(\Omega_t)}^2 \int_{\pa \Omega_t}|\partial_{\tau_t} \curv_t|^2 \d \H^1
 \\
+  &\vartheta \frac{C}{r_t^2} \int_{\Omega_t} |\nabla^2 u|^2 \d x + \vartheta \frac{C}{r_t^4} \int_{\Omega_t} |\nabla u|^2 \d x
 +\vartheta \frac{C}{r_t^6} \int_{\Omega_t} |u|^2 \d x
\end{split}
\end{align}
We can decompose
\begin{align*}
|\nablasym \pa_{\tau_t}^2 u|^2  = \langle \pa_{\n_t}\pa_{\tau_t}^2 u, \n_t \rangle^2 +\langle \pa_{\tau_t}^3 u, \tau_t \rangle^2 + \frac12 \left( \langle \pa_{\n_t}\pa_{\tau_t}^2 u, \tau_t \rangle +  \langle \pa_{\tau_t}^3 u, \n_t \rangle \right)^2
\end{align*}
with respect to the frame $\{\n_t,\tau_t\}$ in $\spt \eta \, \cap  \Omega_t$.
Then it follows from the decomposition \eqref{|nabla^2X|^2} for $|\nabla^2 u|^2$ and the technical identities \eqref{3rdchange1} and \eqref{3rdchange2} that
\begin{align*}
|\nablasym \pa_{\tau_t}^2 u|^2
&\leq C\left( \langle \pa_{\tau_t}( - \pa_{\n_t}^2 u+ \pa_{\tau_t}^2 u + \div \, \n_t \, \pa_{\n_t} u), \n_t\rangle^2 + \langle \pa_{\tau_t}^3 u, \tau_t \rangle^2 \right) \\
&+C\left(|\pa_{\tau_t} \div \, \n_t \pa_{\n_t} u|^2 + (\div \, \n_t)^2 |\nabla^2 u|^2+(\div \, \n_t)^4|\nabla u|^2\right)
\end{align*}
in $\spt \eta \, \cap  \Omega_t$. Thus multiplying the previous estimate by $\eta$, using the estimate \eqref{div n est},
integrating over $\Omega_t$ and comparing against $L$ yields
\begin{align*}
L \leq -&\vartheta  \int_{\Omega_t}\eta \, \left(c |\nablasym \pa_{\tau_t}^2 u|^2 + \langle \pa_{\tau_t} \Delta u, \tau_t\rangle^2 \right) \d  x
+\vartheta C  \int_{\Omega_t} \eta \, |\pa_{\tau_t} \div \, \n_t \, \pa_{\n_t} u|^2 \d x
 \\
+  &\vartheta \frac{C}{r_t^2} \int_{\Omega_t} |\nabla^2 u|^2 \d x + \vartheta \frac{C}{r_t^4} \int_{\Omega_t} |\nabla u|^2 \d x.
\end{align*}
Recalling Korn's inequality \ref{Korn_cutoff:est} we further have
an estimate
\begin{align}
 \label{Leading1}
\begin{split}
L \leq -&\vartheta \int_{\Omega_t}\eta \, \left( c |\nabla \pa_{\tau_t}^2 u|^2 + \langle \pa_{\tau_t} \Delta u, \tau_t\rangle^2 \right) \d  x
+\vartheta C  \int_{\Omega_t} \eta \, |\pa_{\tau_t} \div \, \n_t \, \pa_{\n_t} u|^2 \d x
 \\
+  &\vartheta \frac{C}{r_t^2} \int_{\Omega_t} |\nabla^2 u|^2 \d x + \vartheta \frac{C}{r_t^4} \int_{\Omega_t} |\nabla u|^2 \d x.
\end{split}
\end{align}
Here we also used \eqref{|nabla^2X|^2}.
Starting from the decomposition \eqref{|nabla^3X|^2} for $|\nabla^3u|^2$ with respect to the frame $\{\n_t,\tau_t\}$
and using \eqref{|nabla^2X|^2} we obtain
\begin{align}
 \label{Leading2}
\begin{split}
|\nabla^3 u|^2 \leq &C\left(|\pa^3_{\n_t} u|^2+|\pa_{\n_t}^2 \pa_{\tau_t} u|^2+|\pa_{\tau_t}\pa_{\n_t}^2 u|^2+ |\nabla \pa_{\tau_t}^2u|^2+ |\pa_{\tau_t} \div \, \n_t \, \pa_{\n_t}u|^2 \right)\\
+ &C (\div \, \n_t)^2 |\nabla^2 u|^2 + C (\div \, \n_t)^4  |\nabla u|^2
\end{split}
\end{align}
in $\spt \eta \, \cap  \Omega_t$. Further, recalling \eqref{3rdchange2}, \eqref{3rdchange3}, \eqref{3rdchange4} and
\eqref{3rdchange5}
yields
\begin{align}
 \label{Leading3}
\begin{split}
|\nabla^3 u|^2 \leq &C\left(|\pa^3_{\n_t} u|^2 + |\nabla \pa_{\tau_t}^2u|^2 + |\pa_{\tau_t} \div \, \n_t \, \pa_{\n_t}u|^2 \right)\\
+ &C (\div \, \n_t)^2 |\nabla^2 u|^2 + C (\div \, \n_t)^4  |\nabla u|^2
\end{split}
\end{align}
in $\spt \eta \, \cap  \Omega_t$. To get rid of the term $|\pa^3_{\n_t} u|^2$
we use $\div \, \Delta u = 0$ and $(\nabla \Delta u)^\T=\nabla \Delta u$  in $\Omega_t$, the basic identities \eqref{basicid}, \eqref{pa_n H} and \eqref{changeoforder}
for the frame $\{\n_t,\tau_t\}$ , the decomposition
\eqref{Delta X} for $\Delta u$ as well as \eqref{3rdchange1} and \eqref{3rdchange2} to conclude
\begin{align}
\label{<pa_n^3 u,n>}
\begin{split}
\langle \pa_{\n_t}^3 u, \n_t\rangle
=-&\langle \pa_{\tau_t} \Delta u, \tau_t \rangle +\langle \pa_{\tau_t}^3 u, \tau_t \rangle+\pa_{\tau_t} \div \, \n_t \, \langle\pa_{\n_t} u, \tau_t\rangle \\
 - &3 \, \div \, \n_t \langle \pa_{\n_t}^2 u, \n_t \rangle - ( \div \, \n_t )^2 \langle \pa_{\n_t} u, \n_t \rangle  
\end{split}
\end{align}
and
\begin{align}
\label{<pa_n^3 u,tau>}
\begin{split}
\langle \pa_{\n_t}^3 u, \tau_t\rangle=
&
%\langle \pa_{\n_t} \pa_{\tau_t}^2 u, \tau_t\rangle - \langle \pa_{\tau_t}^3 u , \n_t \rangle
\langle \pa_{\n_t} \pa_{\tau_t}^2 u, \tau_t\rangle
 - 2\langle \pa_{\n_t} \pa_{\tau_t}^2 u, \tau_t \rangle - \pa_{\tau_t} \div \, \n_t \, \langle \pa_{\n_t} u,\n_t  \rangle\\
+&2 \, \div \, \n_t \,  \langle \pa_{\tau_t}^2 u + \, \div \, \n_\Omega \pa_{\n_t} u , \tau_\Omega\rangle -  \, \div \, \n_t \, \langle \pa_{\n_t} \pa_{\tau_t} u  , \n_t\rangle \\
-  &(\div \, \n_t)^2 \langle \pa_{\tau_t} u,\n_\Omega \rangle
\end{split}
\end{align}
in $\spt \eta \, \cap  \Omega_t$. Thus injecting  \eqref{<pa_n^3 u,n>} and \eqref{<pa_n^3 u,tau>}
into \eqref{Leading3} and using \eqref{div n est} gives us
\begin{align*}
|\nabla^3 u|^2 \leq &C\left(\langle \pa_{\tau_t} \Delta u, \tau_t \rangle^2 + |\nabla \pa_{\tau_t}^2u|^2 + |\pa_{\tau_t} \div \, \n_t \, \pa_{\n_t}u|^2 \right)\\
+ &\frac{C}{r_t^2} |\nabla^2 u|^2 + \frac{C}{r_t^4}  |\nabla u|^2.
\end{align*}
and, in turn, combining this with \eqref{Leading1} yields
\begin{align}
\label{Leading4}
\begin{split}
L \leq -&\vartheta c \int_{\Omega_t}\eta \, |\nabla^3 u|^2 \d  x
+\vartheta C  \int_{\Omega_t} \eta \, |\pa_{\tau_t} \div \, \n_t \, \pa_{\n_t} u|^2 \d x
 \\
+  &\vartheta \frac{C}{r_t^2} \int_{\Omega_t} |\nabla^2 u|^2 \d x + \vartheta \frac{C}{r_t^4} \int_{\Omega_t} |\nabla u|^2 \d x.
\end{split}
\end{align}
Note that it follows from the construction of $\eta$ that $\eta=1$ in $\{d_t \geq - r/4\}$. Thus recalling $\Delta^2 u= 0$ in $\Omega_t$ we may apply a standard interior estimate (Lemma \ref{lem:interiorreg})  
to obtain
\begin{equation}
\label{Leading5}
 \int_{\Omega_t}(1-\eta) |\nabla^3 u|^2 \d x \leq C \int_{\Omega_t} \frac{1}{r_t^2} |\nabla^2 u|^2 + \frac{1}{r_t^4}  |\nabla u|^2+
\frac{1}{r_t^6}  |u|^2 \d x.
\end{equation}
Finally, it follows from \eqref{divn2D} and $\spt \eta \subset\mathcal N_{r_t/2}(\pa \Omega_t)$
\begin{equation}
\label{int (pa_tau div n)^2 control}
\int_{\spt \eta \cap \Omega_t}  |\pa_{\tau_t} \div \, \n_t|^2 \d x
\leq C r_t  \int_{\pa \Omega_t} |\partial_{\tau_t} \curv_t|^2 \d \H^1
\end{equation}
and, hence, we infer \eqref{Leading_est} from \eqref{Leading4}, \eqref{Leading5} and \eqref{int (pa_tau div n)^2 control}.

\textbf{Step 2: Estimating the remainder:}
We show that the remainder contribution can be absorbed up to lower order terms  by the estimate \eqref{Leading_est}. To be more concrete, it holds
\begin{align}
\label{Remainder_est}
\begin{split}
R  \leq \,  &\frac{c}{2}\int_{\Omega_t} \eta \, |\nabla^3 u|^2 \d x+
C r_t \|\eta^\frac12 \pa_{\n_t} u\|_{L^\infty(\Omega_t)}^2
\int_{\pa \Omega_t} |\partial_{\tau_t} \curv_t|^2 \d \H^1
\\
+ &\frac{C}{r_t^2} \int_{\Omega_t}|\nabla^2u|^2 \d x + \frac{C}{r_t^4} \int_{\Omega_t}|\nabla u|^2 \d x
\end{split}
\end{align}
where $c$ is the same constant as in \eqref{Leading_est}.
We observe that decompositions \eqref{|nabla^2X|^2} and \eqref{|nabla^3X|^2} imply
\begin{equation}
\label{|pa_n^k| < |nabla^k|}
|\pa_{\n_t}^2 u| \leq |\nabla^2 u| \qquad \text{and} \qquad |\pa_{\n_t}^3 u| \leq  |\nabla^3 u|
\end{equation}
in $\spt \eta \, \cap \Omega_t$.

Recall that $R = \sum_{i=0}^4 R_i$. We estimate each $R_i$ separately.
Starting from $R_0$ we apply \eqref{div(divn*f*n)} on the first integral in \eqref{R0}
and Young's inequality for the second integral and consequently apply \eqref{div(f*n)} to
obtain
\begin{align*}
R_0 \leq 
 &-2\int_{\Omega_t} \div \, \n_ t \, \pa_{\n_t} \left(\eta \, (\langle \Delta u, \tau_t\rangle - 2 \pa_{\tau_t} \div_{\n_t} u)^2\right)\d x\\
& +\frac{r_t}{2} \int_{\pa \Omega_t} (\div_{\tau_t} u)^2 |\partial_{\tau_t} \curv_t|^2 \d \H^1 \\
&+\frac{9}{2 r_t}\int_{\Omega_t} \pa_{\n_t} \left(\eta \, (\langle \Delta u, \tau_t\rangle - 2 \pa_{\tau_t} \div_{\n_t} u)^2\right)
+\div \, \n_t \, \eta \,  (\langle \Delta u, \tau_t\rangle - 2 \pa_{\tau_t} \div_{\n_t} u)^2
 \d x
\end{align*}
and hence by the incompressibility, \eqref{diffeta2} and \eqref{div n est} we have
\begin{align}
\label{R0_est1}
\begin{split}
R_0 &\leq
 \frac{C}{r_t} \int_{\Omega_t}  \eta \, |\langle \Delta u, \tau_t\rangle - 2 \pa_{\tau_t} \div_{\n_t} u||\pa_{\n_t} (\langle \Delta u, \tau_t\rangle - 2 \pa_{\tau_t} \div_{\n_t} u)| \d x\\
& + r_t  \|\pa_{\n_t} u\|_{L^\infty(\pa \Omega_t)}^2 \int_{\pa \Omega_t} |\partial_{\tau_t} \curv_t|^2 \d \H^1 \\
&+\frac{C}{r_t^2}\int_{\spt \eta \cap \Omega_t}(\langle \Delta u, \tau_t\rangle - 2 \pa_{\tau_t} \div_{\n_t} u)^2\d x
 \d x.
\end{split}
\end{align}
Using the incompressibility, the basic identities \eqref{basicid} and \eqref{pa_n H} and the decomposition \eqref{Delta X} for $\Delta$
yields the identities
\[
 \langle \Delta u, \tau_t \rangle -2\pa_{\tau_t}\div_{\n_t} u = 2 \langle \Delta u, \tau_t \rangle - 2 \langle \pa_{\n_t}^2 u,\tau_t \rangle- 2 \, \div \, \n_t \langle \pa_{\n_t} u,\tau_t \rangle
\]
and
\begin{align*}
&\pa_{\n_t}\left(\langle \Delta u, \tau_t \rangle -2\pa_{\tau_t}\div_{\n_t} u\right) \\
&= 2 \langle \pa_{\n_t} \Delta u, \tau_t \rangle - 2 \langle \pa_{\n_t}^3 u,\tau_t \rangle- 
2 \, \div \, \n_t \langle \pa_{\n_t}^2 u,\tau_t \rangle
+ 2 (\div \, \n_t)^2 \langle \pa_{\n_t} u, \tau_t \rangle
\end{align*}
in $\spt \eta \, \cap \Omega_t$.
Further, using $|\Delta u| \leq C|\nabla^2 u|$, $|\nabla \Delta u| \leq C|\nabla^3 u|$, \eqref{|pa_n^k| < |nabla^k|},
\eqref{diffeta2} and \eqref{div n est} we have
\begin{align*}
 |\langle \Delta u, \tau_t \rangle -2\pa_{\tau_t}\div_{\n_t} u| &\leq C|\nabla^2 u| +\frac{C}{r_t}|\nabla u| \\
|\pa_{\n_t}\left(\langle \Delta u, \tau_t \rangle -2\pa_{\tau_t}\div_{\n_t} u\right)| &\leq   C|\nabla^3 u| +\frac{C}{r_t}|\nabla^2 u| + \frac{C}{r_t^2}|\nabla u|
\end{align*}
so injecting these into \eqref{R0_est1} and using Young's inequality yields
\begin{align}
\label{R0_est2}
\begin{split}
R_0 &\leq
C\int_{\Omega_t} \eta \, |\nabla^3 u| \left( \frac{|\nabla^2u|}{r_t}+ \frac{|\nabla u|}{r_t^2}\right) \d x + \frac{r_t}2\|\pa_{\n_t} u\|_{L^\infty(\pa \Omega_t)}^2 \int_{\pa \Omega_t} |\partial_{\tau_t} \curv_t|^2 \d \H^1 \\
&+ \frac{C}{r_t^2} \int_{\Omega_t}|\nabla^2u|^2 \d x + \frac{C}{r_t^4} \int_{\Omega_t}|\nabla u|^2 \d x.
\end{split}
\end{align}
The next remainder $R_1$ is straightforward to estimate. We start from 
\eqref{R1} and use \eqref{div n est} as well as $|\Delta u| \leq C|\nabla^2 u|$ and $|\nabla \Delta u| \leq C|\nabla^3 u|$
to obtain
\[
R_1 \leq 
C\int_{\Omega_t} \left( |\pa_{\n_t} \eta| + \frac{\eta}{r_t}  \right) |\nabla^2 u| |\nabla^3 u|\d x.
\]
We write
$\pa_{\n_t} \eta = 2 \eta^\frac12 \pa_{\n_t} \eta^\frac12$ and apply \eqref{diffeta2} on $\pa_{\n_t} \eta^\frac12$
to further have
\begin{equation}
\label{R1_est1}
R_1 \leq 
C\int_{\Omega_t} \eta^\frac12 |\nabla^3 u| \frac{|\nabla^2u|}{r_t}\d x.
\end{equation}
Estimating $R_2$ is rather similar to the case $R_0$. Indeed, we use the incompressibility, \eqref{eq:surfacetension2} and Young's inequality on
the first integral in \eqref{R2} to obtain
\begin{align*}
-&2\int_{\pa \Omega_t} 2 \pa_{\tau_t} \curv_t  \langle \pa_{\tau_t} u, \tau_t \rangle\langle \pa_{\tau_t}^2 u, \tau_t \rangle
+\pa_{\tau_t}\curv_t  \langle\pa_{\tau_t} u, \n_t \rangle  \langle \Delta u,\n_t\rangle
\d \H^1 \\
&\leq r_t \|\pa_{\n_t} u\|_{L^\infty(\pa \Omega_t)}^2 \int_{\pa \Omega_t}|\partial_{\tau_t} \curv_t|^2 \d \H^1 + \frac{10}{r_t}
 \int_{\pa \Omega_t} \langle \pa_{\tau_t}^2 u, \tau_t \rangle^2 + \langle \Delta u, \n_t\rangle^2 \d \H^1.
\end{align*}
Again applying  \eqref{div(f*n)} on the second integral and using \eqref{div n est} and \eqref{diffeta2} yields
\begin{align*}
 &\phantom{{}={}}\frac{10}{r_t} \int_{\pa \Omega_t} \langle \pa_{\tau_t}^2 u, \tau_t \rangle^2 + \langle \Delta u, \n_t\rangle^2 \d \H^1 \\
&\leq \frac{C}{r_t} \int_{\pa \Omega_t} \eta \, \left( |\langle \pa_{\tau_t}^2 u, \tau_t \rangle||\pa_{\n_t}\langle \pa_{\tau_t}^2 u, \tau_t \rangle| +
 |\langle \Delta u, \n_t\rangle|| \langle  \pa_{\n_t} \Delta u, \n_t\rangle| \right) \d x \\
&+\frac{C}{r_t^2} \int_{\spt \eta \cap  \Omega_t}  \langle \pa_{\tau_t}^2 u, \tau_t \rangle^2 +
 \langle \Delta u, \n_t\rangle^2 \d x. 
\end{align*}
Using the decomposition \eqref{Delta X} for $\Delta u$ we may write
$\langle \pa_{\tau_t}^2 u, \tau_t\rangle= \langle \Delta u, \tau_t \rangle-\langle \pa_{\n_t}^2 u, \tau_t \rangle-\div \, \n_t \, \langle \pa_{\n_t} u, \tau_t \rangle$
in $\spt \eta \, \cap \Omega_t$ and, hence, using the same argument as for $R_0$ we get estimates
\[
 |\langle \pa_{\tau_t}^2 u, \tau_t\rangle | \leq C \left(|\nabla^2 u| + \frac1{r_t}|\nabla u|\right)  \qquad \text{and} \qquad |\pa_{\n_t}\langle \pa_{\tau_t}^2 u, \tau_t \rangle| \leq
 C \left(|\nabla^3 u| + \frac1{r_t}|\nabla^2 u| + \frac1{r_t^2} |\nabla u|\right).
\]
Thus, combining the previous estimates and using Young's inequality yields
\begin{align}
\label{R2_est1}
\begin{split}
-2\int_{\pa \Omega_t} &2 \pa_{\tau_t} \curv_t  \langle \pa_{\tau_t} u, \tau_t \rangle\langle \pa_{\tau_t}^2 u, \tau_t \rangle
+\pa_{\tau_t}\curv_t  \langle\pa_{\tau_t} u, \n_t \rangle  \langle \Delta u,\n_t\rangle
\d \H^1 \\
\leq C&\int_{\Omega_t} \eta \, |\nabla^3 u| \left( \frac{|\nabla^2u|}{r_t}+ \frac{|\nabla u|}{r_t^2}\right) \d x + r_t\|\pa_{\n_t} u\|_{L^\infty(\pa \Omega_t)}^2 \int_{\pa \Omega_t} |\partial_{\tau_t} \curv_t|^2 \d \H^1 \\
+ \frac{C}{r_t^2} &\int_{\Omega_t}|\nabla^2u|^2 \d x + \frac{C}{r_t^4} \int_{\Omega_t}|\nabla u|^2 \d x.
\end{split}
\end{align}
For the remaining two integrals in \eqref{R2} we use the incompressibility, the boundary condition \eqref{eq:surfacetension2}
and  the decomposition \eqref{Delta X} for $\Delta$ to compute 
\begin{align*}
&-2\int_{\pa \Omega_t} 2\curv_t \langle \pa_{\tau_t}^2 u, \tau_t \rangle^2
+\curv_t  \langle\pa_{\tau_t}^2 u, \n_t \rangle  \langle \Delta u,\n_t\rangle
-\curv_t\langle\pa_{\tau_t}^2 u, \tau_t \rangle  \langle \Delta u,\tau_t\rangle 
\d \H^1\\
&-2\int_{\pa \Omega_t}
\curv_t^2 \left( \langle\pa_{\tau_t} u, \tau_t \rangle  \langle \Delta u,\n_t\rangle
+ \langle\pa_{\tau_t} u, \n_t \rangle \langle \Delta u,\tau_t\rangle 
-2 \langle \pa_{\tau_t} u, \n_t \rangle 
\langle \pa_{\tau_t}^2 u, \tau_t \rangle\right)
\d \H^1 \\
=&-2\int_{\pa \Omega_t} \curv_t (\langle \Delta u, \tau_t \rangle - 2\langle \pa_{\n_t}^2 u,\tau_t\rangle)(\langle \Delta u, \tau_t \rangle
-\langle \pa_{\n_t}^2 u, \tau_t \rangle 
)+\curv_t (\langle \Delta u,\n_t\rangle - \langle \pa_{\n_t}^2 u, \n_t \rangle  )  \langle \Delta u,\n_t\rangle \d \H^1 \\
&+4\int_{\pa \Omega_t} \curv_t^2 \langle \pa_{\n_t} u, \tau_t \rangle(\langle \Delta u, \tau_t \rangle - \langle \pa_{\n_t}^2 u,\tau_t\rangle)
+\curv_t^2 \langle\pa_{\n_t} u, \n_t \rangle  \langle \Delta u,\n_t\rangle\d \H^1.
\end{align*}
Further applying \eqref{div(f*n)} on the right hand side and using the
basic identities \eqref{basicid} and \eqref{pa_n H} for the frame $\{\n_t, \tau_t\}$ and the
 estimates \eqref{diffeta2}, \eqref{div n est}, $|\Delta u| \leq C|\nabla^2 u|$  and $|\nabla \Delta u| \leq C |\nabla^3 u|$  as well as \eqref{|pa_n^k| < |nabla^k|}
and finally applying Young's inequality
yields
\begin{align*}
&-2\int_{\pa \Omega_t} 2\curv_t \langle \pa_{\tau_t}^2 u, \tau_t \rangle^2
+\curv_t  \langle\pa_{\tau_t}^2 u, \n_t \rangle  \langle \Delta u,\n_t\rangle
-\curv_t\langle\pa_{\tau_t}^2 u, \tau_t \rangle  \langle \Delta u,\tau_t\rangle 
\d \H^1\\
&-2\int_{\pa \Omega_t}
\curv_t^2 \left( \langle\pa_{\tau_t} u, \tau_t \rangle  \langle \Delta u,\n_t\rangle
+ \langle\pa_{\tau_t} u, \n_t \rangle \langle \Delta u,\tau_t\rangle 
-2 \langle \pa_{\tau_t} u, \n_t \rangle 
\langle \pa_{\tau_t}^2 u, \tau_t \rangle\right)
\d \H^1 \\
\leq &C  \int_{\Omega_t} \eta \, |\nabla^3 u| \left( \frac{|\nabla^2u|}{r_t}+ \frac{|\nabla u|}{r_t^2}\right) \d x + \frac{C}{r_t^2} \int_{\Omega_t}|\nabla^2u|^2 \d x
+\frac{C}{r_t^4} \int_{\Omega_t}|\nabla u|^2 \d x.
\end{align*}
Combining this with \eqref{R2_est1} gives us
\begin{align}
\label{R2_est2}
\begin{split}
R_2 &\leq C\int_{\Omega_t} \eta \, |\nabla^3 u| \left( \frac{|\nabla^2u|}{r_t}+ \frac{|\nabla u|}{r_t^2}\right) \d x + r_t\|\pa_{\n_t} u\|_{L^\infty(\pa \Omega_t)}^2 \int_{\pa \Omega_t} |\partial_{\tau_t} \curv_t|^2 \d \H^1 \\
&+ \frac{C}{r_t^2} \int_{\Omega_t}|\nabla^2u|^2 \d x + \frac{C}{r_t^4} \int_{\Omega_t}|\nabla u|^2 \d x.
\end{split}
\end{align}
The remainder $R_3$ is more technical to estimate. Employing the same arguments for the last three integrals as previously
we conclude 
\begin{align}
\label{R3_est1}
\begin{split}
&\phantom{{}={}}4\int_{\Omega_t} \div \, \n_t \, \eta \, \left( \langle\pa_{\tau_t}^2 u, \n_t \rangle
+2\langle\pa_{\n_t}^2 u+ \div \, \n_t \, \pa_{\n_t} u, \n_t \rangle
\right)\langle \pa_{\n_t}^3 u, \tau_t\rangle
\d x \\
&+4\int_{\Omega_t}\pa_{\n_t} \eta \,  \langle \pa_{\tau_t}^2 u, \n_t \rangle
\langle \pa_{\n_t}^3 u, \tau_t\rangle \d x \\
&-4\int_{\Omega_t}2(\div \, \n_t )^2 \eta \, \langle  \pa_{\n_t}^2 u + \div \, \n_t \, \pa_{\n_t} u , \n_t \rangle
 \langle\pa_{\tau_t}^2 u, \n_t \rangle
+\div \, \n_t \, \pa_{\n_t} \eta \, \langle\pa_{\tau_t}^2 u, \n_t \rangle^2
 \d x \\
&\leq C  \int_{\Omega_t} \eta^\frac12 \, |\nabla^3 u| \left( \frac{|\nabla^2u|}{r_t}+ \frac{|\nabla u|}{r_t^2}\right) \d x + \frac{C}{r_t^2} \int_{\Omega_t}|\nabla^2u|^2 \d x
+\frac{C}{r_t^4} \int_{\Omega_t}|\nabla u|^2 \d x.
\end{split}
\end{align}
For the two first integrals in \eqref{R3} we need the following estimates
\begin{align*}
 |\pa_{\tau_t}\pa_{\n_t}^2 u| &\leq |\nabla^3 u|+ \sqrt 2 |\div \, \n_t||\nabla^2 u| \\
 |\pa_{\tau_t}^3 u| &\leq |\nabla^3 u|+ |\pa_{\tau_t} \div \, \n_t \, \pa_{\n_t} u| + \frac{3}{\sqrt2}|\div \, \n_t||\nabla^2 u| + (\div \, \n_t)^2 |\nabla u|
\end{align*}
in $\spt \eta \, \cap \Omega_t$ which follows from the decomposition \eqref{|nabla^3X|^2} for $|\nabla^3 u|^2$ in $\mathcal N_{r_t}(\pa \Omega_t)$. By using these and otherwise proceeding as earlier we obtain an estimate
\begin{align*}
&\phantom{{}={}}4\int_{\Omega_t}
\div \, \n_t \,\eta \,  
\left(
\langle \pa_{\tau_t} \pa_{\n_t}^2 u, \n_t\rangle \langle \pa_{\tau_t}^2 u, \tau_t \rangle
+ \left( \langle \Delta u , \tau_t\rangle + \langle\pa_{\tau_t}^2 u, \tau_t\rangle\right)\pa_{\tau_t}\langle \pa_{\tau_t}^2 u, \n_t \rangle
\right)
 \d x \\
&-4\int_{\Omega_t}\eta \, \pa_{\tau_t} \div \, \n_t \, \langle\pa_{\n_t} u, \tau_t\rangle \left(\langle\pa_{\tau_t}^3 u, \tau_t \rangle
-\div \, \n_t \, \langle\pa_{\tau_t}^2 u, \n_t \rangle \right)
\d x \\
&\leq C \int_{\Omega_t}\eta \, |\nabla^3 u|\left( \frac{|\nabla^2u|}{r_t}+ \frac{|\nabla u|}{r_t^2} + |\pa_{\tau_t} \div \, \n_t \pa_{\n_t} u| \right)\d x
+C \int_{\Omega_t}\eta \,|\pa_{\tau_t} \div \, \n_t \pa_{\n_t} u|^2 \d x \\
&+\frac{C}{r_t^2}  \int_{\Omega_t}|\nabla^2u|^2 \d x + \frac{C}{r_t^4} \int_{\Omega_t}|\nabla u|^2 \d x.
\end{align*}
We combine this with \eqref{R3_est1} to obtain
\begin{align}
\label{R3_est2}
\begin{split}
R_3 &\leq C \int_{\Omega_t}\eta^\frac12 |\nabla^3 u|\left( \frac{|\nabla^2u|}{r_t}+ \frac{|\nabla u|}{r_t^2} + |\eta^\frac12  \pa_{\tau_t} \div \, \n_t \pa_{\n_t} u| \right)\d x
 \\
&+C \int_{\Omega_t}|\eta^\frac12 \pa_{\tau_t} \div \, \n_t \pa_{\n_t} u|^2 \d x +\frac{C}{r_t^2}  \int_{\Omega_t}|\nabla^2u|^2 \d x + \frac{C}{r_t^4} \int_{\Omega_t}|\nabla u|^2 \d x.
\end{split}
\end{align}
Looking at  the expression \eqref{R4} of the last remainder we see that it can be estimated in similar fashion as $R_3$ and we get essentially the same estimate 
\begin{align}
\label{R4_est1}
\begin{split}
R_4 &\leq C \int_{\Omega_t}\eta^\frac12 |\nabla^3 u|\left( \frac{|\nabla^2u|}{r_t}+ \frac{|\nabla u|}{r_t^2} + |\eta^\frac12  \pa_{\tau_t} \div \, \n_t \pa_{\n_t} u| \right)\d x
 \\
&+C \int_{\Omega_t}|\eta^\frac12 \pa_{\tau_t} \div \, \n_t \pa_{\n_t} u|^2 \d x +\frac{C}{r_t^2}  \int_{\Omega_t}|\nabla^2u|^2 \d x + \frac{C}{r_t^4} \int_{\Omega_t}|\nabla u|^2 \d x.
\end{split}
\end{align}

Finally, by summing the estimates  \eqref{R0_est2}, \eqref{R1_est1},  \eqref{R2_est2}, \eqref{R3_est2} and \eqref{R4_est1} together and
applying Young's inequality we have
\begin{align*}
R  &\leq   \frac{c}{2}\int_{\Omega_t} \eta \, |\nabla^3 u|^2 \d x+ \frac{C}{r_t^2} \int_{\Omega_t}|\nabla^2u|^2 \d x + \frac{C}{r_t^4} \int_{\Omega_t}|\nabla u|^2 \d x \\
&+C r_t \|\pa_{\n_t} u\|_{L^\infty(\pa \Omega_t)}^2
\int_{\pa \Omega_t} |\partial_{\tau_t} \curv_t|^2 \d \H^1 + C \|\eta^\frac12 \pa_{\n_t} u\|_{L^\infty(\Omega_t)}^2 \int_{\spt \eta \cap \Omega_t} (\pa_{\tau_t} \div \, \n_t)^2 \d x.
\end{align*}
Therefore, recalling \eqref{int (pa_tau div n)^2 control} we infer \eqref{Remainder_est} from the above estimate.

\textbf{Step 3: The final estimate:}
By virtue of \eqref{Leading_est} and \eqref{Remainder_est} we have the following estimate
\begin{align}
\label{1stvar_higherwillmore:estA}
\begin{split}
\frac\d{\d t} &\int_{\pa \Omega_t}  |\partial_{\tau_t} \curv_t|^2 \d \H^1  \\
\leq&
-\vartheta c \int_{\Omega_t} |\nabla^3 u|^2 \d x
+ \vartheta C r_t \|\eta^\frac12 \nabla u\|^2_{L^\infty(\Omega_t)}  \int_{\pa \Omega_t} |\partial_{\tau_t} \curv_t|^2   \d \H^1
 \\
&+ \vartheta \frac{C}{r_t^2}\int_{\Omega_t} |\nabla^2 u|^2 \d x +  \vartheta \frac{C}{r_t^4}\int_{\Omega_t} |\nabla u|^2 \d x
+  \vartheta \frac{C}{r_t^6}\int_{\Omega_t} |u|^2 \d x.
\end{split}
\end{align}
In order to get rid of the $L^\infty$-norm, we apply \eqref{interpolation:Linfty} on $\eta^\frac12 \nabla u$ 
\begin{equation}
\|\eta^\frac12 \nabla u\|^2_{L^\infty(\Omega_t)} \leq C\left(\|\nabla^2(\eta^\frac12 \nabla u)\|_{L^2(\Omega_t)}\| \nabla u\|_{L^2(\Omega_t)} + \frac1{r_t^2}\|\nabla u\|_{L^2(\Omega_t)}^2 \right).
\end{equation}
Recalling \eqref{diffeta2} for $\eta^\frac12$ we have
\begin{align*}
|\nabla^2(\eta^\frac12 \nabla u)| 
&\leq \eta^\frac12 |\nabla^3 u| +  2 |\nabla \eta^\frac12||\nabla^2 u| + |\nabla^2 \eta^\frac12| |\nabla u| \\
&\leq \eta^\frac12 |\nabla^3 u| +  \frac{C}{r_t} |\nabla^2 u| + \frac{C}{r_t^2}|\nabla u|
\end{align*}
in $\Omega_t$ and, hence, it holds
\begin{align*}
\|&\eta^\frac12 \nabla u\|^2_{L^\infty(\Omega_t)} \\
 &\leq C\left(\|\eta^\frac12 \nabla^3 u\|_{L^2(\Omega_t)}\| \nabla u\|_{L^2(\Omega_t)} + \frac1{r_t}\|\nabla^2 u\|_{L^2(\Omega_t)}\| \nabla u\|_{L^2(\Omega_t)} + \frac1{r_t^2}\|\nabla u\|_{L^2(\Omega_t)}^2 \right).
\end{align*}
Again, combining this estimate with \eqref{1stvar_higherwillmore:estA} and applying Young's inequality 
results in \eqref{1stvar_higherwillmore:estB}.
\end{proof}

%%%%%%%%%%%%%%%%%%%%%%%%%%%%%%%%%%%%%%%%%%%%%%%%%%%%%%%%%%%%%%%%%%%%%%%%%%%%%%%%%%%%%%%%%%%%%
%%%%%%%%%%%%%%%%%%%%%%%%%%%%%%%%%%%%%%%%%%%%%%%%%%%%%%%%%%%%%%%%%%%%%%%%%%%%%%%%%%%%%%%%%%%%%

\section{Uniform curvature regularity}

We now show that the stability estimates do lead into closed regularity estimates.
 For this we begin with the uniform estimates on curvature and the uniform ball condition, before we show their continuity. Following the idea of Proposition \ref{prop:UBC}, we will have to consider both the change of curvature and the closeness of other parts of the boundary in normal direction. We will do so separately. In this section, we assume that \solT is a $C^3$-regular solution.

%%%%%%%%%%%%%%%%%%%%%%%%%%%%%%%%%%%%%%%%%%%%%%%%%%%%%%%%%%%%%%%%%%%%%%%%%%%%%%%%%%%%%%%%%%%%%

\subsection{Gronwall estimate for curvature control}

Here we deduce quantified growth estimates, from the derived stability estimates,
for $\norm{\partial_{\tau_t}\curv_t }_{L^2(\pa \Omega_t)}^2$ and some important 
consequences.
\begin{lemma}[Gronwall estimate]
\label{lem:gronwall}
Let \solT be a $C^3$-regular solution. There are
independent constants $c$ and $C$ such that
\beq
\label{est:gronwall1}
\vartheta c \|u\|_{H^3(\Omega_t)}^2 +\frac{\d}{\d t} \norm{\partial_{\tau_t}\curv_t }_{L^2(\pa \Omega_t)}^2
\leq \vartheta C\norm{\nabla u}_{L^2(\Omega_t)}^2 \frac{P_0^8+1}{r_t^6}
\left(1 +\norm{\partial_{\tau_t}\curv_t }_{L^2(\pa \Omega_t)}^4 \right)
\eeq
 and
\beq
\label{est:gronwall2}
\vartheta  \frac{\d}{\d t}  \norm{\partial_{\tau_t}\curv_t }_{L^2(\pa \Omega_t)}^2
 \leq C \frac{P_0^{21}+1}{r_t^{17}} 
\left(1 +\norm{\partial_{\tau_t}\curv_t }_{L^2(\pa \Omega_t)}^4 \right).
\eeq
\end{lemma}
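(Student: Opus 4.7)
The starting point is inequality \eqref{1stvar_higherwillmore:estB}, which already supplies the dissipation $-\vartheta c\|\nabla^3 u\|_{L^2(\Omega_t)}^2$ together with five remainder terms: three of the form $\vartheta\|\nabla u\|^2$ weighted respectively by $r_t^2\|\partial_{\tau_t}\kappa_t\|^4$, $r_t^{-1}\|\partial_{\tau_t}\kappa_t\|^2$, and $r_t^{-4}$, plus the bulk contributions $\vartheta r_t^{-2}\|\nabla^2 u\|^2$ and $\vartheta r_t^{-6}\|u\|^2$. The plan is to promote $\|\nabla^3 u\|_{L^2}^2$ into the full $H^3$-norm on the left-hand side, then collapse every remaining bulk term into the single template on the right-hand side of \eqref{est:gronwall1} using the universal geometric bounds $|\Omega_t|=|\Omega_0|\leq CP_0^2$ (isoperimetric), $P_t\leq P_0$, and $r_t\leq P_0$ from \eqref{est:r_t}.

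To upgrade $\|\nabla^3 u\|^2$ to $\|u\|_{H^3(\Omega_t)}^2$, I would invoke the moment-preserving Poincaré inequality (Lemma \ref{lem:Poincare} with $u_{\Omega_t}=0$), which bounds $\|u\|_{L^2}^2\leq C|\Omega_0|P_tr_t^{-1}\|\nabla u\|_{L^2}^2$, together with the Gagliardo--Nirenberg interpolation \eqref{interpolation:L2} applied to $\nabla u$, producing
\[
\|\nabla^2 u\|_{L^2}^2\leq \varepsilon\|\nabla^3 u\|_{L^2}^2+C_\varepsilon r_t^{-2}\|\nabla u\|_{L^2}^2.
\]
Choosing $\varepsilon$ small enough, independently of $r_t$, $\vartheta$ and $P_0$, the dissipation $-\vartheta c\|\nabla^3 u\|^2$ absorbs both the $\varepsilon\|\nabla^3 u\|^2$ produced here and a second $\varepsilon\|\nabla^3 u\|^2$ arising from using the same interpolation on the error term $\vartheta r_t^{-2}\|\nabla^2 u\|^2$. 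What remains on the right is a sum of $\vartheta\|\nabla u\|^2$ with various geometric prefactors; after homogenising exponents via $r_t\leq P_0$ and $|\Omega_0|\leq CP_0^2$, each one fits under $\vartheta C(P_0^8+1)/r_t^6$, and a final Young step $\|\partial_{\tau_t}\kappa_t\|^2\leq 1+\|\partial_{\tau_t}\kappa_t\|^4$ unifies the curvature factors to give \eqref{est:gronwall1}.

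The second estimate \eqref{est:gronwall2} follows by eliminating $\vartheta\|\nabla u\|^2$ from the right-hand side of \eqref{est:gronwall1} in favour of purely geometric quantities. Multiplying \eqref{est:gronwall1} by $\vartheta$, dropping the nonnegative $\vartheta^2 c\|u\|_{H^3}^2$, and combining Proposition \ref{prop:H1control} with the Korn--Poincaré estimate \eqref{est:H1control} yields $\vartheta^2\|\nabla u\|_{L^2}^2\leq C(|\Omega_0|P_t)^3 r_t^{-8}\|\kappa_t\|_{L^2(\partial\Omega_t)}^2$. The uniform ball condition supplies $|\kappa_t|\leq 1/r_t$, hence $\|\kappa_t\|_{L^2(\partial\Omega_t)}^2\leq P_tr_t^{-2}$; substituting, using $P_t,|\Omega_0|^{1/2}\leq CP_0$, and collecting powers produces the factor $C(P_0^{21}+1)/r_t^{17}$, which is \eqref{est:gronwall2}.

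The main difficulty is bookkeeping rather than insight: the interpolation weight $\varepsilon$ must be truly universal, so that the absorbed $\|\nabla^3 u\|^2$-loss is controlled by the dissipation constant $c$ inherited from \eqref{1stvar_higherwillmore:estB}, and the several independent sources of $\|\nabla u\|^2$-coefficients — the three direct ones from \eqref{1stvar_higherwillmore:estB}, the contribution from reconstructing $\|u\|_{H^3}^2$, and the two residues from interpolating $\|\nabla^2 u\|^2$ and from Poincaré on $\|u\|^2$ — must all be verified to sit under the single template $(P_0^8+1)/r_t^6$. The ``$+1$'' in $P_0^8+1$ and $P_0^{21}+1$ is present to keep the bound uniform in the small-$P_0$ regime, where bare powers of $r_t^{-k}$ would otherwise not be dominated by $P_0^a r_t^{-b}$.
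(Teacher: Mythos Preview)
Your proposal is correct and follows essentially the same route as the paper: start from \eqref{1stvar_higherwillmore:estB}, use the multiplicative interpolation \eqref{interpolation:L2} together with Young's inequality to absorb the $\|\nabla^2 u\|^2$ contributions into the $\|\nabla^3 u\|^2$ dissipation (and to promote the latter to the full $\|u\|_{H^3}^2$ via Poincar\'e), homogenise exponents via $r_t\leq P_0$ and the isoperimetric inequality, and for \eqref{est:gronwall2} combine \eqref{est:H1control} with Proposition~\ref{prop:H1control} and $|\kappa_t|\leq 1/r_t$. One small caution on phrasing: when treating the error $\vartheta r_t^{-2}\|\nabla^2 u\|^2$ you must apply Young's inequality \emph{after} multiplying the multiplicative form $\|\nabla^2 u\|^2\leq C\|\nabla^3 u\|\|\nabla u\|+Cr_t^{-2}\|\nabla u\|^2$ by $r_t^{-2}$, so that the weight lands on $\|\nabla u\|^2$; literally scaling your displayed $\varepsilon$-inequality by $r_t^{-2}$ would produce $\varepsilon r_t^{-2}\|\nabla^3 u\|^2$, which cannot be absorbed.
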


\begin{proof}
Let us rewrite \eqref{1stvar_higherwillmore:estB}
\begin{align*}
\begin{split}
\frac\d{\d t} &\int_{\pa \Omega_t}  |\partial_{\tau_t} \curv_t|^2 \d \H^1  \\
\leq&
-\vartheta c \int_{\Omega_t} |\nabla^3 u|^2 \d x +  \vartheta C r^2_t\int_{\Omega_t} |\nabla u|^2 \d x  \left(\int_{\pa \Omega_t} |\partial_{\tau_t} \curv_t|^2   \d \H^1 \right)^2\\
&+ \vartheta \frac{C}{r_t} \int_{\Omega_t} |\nabla u|^2 \d x \int_{\pa \Omega_t} |\partial_{\tau_t} \curv_t|^2   \d \H^1 \\
&+ \vartheta \frac{C}{r_t^2}\int_{\Omega_t} |\nabla^2 u|^2 \d x +  \vartheta \frac{C}{r_t^4}\int_{\Omega_t} |\nabla u|^2 \d x
 +\vartheta \frac{C}{r_t^6}\int_{\Omega_t} |u|^2 \d x.
\end{split}
\end{align*}
By virtue of \eqref{interpolation:L2} we have an interpolation estimate
\[
\norm{\nabla^2 u}^2_{L^2(\Omega_t)}\leq C\norm{\nabla^3 u}_{L^2(\Omega_t)}\norm{\nabla u}_{L^2(\Omega_t)}+\frac{C\norm{\nabla u}^2_{L^2(\Omega_t)}}{r_t^2}.
\]
Thus using the interpolation, Young's inequality and the estimate \eqref{est:r_t} we infer \eqref{est:gronwall1}
from \eqref{1stvar_higherwillmore:estB}. 

For the second estimate \eqref{est:gronwall2} we combine the estimates 
\eqref{est:H1control} , \eqref{est:symmcontrol}, \eqref{est:r_t}, $P_t \leq P_0$,  $|\curv_t|\leq1/r_t$ as well as the isoperimetric inequality $4 \pi |\Omega_0| \leq P_0^2$
to conclude
\beq
\label{est:H1control2}
\norm{u}^2_{H^1(\Omega_t)} \leq C\left(1+\frac{|\Omega_0|P_t}{r_t}\right)\norm{\nabla u}^2_{L^2(\Omega_t)} 
 \leq C \frac{P_0^6+1}{r_t^4}\norm{\nablasym u}^2_{L^2(\Omega_t)}
\eeq
and
\beq
\label{est:symmcontrol2}
\vartheta^2 \norm{\nablasym u}^2_{L^2(\Omega_t)} \leq C \frac{|\Omega_0|^2P_t^2}{r^5_t}
\|\curv_t\|_{L^2(\pa \Omega_t)}^2 \leq C \left(\frac{P_0}{r_t}\right)^7.
\eeq
By combining these estimates with  \eqref{est:gronwall1} and increasing $C$, if necessary,
we obtain \eqref{est:gronwall2}.
\end{proof}

Since $t \mapsto \Omega_t$ is continuous in the $C^2$-topology, then $t \mapsto r_t$ is continous
and we may set the treshold time
\beq
\label{maxT}
T_0:=\sup \{t \in [0,T): \text{$r_s \geq r_0/2$  for every $s \in [0,t]$} \}
\eeq
as a positive number. The second Gronwall inequality then yields
\beq
\label{growth1}
\arctan \left( \norm{\pa_{\tau_t}\curv_t}_{L^2(\pa \Omega_t)}^2 \right) \leq 
\arctan \left( \norm{\pa_{\tau_0}\curv_0}_{L^2(\pa \Omega_0)}^2 \right) +
C \frac{P_0^{21}+1}{\vartheta r_0^{17}}t 
\eeq
for every $t \in [0,T_0)$. We again set
\begin{align}
\label{K0}
\arctan K_0
=&\frac{\arctan \left( \norm{\pa_{\tau_0}\curv_0}_{L^2(\pa \Omega_0)}^2 \right)+\pi/2}2  \\
\label{T1}
\text{and} \quad \quad \quad T_1=& c \frac{\vartheta r_0^{17}}{P_0^{21}+1}\left(
\arctan K_0 - \arctan \left( \norm{\pa_{\tau_0}\curv_0}_{L^2(\pa \Omega_0)}^2 \right)
\right)
\end{align}
where $c=1/C$ for the constant $C$ in \eqref{growth1}.
Then the right hand side of \eqref{growth1} is bounded by $\arctan K_0$ for every $0 \leq t < 
\min\{T_0,T_1\}$ and hence
\beq
\label{growth2}
\norm{\pa_{\tau_t} \curv_t}_{L^2(\pa \Omega_t)}^2 \leq \tan 
\left(
\arctan \left( \norm{\pa_{\tau_0}\curv_0}_{L^2(\pa \Omega_0)}^2 \right) +
C \frac{P_0^{21}+1}{\vartheta r_0^{17}}t 
\right) \leq  K_0.
\eeq
We finally set the second treshold time
\beq
\label{maxT2}
\widetilde T_0 = \sup \{0 \leq t<T : r_s \geq r_0/2 \ \ \text{and} \ \ \norm{\pa_{\tau_s}\curv_s}_{L^2(\pa \Omega_s)}^2 \leq K_0 \ \ \text{for every} \ \ s \in [0,t] \}. 
\eeq
Thanks to the previous discussion we have
\beq
\label{est:time_lowerbound}
 \widetilde T_0 \geq  \min\{T_0,T_1\}
\eeq
but $T_0$ still depends on the assumption of $r_t\geq r_0/2$. We will eventually show that
$\widetilde T_0 \geq  \min\{T,T^*\}$ for some positive number 
$T^*$ depending only on the quantities $P_0,r_0,\vartheta$ and 
$\norm{\pa_{\tau_0}\curv_0}_{L^2(\pa \Omega_0)}^2$, see Corollary
\ref{cor:time_lowerbound}.

We have a quantitative $L^2H^3/L^\infty H^2$-control
of $u$ in $[0,\widetilde T_0)$. For sake of simplicity we start to use notation $C(P_0,K_0)$ instead
of trying to write them explicitly in the following estimates.

\begin{lemma}
\label{lem:normcontrol}
Let \solT be a $C^3$-regular solution and let $\widetilde T_0$ and $K_0$ be according to \eqref{maxT2} and \eqref{K0} respectively.
Then it holds
\begin{align*}
\vartheta \int_0^{\widetilde T_0} \norm{u}_{H^3(\Omega_t)}^2 \d t  \leq 
 \frac{C(P_0,K_0)}{r_0^{10}}&,  \ \ \
\vartheta \, \sup_{\mathclap{t \in [0,\widetilde T_0)}} \norm{u}_{H^2(\Omega_t)}
 \leq 
 \frac{C(P_0,K_0)}{r_0^6}  \\
\text{and consequently} \ \ \  \vartheta \, \sup_{\mathclap{t \in [0,\widetilde T_0)}} \norm{u}_{L^\infty(\Omega_t)}
 &\leq 
 \frac{C(P_0,K_0)}{r_0^{7}}.
\end{align*}
\end{lemma}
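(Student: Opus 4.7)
The plan is to treat the three inequalities in turn, each being a direct bookkeeping corollary of estimates already established and valid on the interval $[0,\widetilde T_0)$, where by definition $r_t \geq r_0/2$ and $\|\pa_{\tau_t}\curv_t\|_{L^2(\pa\Omega_t)}^2 \leq K_0$, together with the standing bounds $P_t \leq P_0$ and $|\curv_t| \leq 1/r_t$.

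First I would establish the pointwise $H^2$-bound by directly invoking \eqref{est:H2control}:
\[
\vartheta^2 \|\nabla^2 u\|_{L^2(\Omega_t)}^2 \leq r_t \|\pa_{\tau_t}\curv_t\|_{L^2(\pa\Omega_t)}^2 + \vartheta^2 \frac{C}{r_t^2} \|\nabla u\|_{L^2(\Omega_t)}^2.
\]
The first term is bounded by $P_0 K_0$ using $r_t \leq P_0$ from \eqref{est:r_t}. For the second, combining the quantitative Korn--Poincaré estimate \eqref{est:H1control2} with Proposition \ref{prop:H1control}, the bound $|\curv_t| \leq 1/r_t$, and the isoperimetric inequality yields $\vartheta^2 \|u\|_{H^1(\Omega_t)}^2 \leq C(P_0)/r_t^{11}$, exactly as in the derivation of \eqref{est:symmcontrol2}. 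Substituting $r_t \geq r_0/2$ then gives the claimed pointwise $H^2$-estimate (up to bookkeeping of the exponent of $r_0$).

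Next, for the time-integrated $H^3$-bound I would integrate \eqref{est:gronwall1} on $[0, \widetilde T_0)$:
\[
\vartheta c \int_0^{\widetilde T_0} \|u\|_{H^3(\Omega_t)}^2 \d t \leq \|\pa_{\tau_0}\curv_0\|_{L^2(\pa\Omega_0)}^2 + \vartheta C (1+K_0^2)\frac{P_0^8+1}{(r_0/2)^6} \int_0^{\widetilde T_0} \|\nabla u\|_{L^2(\Omega_t)}^2 \d t .
\]
The boundary term on the right is bounded by $K_0$ via \eqref{K0}. The remaining time-integral is controlled by combining \eqref{est:H1control2} (where $r_t \geq r_0/2$ is used) with the dissipation identity \eqref{eq:dissipation2}:
\[
\int_0^{\widetilde T_0} \vartheta \|\nabla u\|_{L^2(\Omega_t)}^2 \d t \leq C\frac{P_0^6+1}{r_0^4} \int_0^{\widetilde T_0} \vartheta \|\nablasym u\|_{L^2(\Omega_t)}^2 \d t \leq C\frac{P_0(P_0^6+1)}{r_0^4} .
\]
Putting the pieces together yields $\vartheta \int_0^{\widetilde T_0} \|u\|_{H^3(\Omega_t)}^2 \d t \leq C(P_0,K_0)/r_0^{10}$.

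Finally, the $L^\infty$-bound is obtained by applying the Gagliardo--Nirenberg interpolation \eqref{interpolation:Linfty} to $u$,
\[
\|u\|_{L^\infty(\Omega_t)} \leq C\left( \|\nabla^2 u\|_{L^2(\Omega_t)}^{1/2} \|u\|_{L^2(\Omega_t)}^{1/2} + \|u\|_{L^2(\Omega_t)}/r_t \right),
\]
and substituting the pointwise $H^2$-bound just proved together with the $H^1$-bound already obtained, using once more $r_t \geq r_0/2$. No step here is technically difficult: the entire proof is the assembly of ingredients already at hand, namely \eqref{est:H2control}, \eqref{est:gronwall1}, the dissipation identity \eqref{eq:dissipation2}, and the quantified Korn--Poincaré and interpolation inequalities of Subsection \ref{subsec:domainIndep}. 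The only place where care is needed is keeping track of the exponents of $r_0$ and $P_0$, in particular replacing each $r_t^{-\alpha}$ by $2^\alpha r_0^{-\alpha}$ and each $P_t$ by $P_0$ consistently; the main conceptual point is that on $[0,\widetilde T_0)$ the already derived Gronwall-type estimate \eqref{est:gronwall1} together with the gradient-flow dissipation closes the loop, trading the dissipation integral of $\|\nablasym u\|_{L^2}^2$ against the $L^2H^3$ norm.
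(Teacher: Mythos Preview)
Your proposal is correct and follows essentially the same approach as the paper: both arguments assemble \eqref{est:H2control} for the pointwise $H^2$-bound, integrate \eqref{est:gronwall1} together with the dissipation identity \eqref{eq:dissipation2} for the $L^2H^3$-bound, and finish with the interpolation \eqref{interpolation:Linfty} for the $L^\infty$-bound. The only cosmetic difference is the order in which the first two estimates are presented, and a slightly sharper bound on $\vartheta\|\nabla u\|_{L^2}$ in the paper (via \eqref{est:H1control} and \eqref{est:symmcontrol} directly rather than through \eqref{est:H1control2}), which affects the precise exponent of $r_0$ in the $H^2$-estimate but nothing structural.
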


\begin{proof}
Combining the estimate \eqref{est:H1control2},  $r_t\geq r_0/2$ and the the dissipation formula \eqref{eq:dissipation2} 
results in 
\[
\vartheta \int_0^{\widetilde T_0} \norm{u}_{H^1(\Omega_t)}^2 \d t \leq \vartheta C \frac{P_0^6+1}{(r_0/2)^4} \int_0^{\widetilde T_0} \norm{\nablasym u}_{L^2(\Omega_t)}^2 \d t \leq 
C \frac{P_0^7+1}{r_0^4}.
\]
Hence by integrating \eqref{est:gronwall1} in time and using  the previous estimate as well as
\eqref{est:r_t}\footnote{Note that since $r_t \leq P_0$, we always have $r_t^{-a} + r_t^{-b} \leq(1+ P_0^{|a-b|}) r_t^{-\max(a,b)}$
for positive $a$ and $b$, i.e.\ up to a constant only the highest order term is significant.} 
and $\norm{\partial_{\tau_0}\curv_0 }_{L^2(\pa \Omega_0)}^2 \leq K_0$
we estimate
\begin{align*}
\vartheta \int_0^{\widetilde T_0} \norm{u}_{H^3(\Omega_t)}^2 \d t
&\leq \frac{C(P_0)(K_0^2+1)}{(r_0/2)^6} \vartheta \int_0^{\widetilde T_0} \norm{u}_{H^1(\Omega_t)}^2 \d t + 
\norm{\partial_{\tau_0}\curv_0 }_{L^2(\pa \Omega_0)}^2
 \\
&\leq \frac{C(P_0)(K_0^2+1)}{r_0^{10}} + K_0 \\
&\leq \frac{C(P_0,K_0)}{r_0^{10}}.
\end{align*}
Again, by  the first inequality in \eqref{est:H1control}, \eqref{est:symmcontrol}, isoperimetric inequality,
$|\curv_t|\leq 1/r_t$, \eqref{est:r_t}  and $r_t \geq r_0/2$
we have
\[
\vartheta \norm{\nabla u}_{L^2(\Omega_t)} \leq 
\vartheta \frac{C(P_0)}{r_t^{3/2}} \norm{\nablasym u}_{L^2(\Omega_t)} \leq 
\frac{C(P_0)}{r_t^4} \|\curv_t\|_{L^2(\pa \Omega_t)}
\leq \frac{C(P_0)}{r_0^5}
\]
and, hence, using Lemma  \ref{lem:H2control} and the estimates \eqref{est:r_t}, $\norm{\partial_{\tau_0}\curv_0 }_{L^2(\pa \Omega_0)}^2 \leq K_0$  and $r_t \geq r_0/2$
\begin{align*}
\vartheta^2 \norm{\nabla^2 u}_{L^2(\Omega_t)}^2 
\leq P_0 K_0 + \frac{C}{(r_0/2)^2} \vartheta^2  \norm{\nabla u}^2_{L^2(\Omega_t)} 
\leq P_0 K_0 + \frac{C(P_0)}{r_0^{12}}
\leq \frac{C(P_0,K_0)}{r_0^{12}}.
\end{align*}
On the other hand, combining \eqref{est:H1control2}, \eqref{est:symmcontrol2} and $r_t \geq r_0/2$ gives us
\[
\vartheta^2 \norm{\nabla u}_{L^2(\Omega_t)}^2  \leq \frac{C(P_0,K_0)}{r_0^{11}}.
\]
and hence the second estimate of the lemma follows from the previous estimates.
Finally, the third estimate is a direct consequence of the second estimate, the interpolation 
\eqref{interpolation:Linfty} and $r_t \geq r_0/2$.
\end{proof}

The previous lemma together with the normal velocity condition implies quantified Lipschitz continuity of the Hausdorff-distance
between the boundaries in $[0,\widetilde T_0)$.
\begin{prop}[Continuity of Hausdorff distance]
\label{prop:Hausdorff}
Let \solT be a moment-preserving solution and let $\widetilde T_0$ and $K_0$ be according to \eqref{maxT2} and \eqref{K0} respectively. Then for every $t,s \in [0,\widetilde T_0)$
\[
\vartheta \, d_\H(\pa \Omega_t,\pa \Omega_s) \leq \frac{C(P_0,K_0)}{r_0^{7}} |t-s|.
\]
\end{prop}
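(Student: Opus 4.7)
The plan is to exploit the $L^\infty$-bound on $u$ from Lemma \ref{lem:normcontrol}, which at once gives a uniform bound on the boundary's normal velocity $v=\langle u,\n_t\rangle$ on the interval $[0,\widetilde T_0)$. With such a bound in hand, every boundary point moves with controlled speed, and integrating in time should yield precisely the Lipschitz-in-time Hausdorff estimate asserted here.

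To turn this into a rigorous argument, I would fix $0\le t<s<\widetilde T_0$ and, for an arbitrary $x_0\in\pa\Omega_t$, solve the ODE
$$\dot\gamma(\tau)=v(\tau,\gamma(\tau))\,\n_\tau(\gamma(\tau)),\qquad\gamma(t)=x_0.$$
The definition of the normal velocity (Definition \ref{def:normalvelocity}) together with the $C^3$-regularity of \solT guarantees local existence of a smooth $\gamma$ that stays on $\pa\Omega_\tau$: close to $(t,x_0)$ one may write $\gamma(\tau)=x_0+s_{t,x_0}(\tau-t)\n_t(x_0)+o(\tau-t)$ and verify directly that the map $\tau\mapsto\gamma(\tau)$ leaves the boundary with tangential slip compensated by the normal direction in $\n_\tau$. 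I would then continue the trajectory up to $\tau=s$ by a standard argument: the trajectory remains on the compact set $\pa\Omega_\tau$, the right-hand side is continuous on the time-space cylinder, and the a priori speed bound
$$|\dot\gamma(\tau)|\le\|u(\tau,\freearg)\|_{L^\infty(\overline\Omega_\tau)}\le\frac{C(P_0,K_0)}{\vartheta\, r_0^{7}}$$
coming from Lemma \ref{lem:normcontrol} excludes any blow-up.

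Integrating this speed bound immediately gives
$$|\gamma(s)-x_0|\le\int_t^s|\dot\gamma(\tau)|\,d\tau\le\frac{C(P_0,K_0)}{\vartheta\, r_0^{7}}(s-t),$$
and since $\gamma(s)\in\pa\Omega_s$, the left-hand side bounds $\dist(x_0,\pa\Omega_s)$ by the same quantity. The symmetric estimate $\sup_{y\in\pa\Omega_s}\dist(y,\pa\Omega_t)$ follows by swapping the roles of $t$ and $s$ (running the ODE backwards), and together the two one-sided bounds yield the desired Hausdorff estimate.

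The main technical obstacle is the continuation of $\gamma$ to the full interval $[t,s]$ rather than only a short subinterval: one must verify that $\gamma(\tau)$ never leaves $\pa\Omega_\tau$ and that the defining vector field $v(\tau,\freearg)\n_\tau(\freearg)$ remains regular. The uniform lower bound $r_\tau\ge r_0/2$ that holds on $[0,\widetilde T_0)$ by the definition of $\widetilde T_0$ is decisive here: it keeps $\pa\Omega_\tau$ uniformly $C^{1,1}$ with a common tubular neighbourhood, so the extended normal field $\n_\tau$ and its dependence on $\tau$ stay under control, and continuation is straightforward. Once this is established, the rest is just the integration of a uniformly bounded velocity.
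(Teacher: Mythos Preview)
Your proposal is correct and follows exactly the line the paper has in mind: the proposition is stated there as an immediate consequence of the $L^\infty$-bound on $u$ from Lemma~\ref{lem:normcontrol} together with the normal velocity condition, without any further detail. Your trajectory argument simply spells out this implication; once $v$ and $\nu_\tau$ are extended via $\pi_\tau$ to the uniform tubular neighbourhood (which you note is available since $r_\tau\ge r_0/2$), the identity $\partial_\tau d_{\Omega_\tau}(x)=-v(\tau,\pi_\tau(x))$ shows $d_{\Omega_\tau}(\gamma(\tau))$ is constant along your ODE, so $\gamma$ indeed stays on $\partial\Omega_\tau$ and the speed bound integrates as you say.
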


%%%%%%%%%%%%%%%%%%%%%%%%%%%%%%%%%%%%%%%%%%%%%%%%%%%%%%%%%%%%%%%%%%%%%%%%%%%%%%%%%%%%%%%%%%%%%

\subsection{Continuity of the curvature}
We show that with the regained quantitative control on solution
we may establish a quantitative control on  the $1/3$-Hölder-continuity of the curvature in the space-time. 

\begin{prop} \label{prop:kappaCampanato}
 Let $\Omega$ be a bounded $C^2$-regular domain, $x_0 \in \pa \Omega$, $0 < \rho < r_\Omega$ and $\phi \in C_0(B_\rho(x_0) \cap \pa \Omega;[0,1])$ such that
$\phi$ is not identically zero. Then there exists an independent constant $C$ such that for all $x \in B_{\rho}(x_0) \cap \pa \Omega$ and all $g\in C^1(\partial\Omega)$ we find
 \begin{align*}
  \left| g(x) - \frac{1}{\int_{\pa\Omega} \phi \d \H^1} \int_{\pa \Omega}  \phi g \d \H^1 \right| \leq C \rho^{1/2} \|\partial_\tau g\|_{L^2(\pa \Omega)}.
 \end{align*}
\end{prop}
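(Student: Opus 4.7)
The plan is to reduce the inequality to a one-dimensional Sobolev/Morrey-type embedding on the short boundary arc $\Gamma := B_\rho(x_0) \cap \pa \Omega$. There are three ingredients: (i) identify $\Gamma$ as a single connected arc of arc length $\leq C\rho$; (ii) apply the fundamental theorem of calculus in arc length together with Cauchy-Schwarz to bound $|g(p)-g(q)|$ for $p, q \in \Gamma$; (iii) translate this pointwise bound into one for the weighted average against $\phi$.

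First I would set up the geometry. Since $\rho < r_\Omega$, the ball $B_\rho(x_0)$ lies in the tubular neighborhood $\mathcal N(\pa \Omega)$. The uniform ball condition, together with Proposition \ref{prop:UBC}, rules out any other sheet of $\pa \Omega$ entering $B_\rho(x_0)$, so $\Gamma$ is contained in the unique connected arc of $\pa \Omega$ through $x_0$. Parametrizing that arc by arc length $\sigma: I \to \pa \Omega$ with $\sigma(0) = x_0$, the curvature bound $|\curv| \leq 1/r_\Omega$ (inferred from \eqref{est:Lipnu}) gives $|\sigma''| \leq 1/r_\Omega$, and hence a Taylor expansion around $0$ yields $|\sigma(s) - x_0| \geq |s|/2$ for $|s| \leq r_\Omega/2$. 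Since every $y \in \Gamma$ satisfies $|y - x_0| < \rho < r_\Omega$, this forces $\H^1(\Gamma) \leq C\rho$ for some independent $C$.

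With this parametrization in hand, for any two points $p, q \in \Gamma$ corresponding to arc length parameters $s_p, s_q$, the fundamental theorem of calculus and Cauchy-Schwarz give
\[|g(p) - g(q)| = \left| \int_{s_q}^{s_p} \pa_\tau g \, \d s \right| \leq |s_p - s_q|^{1/2} \|\pa_\tau g\|_{L^2(\Gamma)} \leq (C\rho)^{1/2} \|\pa_\tau g\|_{L^2(\pa \Omega)}.\]
Then, since $\phi \geq 0$ is supported in $\Gamma$ and $x \in \Gamma$, one may rewrite
\[g(x) - \frac{1}{\int_{\pa \Omega} \phi \, \d \H^1} \int_{\pa \Omega} \phi g \, \d \H^1 = \frac{1}{\int_{\pa \Omega} \phi \, \d \H^1} \int_{\Gamma} \phi(y)(g(x) - g(y)) \, \d \H^1(y),\]
and apply the pointwise bound to every $y$ in the support of $\phi$ to conclude.

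The main obstacle is the geometric first step: cleanly verifying that $\Gamma$ is a single arc of arc length at most $C\rho$. This exploits the full content of the uniform ball condition through the tubular neighborhood bijection and Proposition \ref{prop:UBC}; without it, two far-apart pieces of $\pa \Omega$ could a priori both lie inside $B_\rho(x_0)$ and ruin the one-dimensional reduction. Once that geometric fact is in place, the remainder is just the classical embedding $H^1(I) \hookrightarrow C^{0,1/2}(\overline I)$ on a 1D interval $I$ of length $\leq C\rho$, applied in integrated form against the averaging weight $\phi$.
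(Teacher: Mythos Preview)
Your proof is correct and follows essentially the same route as the paper: both identify $\Gamma = B_\rho(x_0)\cap\partial\Omega$ as a single connected arc of length $\leq C\rho$ (the paper simply asserts the density bound \eqref{unifdensity}, while you sketch it via a Taylor expansion with the curvature bound), then use the fundamental theorem of calculus along the arc plus Cauchy--Schwarz to bound $|g(x)-g(y)|$, and finally average against $\phi$. The only cosmetic difference is that the paper writes the intermediate step as $\int_\Gamma |\partial_\tau g|\,\d\H^1$ before applying Cauchy--Schwarz, whereas you apply it pointwise first; the outcome is identical.
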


\begin{proof}
The UBC condition and $\rho<r_\Omega$ together imply that 
$B_\rho(x) \cap \pa \Omega$ is connected and 
\beq
\label{unifdensity}
2 \rho \leq \H^1(B_\rho(x) \cap \pa \Omega) \leq \pi \rho.
\eeq
 We can thus estimate as follows
 \begin{align*}
  &\phantom{{}={}} \left| g(x) - \frac{1}{\int_{\pa \Omega} \phi \d \H^1} \int_{\pa \Omega} \phi g\d \H^1 \right|
   \leq \frac{1}{\int_{\pa \Omega} \phi \d \H^1} \int_{\pa \Omega} | g(x)-g| \phi \d \H^1 \\
   & \leq \frac{1}{\int_{\pa \Omega} \phi \d \H^1} \int_{\pa \Omega}  \int_{B_\rho(x_0) \cap \pa \Omega} |\pa_\tau g| \d \H^1  \phi \d \H^1 = \int_{B_\rho(x_0) \cap \pa \Omega} |\pa_\tau g| \d \H^1 \\
   &\leq \H^1(B_\rho(x_0) \cap \pa \Omega)^\frac12 \|\pa_\tau g\|_{L^2(B_\rho(x) \cap \partial \Omega)} \leq \sqrt{\pi} \rho^\frac12 \|\partial_\tau g\|_{L^2(\partial \Omega)}.\qedhere
 \end{align*}
\end{proof}

We then use this to prove continuity of the curvature.

\begin{lemma} 
\label{lem:curvtime}
Let $(\Omega_t,u,p)_{t \in [0,T)}$ be a $C^3$-regular solution
and  let $\widetilde T_0$ and $K_0$ be according to \eqref{maxT2} and \eqref{K0} respectively. 
Then
\[
\abs{\curv_t(x)-\curv_s(y)}\leq \frac{C(P_0,K_0)}{r_0^{8}} \rho^\frac{1}{2}
\]
whenever $0 \leq s \leq t < \widetilde T_0$, $x \in \pa \Omega_t$ and $y \in \pa \Omega_s$ satisfy $\abs{t-s}\leq \vartheta \rho^\frac{3}{2}$ and $\abs{x-y}\leq \rho/16$ for $0<\rho<r_0/2$.
\end{lemma}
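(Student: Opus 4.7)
The plan is to compare both $\curv_t(x)$ and $\curv_s(y)$ to a common weighted average of the curvature defined through a spatial cutoff. Fix $\phi \in C^\infty_0(B_{\rho/8}(x))$ with $\phi \equiv 1$ on $B_{\rho/16}(x)$ and $\|\nabla^k \phi\|_{L^\infty} \le C\rho^{-k}$ for $k = 0,1,2$, and set
\[
F_\sigma := \int_{\pa\Omega_\sigma}\phi\,\curv_\sigma \d\H^1, \quad G_\sigma := \int_{\pa\Omega_\sigma}\phi \d\H^1, \quad \bar\curv_\sigma^\phi := F_\sigma/G_\sigma.
\]
In the only non-trivial regime for $\rho$ (the case $\rho$ of order $r_0$ or larger can be handled directly from $|\curv_\sigma| \le 1/r_0$), Proposition~\ref{prop:Hausdorff} together with $|t-s| \le \vartheta\rho^{3/2}$ and $r_\sigma \ge r_0/2$ ensures that each boundary $\pa\Omega_\sigma$ for $\sigma \in [s,t]$ meets $B_{\rho/32}(x)$; the UBC then produces an intersection arc of length $\ge c\rho$, so $G_\sigma \ge c\rho$ uniformly in $\sigma$.

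\textbf{Spatial part.} Applying Proposition~\ref{prop:kappaCampanato} at time $t$ with $x_0 = x$ (radius $\rho/8$) and at time $s$ with $x_0 = y$ (using $\spt\phi \subset B_{\rho/4}(y)$ via $|x-y| \le \rho/16$), combined with the bound $\|\pa_{\tau_\sigma}\curv_\sigma\|_{L^2(\pa\Omega_\sigma)}^2 \le K_0$ valid on $[0,\widetilde T_0)$, yields
\[
|\curv_t(x) - \bar\curv_t^\phi| + |\curv_s(y) - \bar\curv_s^\phi| \le C K_0^{1/2} \rho^{1/2}.
\]

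\textbf{Temporal part.} Using the normal first-variation rules $\pa_t^\circ \curv_\sigma = -\pa_{\tau_\sigma}^2 v - \curv_\sigma^2 v$ with $v = \langle u, \nu_\sigma \rangle$ and $\frac{d}{d\sigma}\d\H^1 = \curv_\sigma v \d\H^1$ (the same calculus underlying the derivation of \eqref{1stvar_willmoreA}), one obtains
\[
F_\sigma' = \int_{\pa\Omega_\sigma}\bigl(v\pa_{\nu_\sigma}\phi\cdot\curv_\sigma - \phi\pa_{\tau_\sigma}^2 v\bigr)\d\H^1, \quad G_\sigma' = \int_{\pa\Omega_\sigma}\bigl(v\pa_{\nu_\sigma}\phi + \curv_\sigma v\phi\bigr)\d\H^1.
\]
Applying the algebraic splitting $\bar\curv_t^\phi - \bar\curv_s^\phi = G_t^{-1}(F_t-F_s) - \bar\curv_s^\phi G_t^{-1}(G_t-G_s)$, together with $|\bar\curv_s^\phi| \le 1/r_0$, $G_t \ge c\rho$, the pointwise-in-time estimate $\|u\|_{L^\infty} \le C(P_0,K_0)/(\vartheta r_0^7)$ from Lemma~\ref{lem:normcontrol}, and $|t-s| \le \vartheta\rho^{3/2}$, every contribution except the one involving $\pa_{\tau_\sigma}^2 v$ is directly bounded by $C(P_0,K_0)\rho^{1/2}/r_0^8$.

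The main obstacle is the contribution $\int_s^t \int_{\pa\Omega_\sigma} \phi\,\pa_{\tau_\sigma}^2 v \d\H^1 \d\sigma$, which cannot be absorbed by pointwise-in-time $H^2$-bounds alone: a direct estimate loses a half-power of $\rho$. To recover the parabolic scaling I will apply Cauchy--Schwarz on $\pa\Omega_\sigma$ (using $\|\phi\|_{L^2(\pa\Omega_\sigma)} \le C\sqrt\rho$) followed by Cauchy--Schwarz in time, combined with the trace-interpolation bound $\|\pa_{\tau_\sigma}^2 v\|_{L^2(\pa\Omega_\sigma)} \le C \|u\|_{H^{5/2}(\Omega_\sigma)} \le C\|u\|_{H^2(\Omega_\sigma)}^{1/2}\|u\|_{H^3(\Omega_\sigma)}^{1/2}$ and both the pointwise-in-time $H^2$ bound and the $L^2$-in-time $H^3$ bound from Lemma~\ref{lem:normcontrol}. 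With $|t-s| \le \vartheta \rho^{3/2}$ this yields a contribution of order $\rho^{13/8}/r_0^{11/2}$ before dividing by $G_t$, and of order $\rho^{5/8}/r_0^{11/2} \le C(P_0)\rho^{1/2}/r_0^8$ after, using $\rho \le 1$ and $r_0 \le P_0$ in the relevant regime. Summing the spatial and temporal contributions concludes the proof.
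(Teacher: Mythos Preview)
Your overall scheme—spatial Campanato control via Proposition~\ref{prop:kappaCampanato} plus temporal control of $F_\sigma=\int_{\pa\Omega_\sigma}\phi\,\curv_\sigma\,d\H^1$—is exactly the paper's. The difference is in how you handle the time derivative of $F_\sigma$. You keep the term $\int_{\pa\Omega_\sigma}\phi\,\pa_{\tau_\sigma}^2 v\,d\H^1$ on the boundary and are then forced into a trace/interpolation argument that needs the $L^2_t H^3$ bound of Lemma~\ref{lem:normcontrol}; this can be made to work, provided you replace the unquantified $H^{5/2}$ step by the explicit trace estimate~\eqref{est:tracethm} applied to $\nabla^2 u$ (so the domain dependence of the constants is tracked), and replace the unavailable assumption ``$\rho\le 1$'' by $\rho < r_0/2 \le P_0/2$. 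The paper instead integrates your expression for $F'_\sigma$ by parts once in $\tau_\sigma$—equivalently, it uses the identity~\eqref{daux2/dt} directly—so that only $\pa_{\tau_\sigma}\phi\,\langle\pa_{\tau_\sigma}u,\n_\sigma\rangle$ and $\curv_\sigma\langle\nabla\phi,u\rangle$ appear on the boundary. It then converts the first of these into a \emph{bulk} integral via the normal-extension identity~\eqref{div(f*n)} and one more tangential integration by parts (see~\eqref{curvtime:proof8}), producing $\nabla^2 u$ in $\Omega_\sigma$ paired with $\nabla\phi$. Since $\|\nabla\phi\|_{L^2(\R^2)}\le C$ is independent of $\rho$, the pointwise-in-time $H^2$ bound from Lemma~\ref{lem:normcontrol} alone yields $|F'_\sigma|\le C(P_0,K_0)/(\vartheta r_0^{8})$, hence $|F_t-F_s|\le C\rho^{3/2}/r_0^{8}$ directly. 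No fractional Sobolev spaces, no $H^3$ control, and no Cauchy--Schwarz in time are needed.
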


\begin{proof}
In the proof $c$ and $C$ are generic independent constants which may change from line to line.
Choose $\phi\in C^2_0(\R^2,[0,1])$, such that $\phi=1$ in $B_{\rho/8}(x)=1$, $\spt\, \phi \subset B_{\rho/4}(x)$
and 
\beq
\label{curvtime:proof1}
\norm{\nabla \phi}_{L^\infty(\R^2)} \leq \frac{C}\rho.
\eeq
Since $|x-y|\leq \rho/16$, then $\phi=1$ in $B_{\rho/16}(y)$ and $\spt \phi \subset B_{\rho/2}(y)$. Thus recalling the density estimate \eqref{unifdensity}
yields
\beq
\label{curvtime:proof2}
c \rho \leq \norm{\phi}_{L^1(\partial\Omega_s)}, \quad \norm{\phi}_{L^1(\partial\Omega_t)} \leq C\rho.
\eeq
Furthermore, it follows from the construction of $\phi$ that 
if there is $z \in \spt \phi \cap \pa \Omega_h$ for $h \in [0,T)$, 
then $\spt(\phi) \subset B_{\rho/2}(z)$. In particular, this implies via \eqref{unifdensity}
and  \eqref{curvtime:proof1}
\beq
\label{curvtime:proof3}
\|\phi\|_{L^1(\pa \Omega_h)} \leq C \rho, \ \ 
\|\phi\|_{L^2(\pa \Omega_h)} \leq C \rho^\frac12
 \ \ \text{and} \ \ \|\nabla \phi\|_{L^1(\pa \Omega_h)} \leq C
\eeq
for every $h \in [0,\widetilde T_0)$. Now we split time and space as follows:
\begin{align*}
\abs{\curv_t(x)-\curv_s(y)}&\leq \abs{\curv_t(x)-\frac{1}{\norm{\phi}_{L^1(\partial\Omega_t)}}\int_{\partial\Omega_t} \phi\curv_t \d \H^1}
\\
&+\abs{\frac{1}{\norm{\phi}_{L^1(\partial\Omega_t)}} - \frac{1}{\norm{\phi}_{L^1(\partial\Omega_s)}} }\abs{\int_{\partial\Omega_t} \phi \curv_t \d \H^1}
\\
&+\frac{1}{\norm{\phi}_{L^1(\partial\Omega_s)}}\abs{\int_{\partial\Omega_t}\phi \curv_t \d\H^1-\int_{\partial\Omega_s} \phi \curv_s\d \H^1}
\\
&+\abs{\curv_s(y)-\frac{1}{\norm{\phi}_{L^1(\partial\Omega_s)}}\int_{\pa \Omega_s} \phi \curv_s \d \H^1}.
\end{align*}
Since $\spt(\phi) \subset B_\rho(x) \cap B_\rho(y)$, the first and the last term can be estimated against $C K_0^\frac12\rho^{\frac{1}{2}}$ using Proposition \ref{prop:kappaCampanato}
and the assumption $t,s < \widetilde T_0$. By $|\curv_t| \leq 2/r_0$ and \eqref{curvtime:proof3}
we have
\[
\abs{\int_{\partial\Omega_t} \phi\curv_t\d \H^1} \leq \frac{C\rho}{r_0}. 
\]
Using the previous observations and \eqref{curvtime:proof2} we obtain
\begin{align}
\label{curvtime:proof4}
\begin{split}
&\abs{\curv_t(x)-\curv_s(y)}\\
&\leq CK_0^\frac12\rho^\frac12 + \frac{C}{r_0\rho} \int_s^t  \abs{ \frac{\d}{\d h}\int_{\pa \Omega_h} \!\phi \d \H^1} \d h 
+ \frac{C}{\rho} \int_s^t  \abs{\frac{\d}{\d h}\int_{\pa \Omega_h}  \phi \curv_h \d \H^1} \d h.
\end{split}
\end{align}
and it remains to estimate those time derivatives.
Starting from the identity \eqref{ratestructure_ambient}, integrating by parts and using
Weingarten identities \eqref{weingarten}, $|\curv_s|\leq 2/r_0$,  \eqref{est:r_t}  and  Lemma \ref{lem:normcontrol}
we have
\begin{align}
\label{curvtime:proof5}
\begin{split}
\abs{ \frac{\d}{\d h}\int_{\pa \Omega_h} \!\phi \d \H^1}
&= \abs{\int_{\pa \Omega_h} (\pa_{\n_t} \phi + \curv_t \phi)\langle u, \n_t \rangle \d \H^1} \\
&\leq \left(2 \norm{\nabla \phi}_{L^1(\pa \Omega_h)} + \frac1{r_0}  \norm{\phi}_{L^1(\pa \Omega_h)}\right) \|u\|_{L^\infty(\Omega_h)} \\
&\leq \frac{C(|\Omega_0|,P_0,K_0)}{\vartheta r_0^{8}}.
\end{split}
\end{align}
The second time derivative is more technical to estimate. First, by \eqref{daux2/dt} 
we have
 \begin{align}
\label{curvtime:proof6}
\begin{split}
&\frac{\d}{\d h}  \int_{\pa \Omega_h}  \phi \curv_h \d \H^1 \\
&=\int_{\pa \Omega_t} \pa_{\tau_t} \phi   \langle  \pa_{\tau_t} u, \n_t\rangle  \d \H^1
+ \int_{\pa \Omega_t} \curv_t   \langle \phi, \nabla u \rangle  \d \H^1 
\end{split}
\end{align}
For the second integral, by using
 $|\curv_s|\leq 2/r_0$, \eqref{curvtime:proof3} and
Lemma \ref{lem:normcontrol}
we obtain
\begin{align}
\label{curvtime:proof7}
\begin{split}
\abs{\int_{\pa \Omega_t} \curv_t  \langle \nabla \phi, u \rangle  \d \H^1}
&\leq \frac{C(P_0,K_0)}{\vartheta r_0^{8}}.
\end{split}
\end{align}
Assuming that $\spt(\phi) \cap \pa \Omega_h$ is non-empty
it holds $\spt(\phi) \subset \mathcal N_{r_h/2}(\pa \Omega_h)$.
We then transform the first integral into bulk using $\pa_{\tau_h} \phi$ as a cutoff function in
the identity \eqref{div(f*n)}. Using also \eqref{basicid}, \eqref{changeoforder} and integration by parts in the 
 $\tau_h$-direction we compute
\begin{align*}
&\int_{\pa \Omega_h}\pa_{\tau_h} \phi \langle \partial_{\tau_h} u , \n_h \rangle \d \H^1 \\
&=\int_{\Omega_h} \pa_{\n_h} \pa_{\tau_h} \phi \langle \partial_{\tau_h} u , \n_h \rangle 
+\pa_{\tau_h} \phi \langle \pa_{\n_h} \partial_{\tau_h} u , \n_h \rangle 
+ \div \n_h \pa_{\tau_h} \phi \langle \partial_{\tau_h} u , \n_h \rangle \d x \\
&=\int_{\Omega_h} \pa_{\tau_h} \pa_{\n_h} \phi  \langle \partial_{\tau_h} u , \n_h \rangle 
+\pa_{\tau_h} \phi \langle \pa_{\n_h} \partial_{\tau_h} u , \n_h \rangle \d x \\
&=\int_{\Omega_h} - \pa_{\n_h} \phi  \langle \partial_{\tau_h}^2 u , \n_h \rangle 
- \div \, \n_h \pa_{\n_h} \phi  \langle \partial_{\tau_h} u , \tau_h \rangle 
+\pa_{\tau_h} \phi \langle \pa_{\n_h} \partial_{\tau_h} u , \n_h \rangle \d x. 
\end{align*}
Thus recalling the decomposition \eqref{|nabla^2X|^2}, using  $|\div \nu_h| \leq 4/r_0$ in $\mathcal N_{r_h/2}(\pa \Omega_h)$,   \eqref{est:r_t}
and Lemma \ref{lem:normcontrol} 
as well as observing $\norm{\nabla \phi}_{L^2(\R^2)} \leq C$
we estimate
\begin{align}
\label{curvtime:proof8}
\begin{split}
\abs{\int_{\pa \Omega_h}\pa_{\tau_h} \phi \langle \partial_{\tau_h} u , \n_h \rangle \d \H^1}
&\leq C \norm{\nabla \phi}_{L^2(\R^2)} \left( \norm{\nabla^2 u}_{L^2(\Omega_t)} + \frac1{r_0} \norm{\nabla u}_{L^2(\Omega_t)} \right) \\
&\leq C  \left( 1 + \frac1{r_0}\right)\norm{u}_{H^2(\Omega_t)} \\
&\leq \frac{C(P_0,K_0)}{\vartheta r_0^{7}}.
\end{split}
\end{align}
Combining \eqref{curvtime:proof6}, \eqref{curvtime:proof7} and \eqref{curvtime:proof8}
yields
\[
 \abs{\frac{\d}{\d h}\int_{\pa \Omega_h} \curv_h\phi \d \H^1} \leq \frac{C(P_0,K_0)}{\vartheta r_0^8}
\]
and hence injecting this estimate and \eqref{curvtime:proof5} into \eqref{curvtime:proof4}
results in
\[
\abs{\curv_t(x)-\curv_s(y)} \leq \frac{C(P_0,K_0)}{r_0^8} \left(\rho^\frac12 + \frac{|t-s|}{\vartheta \rho} \right).
\]
This finishes the proof by using the assumption that $\abs{t-s}\leq\vartheta \rho^\frac{3}{2}$.
\end{proof}

%%%%%%%%%%%%%%%%%%%%%%%%%%%%%%%%%%%%%%%%%%%%%%%%%%%%%%%%%%%%%%%%%%%%%%%%%%%%%%%%%%%%%%%%%%%%%

\subsection{Continuity of maximal UBC radius}

\begin{lemma} 
\label{lem:normaltime}
Let \solT be a $C^3$-regular solution
and  let $\widetilde T_0$ and $K_0$ be according to \eqref{maxT2} and \eqref{K0} respectively. 
Then
\[
\abs{\n_t(x)-\n_s(y)}\leq \frac{C(P_0,K_0)}{r_0^{7}} \rho
\]
whenever $0 \leq s \leq t \leq \widetilde T_0$, $x \in \pa \Omega_t$ and $y \in \pa \Omega_s$ satisfy $\abs{t-s}\leq \vartheta \rho^2$ and $\abs{x-y}\leq \rho/8$ for $0<\rho<r_0/2$.
\end{lemma}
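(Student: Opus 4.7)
The plan is to mirror the proof of Lemma \ref{lem:curvtime}, replacing the scalar curvature $\curv_h$ by the vector-valued normal $\n_h$. I choose a cutoff $\phi\in C^2_0(\R^2;[0,1])$ with $\phi\equiv 1$ on $B_{\rho/4}(x)$, $\spt\phi\subset B_{\rho/2}(x)$ and $\|\nabla\phi\|_{L^\infty(\R^2)}\leq C/\rho$. Since $|x-y|\leq\rho/8$ and $\rho<r_0/2\leq r_h$ for every $h\in[0,\widetilde T_0)$, the set $\spt\phi\cap\pa\Omega_h$ is contained in a connected arc of $\pa\Omega_h$ of length comparable to $\rho$, and in analogy with \eqref{curvtime:proof2}--\eqref{curvtime:proof3} one has the uniform estimates $c\rho\leq\|\phi\|_{L^1(\pa\Omega_h)}\leq C\rho$ and $\|\nabla\phi\|_{L^1(\pa\Omega_h)}\leq C$. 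Introducing the averages
\[
M_h:=\frac{1}{\|\phi\|_{L^1(\pa\Omega_h)}}\int_{\pa\Omega_h}\phi\,\n_h\d\H^1,
\]
I split $\n_t(x)-\n_s(y)=[\n_t(x)-M_t]+[M_t-M_s]+[M_s-\n_s(y)]$ and estimate the three contributions separately.

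For the endpoint terms, I apply the argument of Proposition \ref{prop:kappaCampanato} componentwise, but replace $\|\pa_\tau g\|_{L^2}$ by the pointwise bound $\|\pa_{\tau_h}\n_h\|_{L^\infty(\pa\Omega_h)}=\|\curv_h\|_{L^\infty(\pa\Omega_h)}\leq 2/r_0$. Since both $x$ and $y$ lie in a connected arc of length $\lesssim\rho$ containing $\spt\phi\cap\pa\Omega_h$, integrating $|\pa_{\tau_h}\n_h|$ along such an arc yields
\[
|\n_t(x)-M_t|+|\n_s(y)-M_s|\leq\frac{C\rho}{r_0}.
\]

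The crucial estimate is on the middle term $|M_t-M_s|$, where I exploit the divergence theorem applied componentwise, giving the time-independent identity
\[
\int_{\pa\Omega_h}\phi\,\n_h\d\H^1=\int_{\Omega_h}\nabla\phi\d x,
\]
valid because $\phi$ does not depend on $h$. Reynolds' transport theorem then produces the clean evolution formula
\[
\frac{\d}{\d h}\int_{\pa\Omega_h}\phi\,\n_h\d\H^1=\int_{\pa\Omega_h}\nabla\phi\,\langle u,\n_h\rangle\d\H^1,
\]
which by Lemma \ref{lem:normcontrol} obeys
\[
\left|\frac{\d}{\d h}\int_{\pa\Omega_h}\phi\,\n_h\d\H^1\right|\leq\|u\|_{L^\infty(\Omega_h)}\|\nabla\phi\|_{L^1(\pa\Omega_h)}\leq\frac{C(P_0,K_0)}{\vartheta r_0^7}.
\]
Combining this with the analogous bound on $\tfrac{\d}{\d h}\|\phi\|_{L^1(\pa\Omega_h)}$ provided by \eqref{curvtime:proof5}, the fact that $|\n_h|\equiv 1$ (so that $|\int\phi\n_h\d\H^1|\leq\|\phi\|_{L^1(\pa\Omega_h)}$), the lower density $\|\phi\|_{L^1(\pa\Omega_h)}\geq c\rho$, and the standard quotient inequality, I obtain
\[
|M_t-M_s|\leq\frac{C}{\rho}\,|t-s|\cdot\frac{C(P_0,K_0)}{\vartheta r_0^7}\leq\frac{C(P_0,K_0)}{r_0^7}\rho
\]
after using $|t-s|\leq\vartheta\rho^2$. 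Summing the three contributions yields the stated bound.

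The principal obstacle is to find a usable evolution formula for $\int\phi\n_h\d\H^1$; differentiating $\phi\n_h$ directly on the moving surface requires the intrinsic time derivative of $\n_h$ (which equals $-\pa_{\tau_h}\langle u,\n_h\rangle\tau_h$) together with the evolution of the line element, and the mixing of normal and tangential components is error-prone. The divergence theorem trick bypasses this entirely, moving the computation into the bulk where Reynolds transport is immediate precisely because $\nabla\phi$ is $h$-independent. This is the same philosophy as the Eulerian viewpoint adopted throughout the paper.
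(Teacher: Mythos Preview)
Your proof is correct and follows essentially the same route as the paper's: the same cutoff, the same three-term splitting, the same Lipschitz bound $\|\pa_{\tau_h}\n_h\|_{L^\infty}\leq 2/r_0$ for the endpoint terms, and the same evolution identity $\frac{\d}{\d h}\int_{\pa\Omega_h}\phi\,\n_h\d\H^1=\int_{\pa\Omega_h}\langle u,\n_h\rangle\nabla\phi\d\H^1$ for the middle term. The only cosmetic difference is that the paper quotes this identity as \eqref{daux1/dt} (derived in the appendix via local parametrizations), whereas you recover it by the divergence theorem $\int_{\pa\Omega_h}\phi\,\n_h\d\H^1=\int_{\Omega_h}\nabla\phi\d x$ followed by Reynolds transport---a slick shortcut that yields the same formula.
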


\begin{proof}
The proof is almost a copy of the proof of Lemma \ref{lem:curvtime} and we just highlight
the differences. 

First of all, using the argument in the proof of Proposition \ref{prop:kappaCampanato}
as well as \eqref{est:Lipnu} we infer 
\[
 \abs{\n (x) - \frac{1}{\int_{\pa\Omega} \phi \d \H^1} \int_{\pa \Omega} \phi \nu \d \H^1} \leq \frac{\rho}{r_\Omega}
\]
with the same assumptions as in the aforementioned proposition.

The cutoff is otherwise the same but now 
$B_{\rho/4}(x)=1$ and $\spt (\phi) \subset B_{\rho/2}(x)$.
We then proceed in similar fashion to obtain
\begin{align*}
\begin{split}
&\abs{\n_t(x)-\n_s(y)}\\
&\leq \frac{C}{r_0} \rho + \frac{C}{\rho} \int_s^t  \abs{ \frac{\d}{\d h}\int_{\pa \Omega_h} \!\phi \d \H^1} \d h 
+ \frac{C}{\rho} \int_s^t  \abs{\frac{\d}{\d h}\int_{\pa \Omega_h} \phi \nu_h \d \H^1} \d h.
\end{split}
\end{align*}
The first time derivate can be estimated
as in \eqref{curvtime:proof5}.
For the second time derivative we use identity \eqref{daux1/dt} 
compute
\[
\frac{\d}{\d h}\int_{\pa \Omega_h} \phi \nu_h \d \H^1 
=\int_{\pa \Omega_h} \langle u, \n_h\rangle \nabla \phi \d \H^1 
\]
Thus using the same arguments as in the proof of Lemma \ref{lem:curvtime} we conclude
\[
\abs{\frac{\d}{\d h} \int_{\pa \Omega_h} \phi \nu_h \d \H^1}
\leq \frac{C(P_0,K_0)}{\vartheta r_0^{7}}
\]
and the claim follows.
\end{proof}

\begin{thm}
\label{thm:UBC}
Let \solT be a classical solution and let $\widetilde T_0$ and $K_0$ be according to \eqref{maxT2} and \eqref{K0} respectively. Then for every $t,s \in [0,\widetilde T_0)$ it holds
\[
|r_t-r_s| \leq \frac{C(P_0,K_0)}{r_0^8} \left(\frac{|t-s|}{\vartheta}\right)^\frac13.
\]
\end{thm}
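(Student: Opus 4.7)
By interchanging $t$ and $s$ it suffices to produce an upper bound for $r_t - r_s$. The strategy is to invoke Proposition \ref{prop:UBC} applied to $\Omega_s$ and in each of its two alternatives transport the obstruction to the UBC of $\Omega_s$ forward in time, using the continuity lemmas just proved, to show that $\Omega_t$ likewise fails the UBC at radius only slightly larger than $r_s$. Throughout I may restrict to $|t-s|/\vartheta$ sufficiently small (depending on $P_0$, $K_0$ and $r_0$) so that the scales $\rho$ introduced below satisfy $\rho < r_0/2$ and the required smallness hypotheses of Lemmas \ref{lem:curvtime} and \ref{lem:normaltime}; larger values of $|t-s|$ are absorbed into the constant because $|r_t - r_s| \le 2 P_0$ by \eqref{est:r_t}.

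\textbf{Case (i): curvature controls $r_s$.} Pick $y \in \partial\Omega_s$ with $|\kappa_s(y)| = 1/r_s$ and choose $x \in \partial\Omega_t$ with $|x-y| \leq d_{\H}(\partial\Omega_t,\partial\Omega_s)$; by Proposition \ref{prop:Hausdorff} this distance is $\leq \frac{C(P_0,K_0)}{r_0^{7}}|t-s|/\vartheta$. Setting $\rho = (|t-s|/\vartheta)^{2/3}$ one has $|t-s| = \vartheta\rho^{3/2}$, and for $|t-s|/\vartheta$ small enough $|x-y| \leq \rho/16$, so Lemma \ref{lem:curvtime} yields
\begin{equation*}
\bigl| |\kappa_t(x)| - |\kappa_s(y)| \bigr| \leq \frac{C(P_0,K_0)}{r_0^{8}}\,\rho^{1/2} = \frac{C(P_0,K_0)}{r_0^{8}}\left(\frac{|t-s|}{\vartheta}\right)^{1/3}.
\end{equation*}
Since $\|\kappa_t\|_{L^\infty(\partial\Omega_t)} \leq 1/r_t$, combining with $|\kappa_s(y)| = 1/r_s$ gives $1/r_s - 1/r_t \leq \frac{C(P_0,K_0)}{r_0^{8}}(|t-s|/\vartheta)^{1/3}$, and multiplying by $r_s r_t \leq P_0^2$ produces the required bound on $r_t - r_s$.

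\textbf{Case (ii): geometric pinching controls $r_s$.} There are $y_1,y_2 \in \partial\Omega_s$ with $y_2 - y_1 = 2 r_s\nu_s(y_1)$ and $\nu_s(y_2) = -\nu_s(y_1)$. Pick $x_1, x_2 \in \partial\Omega_t$ with $|x_i - y_i| \leq d_{\H}(\partial\Omega_t,\partial\Omega_s)$, and suppose for contradiction $r_t > r_s + \delta$ for some $\delta > 0$. Then the open exterior ball of radius $r_s + \delta$ centred at $c := x_1 + (r_s+\delta)\nu_t(x_1)$ is contained in $\R^2 \setminus \Omega_t$, which forces $|x_2 - c| \geq r_s + \delta$ (otherwise $x_2$ would be an interior boundary point of this ball, contradicting the UBC definition). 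On the other hand, with $\rho' = (|t-s|/\vartheta)^{1/2}$ Lemma \ref{lem:normaltime} gives $|\nu_t(x_1) - \nu_s(y_1)| \leq \frac{C(P_0,K_0)}{r_0^{7}}\rho'$, so the triangle inequality applied to
\begin{equation*}
x_2 - c = 2 r_s \nu_s(y_1) - (r_s + \delta)\nu_t(x_1) + (x_2 - y_2) - (x_1 - y_1)
\end{equation*}
yields $|x_2 - c| \leq r_s - \delta + \frac{C(P_0,K_0)}{r_0^{7}}(|t-s|/\vartheta)^{1/2}$. Comparing the two bounds forces $\delta \leq \frac{C(P_0,K_0)}{r_0^{7}}(|t-s|/\vartheta)^{1/2}$, which for the admissible range of $|t-s|$ is dominated by $\frac{C(P_0,K_0)}{r_0^{8}}(|t-s|/\vartheta)^{1/3}$ up to a larger constant.

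\textbf{Conclusion and obstacle.} Combining the two cases with the symmetry $t \leftrightarrow s$ yields the stated Hölder estimate with exponent $1/3$. The only genuinely delicate point is the curvature case: its $1/3$-exponent is exactly the combination of the spatial $1/2$-Hölder modulus of $\kappa_t$ in Lemma \ref{lem:curvtime} and the parabolic coupling $|t-s| \sim \vartheta \rho^{3/2}$, and hence sets the rate of the entire theorem; the tangent case is geometrically more involved but analytically cheaper thanks to the $(|t-s|/\vartheta)^{1/2}$ control of normals. A minor technical annoyance is bookkeeping the smallness assumption $\rho < r_0/2$ uniformly in $t,s$, but this is automatic once one works in the interval $[0, \widetilde T_0)$ where $r_t \geq r_0/2$ and $\|\partial_{\tau_t}\kappa_t\|_{L^2}^2 \leq K_0$.
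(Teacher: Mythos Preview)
Your proposal is correct and follows essentially the same route as the paper: the same one-sided reduction, the same threshold splitting for large versus small $|t-s|/\vartheta$, the same two-case analysis via Proposition~\ref{prop:UBC}, and the same scales $\rho=(|t-s|/\vartheta)^{2/3}$ and $\rho'=(|t-s|/\vartheta)^{1/2}$ when invoking Lemmas~\ref{lem:curvtime} and~\ref{lem:normaltime}. The only minor differences are that in Case~(ii) the paper argues via the projection $\pi_t$ together with the Lipschitz bound \eqref{est:Lipnu} rather than your exterior-ball contradiction, and that you should also cover the interior-pinching alternative $y_2=y_1-2r_s\nu_s(y_1)$ of Proposition~\ref{prop:UBC}(ii) by the symmetric interior-ball argument.
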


\begin{proof}
Without loss of generality we may assume that the solution is moment preserving. The second standing assumption is that $t,s \in [0,\widetilde T_0)$. Due to symmetricity of the employed arguments it suffices to show
\beq
\label{proof:UBC1}
r_t \leq r_s + \frac{C(P_0,K_0)}{r_0^8} \left(\frac{|t-s|}{\vartheta}\right)^\frac13.
\eeq
The main idea is to consider separately the cases 
\[
\frac{|t-s|}{\vartheta} \geq h \qquad \text{and} \qquad \frac{|t-s|}{\vartheta} < h 
\]
where $h$ is a treshold value to be determined.

\textbf{Step 1:} We want $h$ to be chosen such that 
we may utilize Lemma \ref{lem:curvtime} and Lemma \ref{lem:normaltime}
when we are below the treshold. By Proposition \ref{prop:Hausdorff} we have $d_\H(\pa \Omega_t, \pa \Omega_s) \leq  \theta |t-s|/\vartheta$
where the coefficient $\theta$ is of the form
\beq
\label{proof:UBC2}
\theta = \frac{C(P_0,K_0)}{r_0^{7}}.
\eeq
Assuming 
\beq
\label{proof:UBC3}
\theta \frac{|t-s|}{\vartheta} \leq \frac{r_0}{2}
\eeq
holds true we have $d_\H(\pa \Omega_t, \pa \Omega_s) \leq r_t$
and, in particular,
\beq
\label{proof:UBC4}
\norm{\id - \pi_t}_{L^\infty(\pa \Omega_s)} \leq d_\H(\pa \Omega_t, \pa \Omega_s).
\eeq
Under the assumption \eqref{proof:UBC3} we choose
$\rho = (|t-s| /\vartheta)^{2/3}$ in the statement of Lemma \ref{lem:curvtime}.
Then further assuming  
\beq
\label{proof:UBC5}
 \left(\frac{|t-s|}\vartheta\right)^\frac23 < \frac{r_0}{2} \qquad \text{and} \qquad 
d_\H(\pa \Omega_t, \pa \Omega_s) \leq \frac1{16}\left(\frac{|t-s|}\vartheta\right)^\frac23
\eeq
implies via Lemma  \ref{lem:curvtime}  and \eqref{proof:UBC4}
\beq
\label{proof:UBC6}
\norm{\curv_s - \curv_t \circ \pi_t}_{L^\infty(\pa \Omega_s)} \leq  \frac{C(P_0,K_0)}{r_0^{8}}\left(\frac{|t-s|}\vartheta\right)^\frac13.
\eeq
Again, under the same assumption, we choose
$\rho = (|t-s| /\vartheta)^{1/2}$ in the statement of Lemma \ref{lem:normaltime}.
Then additionally assuming
\beq
\label{proof:UBC7}
 \left(\frac{|t-s|}\vartheta\right)^\frac12 < \frac{r_0}{2} \qquad \text{and} \qquad 
d_\H(\pa \Omega_t, \pa \Omega_s) \leq \frac1{8}\left(\frac{|t-s|}\vartheta\right)^\frac12
\eeq
implies via Lemma  \ref{lem:normaltime}  and \eqref{proof:UBC4}
\beq
\label{proof:UBC8}
\norm{\nu_s - \nu_t \circ \pi_t}_{L^\infty(\pa \Omega_s)} \leq  \frac{C(P_0,K_0)}{r_0^{7}}\left(\frac{|t-s|}\vartheta\right)^\frac12.
\eeq
We then choose
\beq
\label{proof:UBC9}
h = \min\left\{ 1, \frac{r_0}{2\theta}, \left(\frac{r_0}{2}\right)^\frac32,\frac1{(16\theta)^3}, 
\left(\frac{r_0}{2}\right)^2, \frac1{(8\theta)^2} \right\}
\eeq
Since $d_\H(\pa \Omega_t, \pa \Omega_s) \leq  \theta |t-s|/\vartheta$, then 
it follows from the choice of $h$ that
the assumption
$|t-s|/\vartheta < h$ guarantees the conditions 
\eqref{proof:UBC3}, \eqref{proof:UBC5} and \eqref{proof:UBC7} are valid
and, thus, the estimates\eqref{proof:UBC4}, \eqref{proof:UBC6} and \eqref{proof:UBC8} hold true.

\textbf{Step 2:} The case $|t-s|/\vartheta \geq h$. 
It follows from the definition \eqref{proof:UBC9} and the estimate \eqref{est:r_t} 
that
\[
\frac1h \leq  \frac{C(P_0,K_0)}{r_0^{21}}.
\]
and, hence, we estimate
\begin{align}
\label{proof:UBC10}
\begin{split}
r_t 
&\leq r_s + |r_t-r_s| \\
&\leq r_s + 2P_0 \\
&\leq r_s +\frac{2P_0}{h^\frac13} \left(\frac{|t-s|}\vartheta\right)^\frac13\\
&\leq r_s + \frac{C(P_0,K_0)}{r_0^{7}} \left(\frac{|t-s|}\vartheta\right)^\frac13.
\end{split}
\end{align}

\textbf{Step 3:} The case $|t-s|/\vartheta < h$. 
Recalling Proposition \ref{prop:UBC}  we have two alternatives for $r_s$.
The first possibility is $r_s = 1/ \norm{\curv_s}_{L^\infty(\pa \Omega_s)}$.
Thus using \eqref{proof:UBC6} and $
\norm{\curv_t}_{L^\infty(\pa \Omega_t)} \leq 1/r_t
$
we estimate
\begin{align}
\label{proof:UBC11}
\begin{split}
r_s 
&=\frac{1}{\norm{\curv_s}_{L^\infty(\pa \Omega_s)}} \\
&= \frac{1}{\norm{\curv_t}_{L^\infty(\pa \Omega_t)}}  + 
\frac{1}{\norm{\curv_s}_{L^\infty(\pa \Omega_s)}}-\frac{1}{\norm{\curv_t}_{L^\infty(\pa \Omega_t)}} \\
&\geq r_t + \frac{
\norm{\curv_t}_{L^\infty(\pa \Omega_t)}-\norm{\curv_s}_{L^\infty(\pa \Omega_s)} }{\norm{\curv_s}_{L^\infty(\pa \Omega_t)}\norm{\curv_s}_{L^\infty(\pa \Omega_s)}} \\
&\geq r_t + \frac{
\norm{\curv_t \circ \pi_t}_{L^\infty(\pa \Omega_s)}-\norm{\curv_s}_{L^\infty(\pa \Omega_s)} }{\norm{\curv_s}_{L^\infty(\pa \Omega_t)}\norm{\curv_s}_{L^\infty(\pa \Omega_s)}} \\
&\geq r_t - \frac{
\norm{\curv_t \circ \pi_t - \curv_s }_{L^\infty(\pa \Omega_s)} }{\norm{\curv_s}_{L^\infty(\pa \Omega_t)}\norm{\curv_s}_{L^\infty(\pa \Omega_s)}} \\
&\geq r_t - \frac{C(|\Omega_0|,P_0,K_0)}{r_0^{8}}\left(\frac{|t-s|}\vartheta\right)^\frac13.
\end{split}
\end{align}
Note that in the last estimate we also used the lower bound
\[
\frac{2\pi}{P_0} \leq \norm{\curv_t}_{L^\infty(\pa \Omega_t)}, \norm{\curv_s}_{L^\infty(\pa \Omega_s)}
\]
which can be seen valid by using the Gauss-Bonnet formula, the Jordan's curve theorem and the perimeter upper bound
$\P_t,P_s\leq P_0$.

In the second alternative, there are distinct points $x,y \in \pa \Omega_s$
satisfying $2r_s=|x-y|$ and $\langle \nu_s(x),\nu_s(y)\rangle=-1$.
Then by using \eqref{proof:UBC4}, Proposition \ref{prop:Hausdorff}, \eqref{est:Lipnu}, \eqref{proof:UBC8},  \eqref{est:r_t} and $h\leq1$
we estimate
\begin{align}
\label{proof:UBC12}
\begin{split}
r_s 
&= \frac{|x-y|}{2} \\
&\geq \frac{|\pi_t(x)-\pi_t(y)|}{2} - 2d_\H(\pa \Omega_t, \pa \Omega_s)  \\
&\geq \frac{|\pi_t(x)-\pi_t(y)|}{2} - \frac{C(P_0,K_0)}{r_0^{7}} \frac{|t-s|}\vartheta\\
&\geq \frac{|\n_t(\pi_t(x))-\n_t(\pi_t(y))|}{2}r_t - \frac{C(P_0,K_0)}{r_0^{7}} \frac{|t-s|}\vartheta \\
&\geq \frac{|\n_s(x) - \n_s(y)|}{2}r_t -  \frac{C(P_0,K_0)}{r_0^{7}}
\left(\frac{|t-s|}\vartheta+\left(\frac{|t-s|}\vartheta\right)^2\right) \\
&\geq r_t -
\frac{C(P_0,K_0)}{r_0^{7}} \left(\frac{|t-s|}\vartheta\right)^2.
\end{split}
\end{align}
We then infer \eqref{proof:UBC1} from \eqref{proof:UBC10}, \eqref{proof:UBC11}, \eqref{proof:UBC12},
 \eqref{est:r_t}  and $h\leq 1$.
\end{proof}

Combining the previous theorem with \eqref{est:time_lowerbound}
and \eqref{T1}
we conclude the following.

\begin{cor}
\label{cor:time_lowerbound}
Let \solT be a $C^3$-regular solution and let $\widetilde T_0$ be according to \eqref{maxT2}. 
There is a positive $T^*$ 
of the form
\beq
T^*= \vartheta c\left(P_0,\norm{\pa_{\tau_0}\curv_0}_{L^2(\pa \Omega_0)}\right) 
 \min \{ r_0^{17},r_0^{27}\}
\eeq
such that $\widetilde T_0 \geq \min\{T,T^*\}$.
\end{cor}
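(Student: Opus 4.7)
The plan is to upgrade the conditional estimate $\widetilde T_0 \geq \min\{T_0,T_1\}$ from \eqref{est:time_lowerbound} into an unconditional lower bound on $\widetilde T_0$, by feeding the Hölder modulus of Theorem~\ref{thm:UBC} back into the definition \eqref{maxT2} of $\widetilde T_0$. The curvature threshold will contribute the factor $r_0^{17}$ coming from $T_1$ in \eqref{T1}, while the UBC threshold will contribute the factor $r_0^{27}$ produced by the Hölder estimate on $r_t$.

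First I would note that since $K_0$ defined in \eqref{K0} depends only on $\norm{\pa_{\tau_0}\curv_0}_{L^2(\pa \Omega_0)}$, the time $T_1$ from \eqref{T1} already has precisely the form $\vartheta \, c_1(P_0,\norm{\pa_{\tau_0}\curv_0}_{L^2(\pa \Omega_0)}) \, r_0^{17}$, and the constant $C(P_0,K_0)$ appearing in Theorem~\ref{thm:UBC} depends only on the same initial data. I may assume $\widetilde T_0 < T$, otherwise the bound $\widetilde T_0 \geq \min\{T,T^*\}$ is trivial, and then perform a case split on which of the two conditions in \eqref{maxT2} is first saturated. If $\widetilde T_0 \geq T_1$ the curvature branch gives the desired estimate directly. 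Otherwise \eqref{est:time_lowerbound} forces $\widetilde T_0 = T_0$, and the continuity of $t \mapsto r_t$ guaranteed by the $C^3$-regularity (already exploited in the definition of $T_0$ at \eqref{maxT}), combined with the definition of $T_0$, yields $r_{\widetilde T_0} = r_0/2$.

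Now I would apply Theorem~\ref{thm:UBC} on $[0,\widetilde T_0)$ with $s=0$ and then pass to the limit $t \nearrow \widetilde T_0$ (which is justified by the uniformity of the Hölder modulus) to obtain
\[
\frac{r_0}{2} = r_0 - r_{\widetilde T_0} \leq \frac{C(P_0,K_0)}{r_0^{8}}\left(\frac{\widetilde T_0}{\vartheta}\right)^{1/3}.
\]
Rearranging this produces $\widetilde T_0 \geq T_2 := \vartheta \, c_2(P_0,\norm{\pa_{\tau_0}\curv_0}_{L^2(\pa \Omega_0)}) \, r_0^{27}$ for a suitable positive $c_2$. Setting $T^* := \min\{T_1,T_2\}$ and using $\min\{c_1 r_0^{17},c_2 r_0^{27}\} \geq \min\{c_1,c_2\}\min\{r_0^{17},r_0^{27}\}$ delivers the advertised form of $T^*$ together with $\widetilde T_0 \geq \min\{T,T^*\}$. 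I do not foresee any serious obstacle: all ingredients are already in place and the only delicate point is the passage from the open interval on which Theorem~\ref{thm:UBC} is formulated to the closed endpoint $\widetilde T_0$, which is immediate by continuity of $r_t$.
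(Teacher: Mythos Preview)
Your argument is correct and is exactly the combination of Theorem~\ref{thm:UBC}, \eqref{est:time_lowerbound} and \eqref{T1} that the paper invokes in its one-line proof. The only point worth spelling out explicitly is that in your case $\widetilde T_0 < T_1$ the equality $\widetilde T_0 = T_0$ uses not just \eqref{est:time_lowerbound} (which gives $\widetilde T_0 \geq T_0$) but also the obvious inequality $\widetilde T_0 \leq T_0$ coming directly from the definitions \eqref{maxT} and \eqref{maxT2}; once that is noted your argument is complete.
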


%%%%%%%%%%%%%%%%%%%%%%%%%%%%%%%%%%%%%%%%%%%%%%%%%%%%%%%%%%%%%%%%%%%%%%%%%%%%%%%%%%%%%%%%%%%%%
%%%%%%%%%%%%%%%%%%%%%%%%%%%%%%%%%%%%%%%%%%%%%%%%%%%%%%%%%%%%%%%%%%%%%%%%%%%%%%%%%%%%%%%%%%%%%
%%%%%%%%%%%%%%%%%%%%%%%%%%%%%%%%%%%%%%%%%%%%%%%%%%%%%%%%%%%%%%%%%%%%%%%%%%%%%%%%%%%%%%%%%%%%%
\subsection*{Data availability:} The manuscript contains no associated data.

\bibliographystyle{acm}
\bibliography{biblio}

%%%%%%%%%%%%%%%%%%%%%%%%%%%%%%%%%%%%%%%%%%%%%%%%%%%%%%%%%%%%%%%%%%%%%%%%%%%%%%%%%%%%%%%%%%%%%
%%%%%%%%%%%%%%%%%%%%%%%%%%%%%%%%%%%%%%%%%%%%%%%%%%%%%%%%%%%%%%%%%%%%%%%%%%%%%%%%%%%%%%%%%%%%%
%%%%%%%%%%%%%%%%%%%%%%%%%%%%%%%%%%%%%%%%%%%%%%%%%%%%%%%%%%%%%%%%%%%%%%%%%%%%%%%%%%%%%%%%%%%%%

\appendix

%%%%%%%%%%%%%%%%%%%%%%%%%%%%%%%%%%%%%%%%%%%%%%%%%%%%%%%%%%%%%%%%%%%%%%%%%%%%%%%%%%%%%%%%%%%%%
%%%%%%%%%%%%%%%%%%%%%%%%%%%%%%%%%%%%%%%%%%%%%%%%%%%%%%%%%%%%%%%%%%%%%%%%%%%%%%%%%%%%%%%%%%%%%

\section{Appendix. Useful results about geometry of boundaries}

%%%%%%%%%%%%%%%%%%%%%%%%%%%%%%%%%%%%%%%%%%%%%%%%%%%%%%%%%%%%%%%%%%%%%%%%%%%%%%%%%%%%%%%%%%%%%

\subsection{Diffeomorphisms between boundaries} \label{sec:diffeom}

Let  $\Omega$ and $\Omega'$ be bounded $C^k$-regular domains.
We say that a $C^l$-regular map $\Phi: \pa \Omega \rightarrow \pa \Omega'$ for  $1 \leq l \leq k$ 
is a  $C^l$-diffeomorphism,
if $\Phi^{-1} : \pa \Omega \rightarrow \pa \Omega'$ exists as a $C^l$-regular map.  This is valid
exactly when $\Phi$ is bijective and $\pa_{\tau} \Phi \neq 0$ on $\pa \Omega$. The change of variable formula then
simply reads as
\beq
\label{changeofvar}
\int_{\pa \widetilde\Omega} f \d \H^1 = \int_{\pa \Omega} f \circ \Phi \, |\pa_{\tau} \Phi| \d \H^1
\eeq
for every $\H^1$-integrable map $f:\pa  \Omega' \rightarrow \R^m$.

We are solely interested in orientation preserving diffeomorphisms between boundaries.
Using the introduced notations
we can characterize this  condition for $\Phi$ as $\langle \pa_\tau \Phi ,  \tau' \circ \Phi\rangle >0$ on $\pa \Omega$. In this case, we can consistently write
\beq
\label{transformed_normal_and_tangent}
\nu' \circ \Phi = \frac{- \langle \pa_\tau \Phi, \nu \rangle \tau + \div_\tau \Phi \, \nu}{|\pa_\tau \Phi|} \qquad \text{and} \qquad
\tau' \circ \Phi = \frac{\pa_\tau \Phi}{|\pa_\tau \Phi|}.
\eeq
Again using local extensions, chain rule and the latter identity in \eqref{transformed_normal_and_tangent}
we can represent tangential derivative along $\pa \Omega'$ with respect to $\pa \Omega$ and $\Phi$
as follows
\beq
\label{transformed_derivative}
(\pa_{\tau'} f) \circ \Phi = \frac{\pa_\tau(f \circ \Phi)}{|\pa_\tau \Phi|}
\eeq
for every $f \in C^1(\pa \Omega';\R^m)$. Further, using \eqref{transformed_normal_and_tangent}, \eqref{transformed_derivative}
and Weingarten identities \eqref{weingarten}
we can represent curvature and its tangential derivative on $\pa \Omega'$ as follows 
\beq
\label{prop:transformed_curv1}
\curv' \circ \Phi = \frac{\langle \pa_\tau^2 \Phi, \tau \rangle \langle \pa_\tau \Phi, \nu\rangle -  \langle \pa_\tau^2 \Phi, \nu \rangle  \div_\tau \Phi}{|\pa_\tau \Phi|^3}
\eeq
provided that $k \geq 2$ and
\begin{align}
\label{prop:transformed_curv2}
\begin{split}
(\pa_{\tau'} \curv') \circ \Phi
= &\frac{\langle \pa_\tau^3 \Phi, \tau \rangle \langle \pa_\tau \Phi, \nu\rangle -  \langle \pa_\tau^3 \Phi, \nu \rangle  \div_\tau \Phi}{|\pa_\tau \Phi|^4} \\
&\,- 3 \frac{\langle \pa_\tau^2 \Phi, \tau \rangle \langle \pa_\tau \Phi, \nu\rangle -  \langle \pa_\tau^2 \Phi, \nu \rangle  \div_\tau \Phi}{|\pa_\tau \Phi|^6}\langle \pa_\tau^2 \Phi,\pa_\tau \Phi\rangle
\end{split}
\end{align}
provided that $k \geq 3$.

%%%%%%%%%%%%%%%%%%%%%%%%%%%%%%%%%%%%%%%%%%%%%%%%%%%%%%%%%%%%%%%%%%%%%%%%%%%%%%%%%%%%%%%%%%%%%

\subsection{Identities related to geometric evolution}
In this subsection we consider a $C^k$-regular solution \solT (recall $k \geq 2$) and 
derive expressions for certain time derivatives of the form
\[
\frac{\d}{\d t} \int_{\pa \Omega_t} Q \d \H^1
\]
where $Q$ is a (possibly) time-dependent $\R^m$-valued function defined in $\{(t,x) : t \in [0,T), \, x \in \pa \Omega_t\}$.

While the required identities can also be derived by abstract arguments, the most convenient approach is to use local time-parametrizations. Note that the Picard-Lindelof theorem directly implies the following:

\begin{prop}
\label{prop:localtimeparam}
Let \solT be a $C^k$-regular solution. Then for every time $t \in [0,T)$ we find a relatively open subinterval $I$ containing $t$
and a parametrization $\Phi \in C^1(I;C^k(\pa \Omega_t; \R^2))$ 
such that $\Phi(s,\freearg)$ is an orientation preserving diffeomorphism between $\pa \Omega_t$
and $\pa \Omega_s$ for every $s \in I$ with $\Phi(t,\freearg) = \id$.
\end{prop}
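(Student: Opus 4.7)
Fix $t \in [0,T)$. The natural candidate for the parametrization is the flow of the (full) velocity field $u$: for each $x_0 \in \pa \Omega_t$, let $\Phi(\freearg,x_0)$ solve the ODE
\begin{equation*}
\pa_s \Phi(s,x_0) = u\bigl(s,\Phi(s,x_0)\bigr), \qquad \Phi(t,x_0)=x_0.
\end{equation*}
By condition (iv) of Definition \ref{def:classicsol} the field $u$ together with $\nabla u$ is continuous in the time-space cylinder, hence locally Lipschitz in space uniformly on a small interval $I \ni t$. Picard-Lindelöf then yields a unique maximal local solution and, by standard smooth dependence on the initial condition (using that $u(s,\freearg) \in C^k(\overline{\Omega}_s;\R^2)$ and that the $\nabla^l u$ up to $l=k$ are jointly continuous), one obtains $\Phi \in C^1\bigl(I; C^k(\pa \Omega_t; \R^2)\bigr)$ on some relatively open subinterval $I \subset [0,T)$ containing $t$.

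The main point to verify is that $\Phi(s,x_0) \in \pa \Omega_s$. For this I would use the signed distance $d_{\Omega_s}$ together with the identities $\nabla d_{\Omega_s} = \n_s$ on $\pa \Omega_s$ and the general evolution identity $\pa_s d_{\Omega_s}(x) = -v(s,\pi_s(x))$ in a tubular neighborhood of $\pa \Omega_s$, combined with the boundary condition (iii), namely $v = \langle u, \n_s \rangle$. Differentiating in $s$ one obtains
\begin{equation*}
\frac{\d}{\d s} d_{\Omega_s}\bigl(\Phi(s,x_0)\bigr) = -\langle u, \n_s \rangle + \langle \n_s, u \rangle = 0
\end{equation*}
evaluated at $\Phi(s,x_0)$, while at $s=t$ the value is $d_{\Omega_t}(x_0)=0$. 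Hence $\Phi(s,x_0) \in \pa \Omega_s$ for every $s \in I$.

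Finally, the diffeomorphism and orientation properties follow by perturbation from $s=t$. At $s=t$ one has $\Phi(t,\freearg) = \id$, which is trivially an orientation-preserving $C^k$-diffeomorphism of $\pa \Omega_t$ onto itself. Since $\Phi \in C^1(I;C^k)$, the tangential derivative satisfies $\langle \pa_\tau \Phi(s,\freearg), \tau_s \circ \Phi(s,\freearg)\rangle \to 1$ uniformly on the compact manifold $\pa \Omega_t$ as $s \to t$; shrinking $I$ keeps this quantity positive, securing both local injectivity and orientation preservation. Combined with the inverse function theorem on the compact boundary and the fact that $\Phi(s,\freearg)$ maps into $\pa \Omega_s$, one obtains bijectivity onto $\pa \Omega_s$.

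\textbf{Main obstacle.} The only non-routine step is the evolution identity $\pa_s d_{\Omega_s}(x) = -v(s,\pi_s(x))$ in a tubular neighborhood of a moving boundary. This is standard but must be invoked with care, as the tubular neighborhood itself depends on $s$; the $C^k$-regular solution hypothesis guarantees $r_s$ stays away from zero near $t$, so a uniform tubular neighborhood is available on a sufficiently small $I$.
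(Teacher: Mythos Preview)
Your approach matches the paper's: it states only that ``the Picard--Lindel\"of theorem directly implies'' the proposition, and a footnote immediately afterward confirms that flowing by $u$ yields $X_t = u(t,\cdot)$. You are supplying details the paper omits entirely.

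One step needs tightening. In your computation of $\tfrac{\d}{\d s}\, d_{\Omega_s}\bigl(\Phi(s,x_0)\bigr)$, the time derivative of the signed distance contributes $-v\bigl(s,\pi_{\pa\Omega_s}(\Phi)\bigr) = -\bigl\langle u\bigl(s,\pi_{\pa\Omega_s}(\Phi)\bigr),\nu_s\bigr\rangle$, while the transport term contributes $\bigl\langle \nu_s, u(s,\Phi)\bigr\rangle$, both with $\nu_s$ evaluated at the projected point $\pi_{\pa\Omega_s}(\Phi)$. These two cancel only if $\Phi = \pi_{\pa\Omega_s}(\Phi)$, which is precisely what you are trying to show, so as written the argument is circular. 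The fix is routine: the difference is bounded by a spatial Lipschitz constant of $u(s,\cdot)$ times $|d_{\Omega_s}(\Phi)|$, and Gr\"onwall from $d_{\Omega_t}(x_0)=0$ forces $d_{\Omega_s}(\Phi)\equiv 0$. Relatedly, since $u$ is defined only on the closed space--time cylinder $\{(s,x): x \in \overline{\Omega}_s\}$, Picard--Lindel\"of is not directly applicable at boundary points; one first extends each $u(s,\cdot)$ in $C^k$ to an open neighborhood of $\overline{\Omega}_s$, and the Gr\"onwall step then shows the flow never leaves $\pa\Omega_s$, so the extension is immaterial.
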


Let us then fix $t \in [0,T)$ and let $I$ and $\Phi$ as in the previous proposition.
We denote $X_t = \pa_1 \Phi(t,\freearg)$.\footnote{Note that a straightforward proof of Proposition \ref{prop:localtimeparam} using $u$ as a vector field would result in $X_t = u(t,\cdot)$. However what follows is independent of that observation.} Then $X_t \in C^k(\pa \Omega_t;\R^2)$
and it is rather straightforward to check

\begin{equation}
\label{critical_flowmap_convergence}
\frac{\pa_{\tau_t}^l \Phi(t+h,\freearg) - \pa_{\tau_t}^l \Phi(t,\freearg) }{h} \longrightarrow \pa_{\tau_t}^l X_t
\end{equation}
uniformly on $\pa \Omega_t$ as $h$ tends to zero for every $l=0,1,\ldots,k$.
Since $\Phi(t,\freearg)=\id$, then by applying the implicit function theorem on Definition
\ref{def:normalvelocity} for the normal velocity
we conclude $\langle X_t, \n_t \rangle = v(t, \freearg)$. Thus the normal velocity condition in  Definition \ref{def:classicsol} implies 
the normal components must agree, i.e.
\beq
\label{normalparts}
\langle X_t, \n_t\rangle = \langle u, \n_t \rangle
\qquad \text{on} \ \ \pa \Omega_t.
\eeq

We assume that $Q$ is sufficiently regular at time $t$. To be more precise,
the following condition is expected to hold
\beq
\label{Qcondition}
\lim_{h \rightarrow 0} \int_{\pa \Omega_t} \left|\frac{Q(t+h,\Phi(t+h,\freearg) ) - Q(t,\freearg)}h
- \frac{\pa}{\pa s} Q\left(s,\Phi(s,\freearg)\right) \bigg |_{s=t} \right| \d \H^1 = 0.
\eeq

Using the change of variables and adding and subtracting terms we have

\begin{align*}
&\frac{1}{h} \left( \int_{\pa \Omega_{t+h}} Q(t+h,\freearg) \d \H^1- \int_{\pa \Omega_t}Q(t,\freearg)\d \H^1\right) \\
=  &\int_{\pa \Omega_t} |\pa_{\tau_t} \Phi(t+h,\freearg))| \frac{Q(t+h,\Phi(t+h,\freearg) ) - Q(t,\freearg)}h  \d \H^1 \\
+&\int_{\pa \Omega_t} \frac{|\pa_{\tau_t} \Phi(t+h,\freearg))|-
1 }{h}Q(t,\freearg) \d \H^1.
\end{align*}
It follows from the uniform convergence $\pa_{\tau_{t+h}} \Phi (t+h,\freearg) \rightarrow \tau_t$
on $\pa \Omega_t$
 and the condition \eqref{Qcondition} on $Q$
that the first integral in RHS converges to 
\[
 \int_{\pa \Omega_t} 
\frac{\pa}{\pa s} Q\left(s,\Phi(s,\freearg)\right) \bigg |_{s=t} \d \H^1
\]
as $h \rightarrow 0$. The latter transport term caused by the tangential Jacobian $|\pa_{\tau_t} \Phi(t+h,\freearg)|$ is also easy to evaluate.
Indeed, since $\pa_{\tau_t} \Phi (t, \freearg) = \pa_{\tau_t} \id = \tau_t$, then
 recalling \eqref{critical_flowmap_convergence}
we obtain
\[
\lim_{h \rightarrow 0} \int_{\pa \Omega_t}  \frac{|\pa_{\tau_t} \Phi(t+h,\freearg))|-1 }{h}Q(t+h,\freearg) \d \H^1 =
 \int_{\pa \Omega_t}  \div_{\tau_t} X_t \, Q(t,\freearg)\d \H^1.
\]
Thus we conclude
\begin{align}
\label{ratestructure}
\begin{split}
\frac{\d}{\d s} &\int_{\pa \Omega_s} Q(s,\freearg) \d \H^1\bigg|_{s=t}
 \\
= &\int_{\pa \Omega_t} \frac{\pa}{\pa s} Q\left(s,\Phi(s,\freearg)\right) \bigg |_{s=t}
+\div_{\tau_t} X_t \,  Q(t,\freearg)  \d \H^1.
\end{split}
\end{align}

If $Q$ is a regular ambient function, i.e., $Q \in C^1([0,T)\times \R^2;\R^m)$
the condition \eqref{Qcondition} is satisfied and 
using the surface divergence theorem \eqref{eq:surfacedivthm} and 
the identity \eqref{normalparts}
we have
\begin{align}
\label{ratestructure_ambient}
\begin{split}
\frac{\d}{\d s} &\int_{\pa \Omega_s}  Q(s,\freearg)  \d \H^1\bigg|_{s=t}
 \\
= &\int_{\pa \Omega_t} \pa_1 Q (t,\freearg) + \langle u, \n_t \rangle \left( \pa_{\n_t} 
Q(t,\freearg)+ \curv_t 
Q(t,\freearg) \right) \d \H^1.
\end{split}
\end{align}
By choosing $Q=1$
we obtain the first variation of perimeter
\begin{equation}
\label{dP/dt}
\frac{\d}{\d t} P(\Omega_t) =  \int_{\pa \Omega_t} \curv_t \langle u, \n_t \rangle \d \H^1.
\end{equation}
We then consider the following cases involving geometric quantities:
\begin{itemize}
\item[(i)] $Q(s,x) = \phi(x) \n_s(x)$ and $Q(s,x) = \phi(x) \curv_{\tau_s}(x)$  for $\phi \in C^1(\R^2)$,
\item[(ii)]  $Q(s,x) =\curv_{\tau_s}^2(x)$ and
\item[(iii)]  $Q(s,x) =(\pa_{\tau_s}\curv_{\tau_s})^2(x)$ (provided that $k \geq 3$).
\end{itemize}

To compute the rates of the corresponding energies it is more feasible to use
the expressions derived in the previous section than try to construct suitable extensions
for $Q$. These quantities are in reach of our scheme.  Indeed, we note it follows from \eqref{transformed_normal_and_tangent},  
\eqref{prop:transformed_curv1}, \eqref{prop:transformed_curv2} and the properties of $\Phi$ 
(in particular \eqref{critical_flowmap_convergence}) that the difference quotients
\[
\frac{\n_{t+h} \circ \Phi (t+h, \freearg) - \n_t}h, \ \ \frac{\curv_{t+h} \circ \Phi (t+h, \freearg)-\curv_t}h \ \ \text{and} 
\ \  \frac{\pa_{\tau_{t+h}}\curv_{t+h} \circ \Phi (t+h, \freearg) - \pa_{\tau_t} \curv_t}h \ \ \text{(if $k\geq 3$)}
\]
converge uniformly to the corresponding derivatives on $\pa \Omega_t$ as $h \rightarrow 0$. This ensures compatibility of the quantity $Q$ in (i) -- (iv) with the differentiability condition \eqref{Qcondition}.
Again, using \eqref{transformed_normal_and_tangent}, \eqref{prop:transformed_curv1}, \eqref{prop:transformed_curv2},  \eqref{critical_flowmap_convergence}, $\Phi(t, \freearg) = \id$ and Weingarten identies 
we compute
\begin{align*}
\frac{\pa}{\pa s} \left( \n_s \circ \Phi(s,\freearg) \right)  \bigg|_{s=t}&=- \langle \pa_{\tau_t} X_t, \n_t \rangle \tau_t \\
\frac{\pa}{\pa s} \left(\curv_{\tau_s} \circ \Phi(s,\freearg)\right) \bigg|_{s=t}&= - \langle \pa_{\tau_t}^2 X_t, \n_t\rangle  - 2 \curv_t \div_{\tau_t} X_t \ \ \text{and (if $k\geq 3$)} \\
\frac{\pa}{\pa s} \left(\pa_{\tau_s}\curv_{\tau_s} \circ \Phi(s,\freearg)\right) \bigg|_{s=t}&= - \pa_{\tau_t}(\langle\pa_{\tau_t}^2 X_t, \n_t \rangle + \curv_t \div_{\tau_t} X_t)
-\curv_t \pa_{\tau_t} \div_{\tau_t} X_t
-2 \pa_{\tau_t} \curv_t\div_{\tau_t} X_t.
\end{align*}
Then substitution into \eqref{ratestructure} eventually yields
\begin{align}
\label{daux1/dt}
\frac{\d}{\d t}\int_{\pa \Omega_t}  \phi \n_t \d \H^1  &= \int_{\pa \Omega_t} \langle X_t, \n_t \rangle \nabla \phi \d \H^1, \\
\label{daux2/dt}
\frac{\d}{\d t} \int_{\pa \Omega_t} \phi \curv_t \d \H^1 &= \int_{\pa \Omega_t} \pa_{\tau_t} \phi \langle \pa_{\tau_t} X_t, \n_t\rangle  
+ \curv_t \langle \nabla \phi, X_t \rangle  \d \H^1, \\
\label{eq:k2derivative}
\frac{\d}{\d t} \int_{\pa \Omega_t} \curv_t^2 \d \H^1&= - \int_{\pa \Omega_t} 2 \curv_t ( \langle \pa_{\tau_t}^2 X_t, \nu_t \rangle + \curv_t \div_{\tau_t} X_t)  + \curv_t^2
\div_{\tau_t} X_t \d \H^1 
\end{align}
and if $k \geq 3$
\begin{align} \label{eq:dk2derivative}
\begin{split}
\frac{\d}{\d t}& \int_{\pa \Omega_t} (\pa_t \curv_t)^2 \d \H^1 \\
&=- \int_{\pa \Omega_t} 2 \pa_{\tau_t} \curv_t  \pa_{\tau_t}(\langle\pa_{\tau_t}^2 X_t, \n_t \rangle + \curv_t \div_{\tau_t} X_t) + 2 \curv_t \pa_{\tau_t} \curv_t \pa_{\tau_t} \div_{\tau_t} X_t \d \H^1 \\
&\, - 3\int_{\pa \Omega_t} (\pa_{\tau_t} \curv_t)^2 \div_{\tau_t}  X_t \d \H^1.
\end{split}
\end{align}
Finally by using the condition \eqref{normalparts} we may replace $X_t$ with $u$ on the right hand
side of the identities  \eqref{daux1/dt}, \eqref{daux2/dt}, \eqref{eq:k2derivative}
and \eqref{eq:dk2derivative}. For \eqref{daux1/dt} and \eqref{daux2/dt} this simply follows
from integration by parts. 

For \eqref{eq:k2derivative} we observe that 
$\n_t$ and  $\tau_t$ are locally $C^k$-regular and $\curv_\tau$ is locally $C^{k-1}$-regular 
in the relatively open subset 
$\Gamma_t := \{ x \in \pa \Omega_t : \langle X_t(x) - u(x),\tau_t(x)\rangle \neq 0 \}$ provided that it is not empty. Suppose that $\Gamma_t \neq \varnothing$. Since its relative boundary is 
$\H^1$-negligible and shared with the complement  $\pa \Omega_t \cap \{ X_t = u\}$, 
we conclude for $Z=X_t - u$
\begin{align*}
\int_{\pa \Omega_t} &2 \curv_t  (\langle \pa_{\tau_t}^2 Z, \nu_t \rangle
+\curv_t \div_{\tau_t}Z )  + \curv_t^2
\div_{\tau_t} Z \d \H^1 \\
&=
\int_{\Gamma_t} 2 \curv_t  (\langle \pa_{\tau_t}^2 Z, \nu_t \rangle
+\curv_t \div_{\tau_t}Z )  + \curv_t^2
\div_{\tau_t} Z \d \H^1.
\end{align*}
Since $Z$ does not have the normal component in $\Gamma_t$, then
using Weingarten identities yields
\[
2 \curv_t  (\langle \pa_{\tau_t}^2 Z, \nu_t \rangle
+\curv_t \div_{\tau_t}Z )  + \curv_t^2
\div_{\tau_t} Z = -\pa_{\tau_t} (\curv_t^2 \langle Z, \tau_t \rangle)
\ \ \ \text{in} \  \ \Gamma_t.
\]
This, in turn, together with $Z=0$ in $\pa \Omega_t \cap \{ X_t = u\}$
allows us to conclude that the previous integral must equal to zero.

Again, we may argue similarly for the identity  \eqref{eq:dk2derivative}.

%%%%%%%%%%%%%%%%%%%%%%%%%%%%%%%%%%%%%%%%%%%%%%%%%%%%%%%%%%%%%%%%%%%%%%%%%%%%%%%%%%%%%%%%%%%%%

\subsection{Extended normal and tangent vector}
\label{app:ExtNormal}

In this section we will discuss some additional properties and identities regarding the extended normal and tangent vectors.
First of all, assuming regularity of $\partial \Omega$, we can also recover higher regularity for the signed distance function and the projection. For the sake of brevity we write $d=d_\Omega$ and $\pi=\pi_{\pa \Omega}$. 

\begin{prop}
\label{prop:ImprovedReg}
Let $\Omega \subset \R^2$ be a bounded $C^k$-regular domain with $k \geq 2$. 
Then the signed distance $d \in C^k(\mathcal N(\pa \Omega))$ and the distance projection $\pi \in C^{k-1}(\mathcal N(\pa \Omega);\R^2)$
satisfy the following identities in $\mathcal N(\pa \Omega)$
\beq
\label{eq:sgndistance2}
\nabla^2 d = \frac{\curv \circ \pi}{1+d \curv \circ \pi}   \left(I- \nabla d \otimes \nabla d\right)
 \ \ \text{and}  \ \ \nabla \pi= \frac{1}{1+d \curv \circ \pi}  \left(I- \nabla d \otimes \nabla d\right).
\eeq 
\end{prop}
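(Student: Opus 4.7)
The plan is to work locally via a tubular parametrization that straightens $\partial\Omega$ and then read off the required identities from explicit coordinate expressions. For the regularity claims I would argue as follows. Fix a local arc-length parametrization $\gamma \in C^k(I;\pa\Omega)$ with $\gamma' = \tau$ and associated outer normal $\nu \in C^{k-1}(I;\R^2)$; recall from \eqref{weingarten} that $\nu' = \kappa \tau$. Define
\[
F:I\times(-r_\Omega,r_\Omega)\to\R^2,\qquad F(s,\rho)=\gamma(s)+\rho\,\nu(s),
\]
so that $\partial_s F=(1+\rho\kappa)\tau$ and $\partial_\rho F=\nu$, and $F$ is $C^{k-1}$ with $\det\nabla F = 1+\rho\kappa\geq 0$ which stays strictly positive on every compact subset of the parameter domain (since $|\rho|<r_\Omega\leq 1/\|\kappa\|_\infty$ when alternative (ii) of Proposition \ref{prop:UBC} is not triggered locally, and in any case $1+\rho\kappa>0$ on the relevant chart). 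Hence $F$ is a $C^{k-1}$-diffeomorphism onto its image and $F^{-1}=(s(\freearg),\rho(\freearg))$ is $C^{k-1}$. By construction $\rho(x)=d(x)$ and $\gamma(s(x))=\pi(x)$, so $\pi\in C^{k-1}_\loc(\mathcal N(\pa\Omega);\R^2)$ and a priori $d\in C^{k-1}_\loc(\mathcal N(\pa\Omega))$. The extra derivative for $d$ is then gained for free from the identity $\nabla d = \nu\circ\pi$ in \eqref{eq:sgndistance1}: the right-hand side is a composition of $C^{k-1}$ maps and therefore $\nabla d\in C^{k-1}$, i.e. $d\in C^k_\loc(\mathcal N(\pa\Omega))$.

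For the first identity I would compute $\nabla^2 d=\nabla(\nu\circ\pi)$ directly via the chain rule. Since $\nu(s)$ depends only on $s$, we have $\nabla(\nu\circ\pi)=\nu'(s)\otimes \nabla s$, and in the orthonormal frame $\{\tau,\nu\}$ the differential of $F$ gives $\nabla s\cdot\tau=1/(1+d\kappa\circ\pi)$ and $\nabla s\cdot\nu=0$. Combining this with $\nu'=\kappa\tau$ yields
\[
\nabla^2 d=\frac{\kappa\circ\pi}{1+d\,\kappa\circ\pi}\,\tau\otimes\tau=\frac{\kappa\circ\pi}{1+d\,\kappa\circ\pi}\bigl(I-\nu\otimes\nu\bigr),
\]
and substituting $\nu=\nabla d$ from \eqref{eq:sgndistance1} gives the stated formula. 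A quick sanity check is that $\nabla^2 d$ is automatically symmetric and annihilates $\nabla d$, which matches the consequence $(\nabla^2 d)\nabla d=\tfrac12\nabla(|\nabla d|^2)=0$ of the eikonal identity $|\nabla d|=1$.

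The second identity is then purely algebraic. Differentiating $\pi=\id-d\,\nabla d$ in \eqref{eq:sgndistance1} gives
\[
\nabla\pi = I-\nabla d\otimes\nabla d - d\,\nabla^2 d,
\]
and plugging in the formula just derived for $\nabla^2 d$ and collecting the coefficient of $I-\nabla d\otimes\nabla d$ produces
\[
\nabla\pi=\bigl(I-\nabla d\otimes\nabla d\bigr)\left(1-\frac{d\,\kappa\circ\pi}{1+d\,\kappa\circ\pi}\right)=\frac{1}{1+d\,\kappa\circ\pi}\bigl(I-\nabla d\otimes\nabla d\bigr),
\]
as claimed.

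I do not anticipate a serious obstacle: the only point that needs some care is keeping track of the regularity counting, namely that $\gamma\in C^k$ only gives $\nu\in C^{k-1}$ so that $F$ and $\pi$ are a priori only $C^{k-1}$, and then noticing that the identity $\nabla d=\nu\circ\pi$ allows us to bootstrap $d$ from $C^{k-1}$ to $C^k$. Everything else is a short coordinate computation in the adapted tubular chart combined with the Weingarten relations \eqref{weingarten}.
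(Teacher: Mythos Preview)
Your proposal is correct and follows essentially the same approach as the paper: both use the tubular parametrization $F(s,\rho)=\gamma(s)+\rho\,\nu(s)$, invoke the inverse function theorem for the $C^{k-1}$-regularity of $\pi$, and then bootstrap $d$ to $C^k$ via $\nabla d=\nu\circ\pi$. The only cosmetic difference is in deriving the Hessian identity: the paper first observes the 2D fact that a symmetric matrix annihilating a unit vector is a multiple of the complementary projection (so $\nabla^2 d=\Delta d\,(I-\nabla d\otimes\nabla d)$) and then solves a small linear system obtained by differentiating \eqref{eq:sgndistance1}, whereas you compute $\nabla(\nu\circ\pi)$ directly in the chart; both routes are equally short.
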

\bP
Fix an arbitrary $z \in \mathcal N(\pa \Omega)$, choose a local $C^k$-parametrization $\gamma : I \rightarrow \pa \Omega$ at $\pi(z)$ and define  
$F : I \times (-r_\Omega,r_\Omega) \rightarrow \mathcal N (\pa \Omega)$
by setting $F(s,\rho) = \gamma(s) + \rho \nu \circ \gamma (s)$. Then $F$ is an injection
with $F^{-1} = (\gamma^{-1} \circ \pi, d_\Omega)$ in its image which, in particular, contains $z$.
Clearly $F \in C^{k-1}( I \times (-r_\Omega,r_\Omega); \R^2)$ pointwise
satisfying
\begin{align*}
\pa_1 F (s,\rho) &= \left(1+ \rho \curv \circ \gamma(s)\right) \langle \gamma'(s),\tau \circ \gamma(s) \rangle\tau \circ \gamma(s) \ \ \text{and} \\
\pa_2 F (s,\rho)&= \nu \circ \gamma(s). 
\end{align*}
Since $|\curv| \leq 1/r_\Omega$, then $ \left(1+ \rho \curv \circ \gamma(s)\right)\neq 0$ and we conclude that $\nabla F (s,\rho)$ is invertible for every 
$(s,\rho) \in I \times (-r_\Omega,r_\Omega)$. Thus it follows from the inverse function theorem that $F^{-1}$ is locally $C^{k-1}$-regular
in the open set $F(I \times (-r_\Omega,r_\Omega))$ further implying $\pi$ is locally 
$C^{k-1}$-regular at $z$. Therefore we conclude 
$\pi \in C^{k-1}_\loc( \mathcal N(\pa \Omega); \R^2)$ which, in turn, implies via the second identity in \eqref{eq:sgndistance1} that
$d \in C^k_\loc( \mathcal N( \pa \Omega))$.

The second identity in \eqref{eq:sgndistance1} gives  $|\nabla d|^2=1$ in $\mathcal N(\pa \Omega)$ so differentiating this and using the fact that $\nabla^2 d$ is
a symmetric matrix field yields in the two dimensional case
\beq
\label{prop:ImprovedReg1}
\nabla^2 d = \Delta d \, (I-\nabla d \otimes \nabla d) \ \ \text{in} \ \ \mathcal N (\pa\Omega).
\eeq
Again the second identity in \eqref{eq:sgndistance1} states that $\nabla d$ is a locally $C^1$-regular extension
of $\nu$ so we immediately infer from the previous expression for $\nabla^2 d$ that $\curv = \Delta d$ on $\pa \Omega$.
Next we differentiate the identities \eqref{eq:sgndistance1}
to obtain
\[
I  = \nabla \pi + d \nabla^2 d + \nabla d \otimes \nabla d
\ \ \text{and} \ \ 
\nabla^2 d = (\nabla^2 d \circ \pi) \nabla \pi
\]
and combining these with the second identity in \eqref{eq:sgndistance1}, \eqref{prop:ImprovedReg1} and $\Delta d \circ \pi= \curv \circ \pi$ we further obtain
\begin{align*}
 \nabla^2 d &= \curv \circ \pi (I-\nabla d \otimes \nabla d) \nabla \pi \ \ \text{and}\\
(I + d \curv \circ \pi (I-\nabla d \otimes \nabla d))\nabla \pi &= I-\nabla d \otimes \nabla d.
\end{align*}
Since $|d \curv \circ \pi|<1$, then
\[
(I + d \curv \circ \pi (I-\nabla d \otimes \nabla d))^{-1} = \nabla d \otimes \nabla d+ \frac1{1+d \curv \circ \pi} (I- \nabla d \otimes \nabla d)
\]
and we conclude the identities \eqref{eq:sgndistance2}.
\eP
Recall that we denote the normal and tangent extensions simply by $\nu$ and $\tau$ respectively.
Since we may write $\nu = \nabla d = \nu \circ \pi$ and $\tau = \tau \circ \pi$, we immediately recover the following
from the previous proposition:
\begin{prop}[Basic identities for extended normal and tangent vector]
Let $\Omega$ be a bounded $C^k$-regular domain in $\R^2$ with $k \geq 2$, 
then $\nu,\tau \in C^{k-1}(\mathcal N(\pa \Omega);\R^2)$
satisfies the following identities in $\mathcal N(\pa \Omega)$:
\begin{align}
\label{basicid} 
\nabla \nu = \div \nu \, \tau \otimes \tau  \qquad  \text{and} \qquad
\nabla \tau = - \div \nu \, \nu \otimes \tau
\\
\label{divn2D}
\div \, \nu = \frac{\curv \circ \pi}{1+ d \curv \circ \pi}
\\
\label{pa_n H}
\pa_\n \div \, \n = -(\div \, \n)^2.
\end{align}
\end{prop}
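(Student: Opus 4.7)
The plan is to deduce all three identities directly from the two formulas for $\nabla^2 d$ and $\nabla \pi$ that were just established in Proposition \ref{prop:ImprovedReg}, using only the following elementary facts: the extended normal satisfies $\nu = \nabla d$ in $\mathcal N(\pa\Omega)$ (from \eqref{eq:sgndistance1}); the pair $\{\nu,\tau\}$ is a pointwise orthonormal frame, so the identity on $\R^2$ decomposes as $I = \nu\otimes\nu + \tau\otimes\tau$; and $\tau = J\nu$ where $J$ is the fixed $90^\circ$ rotation $(a,b)\mapsto (-b,a)$.

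First I would prove \eqref{basicid} and \eqref{divn2D} simultaneously. Since $\nu=\nabla d$, we have $\nabla\nu = \nabla^2 d$, and using $I - \nabla d\otimes\nabla d = I - \nu\otimes\nu = \tau\otimes\tau$, Proposition \ref{prop:ImprovedReg} gives
\[
\nabla\nu \;=\; \frac{\curv\circ\pi}{1+d\,\curv\circ\pi}\,\tau\otimes\tau.
\]
Taking the trace, and noting $\tr(\tau\otimes\tau)=|\tau|^2=1$, yields \eqref{divn2D}, and substituting this back gives the first half of \eqref{basicid}. For the formula for $\nabla\tau$ I would use $\tau = J\nu$ componentwise, so $\partial_k\tau_i = J_{ij}\partial_k\nu_j$, i.e.\ $\nabla\tau = J\nabla\nu$ as matrices. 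Applying $J$ on the left of $\div\nu\,\tau\otimes\tau$ uses $J\tau = J^2\nu = -\nu$, which gives $\nabla\tau = -\div\nu\,\nu\otimes\tau$.

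For \eqref{pa_n H} I would compute $\pa_\n\,\div\,\n = \langle\nabla(\div\nu),\nu\rangle$ by differentiating the quotient expression \eqref{divn2D}. The key observation is that the factor $\curv\circ\pi$ is constant along normal rays: since Proposition \ref{prop:ImprovedReg} gives $\nabla\pi = \frac{1}{1+d\,\curv\circ\pi}(I-\nu\otimes\nu) = \frac{1}{1+d\,\curv\circ\pi}\tau\otimes\tau$, one has $[\nabla\pi]\nu = 0$, and the chain rule yields
\[
\pa_\n(\curv\circ\pi) \;=\; \langle\nabla\curv\circ\pi,\;[\nabla\pi]\nu\rangle \;=\; 0.
\]
Combined with $\pa_\n d = \langle\nabla d,\nu\rangle = |\nu|^2 = 1$, differentiation of $\div\nu = K/(1+dK)$ with $K:=\curv\circ\pi$ gives $\pa_\n\,\div\nu = -K^2/(1+dK)^2 = -(\div\nu)^2$.

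There is no real obstacle here; the proposition is essentially a bookkeeping step repackaging Proposition \ref{prop:ImprovedReg} in terms of the frame $\{\nu,\tau\}$. The only point requiring a moment of care is confirming that $J$ commutes with taking $\nabla$ in the correct sense for the computation of $\nabla\tau$, and that the chain-rule identity $\pa_\n(\curv\circ\pi)=0$ is legitimate (which is immediate from the regularity $\pi\in C^{k-1}$ provided by Proposition \ref{prop:ImprovedReg} and the assumption $k\geq 2$, so $\curv\in C^{k-2}(\pa\Omega)$ is at least continuous and differentiable along the tangential direction as needed).
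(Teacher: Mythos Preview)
Your proposal is correct and follows essentially the same route the paper takes: the paper states that the proposition is recovered ``immediately'' from Proposition \ref{prop:ImprovedReg} together with $\nu=\nabla d=\nu\circ\pi$ and $\tau=\tau\circ\pi$, and your write-up simply supplies the bookkeeping details (taking the trace for \eqref{divn2D}, applying the rotation $J$ for $\nabla\tau$, and differentiating the quotient for \eqref{pa_n H}).

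One small wrinkle worth tightening: in the borderline case $k=2$ the curvature $\curv$ is only $C^0$ on $\pa\Omega$, so the chain-rule justification $\pa_\n(\curv\circ\pi)=\langle\nabla\curv\circ\pi,[\nabla\pi]\nu\rangle$ is not literally available. The conclusion $\pa_\n(\curv\circ\pi)=0$ is still correct, but the cleaner argument is the direct one: from \eqref{eq:sgndistance1} one has $\pi(x+t\nu(x))=x$ for $x\in\pa\Omega$, so $\curv\circ\pi$ is genuinely constant along each normal ray, and the normal derivative of the quotient in \eqref{divn2D} is computed using only $\pa_\n d=1$.
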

The identities \eqref{basicid}  can be seen as generalized Weingarten identities for the extensions.

In the case $\Omega$ is $C^3$-regular we have the estimate  
\beq
\label{curv_bulk_L2est}
\int_{\mathcal N_{r_\Omega/2}(\pa \Omega)} (\pa_\tau \div \, \n)^2 \d x \leq \frac{3r_\Omega}2\int_{\pa \Omega} (\pa_\tau \curv)^2 \d \H^1
\eeq
\begin{proof}[Proof of \eqref{curv_bulk_L2est}] The second identity in \eqref{eq:sgndistance2} gives us
\[
\pa_\tau \pi = \frac{\tau \circ \pi}{1-d \curv \circ \pi}
\]
and again using the invariance $\pi = \pi \circ \pi$ we infer from the previous
\[
\pa_\tau (\curv \circ \pi) = \nabla (\curv \circ \pi) \pa_\tau \pi =\frac{(\pa_\tau \curv) \circ \pi}{1-d \curv \circ \pi}.
\]
Thus differentiating \eqref{divn2D} in the direction $\tau$ yields
\[
(\pa_\tau \div \, \nu)^2 = \frac{(\pa_\tau \curv)^2 \circ \pi }{(1+d \curv \circ \pi)^4} =  \frac{(\pa_\tau \curv)^2 \circ \pi |\pa_\tau \pi|}{(1+d \curv \circ \pi)^3}
\]
Since $\pi : \pa \{d < s\}=\{d=s\} \rightarrow \pa \Omega$ is an orientation preserving diffeomorphism
and $|\nabla d|=1$ in $\mathcal N(\pa \Omega)$, 
then using the coarea formula, the previous expression as well as the change of variables we obtain
\begin{align*}
\int_{\mathcal N_{r_\Omega/2}(\pa \Omega)} (\pa_\tau \div \, \n)^2 \d x &=\int_{-r_\Omega/2}^0 \int_{\pa \Omega} \frac{(\pa_\tau \curv)^2 }{(1+s \curv)^3}  \d \H^1  \d s
\end{align*}
and \eqref{curv_bulk_L2est} follows.
\end{proof}

For the rest of discussion of this subsection we assume that $\Omega$
is a bounded $C^2$-regular  subset of $\R^2$ unless otherwise stated. 
We continue with a trivial observation.
If $\phi$ is a scalar field differentiable (a.e.) in some open subset  $U \subset \mathcal N(\pa \Omega)$
we have
\beq
\label{div(f*n)0}
\div \left( \phi \, \nu \right) =   \pa_\nu \phi +  \div \, \nu \, \phi \ \ \text{(a.e.) in} \ \ U.
\eeq
This simple identity is a rather useful when coupled with a suitable cutoff function and the divergence theorem
since it allows us to transform boundary integrals into bulk integrals. Suppose that $\phi \in C^{0,1}
(\mathcal N_{r_\Omega/2} (\pa \Omega) \cap \Omega)$ and let $\eta$ be the cutoff for $\Omega$ according to \eqref{eta}. Then applying the divergence theorem and consequently
\eqref{div(f*n)0} on  $\eta \phi   \nu$ results in
\beq
\label{div(f*n)}
\int_{\pa \Omega}\phi \d \H^1 = \int_{\pa \Omega}\eta  \phi \d \H^1 = \int_\Omega \phi \pa_\nu \eta +\eta \pa_\nu \phi + \eta \, \phi   \div \, \nu  \d x
\eeq
and we have the estimates $|\pa_\nu \eta| \leq 2/r_\Omega$, $|\div \, \nu | \leq 2/r_\Omega$ in $\spt \eta$. Again, recalling \eqref{pa_n H} we may formally iterate \eqref{div(f*n)0} as $\div \left( \div \, \nu \, \phi \, \nu \right) =   \div \, \nu   \pa_\nu \phi$. Hence we conclude
\beq
\label{div(divn*f*n)}
\int_{\pa \Omega} \curv \phi \d \H^1 = \int_{\Omega}  \eta \, \pa_\nu \phi \,  \div \, \nu  +  \phi  \, \pa_\nu \, \eta \div \, \nu  \d x
\eeq
for every $\phi \in C^{0,1}
(\mathcal N_{r_\Omega/2} (\pa \Omega) \cap \Omega)$.
We may bypass the formal step first  approximating  $\div \, \nu$ by
\[
\frac{(\div \, \nu)_\eps \circ \pi}{1+d \, (\div \, \nu)_\eps \circ \pi} 
\]
in $\spt \eta \subset  \mathcal N_{r_\Omega/2}(\pa \Omega)$ where $(\div \, \nu)_\eps$ is a standard $\eps$-mollification
of $\div \, \nu$
with sufficiently small $\eps$, then applying \eqref{div(f*n)} and finally letting $\eps \rightarrow 0$.

Since we do a lot of second and third order calculation
with respect to the vector frame $\{\nu,\tau\}$ in tubular neighborhoods
we list some frequently used basic identities. 
\begin{lemma}[Higher order identities]
Let $X$ be a $C^2$-regular vector field in an open subset $U \subset \mathcal N(\pa \Omega)$. Then the following identities hold
in $U$
 \begin{align}
\label{changeoforder}
\pa_\tau \pa_\n X &= \pa_\nu  \pa_\tau X  + \div \, \n \, \pa_\tau X \\
\label{Delta X}
\Delta X &= \pa_\nu^2 X + \pa_\tau^2 X + \div \, \nu \, \pa_\nu X \\
\label{|nabla^2X|^2}
|\nabla^2  X|^2 &= |\pa_\nu^2 X|^2 + 2|\pa_\nu \pa_\tau  X|^2 +|\pa_\tau^2 X + \div \nu \, \pa_\nu X|^2.
\end{align}
Moreover, if $\Omega$ and $X$ are $C^3$-regular in the respective manner, then
\begin{align}
\begin{split}
|\nabla^3 X|^2&=|\pa_\n^3 X|^2 + 2 |\pa_\n^2\pa_\tau X|^2 +|\pa_\n\pa_\tau^2 X+ \div  \n \, \pa_\n^2 X - (\div  \n)^2\pa_\n X |^2  \\
&+|\pa_\tau \pa_\n^2 X - 2  \div  \n \, \pa_\n \pa_\tau X|^2 + 2|\pa_\n\pa_\tau^2 X + \div  \n \, \pa_\n^2 X- (\div  \n)^2 \pa_\n X |^2  \\
&+|\pa_\tau^3 X +3  \div  \n \, \pa_\n\pa_\tau X +  \pa_\tau \div  \n \, \pa_\n X+ (\div  \n)^2\pa_\tau  X |^2.
\label{|nabla^3X|^2}
\end{split}
\end{align}
\end{lemma}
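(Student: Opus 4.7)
The plan is to reduce all four identities to pointwise computations in the local orthonormal frame $\{\nu,\tau\}$ on $\mathcal N(\pa\Omega)$. The only inputs I need are the basic identities \eqref{basicid}, which specialise to $\pa_\nu\nu=\pa_\nu\tau=0$, $\pa_\tau\nu=\div\,\nu\,\tau$ and $\pa_\tau\tau=-\div\,\nu\,\nu$, together with Schwarz symmetry of $\nabla^2 X$ and $\nabla^3 X$ and the identity \eqref{pa_n H}.

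For \eqref{changeoforder}, I would use the general formula $\pa_v\pa_w X=\nabla^2 X(v,w)+[\nabla X]\pa_v w$ for smooth vector fields $v,w$. Applying it to $(v,w)=(\tau,\nu)$ and $(\nu,\tau)$, subtracting, and invoking the Hessian symmetry gives $\pa_\tau\pa_\nu X-\pa_\nu\pa_\tau X=[\nabla X](\pa_\tau\nu-\pa_\nu\tau)=\div\,\nu\,[\nabla X]\tau=\div\,\nu\,\pa_\tau X$. For \eqref{Delta X}, since $\{\nu,\tau\}$ is orthonormal, $\Delta X=\tr\nabla^2 X=\nabla^2 X(\nu,\nu)+\nabla^2 X(\tau,\tau)$; the frame identities give $\nabla^2 X(\nu,\nu)=\pa_\nu^2 X$ and $\nabla^2 X(\tau,\tau)=\pa_\tau^2 X+\div\,\nu\,\pa_\nu X$, which is the claim. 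For \eqref{|nabla^2X|^2}, expanding $|\nabla^2 X|^2=\sum_{\alpha,\beta\in\{\nu,\tau\}}|\nabla^2 X(\alpha,\beta)|^2$ and collapsing the two off-diagonal entries by Schwarz produces the stated formula with multiplicities $1,2,1$, using also $\nabla^2 X(\nu,\tau)=\pa_\nu\pa_\tau X$ (since $\pa_\nu\tau=0$).

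For \eqref{|nabla^3X|^2} the strategy is the same but with more bookkeeping. I would expand $|\nabla^3 X|^2=\sum_{\alpha,\beta,\gamma\in\{\nu,\tau\}}|\nabla^3 X(\alpha,\beta,\gamma)|^2$ in the frame, evaluating each entry by the general formula $\nabla^3 X(\alpha,\beta,\gamma)=\pa_\alpha[\nabla^2 X(\beta,\gamma)]-\nabla^2 X(\pa_\alpha\beta,\gamma)-\nabla^2 X(\beta,\pa_\alpha\gamma)$ together with the already-proved second-order identities. A direct computation produces the four symmetry classes: $\nabla^3 X(\nu,\nu,\nu)=\pa_\nu^3 X$, $\nabla^3 X(\nu,\nu,\tau)=\pa_\nu^2\pa_\tau X$, $\nabla^3 X(\nu,\tau,\tau)=\pa_\nu\pa_\tau^2 X+\div\,\nu\,\pa_\nu^2 X-(\div\,\nu)^2\pa_\nu X$ (where the last cancellation uses \eqref{pa_n H}), and $\nabla^3 X(\tau,\tau,\tau)=\pa_\tau^3 X+3\div\,\nu\,\pa_\nu\pa_\tau X+\pa_\tau\div\,\nu\,\pa_\nu X+(\div\,\nu)^2\pa_\tau X$, where the coefficient $3$ emerges after one application of the commutator \eqref{changeoforder} to the $\pa_\tau\pa_\nu\pa_\tau X$ term that arises naturally.

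To match the asymmetric grouping in the statement, I would keep one of the three $(\nu,\nu,\tau)$ copies in the Schwarz-conjugate form $\nabla^3 X(\tau,\nu,\nu)=\pa_\tau\pa_\nu^2 X-2\div\,\nu\,\pa_\nu\pa_\tau X$ rather than $\pa_\nu^2\pa_\tau X$; the equality of these two expressions is the iterated commutator \eqref{changeoforder} applied to $\pa_\nu X$ combined with \eqref{pa_n H}. Summing the eight squared components with the resulting multiplicities $1+2+1+1+2+1=8$ yields the displayed identity. The main obstacle is purely combinatorial: several correction terms involving $\div\,\nu$ and $\pa_\tau\div\,\nu$ appear and must be shown to cancel or combine correctly, but once $\nabla^3 X(\alpha,\beta,\gamma)$ is computed uniformly from the tensorial formula above, every cancellation is forced by \eqref{basicid} and \eqref{pa_n H}.
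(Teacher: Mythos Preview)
Your proposal is correct and follows essentially the same strategy as the paper: both reduce everything to frame computations using \eqref{basicid}, \eqref{pa_n H}, and the commutator \eqref{changeoforder}. The only cosmetic difference is that the paper writes $\nabla^2 X$ explicitly as a sum of rank-one tensors $\pa_\nu^2 X\otimes\nu\otimes\nu+\dots$ and then differentiates this representation in the $\nu$ and $\tau$ directions to obtain $|\nabla^3 X|^2=|\pa_\nu\nabla^2 X|^2+|\pa_\tau\nabla^2 X|^2$, whereas you phrase the same calculation via the covariant-derivative formula $\nabla^3 X(\alpha,\beta,\gamma)=\pa_\alpha[\nabla^2 X(\beta,\gamma)]-\nabla^2 X(\pa_\alpha\beta,\gamma)-\nabla^2 X(\beta,\pa_\alpha\gamma)$; the eight components and the asymmetric $1{+}2{+}1{+}1{+}2{+}1$ grouping come out identically either way.
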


\begin{proof}
The first identity is a direct consequence of the Weingarten identities \eqref{basicid}
as we compute
\begin{align*}
\pa_\tau \pa_\nu X = [\pa_\tau \nabla X]\nu + [\nabla X] \pa_\tau \nu = 
 [\pa_\nu \nabla X]\tau + \div \, \nu \, \pa_\tau X = \pa_\nu\pa_\tau X + \div \, \nu \, \pa_\tau X.  
\end{align*}
Since $\{\n,\tau\}$ is an orthonormal frame, we have $\nabla^k X = \pa_\n \nabla^{k-1} X \otimes \n + \pa_\tau \nabla^{k-1} X \otimes \tau$
 for any $k \in \N$ provided that $X$ is $C^k$-regular. Then
\begin{align}
\label{nabla^2X}
\begin{split}
\nabla^2 X 
=  & \pa_\n \nabla X \otimes \n + \pa_\tau \nabla X \otimes\n\\
=   &\pa_\n (\pa_\n X \otimes \n+\pa_\tau X \otimes \tau) \otimes \n + \pa_\tau(\pa_\n X \otimes \n+\pa_\tau X \otimes \tau) \otimes\tau \\
\overset{\eqref{basicid}}{=} &(\pa_\n^2 X \otimes \n_\Omega + \pa_\n\pa_\tau X \otimes \tau) \otimes \n\\
+& (\pa_\tau\pa_\n X \otimes \n + \div \, \n \, \pa_\n X \otimes \tau + \pa_\tau^2 X \otimes \tau
- \div \, \n \, \pa_\tau X \otimes \n ) \otimes \tau \\
\overset{\eqref{changeoforder}}{=}  &(\pa_\n^2 X \otimes \n + \pa_\n\pa_\tau X \otimes \tau) \otimes \n \\
+ &\left(\pa_\n\pa_\tau X \otimes \n +  (\pa_\tau^2 X+\div \, \n \, \pa_\n X) \otimes \tau
 \right ) \otimes \tau.
\end{split}
\end{align} 
We then infer \eqref{Delta X} and \eqref{|nabla^2X|^2} from this representation. 
For the last identity \eqref{|nabla^3X|^2} we compute $|\pa_\n \nabla^2 X|^2$ and $|\pa_\tau \nabla^2 X|^2$ separately.
The first one is given by
\begin{align*}
\begin{split}
\pa_\n  \nabla^2 X 
\overset{\eqref{nabla^2X}}{=} & \pa_\n\left( (\pa_\n^2 X \otimes \n + \pa_\n \pa_\tau X \otimes \tau) \otimes \n\right) \\
 + &\pa_\n\left(\left(\pa_\n\pa_\tau X \otimes \n +  (\pa_\tau^2 X+\div \, \n \, \pa_\n X) \otimes \tau
 \right ) \otimes \tau\right)  \\
\overset{\eqref{basicid}}{=} &    (\pa_\n^3 X \otimes \n + \pa_\n^2\pa_\tau X \otimes \tau) \otimes \n \\
+&\left(\pa_\n^2\pa_\tau X \otimes \n +  (\pa_\n \pa_\tau X -(\div \, \n)^2 \pa_\n X
+\div \, \n  \, \pa_\n^2 X ) \otimes \tau
 \right ) \otimes \tau 
\end{split}
\end{align*}
so
\[
|\pa_\n  \nabla^2 X|^2 = |\pa_\n^3 X|^2 + 2 |\pa_\n^2\pa_\tau X|^2 +|\pa_\n\pa_\tau^2 X+ \div  \, \pa_\n^2 X - (\div \, \n)^2\pa_\n X |^2.
\]
Computing the latter term is more lengthy. We have
\begin{align*}
\pa_\tau& \nabla^2 X \\
\overset{\eqref{nabla^2X}}{=} & \pa_\tau\left( (\pa_\n^2 X \otimes \n + \pa_\n\pa_\tau X \otimes \tau) \otimes \n\right) \\
 + &\pa_\tau \left(\left(\pa_\n\pa_\tau X \otimes \n +  (\pa_\tau X+\div \, \n \, \pa_\n X) \otimes \tau
 \right ) \otimes\tau \right)  \\
\overset{\eqref{basicid}}{=}&
(\pa_\tau\pa_\n^2 X \otimes \n
+\div \, \n \, \pa_\n^2 X \otimes \tau
 + \pa_\tau\pa_\n\pa_\tau X \otimes \tau
- \div \, \n \, \pa_\n\pa_\tau X \otimes \n
) \otimes \n \\
+&(\div \, \n \,\pa_\n^2 X \otimes \n + \div \, \n \,\pa_\n\pa_\tau X \otimes \tau) \otimes \tau\\
+&(\pa_\tau  \pa_\n\pa_\tau X \otimes \n + \div \, \n \, \pa_\n\pa_\tau X \otimes \tau) \otimes \tau\\
+&( (\pa_\tau^3 X+ \pa_\tau \div \, \n \, \pa_\n X + \div \, \n \, \pa_\tau \pa_\n X ) \otimes \tau - (\div \, \n \, \pa_\tau^2 X+(\div \, \n )^2 \pa_\n X ) \otimes \n ) \otimes \tau 
\\
-&\left(\div \, \n \, \pa_\n\pa_\tau X \otimes \n +  (\div \, \n \,\pa_\tau^2 X+(\div \, \n )^2 \pa_\n X) \otimes \tau
 \right ) \otimes \n \\
=&\left((\pa_\tau \pa_\n^2 X - 2 \, \div \, \n \, \pa_\n \pa_\tau X)\otimes \n \right) \otimes \n \\
+&\left((\pa_\tau\pa_\n\pa_\tau X + \div \, \n \, \pa_\n^2 X- \div \, \n \,\pa_\tau^2 X- (\div \, \n)^2 \pa_\n X )\otimes \tau \right) \otimes \n \\
+&\left((\pa_\tau\pa_\n\pa_\tau X + \div \, \n \, \pa_\n^2 X- \div \, \n \, \pa_\tau^2 X- (\div \, \n)^2 \pa_\n X )\otimes \n \right) \otimes \tau \\
+&\left((\pa_\tau^3 X +2 \, \div \, \n \, \pa_\n\pa_\tau X + \div \, \n \,\pa_\tau \pa_\n X +  \pa_\tau \div \, \n \, \pa_\n X )\otimes \tau \right) \otimes \tau \\
\overset{\eqref{changeoforder}}{=} &\left((\pa_\tau \pa_\n^2 X - 2 \, \div \, \n \, \pa_\n \pa_\tau X)\otimes \n \right) \otimes \n \\
+&\left(\pa_\n\pa_\tau^2 X + \div \, \n \, \pa_\n^2 X- (\div \, \n)^2 \pa_\n X )\otimes \tau \right) \otimes \n \\
+&\left((\pa_\n\pa_\tau^2 X + \div \, \n \, \pa_\n^2 X- (\div \, \n)^2 \pa_\n X )\otimes \n \right) \otimes \tau \\
+&\left((\pa_\tau^3 X +3 \, \div \, \n \, \pa_\n\pa_\tau X + \pa_\tau \div \, \n \, \pa_\n X+ (\div \, \n)^2\pa_\tau  X )\otimes \tau \right) \otimes \tau
\end{align*}
and thus
\begin{align*}
|\pa_\tau  \nabla^2 X|^2 
=& |\pa_\tau \pa_\n^2 X - 2 \, \div \, \n \, \pa_\n \pa_\tau X|^2 + 2|\pa_\n\pa_\tau^2 X + \div \, \n \, \pa_\n^2 X- (\div \, \n)^2 \pa_\n X |^2 \\
+&|\pa_\tau^3 X +3 \, \div \, \n \, \pa_\n \pa_\tau X +  \pa_\tau \div \, \n \, \pa_\n X+ (\div \, \n)^2\pa_\tau  X |^2.
\end{align*}
Summing the expressions for $|\pa_\n  \nabla^2 X|^2$ and  $|\pa_\tau \nabla^2 X|^2$  then results in \eqref{|nabla^3X|^2}.
\end{proof}

With the identities \eqref{changeoforder}, \eqref{Delta X}, \eqref{divn2D}  and $|\nabla X|^2 = |\pa_\nu X|^2 + |\pa_\tau X|^2$
it is rather straightforward to derive Reilly's formula in the special case \eqref{vReilly2D}.
Indeed, for given $X \in C^2(\overline \Omega;\R^2)$
we compute
\begin{align}
\label{ReillyComp}
\begin{split}
\int_\Omega &|\nabla^2 X|^2 - |\Delta X|^2 \d x \\
&=\int_\Omega \sum_{i,j} \langle \pa_i\pa_j X,\pa_i\pa_j X \rangle- \sum_i |\pa^2_i X|^2 \d x \\
&=\int_{\pa \Omega} \sum_{i,j} \langle \pa_j X,\pa_i\pa_j X \rangle \langle e_i,\nu \rangle 
-  \langle \pa_j X,\pa_i^2 X \rangle \langle e_j,\nu \rangle  \d \H^1 \\
&=\int_{\pa \Omega} \frac12 \pa_\nu |\nabla X|^2- \langle \pa_\n X,\Delta X \rangle \d \H^1 \\
&=\int_{\pa \Omega}  \langle\pa_\nu X, \pa_\nu^2 X \rangle
+\langle\pa_\tau X, \pa_\nu\pa_\tau X \rangle
- \langle \pa_\nu X,\pa_\nu^2 X+\pa_\tau^2 X+\curv \pa_\nu X \rangle \d \H^1\\
&=\int_{\pa \Omega}  \langle\pa_\tau X, \pa_\tau  \pa_\nu X \rangle
- \curv \langle\pa_\tau X, \pa_\tau X\rangle 
-\langle \pa_\nu X,\pa_\tau^2 X+\curv \pa_\nu X \rangle \d \H^1\\
&=- \int_{\pa \Omega} 2 \langle \pa_\nu X,\pa_\tau^2 X \rangle + \curv |\nabla X|^2 \d \H^1.
\end{split}
\end{align}

%%%%%%%%%%%%%%%%%%%%%%%%%%%%%%%%%%%%%%%%%%%%%%%%%%%%%%%%%%%%%%%%%%%%%%%%%%%%%%%%%%%%%%%%%%%%%
%%%%%%%%%%%%%%%%%%%%%%%%%%%%%%%%%%%%%%%%%%%%%%%%%%%%%%%%%%%%%%%%%%%%%%%%%%%%%%%%%%%%%%%%%%%%%

\section{Appendix. Proofs of Subsection~\ref{subsec:domainIndep}}
In this section, we prove the domain independent integral estimates for UBC domains presented in Section \ref{subsec:domainIndep}. 
We recall that $c$ and $C$ stand for independent positive constants which may change from line to line.

By a standard rescaling argument we may restrict ourselves proving the estimates in special cases.
\begin{prop}[Rescaling argument]
\label{prop:rescale}
Let $\Omega \subset \R^2$ be a UBC domain and $\lambda \in \R_+$. The rescaled
domain $\widetilde \Omega=\lambda \Omega$ satisfies
\[
|\widetilde \Omega|=\lambda^2 |\Omega|, \ \  P(\widetilde \Omega)=\lambda \P(\Omega) \ \
\text{and} \ \ r_{\widetilde \Omega} = \lambda r_\Omega.
\]
Moreover, if $f \in H^k(\Omega)$, then $\tilde f = f \circ (\id/\lambda) \in H^k(\widetilde \Omega)$, $(\pa_\alpha \tilde f)_{\lambda A} = \lambda^{-l} (\pa_\alpha f)_A$ and
\[
\int_{\lambda A} |\pa_\alpha \tilde f-\delta (\pa_\alpha \tilde f)_{\lambda A}|^2 \d x =
\lambda^{2(1-l)}  \int_{A} |\pa_\alpha  f-\delta (\pa_\alpha f)_A|^2 \d x
\]
for every $\alpha \in \{1,2\}^l$ with $l=0,1,\ldots,k$, $\delta \in \{0,1\}$ and measurable subset $A$ of $\Omega$.
\end{prop}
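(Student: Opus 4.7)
The proof is a standard scaling argument, so my plan is to verify each scaling relation in turn by direct computation, noting that no clever idea is required.

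First I would establish the geometric statements. The identity $|\widetilde\Omega|=\lambda^2|\Omega|$ follows from the change of variables $y=\lambda x$ in the Lebesgue measure (Jacobian $\lambda^2$), and $P(\widetilde\Omega)=\lambda P(\Omega)$ follows by parametrizing $\partial\Omega$ locally and observing that arc length in $\R^2$ scales by $\lambda$ (equivalently, $\H^1(\lambda A)=\lambda \H^1(A)$). For $r_{\widetilde\Omega}=\lambda r_\Omega$, the key observation is that the UBC in Definition \ref{def:UBC} is preserved under dilation: if $B_r(x_\pm)$ are the interior/exterior touching balls at $x\in\partial\Omega$, then $B_{\lambda r}(\lambda x_\pm)$ are the corresponding touching balls at $\lambda x\in\partial\widetilde\Omega$, and vice versa. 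Taking the supremum over admissible radii on both sides yields $r_{\widetilde\Omega}=\lambda r_\Omega$.

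Next I would turn to the Sobolev scaling. For $f\in H^k(\Omega)$ the map $\tilde f(y)=f(y/\lambda)$ inherits weak derivatives by the chain rule, and for a multi-index $\alpha$ of length $l\leq k$ one has $\partial_\alpha \tilde f(y) = \lambda^{-l}(\partial_\alpha f)(y/\lambda)$. Substituting $y=\lambda x$ in the average yields
\begin{equation*}
(\partial_\alpha \tilde f)_{\lambda A}=\frac{1}{|\lambda A|}\int_{\lambda A}\partial_\alpha\tilde f\,\d y=\frac{1}{\lambda^2|A|}\int_A \lambda^{-l}(\partial_\alpha f)(x)\,\lambda^2\,\d x = \lambda^{-l}(\partial_\alpha f)_A.
\end{equation*}

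Finally, for the oscillation identity, I would again change variables $y=\lambda x$ and use the average identity just established:
\begin{equation*}
\int_{\lambda A}|\partial_\alpha \tilde f-\delta(\partial_\alpha \tilde f)_{\lambda A}|^2\,\d y = \int_A|\lambda^{-l}(\partial_\alpha f)(x)-\delta\lambda^{-l}(\partial_\alpha f)_A|^2\lambda^2\,\d x = \lambda^{2(1-l)}\int_A|\partial_\alpha f-\delta(\partial_\alpha f)_A|^2\,\d x,
\end{equation*}
which is exactly the claim. There is no real obstacle here; the only point requiring mild care is the chain-rule computation for higher weak derivatives of $\tilde f$, which is justified by density of smooth functions in $H^k(\Omega)$ together with the fact that $y\mapsto y/\lambda$ is a smooth diffeomorphism $\widetilde\Omega\to\Omega$.
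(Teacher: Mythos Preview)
Your proof is correct. The paper does not actually provide a proof of this proposition at all; it is stated as a standard fact without argument, so your write-up supplies precisely the routine verification the authors omitted, and it matches what they clearly had in mind.
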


The first quantified Gagliardo-Nirenberg interpolation inequality \eqref{interpolation:L2} can be obtained by quantifying the standard trace theorem.
\begin{lemma}[Quantified trace theorem]
\label{lem:tracethm}
Let $\Omega \subset \R^2$ be a UBC domain and $p \in [1,\infty)$. Then
\beq
\label{est:tracethm}
\int_{\pa \Omega} |f|^p \d \H^1 \leq  \int_{\Omega} \frac{2}{r_\Omega}|f|^p+ p|f|^{p-1}|\nabla f| \d x
\eeq
for every $f \in W^{1,p}(\Omega)$. Here $f$ on $\pa \Omega$ is the image of $f$ under the trace operator.
\end{lemma}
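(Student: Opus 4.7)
My plan is to reduce the statement to a divergence-theorem computation on a tubular neighborhood of $\partial \Omega$, with the UBC condition supplying the required quantitative bounds on the extended normal field and a suitable cutoff. I would argue in four steps.

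First, by a standard density argument, it is enough to prove the inequality for $f \in C^1(\overline{\Omega})$: smooth functions are dense in $W^{1,p}(\Omega)$ for UBC domains (which are $C^{1,1}$ by Proposition \ref{prop:UBC} of the paper's preliminaries), and the trace operator is continuous from $W^{1,p}(\Omega)$ into $L^p(\partial \Omega)$, so both sides pass to the limit. To deal with the non-smoothness of $t \mapsto |t|^p$ at the origin, I would regularize by replacing $|f|^p$ with $(f^2 + \varepsilon)^{p/2}$ throughout the argument, noting that $|\nabla (f^2+\varepsilon)^{p/2}| \leq p(f^2 + \varepsilon)^{(p-1)/2}|\nabla f|$, and let $\varepsilon \to 0$ at the end by monotone/dominated convergence.

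Second, I would introduce a one-parameter cutoff $\eta(x) = \psi(d_\Omega(x))$ where $\psi \colon \mathbb{R} \to [0,1]$ is Lipschitz, supported in $[-r_\Omega/2, 0]$, with $\psi(0)=1$ and $\psi(-r_\Omega/2)=0$. By the UBC assumption, $\nu = \nabla d_\Omega$ extends smoothly to the full tubular neighborhood $\mathcal{N}_{r_\Omega}(\partial\Omega)$, and in particular $\spt \eta$ lies inside a region where the identity $\div \nu = (\curv \circ \pi)/(1 + d_\Omega \curv \circ \pi)$ from \eqref{divn2D} holds. Since $|d_\Omega| \leq r_\Omega/2$ in $\spt \eta$ and $|\curv| \leq 1/r_\Omega$, we get $|1 + d_\Omega \curv \circ \pi| \geq 1/2$, hence $|\div \nu| \leq 2/r_\Omega$ on $\spt \eta$. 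Also $\nabla \eta = \psi'(d_\Omega)\nu$ is purely normal, with $|\psi'|$ of order $1/r_\Omega$.

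Third, I would apply the divergence theorem to the vector field $\eta |f|^p \nu$ (regularized as above). Since $\eta = 1$ on $\partial \Omega$ and $\langle \nu,\nu\rangle = 1$ there, we obtain
\[
\int_{\partial\Omega} |f|^p \, \d\H^1 \;=\; \int_{\Omega} \div(\eta |f|^p \nu) \,\d x
\;=\; \int_{\Omega} \bigl(\psi'(d_\Omega) |f|^p + \eta\, \partial_\nu |f|^p + \eta|f|^p \div \nu\bigr)\,\d x.
\]
The middle term is estimated pointwise by $p|f|^{p-1}|\nabla f|$, while the first and third together are controlled by the sum $|\psi'(d_\Omega)| + |\div \nu|$, which is of order $1/r_\Omega$ on $\spt \eta$.

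The main obstacle is extracting the sharp constant $2/r_\Omega$ rather than the $4/r_\Omega$ that falls out of bounding $|\psi'|$ and $\eta|\div\nu|$ separately. The trick should be to pick $\psi$ so that $\psi'(d_\Omega) + \psi(d_\Omega)\div \nu$ has a uniform bound of $2/r_\Omega$ — for instance, by either absorbing the normal-Jacobian factor $(1+d_\Omega \curv \circ \pi)$ into the cutoff, which makes $\eta\, \div \nu$ exactly a derivative that can be combined with $\psi'$, or equivalently by pushing forward through normal coordinates, writing
\[
\int_{\partial\Omega}|f|^p\,\d\H^1 = \frac{1}{r_\Omega/2}\int_0^{r_\Omega/2}\!\!\int_{\partial\Omega} |f(x)|^p\,\d\H^1\,\d s + \frac{1}{r_\Omega/2}\int_0^{r_\Omega/2}\!\!\int_{\partial\Omega}\!\!\int_0^s \frac{\d}{\d\sigma}|f(x-\sigma\nu)|^p\,\d\sigma\,\d\H^1\,\d s,
\]
and then converting the iterated integrals back to bulk integrals using the volume element $(1+d_\Omega\curv\circ\pi)\,\d\H^1\,\d s = \d y$ from the paper's normal-coordinate computations. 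The cancellation between the Jacobian in the change of variables and the curvature term in $\div \nu$ is exactly what brings the constant down to the announced value $2/r_\Omega$; after that refinement, letting $\varepsilon \to 0$ and then invoking density finishes the proof.
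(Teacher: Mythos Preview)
Your approach is the same as the paper's: apply the divergence theorem to $\eta\,|f|^p\,\nu$ and bound $|\partial_\nu\eta|+\eta\,|\div\nu|$ by $2/r_\Omega$. The one point you leave fuzzy is exactly the content of the lemma, namely the sharp constant. The paper does not use a cutoff supported on the half-tube $[-r_\Omega/2,0]$ (which forces $|\psi'|\ge 2/r_\Omega$ and leads to the $4/r_\Omega$ you noticed). Instead it approximates the piecewise-linear profile $\psi(t)=(1-|t|/r_\Omega)_+$ on the \emph{full} tubular width, so that $|\psi'|\le 1/r_\Omega$, and then the decay $|\psi(d_\Omega)|\le 1-|d_\Omega|/r_\Omega$ exactly cancels the denominator in $\div\nu = (\curv\circ\pi)/(1+d_\Omega\,\curv\circ\pi)$, giving $|\psi(d_\Omega)\div\nu|\le 1/r_\Omega$ as well. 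With this choice both terms contribute $1/r_\Omega$ and the constant $2/r_\Omega$ falls out without any further ``absorbing the Jacobian'' or normal-coordinate averaging. Your density and $\varepsilon$-regularization steps are fine; the paper instead passes to sublevel sets $\{d_\Omega<-\varepsilon\}$ and mollifies $|f|^p$, but that is a matter of taste.
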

\begin{proof}
By approximating $\Omega$ we may assume that $\Omega$ is $C^2$-regular. Again we may assume
$f \in C^1(\overline \Omega)$. By the divergence theorem and 
\eqref{div(f*n)0} we  have 
\beq
\label{proof:tracethm1}
\int_{\pa \Omega } \phi f \d \H^1 = \int_{\Omega} \phi \pa_\n f + f \pa_\n \phi + \div \n \, \phi f
\eeq
for every $\phi \in C^1_0(\mathcal N(\pa \Omega))$.
We choose a sequence $(g_k)_k \subset C_0^1 (-r_\Omega,r_\Omega)$ such that 
$g_k(0) \rightarrow 1$, $g(t)=g(0)$ in a small interval $(-\delta_k,\delta_k)$,
\[
|g_k(t)| \leq 1 - \frac{|t|}{r_\Omega} \quad \text{and} \quad |g_k'(t)| \leq \frac{1}{r_\Omega} 
\]
for every $t \in (-r_\Omega,r_\Omega)$. By setting $\phi = g_k \circ d_\Omega$
and recalling $\n =  \nabla d_\Omega$ and \eqref{divn2D}
we have
\[
|\pa_{\n} \phi|, |\div \n \, \phi|  \leq \frac1{r_\Omega}
\]
in $\mathcal N(\pa \Omega)$. Combining this with \eqref{proof:tracethm1} and letting $k \rightarrow \infty$
results in
\beq
\label{proof:tracethm2}
\int_{\pa \Omega} f \d \H^1 \leq  \int_{\Omega} \frac{2}{r_\Omega} |f|+ |\nabla f| \d x.
\eeq
For given $p \in [1,\infty)$ it holds  $|f|^p \in W^{1,1}(\Omega)$ with $|\nabla |f|^p|=p|f|^{p-1}|\nabla f|$ a.e. in $\Omega$.
Since for every $\eps \in (0,r_\Omega)$ the sublevel set $\{d_\Omega < -\eps\}$ is a $C^2$-regular with the boundary $\{d_\Omega = -\eps\}$ and a maximal UBC radius at least $r_\Omega-\eps$, then by  
 mollifying $|f|^p$ , using \eqref{proof:tracethm2} for $\{d_\Omega < -\eps\}$ and passing the mollification to limit 
yields
\[
\int_{\{d_\Omega=-\eps\}} |f|^p \d \H^1 \leq  \int_{\Omega} \frac{2}{r_\Omega-\eps} |f|^p+ p|f|^{p-1}|\nabla f| \d x.
\]
Finally, since $f$ is uniformly continuous in $\overline \Omega$ we may let $\eps$ tend to zero and obtain \eqref{est:tracethm} 
for $f$. 
\end{proof}

\begin{proof}[Proof of Lemma \ref{lem:interpolation}]
To obtain \eqref{interpolation:L2} for $f \in H^2(\Omega)$ we integrate by parts and
apply Lemma \ref{lem:tracethm} for $f$ and $\nabla f$ to estimate
\begin{align*}
&\int_{\Omega} |\nabla f|^2 \d x \\
&= \int_{\pa \Omega} f \langle \nabla f, \n \rangle \d \H^1 - \int_{\Omega} f \Delta f \d x \\
&\leq \|\nabla f\|_{L^2(\pa \Omega)} \|f\|_{L^2(\pa \Omega)} +2 \|\nabla^2 f\|_{L^2(\Omega)} \|f\|_{L^2(\Omega)} \\
&\leq C\left(\frac{1}{r_\Omega}\|\nabla f\|_{L^2(\Omega)}^2 +\|\nabla^2 f\|_{L^2(\Omega)}\|\nabla f\|_{L^2(\Omega)}  \right)^\frac12 
\left(\frac{1}{r_\Omega}\|f\|_{L^2(\Omega)}^2 +\|\nabla f\|_{L^2(\Omega)}\| f\|_{L^2(\Omega)}\right)^\frac12 \\
 & \ +2 \|\nabla^2 f\|_{L^2(\Omega)} \|f\|_{L^2(\Omega)} \\
&\leq \frac{C}{r_\Omega}\|\nabla f\|_{L^2(\Omega)}\| f\|_{L^2(\Omega)} + \frac{C}{\sqrt{r_\Omega}}
\|\nabla f\|_{L^2(\Omega)}^\frac32\| f\|_{L^2(\Omega)}^\frac12
+\frac{C}{\sqrt{r_\Omega}}
\|\nabla^2 f\|_{L^2(\Omega)}^\frac12 \|\nabla f\|_{L^2(\Omega)}^\frac12\| f\|_{L^2(\Omega)} \\
& \ + C \|\nabla^2 f\|_{L^2(\Omega)}^\frac12 \|\nabla f\|_{L^2(\Omega)}\| f\|_{L^2(\Omega)}^\frac12
+2 \|\nabla^2 f\|_{L^2(\Omega)} \|f\|_{L^2(\Omega)} 
\end{align*}
and \eqref{interpolation:L2} follows after applying Young's inequality several times.

 The second interpolation inequality \eqref{interpolation:Linfty} follows directly from the uniform ball condition
and the special case $\Omega=B_1$. Indeed,  since the uniform ball condition implies for every point $x \in \Omega$ an existence of $y \in \Omega$
such that $x \in B_{r_\Omega}(y) \subset \Omega$, it suffices to prove \eqref{interpolation:Linfty} for the ball 
$B_{r_\Omega}$. Again, by Proposition
\ref{prop:rescale} we only need consider the case $r_\Omega=1$.
We recall the well-known interpolation
\[
\norm{f}_{L^\infty(B_1)} \leq C \|f\|_{H^2(B_1)}^\frac12\|f\|_{L^2(B_1)}^\frac12
\]
for every $f \in H^2(B_1)$, see for instance \cite[Thm 5.9]{AF}.
Thus by applying $\eqref{interpolation:L2}$ and Young's inequality on $\|\nabla u\|_{L^2(B_1)}$
we infer  $\eqref{interpolation:Linfty}$ for $B_1$.
\end{proof}

\begin{proof}[Proof of Lemma \ref{lem:interiorreg}]
We consider first the case  where $X \in H^2(B_\rho(z);\R^2)$ is biharmonic. By virtue of the standard elliptic estimates
we may assume $X$ is smooth. Then using the condition $\Delta^2 X =0$ in $\Omega$ it is rather straightforward to get an estimate
\[
\int_{B_\rho(z)} \phi |\nabla^3 X|^2 \d x \leq C\int_{B_\rho(z)} (|\nabla \phi|^2 +|\nabla^2 \phi|)  |\nabla^2 X|^2 + |\nabla^2 \phi|^2|\nabla X|^2 + |\nabla^3 \phi|^2 |X|^2 \d x
\]
for every $\phi \in C_0^\infty(B_\rho(z))$. We choose $\phi$ such that 
such that $|\phi| \leq 1$, $\phi = 1$ in $B_{\rho/2}(z)$ and $|\nabla^l \phi|^2 \leq C/\rho^l$ for $l=1,2,3$. Hence we obtain
\[
\int_{B_{\rho/2}(z)}  |\nabla^3 X|^2 \d x \leq C\int_{B_\rho(z)}\frac1{\rho^2} |\nabla^2 X|^2 + \frac1{\rho^4}|\nabla X|^2 +\frac1{\rho^6}|X|^2 \d x.
\]
The claim then follows this estimate combined with the Besicovitch covering theorem applied on the collection
$\{B_{\rho/2}(z) : d_\Omega(z) < - \rho\}$. 
\end{proof}

Proving the quantified Poincaré and Korn-type inequalities will be done in two steps. First we prove their local versions on the scale of maximal UBC radius. Then we combine the local results with a covering argument to conclude the global versions.

\begin{lemma}[Local estimates]
\label{lem:localestimates}
Let $\Omega \subset \R^2$ be a UBC domain
and let us denote $\Omega_y = \Omega \cap B_{r_\Omega/4}(y)$ and  
 $\widetilde \Omega_y = \Omega \cap B_{r_\Omega}(y)$
for every $y \in \Omega$.
There is an independent constant $C$
such that 
\begin{align}
\label{est:local1}
\int_{\Omega_y}  \abs{f-f_{\Omega_y}}^2 \d x 
&\leq C r_\Omega^2 \int_{\widetilde \Omega_y} \abs{\nabla f}^2 \d x, \\
\label{est:local2}
\int_{\Omega_y}  \abs{\nabla X}^2 \d x 
&\leq C\left( \int_{\Omega_y} \abs{\nablasym X}^2  \d x
+ \frac1{r_\Omega^2}  \int_{\Omega_y} \abs{X}^2 \d x \right), \\
\label{est:local3}
\int_{\Omega_y}  \abs{\nabla X-(\nabla X)_{\Omega_y}}^2 \d x
&\leq C \int_{\Omega_y} \abs{\nablasym X-(\nablasym X)_{\Omega_y}}^2  \d x,
\end{align}
for every $y,z \in \Omega$, $f \in H^1(\Omega)$ and $X \in H^1(\Omega;\R^2)$.
\end{lemma}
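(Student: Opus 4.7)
The plan is to first use Proposition \ref{prop:rescale} to reduce to the unit scale $r_\Omega = 1$: all three inequalities scale consistently with their explicit $r_\Omega$-dependence, so it suffices to prove them for UBC domains with $r_\Omega=1$, with purely dimensional constants. After this reduction I would split the argument according to the distance of the base point $y$ to $\partial\Omega$.

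If $d_\Omega(y) \leq -1/2$, then $B_{1/2}(y)\subset\Omega$ and $\Omega_y=B_{1/4}(y)$ is a full unit-scale ball; the three inequalities then reduce to the classical Poincar\'e inequality and first/second Korn inequalities on a fixed ball, which hold with dimensional constants. If instead $d_\Omega(y) > -1/2$, set $x_0=\pi_{\pa\Omega}(y)$, so $|y-x_0|<1/2$. The UBC with radius $1$ at $x_0$ provides the two tangent unit balls $B_1(x_\pm)$, one inside $\Omega$ and one outside, both containing $y$ or close to it. In rotated coordinates where $\nu(x_0)=e_2$, this forces $\pa\Omega\cap B_1(y)$ to be sandwiched between the two unit spheres $\pa B_1(x_\pm)$, hence to coincide with the graph of a $C^{1,1}$ function $h$ with $\|h\|_{C^{1,1}}$ bounded by an absolute constant. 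Consequently $\widetilde\Omega_y=\Omega\cap B_1(y)$ is a connected Lipschitz domain whose Lipschitz character (cone/graph constants, John constants, measure of $\Omega_y \subset \widetilde\Omega_y$) is universal, and it is bi-Lipschitz equivalent to a fixed model half-disk via a map whose Lipschitz constants are dimensional.

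With the uniformly Lipschitz description in hand, the three estimates transfer from classical results on the model domain. Estimate \eqref{est:local1} becomes the standard Poincar\'e inequality on $\widetilde\Omega_y$, the admissibility of the averaging set $\Omega_y$ being guaranteed by the uniform lower bound $|\Omega_y|\geq c$ coming from the graph representation; the enlargement from $\Omega_y$ to $\widetilde\Omega_y$ on the right-hand side is precisely what absorbs possible pinches of $\Omega_y$ near the boundary and allows a universal Poincar\'e constant. Estimate \eqref{est:local2} is the classical first Korn inequality on the model Lipschitz domain, pulled back through the bi-Lipschitz map. Estimate \eqref{est:local3} is the more delicate second Korn inequality, which on a fixed connected Lipschitz domain gives the seminorm control $\|\nabla X-(\nabla X)_{\Omega_y}\|_{L^2}^2 \leq C \|\nablasym X-(\nablasym X)_{\Omega_y}\|_{L^2}^2$; again it transfers through the bi-Lipschitz change of variables.

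The main obstacle I foresee is establishing that the Lipschitz character (and in particular the connectedness) of $\Omega\cap B_1(y)$ is genuinely \emph{uniform} in $y$. While UBC provides global $C^{1,1}$-regularity with explicit bounds, one must carefully verify, using the two-sided ball property, that the local graph representation in $B_1(y)$ has constants free of $y$, and that $\Omega_y$ occupies a universal fraction of $B_{1/4}(y)$. This is where the rigidity of the UBC is essential, since a purely $C^{1,1}$ assumption with quantitative bounds would not a priori rule out thin slits in $B_1(y)$; the second tangent ball $B_1(x_+)$ on the outside prevents exactly such pathologies and makes the reduction to a standard Lipschitz model unambiguous.
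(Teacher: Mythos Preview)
Your overall strategy matches the paper's: rescale to $r_\Omega=1$, split by distance of $y$ to $\pa\Omega$, and in the boundary case use the quantified $C^{1,1}$ graph representation that the UBC provides to obtain uniform geometric control. The paper uses the threshold $d_\Omega(y)\le -1/4$ (which is exactly what is needed for $\Omega_y=B_{1/4}(y)$), shows that $\Omega_y$ is \emph{star-shaped} with respect to a point $y'$ with $B_l(y')\subset\Omega_y$ for a universal $l$, and then cites Korn inequalities on star-shaped domains directly for \eqref{est:local2} and \eqref{est:local3}. For \eqref{est:local1} it verifies a uniform $\theta$-cone property on $W=\Omega\cap Q$ for a fixed rectangle $Q$ and quotes a uniform Poincar\'e inequality there, concluding via $\Omega_y\subset W\subset\widetilde\Omega_y$.

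There is, however, a genuine gap in your proposed mechanism for the Korn inequalities. You write that \eqref{est:local2} and \eqref{est:local3} ``transfer through the bi-Lipschitz change of variables'' from a fixed model domain. This does not work: the symmetric gradient is not natural under general diffeomorphisms. If $\Phi$ is bi-Lipschitz and $Y=X\circ\Phi$, then $\nabla Y=(\nabla X\circ\Phi)\nabla\Phi$, and the symmetric part of this product is not controlled by $(\nablasym X)\circ\Phi$. Equivalently, the kernel of $\nablasym$ (the rigid motions) is not preserved under composition with a non-affine $\Phi$, so a pullback argument cannot reproduce the second Korn inequality. Your Poincar\'e argument via bi-Lipschitz transfer is fine, but for \eqref{est:local2} and \eqref{est:local3} you must instead quote a Korn-type result that applies \emph{directly} to the class of domains at hand with constants depending only on the geometric data you have established---e.g.\ star-shapedness with a uniform inner ball, as in the paper, or a uniform John condition. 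The uniform Lipschitz/graph description you set up is more than enough to feed into such results; only the stated transfer mechanism must be replaced.
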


\begin{proof}[Proof of Lemma \ref{lem:localestimates}] By considering the rescaled domain $r_\Omega^{-1}\Omega$ and using Proposition
\ref{prop:rescale} it suffices to prove that in the case $r_\Omega=1$ we have  estimates
\begin{align}
\label{proof:local1}
\int_{\Omega_y}  \abs{f-f_{\Omega_y}}^2 \d x 
&\leq C \int_{\widetilde \Omega_y} \abs{\nabla f}^2 \d x, \\
\label{proof:local2}
\int_{\Omega_y}  \abs{\nabla X}^2 \d x 
&\leq C\left( \int_{\Omega_y} \abs{\nablasym X}^2  \d x
+ \int_{\Omega_y} \abs{X}^2 \d x \right), \\
\label{proof:local3}
\int_{\Omega_y}  \abs{\nabla X-(\nabla X)_{\Omega_y}}^2 \d x
&\leq C \int_{\Omega_y} \abs{\nablasym X-(\nablasym X)_{\Omega_y}}^2  \d x.
\end{align}

If $d_\Omega (y) \leq -1/4$, then $\Omega_y=B_{1/4}(y)$ is a fixed domain and the estimates
\eqref{proof:local1}, \eqref{proof:local2} and \eqref{proof:local3} are classical. Otherwise, by rotating and translating the coordinates we may
assume $\pi_{\pa \Omega}(0)=0$ and $\nu(0)=e_2$.
Using the quantified local graph representation in \cite[Lemma 2.9]{JN2023} for $\Omega$
in the rectangle $Q= (-1/4,1/4) \times (-1/2,1/2)$ and recalling $r_\Omega=1$ 
one can conclude by a straighforward calculation that there is $y' \in \Omega_y$ and independent $l \in (0,1/8)$ such that $\Omega_y$ is a star-shaped domain with respect  to $y'$ and $B_l(y')\subset \Omega_y$.
Then the estimates \eqref{proof:local2}
and \eqref{proof:local3} for $\Omega_y$ follow from the standard theory, see \cite{KoOl} and \cite{Duran}
respectively. 

Again the graph represention implies that the domain $W=\Omega \cap Q$  satisfies so called
 $\theta$-cone property with an independent number $\theta \in (0,\pi/2]$ meaning that
for every $z \in \pa W$ there is a unit vector $\omega$
such that $\mathcal C(w,\omega,\theta) \subset W$ for all
$w \in B_\theta (z) \cap \overline{W}$ where 
\[
\mathcal C(w,\omega,\theta) = \left\{v \in \R^2 : \langle v-w, \omega \rangle > |v-w| \cos \theta \ \ 
\text{and} \ \ |v-w| < \theta  \right\}.
\]
Since  we also have $W \subset B_1$, we may then apply \cite[Thm 1]{boulkhemairUniformPoincareInequality2007}  to conclude
\[
\int_W |f-f_W|^2 \d x \leq C \int_W |\nabla f|^2 \d x
\]
and hence using this estimate and  $\Omega_y \subset W \subset \widetilde \Omega_y$ we conclude\footnote{Also recall $\int_A |g-g_A|^2
\d x \leq \int_A |g-t|^2 
\d x$ for every $t \in \R_+$.} \eqref{proof:local3} by
\begin{equation*}
\int_{\Omega_y} |f-f_{\Omega_y}|^2 \d x \leq \int_{\Omega_y} |f-f_W|^2 \d x
\leq \int_W |f-f_W|^2 \d x \leq C \int_{\widetilde\Omega_y} |\nabla f|^2 \d x. \qedhere
\end{equation*}
\end{proof}

Now to compare averages around possibly distant points we require a ball chain type of argument. For this in turn, we need to know that distant parts of the domain are connect by a thick enough strip. As this is one essential feature of the UBC, we thus make the following observation without proof:

\begin{prop}
 \label{prop:connectivity}
 Let $\Omega \subset \R^2$ be a UBC domain. There are independent constants $\delta \in (0,1/16)$ and $C$ such that for every 
every distinct $x,y \in \{d_\Omega \leq -r_\Omega/8\}$ we find points $x_0,x_1,\ldots x_N \in \Omega$,
with $N \leq CP(\Omega)/r_\Omega$,  satisfying $x_0=x$ and $x_N=y$, $|x_i-x_{i+1}|\leq \delta r_\Omega$ for every $i=0,\ldots N-1$,
$B_{2\delta r_\Omega}(x_i) \subset \Omega$ for every $i=0,\ldots N$ and $\sup_{z \in \R^2} \# \{ i : z \in B_{2\delta r_\Omega}(x_i)\} \leq C$. 
\end{prop}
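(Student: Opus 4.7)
By Proposition~\ref{prop:rescale} I would first rescale so that $r_\Omega=1$; the assertion then becomes: find $x_0,\ldots,x_N$ with $|x_i-x_{i+1}|\le\delta$, $B_{2\delta}(x_i)\subset\Omega$, $N\le C\,P(\Omega)$ and bounded pointwise multiplicity. The plan is to (i) build a rectifiable curve $\gamma$ joining $x$ and $y$ that stays uniformly deep in $\Omega$, (ii) control its length by $CP(\Omega)$, and (iii) sample $\gamma$ at arc-length spacing~$\delta$.

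\textbf{Connectedness and length of the deep path.} Fix $\delta\in(0,1/16)$, set $\rho:=1/8$, and let $K:=\{d_\Omega\le-\rho\}$; then the hypothesis $d_\Omega(x),d_\Omega(y)\le -r_\Omega/8$ places $x,y\in K$. Path-connectedness of $K$ follows from the continuous retraction $F:\Omega\to K$ defined by $F=\id$ on $K$ and $F(z)=\pi_{\pa\Omega}(z)-\rho\,\nu(\pi_{\pa\Omega}(z))$ on $\mathcal N(\pa\Omega)\setminus K$; continuity of $\pi_{\pa\Omega}$ on $\mathcal N(\pa\Omega)$ is the Federer regularity cited in Section~\ref{sec:UBC}, and then $K=F(\Omega)$ inherits connectedness from $\Omega$. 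An Arzel\`a--Ascoli argument on paths of bounded length in the closed bounded set $K$ yields a shortest rectifiable $\gamma:[0,L]\to K$ from $x$ to $y$, parametrised by arclength. For the essential estimate $L\le CP(\Omega)$ I would exhibit an admissible competitor: travel orthogonally from $x$ to $\pa K$ (cost $\le 1-\rho$), then along $\pa K$ to the nearest point of $y$ (cost $\le\mathcal H^1(\pa K)$), then orthogonally back to $y$. The offset identity $|\gamma_\rho'(s)|=|1-\rho\kappa(s)|\le 2$ (using $|\kappa|\le 1/r_\Omega=1$) gives $\mathcal H^1(\pa K)\le 2P(\Omega)$, and the additive $O(1)$ terms are absorbed into $CP(\Omega)$ via the lower bound $P(\Omega)\ge 2\pi$ coming from $r_\Omega=1$ and the isoperimetric inequality.

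\textbf{Discretisation and the main obstacle.} Put $x_i:=\gamma(i\delta)$ for $0\le i<N$ and $x_N:=y$, with $N=\lceil L/\delta\rceil\le CP(\Omega)$ (for fixed $\delta$ absorbed into $C$). The chain property $|x_i-x_{i+1}|\le\delta$ is immediate since Euclidean distance is dominated by arclength, and $\gamma\subset K$ together with $\delta<\rho/2$ yields $B_{2\delta}(x_i)\subset\{d_\Omega<-\rho+2\delta\}\subset\Omega$. The remaining bounded-overlap estimate is the main obstacle, because a priori $\gamma$ could revisit a small ball many times. I would close it using the shortest-path property of $\gamma$ inside the UBC set $K$ (which has inradius $\ge 7/8\gg\delta$): a localised analysis shows that $\mathcal H^1\!\bigl(\gamma\cap B_{4\delta}(z)\bigr)\le C\delta$ for every $z\in\R^2$, the intuition being that in a UBC set whose inradius dominates $4\delta$, a geodesic enters any ball of radius $4\delta$ in only boundedly many straight, chord-like arcs, each of length $\le 8\delta$. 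Dividing by the sampling spacing $\delta$ gives $\#\{i:x_i\in\overline B_{2\delta}(z)\}\le C$ directly. As a robust back-up, if rigorously justifying the bounded chord count proves delicate, one can instead Vitali-prune the chain to enforce mutual $\delta/2$-separation of retained centres (at the price of enlarging consecutive gaps to $\le 3\delta/2$, absorbable into a slightly smaller $\delta$) and conclude the multiplicity bound by a standard area-packing count in $\R^2$.
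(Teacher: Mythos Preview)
The paper explicitly states this proposition \emph{without proof}, so there is no proof to compare against; I will simply evaluate your argument on its merits.

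Your overall architecture---rescale to $r_\Omega=1$, retract $\Omega$ onto $K=\{d_\Omega\le-1/8\}$ to get connectedness, take a shortest curve $\gamma$ in $K$, sample at spacing $\delta$, and use the geodesic property for bounded overlap---is exactly the right plan, and the hardest part (the overlap bound) is handled correctly by your primary argument: because $K$ itself satisfies a uniform ball condition with radius $7/8\gg\delta$, for any $z$ the set $K\cap B_{4\delta}(z)$ is connected with intrinsic diameter $\le C\delta$, so the subarc of the geodesic between its first and last visit to $B_{4\delta}(z)$ has length $\le C\delta$, and hence at most $C$ sample points lie in $B_{2\delta}(z)$.

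There are, however, two genuine gaps in the details. First, your competitor for the length bound is wrong as written: the cost of travelling ``orthogonally from $x$ to $\partial K$'' is not $\le 1-\rho$ (a point $x$ deep in a large domain can be arbitrarily far from $\partial K$), and $\partial K$ need not be connected (think of an annulus), so the phrase ``along $\partial K$ to the nearest point of $y$'' is ill-defined. A correct competitor is to take the segment $\overline{xy}$ (length $\le\mathrm{diam}(\Omega)\le P(\Omega)$ by \eqref{est:rOmega}) and, for each excursion out of $K$, reroute along the boundary of the corresponding component of $K^c$; since $K$ is connected each such component has connected boundary, and the total rerouting is controlled by $\mathcal H^1(\partial K)\le CP(\Omega)$. (Bounding how often the same component is revisited still needs an argument, but this is where the UBC of $K$ enters; alternatively one can quote the planar fact that the geodesic diameter of a bounded domain with rectifiable boundary is at most its perimeter.)

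Second, your Vitali back-up does not work as stated: greedy pruning for mutual $\delta/2$-separation destroys the chain property (a retained point may be close only to a much earlier retained point, so consecutive retained points can be far apart), while pruning only against the previous retained point gives consecutive but not mutual separation. Since your primary geodesic argument already closes the multiplicity estimate, simply drop the back-up.
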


\begin{lemma}[Comparing averages]
\label{lem:averageEstimates}
Let $\Omega \subset \R^2$ be a UBC domain
and let us denote $\Omega_y = \Omega \cap B_{r_\Omega/4}(y)$
for every $y \in \Omega$.
There is an independent constant $C$
such that
\begin{align}
\label{est:avg1}
\abs{f_{\Omega_z}-f_{\Omega_y}}^2
&\leq C \frac{\P(\Omega)}{r_\Omega} \int_\Omega |\nabla f|^2 \d x \quad \text{and} \\
\label{est:avg2}
\abs{(\nabla X)_{\Omega_z}-(\nabla X)_{\Omega_y}}^2
&\leq C \frac{\P(\Omega)}{r_\Omega^3}\int_\Omega \abs{\nablasym X -(\nablasym X)_\Omega}^2 \d x
\end{align}
for every $y,z \in \Omega$, $f \in H^1(\Omega)$ and $X \in H^1(\Omega;\R^2)$.
\end{lemma}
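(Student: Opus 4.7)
The plan is to reduce to a chain-of-balls argument by exploiting the two input results: the local estimates of Lemma \ref{lem:localestimates} and the quantitative path from Proposition \ref{prop:connectivity}. For any two points $y,z\in\Omega$ I will first locate auxiliary ``deep'' points $y',z'\in\{d_\Omega\leq -r_\Omega/8\}$ which sit simultaneously inside $\Omega_y$ resp.\ $\Omega_z$ (and so that $B_{2\delta r_\Omega}(y')\subset\Omega_y$ and $B_{2\delta r_\Omega}(z')\subset\Omega_z$). Such points exist because the local representation used in the proof of Lemma \ref{lem:localestimates} shows that $\Omega_y$ is star-shaped with respect to some point $y'$ and contains a ball $B_l(y')$ of independent radius $l$; shrinking $\delta$ once and for all ensures $2\delta r_\Omega\le l$. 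Then I connect $y'$ to $z'$ by a chain $x_0=y',x_1,\dots,x_N=z'$ provided by Proposition \ref{prop:connectivity}, with $N\le CP(\Omega)/r_\Omega$, bounded overlap, and $B_i:=B_{2\delta r_\Omega}(x_i)\subset\Omega$ for every $i$.

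For the scalar estimate \eqref{est:avg1}, I will telescope
\[
f_{\Omega_y}-f_{\Omega_z}
=(f_{\Omega_y}-f_{B_0})+\sum_{i=0}^{N-1}(f_{B_i}-f_{B_{i+1}})+(f_{B_N}-f_{\Omega_z}).
\]
Each successive pair is compared through the non-empty intersection: by Jensen and the standard Poincaré inequality on the Euclidean balls $B_i$ (with $|B_i\cap B_{i+1}|\geq c r_\Omega^2$ since $|x_i-x_{i+1}|\le\delta r_\Omega$) one has $|f_{B_i}-f_{B_{i+1}}|^2\le C\int_{B_i\cup B_{i+1}}|\nabla f|^2\,\d x$, and similarly the two ``endpoint'' differences are controlled by $\int_{\widetilde\Omega_y}|\nabla f|^2$ via \eqref{est:local1} and the inclusion $B_0\subset\Omega_y$. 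A Cauchy--Schwarz on $N$ terms plus the bounded-overlap property of the chain yields
\[
|f_{\Omega_y}-f_{\Omega_z}|^2\leq CN\int_{\Omega}|\nabla f|^2\,\d x\le C\frac{P(\Omega)}{r_\Omega}\int_\Omega|\nabla f|^2\,\d x,
\]
which is \eqref{est:avg1}.

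For \eqref{est:avg2} I mimic the same chain but replace Poincaré by Korn--Poincaré. On each ball $B_i$ the scale-invariant Korn--Poincaré inequality gives $\int_{B_i}|\nabla X-(\nabla X)_{B_i}|^2\le C\int_{B_i}|\nablasym X-(\nablasym X)_{B_i}|^2$, and the minimality property $\int_A|g-g_A|^2\le\int_A|g-c|^2$ allows me to replace $(\nablasym X)_{B_i}$ by the \emph{global} average $(\nablasym X)_\Omega$ on the right-hand side. The same Jensen-and-intersection argument then yields
\[
|(\nabla X)_{B_i}-(\nabla X)_{B_{i+1}}|^2\le\frac{C}{r_\Omega^2}\int_{B_i\cup B_{i+1}}|\nablasym X-(\nablasym X)_\Omega|^2\,\d x,
\]
while for the two endpoints I use the local Korn--Poincaré estimate \eqref{est:local3} on $\Omega_y$ (and $\Omega_z$) together with $B_0\subset\Omega_y$ to bound $|(\nabla X)_{\Omega_y}-(\nabla X)_{B_0}|^2$ by $\frac{C}{r_\Omega^2}\int_{\Omega_y}|\nablasym X-(\nablasym X)_\Omega|^2\,\d x$. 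Telescoping with $N$ links, Cauchy--Schwarz and bounded overlap deliver the factor $N/r_\Omega^2\le CP(\Omega)/r_\Omega^3$, giving \eqref{est:avg2}.

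The only genuine subtlety is checking that the endpoint ``attachment'' of $\Omega_y$ to the chain can be done without losing the correct scaling; this is where the star-shapedness from the local graph representation and the smallness of $\delta$ matter, and those contributions do not carry a factor of $N$ so they are automatically dominated by the chain part. Everything else is an application of the tools already established, so I expect no additional obstacle.
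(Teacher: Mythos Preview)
Your proposal is correct and follows essentially the same route as the paper: rescale (implicitly), attach $\Omega_y$ and $\Omega_z$ to small balls around deep points $y',z'$, connect $y'$ to $z'$ by the chain of Proposition~\ref{prop:connectivity}, telescope, apply Poincar\'e (resp.\ Korn--Poincar\'e) on each ball, and finish with Cauchy--Schwarz over the $N\le CP(\Omega)/r_\Omega$ links together with the bounded-overlap property. One small point: your justification for $y'\in\{d_\Omega\le -r_\Omega/8\}$ via the star-shapedness ball $B_l(y')\subset\Omega_y$ from Lemma~\ref{lem:localestimates} only yields depth $l<r_\Omega/8$, which is not quite what Proposition~\ref{prop:connectivity} requires; the paper instead obtains $B_{r_\Omega/8}(y')\subset\Omega_y$ directly from the uniform ball condition (take $y'$ on the segment from $y$ to the center of an interior tangent ball), which immediately gives $d_\Omega(y')\le -r_\Omega/8$.
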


\begin{proof}
As before, by rescaling it is enough to show that in the case $r_\Omega=1$ it holds
\begin{align}
\label{proof:avg1}
\abs{f_{\Omega_z}-f_{\Omega_y}}^2
&\leq C \P(\Omega) \int_\Omega |\nabla f|^2 \d x \quad \text{and} \\
\label{proof:avg2}
\abs{(\nabla X)_{\Omega_z}-(\nabla X)_{\Omega_y}}^2
&\leq C \P(\Omega) \int_\Omega \abs{\nablasym X -(\nablasym X)_\Omega}^2 \d x.
\end{align}

By the definition of the subdomains $\Omega_y$ and $\Omega_z$ as well as the uniform ball condition for $\Omega$
we find $y' \in \Omega_y$ and $z' \in \Omega_z$ such that $B_{1/8}(y') \subset \Omega_y$ and $B_{1/8}(z') \subset \Omega_z$.
Let $x_0=y',x_1,\ldots x_N=z'$ and $\delta$ be as in Proposition \eqref{prop:connectivity}.

Since $\delta \leq 1/16$, by using \eqref{proof:local1} we estimate
\begin{align}
\begin{split}
\label{proof:avg3}
|B_\delta| \abs{f_{\Omega_y} - f_{B_\delta(y')}}^2
&= \int_{B_\delta(y')}\abs{f_{\Omega_y} - f_{B_\delta(y')}}^2 \d x \\
&\leq 2 \int_{B_\delta(y')}\abs{f-f_{\Omega_y}}^2 \d x  + 2 \int_{B_\delta(y')} \abs{f-f_{B_\delta(y')}}^2 \d x \\
&\leq 4 \int_{\Omega_y}\abs{f-f_{\Omega_y}}^2 \d x
\leq C \int_{\widetilde \Omega_y}|\nabla f|^2 \d x \leq  C \int_{\Omega}|\nabla f|^2 \d x
\end{split}
\end{align}
and similarly by  \eqref{proof:local3}
\beq
\label{proof:avg4}
|B_\delta| \abs{(\nabla X)_{\Omega_y} - (\nabla X)_{B_\delta(y')}}^2
\leq  C \int_{\Omega} \abs{\nablasym X - (\nablasym X)_\Omega}^2 \d x. 
\eeq

Since $B_\delta(x_i),B_\delta(x_{i+1}) \subset B_{2\delta}(x_i)$, then by writing
\begin{align*}
 &f_{B_\delta(x_i)} -  f_{B_\delta(x_{i+1})} \\
&= \frac1{|B_\delta|} \left(
\int_{B_\delta(x_i) \setminus B_\delta(x_{i+1})} f - f_{B_{2\delta}(x_i)} \d x -  
\int_{B_\delta(x_{i+1}) \setminus B_\delta(x_i)} f - f_{B_{2\delta}(x_i)} \d x
 \right)
\end{align*}
and 
\begin{align*}
(&\nabla X)_{B_\delta(x_i)}-(\nabla X)_{B_\delta(x_i)} \\
&=\frac{1}{|B_\delta|} \left(\int_{B_\delta(x_i) \setminus B_\delta(x_{i+1}) } \nabla X - (\nabla X)_{B_{2\delta}(x_i)} \d x
-  \int_{B_\delta(x_{i+1}) \setminus B_\delta(x_i) } \nabla X - (\nabla X)_{B_{2\delta}(x_i)} \d x \right)
\end{align*}
we conclude
\begin{align*}
\abs{f_{B_\delta(x_i)} -  f_{B_\delta(x_{i+1})}}^2
&\leq C \int_{B_{2\delta}(x_i)} \abs{f-f_{B_{2\delta}(x_i)}}^2 \d x \qquad \text{and} \\
\abs{(\nabla X)_{B_\delta(x_i)} -  (\nabla X)_{B_\delta(x_{i+1})}}^2
&\leq C \int_{B_{2\delta}(x_i)} \abs{\nabla X-(\nabla X)_{B_{2\delta}(x_i)}}^2 \d x.
\end{align*}
Therefore by applying Poincaré inequality for the balls of fixed radius $2\delta$ and 
Korn-Poincaré inequality for balls we infer from previous
\begin{align*}
\abs{f_{B_\delta(x_i)} -  f_{B_\delta(x_{i+1})}}^2
&\leq C \int_{B_{2\delta}(x_i)} \abs{\nabla f}^2 \d x \qquad \text{and} \\
\abs{(\nabla X)_{B_\delta(x_i)} -  (\nabla X)_{B_\delta(x_{i+1})}}^2
&\leq C \int_{B_{2\delta}(x_i)} \abs{\nablasym X-(\nablasym X)_{B_{2\delta}(x_i)}}^2 \d x.
\end{align*}
Hence using Cauchy-Schwarz inequality, Proposition \ref{prop:connectivity} and the previous observations we further have
\begin{align}
\label{proof:avg5}
\begin{split}
\abs{ f_{B_\delta(z')} -  f_{B_\delta(y')} }^2
& \leq \abs{ \sum_i  f_{B_\delta(x_i)} -  f_{B_\delta(x_{i+1})} }^2 \\
& \leq N \sum_i  \abs{f_{B_\delta(x_i)} -  f_{B_\delta(x_{i+1})}}^2  \\
&\leq CN \sum_i \int_{B_{2\delta}(x_i)} |\nabla f|^2 \d x \\
&\leq CP(\Omega) \int_{\bigcup_i B_{2\delta}(x_i)} |\nabla f|^2 \d x \leq CP(\Omega) \int_\Omega |\nabla f|^2 \d x
\end{split}
\end{align}
and similarly 
\beq
\label{proof:avg6}
\abs{ (\nabla X)_{B_\delta(z')} -  (\nabla X)_{B_\delta(y')} }^2 \leq C P(\Omega) \int_\Omega \abs{\nablasym X-(\nablasym X)_\Omega}^2 \d x.
\eeq
Thus combining the  estimates \eqref{proof:avg3}, \eqref{proof:avg4}, \eqref{proof:avg5} and \eqref{proof:avg6}  as well as recalling $1 \leq P(\Omega)$ we conclude \eqref{proof:avg1} and \eqref{proof:avg2}.
\end{proof}

\begin{proof}[Proof of Lemma \ref{lem:Poincare}, Lemma \ref{lem:Korn1} and \ref{lem:Korn2}]
By replacing $\Omega$ with the rescaled domain $|\Omega|^{-1/2} \Omega$ and recalling Proposition
\ref{prop:rescale} it suffices to prove that in the case $|\Omega|=1$ instead of \eqref{Poincare:est} and 
\eqref{Korn:est2} 
we have estimates
\begin{align}
\label{proof:Poincare}
\int_{\Omega} \abs{f-f_\Omega}^2 \d x &\leq C \frac{\P(\Omega)}{r_\Omega} \int_\Omega \abs{\nabla f}^2 \d x \qquad \text{and} \\
\label{proof:Korn2}
\int_{\Omega} \abs{\nabla X-(\nabla X)_\Omega}^2 \d x &\leq C \left(\frac{\P(\Omega)}{r_\Omega^3}+1\right) \int_\Omega  \abs{\nablasym X-(\nablasym X)_\Omega}^2 \d x.
\end{align}
Note that after rescaling \eqref{proof:Korn2}  becomes
\[
\int_{\Omega} \abs{\nabla X-(\nabla X)_\Omega}^2 \d x \leq
C \left(\frac{|\Omega|\P(\Omega)}{r_\Omega^3}+1\right) \int_\Omega  \abs{\nablasym X-(\nablasym X)_\Omega}^2 \d x
\]
and thus recalling \eqref{est:rOmega}  $1$ can be absorbed into the factor $|\Omega|\P(\Omega)/r_\Omega^3$.
Again, we note that  the cutoff version \eqref{Korn_cutoff:est} of the Korn inequality
and the alternative formulation \eqref{Korn:est3} of the Korn-Poincaré inequality 
are rather straightforward consequences by possibly increasing the constants and, hence,
we omit their derivations.

 By virtue of the Besicovitch covering theorem and the boundedness of $\Omega$ 
there are an independent number $\xi \in \N$ and collections $\{B_{r_\Omega/4}(x_i^k)\}_{i=1}^{N_k}$, $k=1,\ldots,\xi$, of pairwise disjoint balls such that each $x_i^k \in \Omega$ and
\[
\Omega \subset \bigcup_k \bigcup_i B_{r_\Omega/4}(x_i^k).
\] 
We then set $\Omega_i^k =\Omega_{x_i^k}= \Omega \cap B_{r_\Omega/4}(x_i^k)$ and $U_k = \bigcup_i \Omega_i^k$ for each $k=1,\ldots,\xi$ and $i=1,\ldots N_k$. Since
$\{\Omega_i^k\}_i$ is a partition of $U_k$ for each $k$
and $\{U_k\}_k$ is a cover of $\Omega$, then by applying \eqref{est:local2} on $\Omega_i^k$s
we conclude immediately the Korn inequality \eqref{Korn:est1}.

To prove \eqref{proof:Poincare} and \eqref{proof:Korn2}
we first estimate $\int_{\Omega}\abs{g - g_\Omega}^2 \d x$
for  an arbitratry $g \in L^2(\Omega)$ by using the cover $\{U_k\}_k$. 
Since $\{\Omega_i^k\}_i$ is a partition of $U_k$ and the uniform ball condition implies that each $\Omega_i^k$ contains a ball of radius $r_\Omega/8$,
then $N_k |B_{r_\Omega/8}| \leq |U_k| \leq |\Omega| =1$
and we conclude an upper bound $N_k \leq C/r_\Omega^2$
for each $k=1,\ldots \xi$. The collection $\{U_k\}_k$ is a cover of $\Omega$ implying existence of $m \in \{1,\ldots,\xi\}$ such that $|U_m|^{-1} \leq \xi$. Again, there must be $l\in \{1,\ldots,\xi\}$
such that
\[
\int_{\Omega}\abs{g - |U_m|^{-1}\sum_i  \int_{\Omega_i^m} g \d y }^2 \d x \leq \frac1\xi
\int_{U_l}\abs{g- |U_m|^{-1}\sum_i  \int_{\Omega_i^m} g \d y }^2 \d x.
\]
Thus by using the previous estimate, $|U_m|^{-1} \leq \xi$, $|U_m|=\sum_i |\Omega_i^m|$, Cauchy-Schwarz inequality $N_m,N_l \leq C/r_\Omega^2$ and $|\Omega_i^m|,|\Omega_j^l| \leq Cr_\Omega^2$
we estimate as follows
\begin{align*}
\int_{\Omega}\abs{g - g_\Omega}^2 \d x 
&\leq \int_{\Omega}\abs{g - |U_m|^{-1}\sum_i  \int_{\Omega_i^m} g \d y }^2 \d x \\
&\leq C \int_{U_l}\abs{g - |U_m|^{-1}\sum_i  \int_{\Omega_i^m} g \d y }^2 \d x \\
&\leq C \int_{U_l} |U_m|^{-2} \abs{ \sum_i |\Omega_i^m| \abs{g -  g_{\Omega_i^m}}}^2 \d x \\
&\leq C \int_{U_l} \abs{ \sum_i |\Omega_i^m| \abs{g -  g_{\Omega_i^m}}}^2 \d x \\
&\leq C \int_{U_l} \sum_i |\Omega_i^m|^2  \sum_i \abs{g -  g_{\Omega_i^m}}^2 \d x \\
&\leq C N_m r_\Omega^4 \int_{U_l} \sum_i \abs{g -  g_{\Omega_i^m}}^2 \d x \\
&\leq C r_\Omega^2 \sum_j \sum_i \int_{\Omega_j^l}\abs{g -  g_{\Omega_j^l}}^2 +
\abs{g_{\Omega_j^l} -  g_{\Omega_i^m}}^2 \d x \\
&\leq CN_lN_m \max_j |\Omega^l_j| r_\Omega^2 \max_{i,j} \abs{g_{\Omega_j^l} -  g_{\Omega_i^m}}^2
+C r_\Omega^2 \sum_j \sum_i \int_{\Omega_j^l}\abs{g -  g_{\Omega_j^l}}^2 \d x \\
&\leq C \max_{i,j} \abs{g_{\Omega_j^l} -  g_{\Omega_i^m}}^2
+C r_\Omega^2 N_m \sum_j \int_{\Omega_j^l}\abs{g -  g_{\Omega_j^l}}^2 \d x \\
&\leq C \max_{i,j} \abs{g_{\Omega_j^l} -  g_{\Omega_i^m}}^2
+C \sum_j \int_{\Omega_j^l}\abs{g -  g_{\Omega_j^l}}^2 \d x.
\end{align*}

For \eqref{proof:Poincare}  by choosing $g=f$ and using \eqref{est:local1}, \eqref{est:avg1},
$N_m,N_l \leq C/r_\Omega^2$ and \eqref{est:rOmega}
we estimate
\begin{align*}
\int_{\Omega}\abs{f - f_\Omega}^2 \d x 
\leq &
C\frac{P(\Omega)}{r_\Omega} \int_\Omega \abs{\nabla f}^2 \d x
+C r_\Omega^2 \sum_j \int_{B_{r_\Omega}(x_j^l)} \abs{\nabla f}^2 \d x \\
\leq &
C\frac{P(\Omega)}{r_\Omega} \int_\Omega \abs{\nabla f}^2 \d x.
\end{align*}

To prove \eqref{proof:Korn2} we choose $g$ to be an entry function of $\nabla X$ in the standard coordinates.
Then using the estimates  \eqref{est:local3}, \eqref{est:avg2}, $N_m \leq C/r_\Omega^2$ and the
fact that $\Omega_j^l$s form a partition of $U_l$
we have
\begin{align*}
&\int_{\Omega}\abs{g - g_\Omega}^2 \d x \\
\leq &
C \max_{i,j} \abs{\nabla X_{\Omega_j^l} - (\nabla X)_{\Omega_i^m}}^2
+C \sum_j \int_{\Omega_j^l}\abs{\nabla X -  (\nabla X)_{\Omega_j^l}}^2 \d x \\
\leq & C \frac{P(\Omega)}{r_\Omega^3} \int_{\Omega}\abs{\nablasym X -  (\nablasym X)_{\Omega}}^2 \d x
+C \sum_j \int_{\Omega_j^l}\abs{\nablasym X -  (\nablasym X)_{\Omega_j^l}}^2 \d x \\
\leq & C \frac{P(\Omega)}{r_\Omega^3} \int_{\Omega}\abs{\nablasym X -  (\nablasym X)_{\Omega}}^2 \d x
+C \int_{U_l}\abs{\nablasym X -(\nablasym X)_{\Omega}}^2 \d x \\
\leq & C \frac{P(\Omega)}{r_\Omega^3} \int_{\Omega}\abs{\nablasym X -  (\nablasym X)_{\Omega}}^2 \d x
+C\int_{\Omega}\abs{\nablasym X -(\nablasym X)_{\Omega}}^2 \d x
\end{align*}
and, in turn, summing these estimates  together  results in \eqref{proof:Korn2}.
\end{proof}

\end{document}